\documentclass[11pt]{amsart}
\usepackage{amssymb,amsmath,amsthm,amsfonts} 
\usepackage{enumerate}
\usepackage {latexsym}
\usepackage{graphicx}
\usepackage{epsfig}
\usepackage{bbm}
\include{srctex}
\usepackage{latexsym}
\usepackage{bbm,enumerate}
\usepackage{mathtools}
\usepackage{hyperref}
\usepackage{marginnote}
\usepackage[letterpaper, top=1.2in, bottom=1.2in, outer=1.1in, inner=1.1in, heightrounded, marginparwidth=2.5cm, marginparsep=2mm]{geometry}

\linespread{1.3} 
\allowdisplaybreaks
\sloppy  
\hyphenation{di-men-sion-al}

\newcommand{\nc}{\newcommand}
\nc{\les}{\lesssim}
\nc{\nit}{\noindent}
\nc{\nn}{\nonumber}
\nc{\D}{\partial}
\nc{\diff}[2]{\frac{d #1}{d #2}}
\nc{\diffn}[3]{\frac{d^{#3} #1}{d {#2}^{#3}}}
\nc{\pdiff}[2]{\frac{\partial #1}{\partial #2}}
\nc{\pdiffn}[3]{\frac{\partial^{#3} #1}{\partial{#2}^{#3}}}
\nc{\abs}[1] {\lvert #1 \rvert}
\nc{\cAc}{{\cal A}_c}
\nc{\cE}{{\cal E}}
\nc{\cF}{{\cal F}}
\nc{\cP}{{\cal P}}
\nc{\cV}{{\cal V}}
\nc{\cQ}{{\cal Q}}
\nc{\cGin}{{\cal G}_{\rm in}}
\nc{\cGout}{{\cal G}_{\rm out}}
\nc{\cO}{{\cal O}}
\nc{\Lav}{{\cal L}_{\rm av}}
\nc{\cL}{{\cal L}}
\nc{\cB}{{\cal B}}
\nc{\cZ}{{\cal Z}}
\nc{\mR}{{\mathcal R}}
\nc{\mG}{{\mathcal G}}
\nc{\cT}{{\cal T}}
\nc{\cY}{{\cal Y}}
\nc{\cX}{{\cal X}}
\nc{\cXT}{{{\cal X}(T)}}
\nc{\cBT}{{{\cal B}(T)}}
\nc{\vD}{{\vec \mathcal{D}}}
\nc{\efield}{\mathcal{E}}
\nc{\vE}{{\vec \efield}}
\nc{\vB}{{\vec \mathcal{B}}}
\nc{\vH}{{\vec \mathcal{H}}}
\nc{\F}{  \mathcal{F} }
\nc{\ty}{{\tilde y}}
\nc{\tu}{{\tilde u}}
\nc{\tV}{{\tilde V}}
\nc{\Pc}{{\bf P_c}}
\nc{\bx}{{\bf x}}
\nc{\bX}{{\bf X}}
\nc{\bXYZ}{{\bf XYZ}}
\nc{\bY}{{\bf Y}}
\nc{\bF}{{\bf F}}
\nc{\bS}{{\bf S}}
\nc{\dV}{{\delta V}}
\nc{\dE}{{\delta E}}
\nc{\TT}{{\Theta}}
\nc{\dPsi}{{\delta\Psi}}
\nc{\order}{{\cal O}}
\nc{\Rout}{R_{\rm out}}
\nc{\eplus}{e_+}
\nc{\eminus}{e_-}
\nc{\epm}{e_\pm}
\nc{\eps}{\varepsilon}
\nc{\vnabla}{{\vec\nabla}}
\nc{\G}{\Gamma}
\nc{\w}{\omega}
\nc{\mh}{h}
\nc{\mg}{g}
\nc{\vphi}{\varphi}
\nc{\tlambda}{\tilde\lambda}
\nc{\be}{\begin{equation}}
	\nc{\ee}{\end{equation}}
\nc{\ba}{\begin{eqnarray}}
	\nc{\ea}{\end{eqnarray}}

\nc{\g}{\gamma}
\nc{\ol}{\overline}

\newtheorem{theorem}{Theorem}[section]
\newtheorem{lemma}[theorem]{Lemma}
\newtheorem{prop}[theorem]{Proposition}
\newtheorem{corollary}[theorem]{Corollary}
\newtheorem{defin}[theorem]{Definition}
\newtheorem{rmk}[theorem]{Remark}

\nc{\pT}{\partial_T}
\nc{\pz}{\partial_z}
\nc{\pt}{\partial_t}
\nc{\la}{\langle}
\nc{\ra}{\rangle}
\nc{\infint}{\int_{-\infty}^{\infty}}
\nc{\halfwidth}{6.5cm}
\nc{\figwidth}{10cm}
\newcommand{\f}{\frac}

\nc{\nlayers}{L} \nc{\nsectors}{M}
\nc{\indicator}{\mathbf{1}}
\nc{\Rhole}{R_{\rm hole}}
\nc{\Rring}{R_{\rm ring}}
\nc{\neff}{n_{\rm eff}}
\nc{\Frem}{F_{\rm rem}}
\nc{\R}{\mathbb R}
\nc{\C}{\mathbb C}
\nc{\Z}{\mathbb Z}
\nc{\DD}{\Delta}
\nc{\cD}{\mathcal D}
\nc{\cR}{\mathcal R}
\nc{\lnorm}{\left\|}
\nc{\rnorm}{\right\|}
\nc{\rnormp}{\right\|_{\ell^{p,\eps}}}
\nc{\rar}{\rightarrow}
\newcommand{\ang}[1]{\left\langle#1\right\rangle}

\usepackage{xcolor}

\sloppy

\begin{document}

	\begin{abstract}
		
		We investigate $L^1\to L^\infty$ dispersive estimates for the Dirac equation with a potential in four spatial dimensions.  We classify the structure of the obstructions at the thresholds as being composed of an at most two dimensional space of resonances per threshold,  and finitely many eigenfunctions.  Similar to the Schr\"odinger evolution, we prove the natural $t^{-2}$ decay rate when the thresholds are regular.  When there is a threshold resonance or eigenvalue, we show that there is a time dependent, finite rank operator satisfying $\|F_t\|_{L^1\to L^\infty}\les (\log t)^{-1}$ for $t>2$ such that
		$$
		\|e^{it\mathcal H}P(\mathcal H)-F_t\|_{L^1\to L^\infty}\les t^{-1} \quad \text{for } t>2,
		$$
		with $P$ a projection onto a subspace of the absolutely continuous spectrum in a small neighborhood of the thresholds.
		We further show that the operator $F_t=0$ if there is a threshold eigenvalue but no threshold resonance.  We pair this with high energy bounds for the evolution and provide a complete description of the dispersive bounds.  
	\end{abstract}

	\title[Dispersive estimates for Dirac Operators]{\textit{Dispersive estimates for Dirac Operators in dimension four with 
			obstructions at threshold energies}}
	
	\author[W.~R. Green, C.~Lane, B.~Lyons]{William~R. Green, Connor Lane, Benjamin Lyons}
	\thanks{The first author was partially supported by Simons Foundation Grant 511825.}
	\address{Department of Mathematics\\
		Rose-Hulman Institute of Technology \\
		Terre Haute, IN 47803, U.S.A.}
	\email{green@rose-hulman.edu, lanecf@rose-hulman.edu, lyonsba1@rose-hulman.edu}

	\maketitle

	\section{Introduction}
	
	We consider the linear Dirac equation with a potential in four spatial dimensions:
	\begin{align*}
		i\partial_t \psi(x, t) = (D_m + V(x))\psi(x, t), \qquad
		\psi(x, 0) = \psi_0(x).
	\end{align*}
	Here, $x \in \mathbb R^4$ is the spatial variable, $\psi(x,t) \in \mathbb C^{4}$ and the potential $V$ is a $4\times 4$ matrix-valued function. The free Dirac operator
	$D_m$ is defined by
	\begin{align*}
		D_m = -i \alpha \cdot \nabla + m \beta = -i \sum_{k = 1}^{4} \alpha_k \partial_{x_k} + m \beta,
	\end{align*}
	where $m\geq0$ is a constant, and the $4\times 4$ Hermitian matrices $\alpha_0 := \beta$ and $\alpha_j$ satisfy
	\begin{align}\label{eqn:anticomm}
		\alpha_j \alpha_k + \alpha_k\alpha_j 
		= 2\delta_{jk} \mathbb I_{\mathbb C^{4}}, 
		\qquad 
		\text{for all } j, k \in \{0, 1, 2, 3, 4\}.
	\end{align}
	We consider the massive case, when $m > 0$.  The Dirac equation can be thought of as a relativistic correction to the Schr\"odinger equation to account for relativistic dynamics of quantum particles.  A more detailed introduction to the Dirac equation may be found in Thaller's detailed monograph, \cite{Thaller}.  For a briefer introduction, see \cite{EGTwhat}.  For concreteness, in four dimensions we use the following matrices
	\begin{align*}
		\beta = \left[\begin{array}{cc} I_{\mathbb{C}^2} & 0\\ 0 & -I_{\mathbb{C}^2}
		\end{array}\right], \text{ for } 1\leq i\leq 3, \, \alpha_i=\left[\begin{array}{cc} 0 & \sigma_i \\ \sigma_i & 0
		\end{array}\right],  \text{ where }
	\end{align*}
	\begin{align*}
		\sigma_1=\left[\begin{array}{cc} 0 & -i\\ i & 0
		\end{array}\right],\
		\sigma_2=\left[\begin{array}{cc} 0 & 1 \\ 1 & 0
		\end{array}\right],\
		\sigma_3=\left[\begin{array}{cc} 1 & 0 \\ 0 & -1
		\end{array}\right], \, \text{ and } \alpha_4=i\left[ \begin{array}{cc} 0 &   I_{\mathbb{C}^2}\\ - I_{\mathbb{C}^2} & 0\end{array}\right].
	\end{align*}

	The following identity,\footnote{When discussing  scalar operators such as $-\Delta+m^2-\lambda^2$ in the context of the Dirac equation they are to be understood as matrix-valued, that is as $(-\Delta+m^2-\lambda^2)\mathbbm 1_{\mathbb C^{4}}$, a diagonal system of operators.} which follows from  the anti-commutations relations  \eqref{eqn:anticomm},
	\be  \label{dirac_schro_free}
	(D_m-\lambda \mathbbm 1)(D_m+\lambda \mathbbm 1) =(-i \alpha \cdot \nabla +m\beta -\lambda \mathbbm 1)
	(-i\alpha\cdot \nabla+m\beta+\lambda \mathbbm 1)   =(-\Delta+m^2-\lambda^2) 
	\ee
	allows us to formally define the free Dirac resolvent
	operator $\mathcal R_0(\lambda)=(D_m-\lambda)^{-1}$ in terms of the
	free resolvent $R_0(\lambda)=(-\Delta-\lambda)^{-1}$ of  the Schr\"{o}dinger operator for $\lambda$ in the resolvent set:
	\begin{align}\label{eqn:resolvdef}
		\mathcal R_0(\lambda)=(D_m+\lambda) R_0(\lambda^2-m^2).
	\end{align}
	This relates the more complicated Dirac resolvent to the more well-studied Schr\"odinger resolvent.
	
	Throughout the paper we utilize the following notation:  $a-:a-\epsilon$ for an arbitrarily small, but fixed $\epsilon>0$.  Similarly, $a+:=a+\epsilon$.  When there is an absolute constant $C>0$ with $A\leq CB$, we write $A\les B$.  We denote $\la x\ra:=(1+|x|^2)^{1/2}$, and for matrix-valued functions we write $|V(x)|\les \la x\ra^{-\delta}$ to mean that $|V_{ij}(x)|\les \la x\ra^{-\delta}$ for every entry.  Similarly, any function space $X$ used in the paper denotes the space of $\mathbb C^4$-valued functions with all entries in $X$.  That is, by $f\in L^p(\R^4)$ we mean $f(x)=(f_j(x))_{j=1}^4$ with each component an $L^p$ function. We call the threshold energies $\pm m$ regular if there are no resonances or eigenfunctions of the Dirac operator $\mathcal H \coloneqq D_m + V$, which we characterize in terms of non-trivial distributional solutions to $\mathcal H\psi=\pm m\psi$ in Section~\ref{sec:esa} below.
	Our main results control the evolution of the Dirac operator.  Without loss of generality, we focus on the positive threshold at energy $m$.
	Define a smooth cut-off function $\chi(y)$
	with $\chi(y)=1$ if $|y-m|<z_1/2$ and
	$\chi(y)=0$ if $|y-m|>z_1$, for a
	sufficiently small $0<z_1\ll 1$.  We define $P_{ac}(\mathcal H)$ to be the projection onto the absolutely continuous subspace of $L^2$ associated to $\mathcal H$.
	Our main technical result are the following low energy bounds.  
	\begin{theorem}\label{thm:main}
		
		Assume that $V$ is self-adjoint satisfying $|V(x)|\les \la x\ra ^{-\delta}$, and that the thresholds are regular points of the spectrum of
		$\mathcal H$ on $\R^4$.  Then if $\delta>5$, we have 
		$$
		\lnorm  e^{it\mathcal H}\chi(\mathcal H)P_{ac}(\mathcal H)  \rnorm_{L^1 \to L^\infty} \les \la t\ra^{-2}.
		$$
		If either threshold is not a regular point of the spectrum of
		$\mathcal H$ on $\R^4$, then there exists a time dependent, finite rank operator $F_t$  satisfying
		$\lnorm F_t\rnorm_{L^1\to L^\infty} \les 1/\log t$ such that, for $t>2$,
		$$
		\lnorm  e^{it\mathcal H}\chi(\mathcal H)P_{ac}(\mathcal H) - F_t \rnorm_{L^1 \to L^\infty} \les t^{-1}.
		$$
		\begin{enumerate}[i)]
			\item If there is a resonance at zero but no eigenvalue, $F_t$ is rank at most four provided
			$\delta>4$.
			\item If there is an eigenvalue at zero but no resonance, then $F_t=0$ provided
			$\delta>8$.
			\item If there is an eigenvalue and a resonance at zero, $F_t$ is finite rank provided 		$\delta>8$.
		\end{enumerate}
		
	\end{theorem}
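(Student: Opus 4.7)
The plan is to proceed via Stone's formula, which expresses the evolution $e^{it\mathcal H}\chi(\mathcal H)P_{ac}(\mathcal H)$ in terms of a weighted integral of the difference of boundary values of the perturbed resolvent $\mathcal R_V^\pm(\lambda)=(\mathcal H-(\lambda\pm i0))^{-1}$. The analysis rests on the symmetric resolvent identity: writing $V=v^*Uv$ with $v=|V|^{1/2}$ and $U$ a bounded involution, one obtains
$$\mathcal R_V^\pm(\lambda)=\mathcal R_0^\pm(\lambda)-\mathcal R_0^\pm(\lambda)\,v^*[M^\pm(\lambda)]^{-1}v\,\mathcal R_0^\pm(\lambda),\qquad M^\pm(\lambda):=U+v\,\mathcal R_0^\pm(\lambda)\,v^*.$$
Using \eqref{eqn:resolvdef}, the behavior of $\mathcal R_0^\pm(\lambda)$ as $\lambda\to m$ inherits the small-energy asymptotics of the scalar four dimensional Schr\"odinger resolvent $R_0(\lambda^2-m^2)$, which famously contains a $\log(\lambda-m)$ singularity on top of the Taylor part.

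First I would expand $M^\pm(\lambda)$ in Hilbert--Schmidt norm near $\lambda=m$ to the orders necessary for each case and apply a Jensen--Nenciu / Feshbach iteration to compute $[M^\pm(\lambda)]^{-1}$. This produces a chain of projections onto successive null spaces, which one identifies concretely with the spaces of threshold resonances and eigenfunctions characterized in Section~\ref{sec:esa}; this gives the announced classification (at most two dimensional resonance space plus a finite dimensional eigenspace). In the regular case these null spaces are trivial, $[M^\pm(\lambda)]^{-1}$ is uniformly bounded and smooth in $\lambda$ near $m$, and two integrations by parts in Stone's formula (after factoring out the free oscillation $e^{\pm it\sqrt{\lambda^2-m^2}}$) yield the $\la t\ra^{-2}$ bound. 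The polynomial weights generated by differentiating the resolvent kernels are absorbed by $v$ under the $\delta>5$ hypothesis.

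Next I would treat the singular cases. The Feshbach inversion introduces terms of the form $1/(a+\log(\lambda-m))$; their contribution to Stone's formula is exactly what defines the finite rank operator $F_t$, and the standard oscillatory estimate $|\widehat{\chi/\log}(t)|\les 1/\log t$ gives $\|F_t\|_{L^1\to L^\infty}\les(\log t)^{-1}$. The rank bound of four in case (i) reflects the at most two dimensional resonance subspace doubled by the $(D_m+\lambda)$ factor in \eqref{eqn:resolvdef}. After subtracting $F_t$ the remaining contributions carry an extra power of $(\lambda-m)$, yielding the $t^{-1}$ bound via a single integration by parts.

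The main obstacle will be case (ii): when there is only a threshold eigenvalue, the Feshbach construction formally still produces a logarithmically singular leading term, and one must prove a cancellation identity showing that the eigenfunction-supported piece of $[M^\pm(\lambda)]^{-1}$ is annihilated when sandwiched against $\mathcal R_0^\pm(\lambda)v^*$ and its adjoint, up to remainders with enough vanishing in $\lambda-m$ to produce $F_t=0$ and pure $t^{-1}$ decay. This is analogous to the cancellation observed in the Schr\"odinger setting by Jensen--Kato and refined by Erdogan--Schlag and Erdogan--Green; executing it requires Taylor expanding $\mathcal R_0^\pm$ to two higher orders, which is the source of the $\delta>8$ hypothesis in (ii) and (iii). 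Pairing this low-energy analysis with high-energy dispersive bounds proved separately completes the proof.
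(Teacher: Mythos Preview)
Your overall strategy matches the paper's: Stone's formula, the symmetric resolvent identity, Jensen--Nenciu/Feshbach inversion of $M^\pm$, and oscillatory integral estimates for the resulting singular pieces. Two points need correction.

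First, the rank bound of four in case (i) has nothing to do with the $(D_m+\lambda)$ factor in \eqref{eqn:resolvdef}. Each threshold $\lambda=\pm m$ contributes a resonance space of dimension at most two (this is Corollary~\ref{cor:2d}, coming from the fact that the non-$L^2$ tail of a resonance function is $\la x\ra^{-2}(a_1,a_2,0,0)^T$), and four is simply two thresholds times two.

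Second, and more substantively, your proposed mechanism for $F_t=0$ in case (ii) mislocates the cancellation. You suggest the eigenfunction-supported piece of $[M^\pm]^{-1}$ is annihilated when sandwiched against $\mathcal R_0^\pm v^*$ and its adjoint. In fact the cancellation is \emph{internal} to $M^{-1}$: the key identity is $S_2v\mathcal G_1=0$ (Corollary~\ref{cor:S2 orth}), which is exactly the vanishing-moment condition $M_{uc}v^*\phi=0$ distinguishing eigenfunctions from resonances (Lemma~\ref{lem:esa2}). When $S_1=S_2$ this kills the $g_1^\pm(z)$ term already in $B^\pm(z)=S_1-S_1(M^\pm+S_1)^{-1}S_1$, so $(B^\pm)^{-1}$ has leading singularity $-D_2/z^2$ with \emph{no} $1/g_1^\pm$ piece (Proposition~\ref{prop:Binv}), and hence no $(\log t)^{-1}$ contribution ever appears. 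If you look for the cancellation after sandwiching with the outer resolvents you will not find it: $\mathcal G_0v^*S_2$ does not vanish. The $\delta>8$ requirement does come from longer expansions, but of $M^\pm$ and $(M^\pm+S_1)^{-1}$, needed to resolve the error terms below the $z^{-2}$ singularity of $(B^\pm)^{-1}$.
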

	The operator $F_t$ may be explicitly constructed as in \eqref{eqn:Ft def} below at the threshold $m$.  We note that $F_t$ has rank at most two per threshold; at energies $\lambda=m$ and $\lambda=-m$.  We note that these low energy results mirror those known for the Schr\"odinger operator $-\Delta+V$ in four dimensions, \cite{EGG4}.  Heuristically, this is due to \eqref{eqn:resolvdef} which relates the Dirac and Schr\"odinger resolvent operators.

	The class of self-adjoint, decaying potentials we consider here are of the form that naturally arise when linearizing about a standing wave solution to a nonlinear Dirac equation, we refer the reader to the Boussa\"id and Comech's monograph \cite{BCbook} for a detailed treatment.
	As usual, the dispersive bounds follow by studying the solution operator as an element of the functional calculus.  Under the assumptions of Theorem~\ref{thm:main}, $\mathcal H=D_0+V$ is self-adjoint but unbounded both above and below.  Namely, one has the Weyl criterion, $\sigma_{c}(\mathcal H)=\sigma_c(D_m)=(-\infty,m]\cup[m,\infty)$. Further, there is no singularly continuous spectrum or embedded eigenvalues, \cite{GM,BC}.  The spectral measure may be constructed in terms of the limiting resolvent operators
	$$
	\mR_V^\pm(\lambda)=\lim_{\epsilon\to 0^+} [D_m+V-(\lambda\pm i\epsilon)]^{-1}.
	$$
	Agmon's limiting absorption principle ensures that these limiting operators exist as operators on weighted $L^2$ spaces, \cite{agmon}, and their relationship to the Schr\"{o}dinger resolvents \eqref{eqn:resolvdef}.  As usual, the Stone's formula allows us to express the evolution of the solution operator as
	\begin{align}\label{eqn:stone}
		e^{-it\mathcal H}P_{ac}(\mathcal H)f=\frac{1}{2\pi i} \int_{\R} e^{-it\lambda} [\mR_V^+-\mR_V^-](\lambda) f\, d\lambda.
	\end{align}
	We study the spectral measure by creating expansions of the perturbed Dirac resolvent operators $\mR_V^\pm(\lambda)$ as perturbations of the free Dirac resolvents
	$$
	\mR_0^\pm(\lambda) = \lim_{\epsilon\to 0^+} [D_m-(\lambda\pm i\epsilon)]^{-1}.
	$$
	Using \eqref{eqn:resolvdef}, we obtain the identity $\mR_0^\pm (\lambda) =(D_m+\lambda)R_0^\pm(\lambda^2-m^2)$ where $R_0^\pm(\lambda)=(-\Delta-(\lambda\pm i0))^{-1}$ are the limiting resolvent operators for the Schr\"{o}dinger operator.  As is expected, the case of four spatial dimensions has more technical challenges as the free resolvent operators cannot be expressed in a closed form as in three dimensions.  Further, there are logarithmic terms that arise in the expansions as the spectral parameter approaches the thresholds.  We overcome these technical challenges by carefully adapting the four dimensional Schr\"odinger resolvents in \cite{EGG4} to the richer Dirac structure.  We note that our analysis uses and builds on techniques developed in the study of other dispersive equations, notably the Schr\"odinger equation \cite{Mur,Jen,Jen2,GS,ES,KS,eg2,EGG4}. 
	
	Global dispersive bounds that control the $L^\infty(\R^n)$ size of solutions are well-studied in the Schr\"odinger, wave, and Klein-Gordon contexts.  Rather than try to give a full history of these related problem, we refer the interested reader to Schlag's recent survey paper \cite{Scsurvey} and the references therein.  The literature on global dispersive estimates for the Dirac equation has developed in the last two decades; the first works were done by Boussa\"id \cite{Bouss1}, and D'Ancona and Fanelli, \cite{DF} in the massive $m>0$ in the weighted-$L^2$ setting when $n=3$.  
	
	The effect of threshold obstructions on the global dispersive bounds, as well as classifying resonances and eigenvalues in terms of distributional solutions to $\mathcal H\psi=\pm m\psi$ where the exact properties of $\psi$ depend on the spatial dimension, $n$, is more recent. Global estimates for the Dirac evolution in dimensions $n=2,3$ have been established by  Erdo\smash{\u{g}}an and the first author in \cite{egd}, along with Toprak, \cite{EGT,EGT2} in the massive case, and with Goldberg in the massless case \cite{EGG2}.  The case of $n=1$ was studied by Erdo\smash{\u{g}}an and the first author in \cite{egd1}.    More recently the authors, with Ravishankar and Shaw in \cite{GLLSS}, considered the massless case when $n=3$. Danesi, \cite{Dan}, and separately Cacciafesta, S\'er\'e and Zhang, \cite{CSZ}  consider related estimates in the massless case when $n=3$ with more singular potentials.
	Recent work of Kraisler, Sagiv and Weinstein, \cite{KSW}, considered non-compact perturbations, and time-periodic perturbations, \cite{KSW2}, of the Dirac operator in one dimension.

	Nonlinear Dirac equations are of independent interest, we again refer the reader to  Boussa\"id and Comech's excellent monograph, \cite{BCbook}, and \cite{EV,PS,BH3,BH,CTS,BC2,BF,Bor} for example.  There is a longer history in the study of spectral properties of Dirac operators, \cite{BG1,GM,MY,CGetal,BC} and limiting absorption principles \cite{Yam,GM,BG,EGG,CGetal}.
	
	For completeness, we pair the bounds of Theorem~\ref{thm:main} with an argument that controls the high energy portion of the evolution.   
	\begin{theorem}\label{thm:full results}
		
		Assume that $V$ is self-adjoint with continuous entries satisfying $|V(x)|\les \la x\ra^{-\delta}$ for some $\delta>5$.   
		If the thresholds are regular points of the spectrum we have 
		$$
		\lnorm  e^{it\mathcal H}P_{ac}(\mathcal H)\la \mathcal H\ra^{-\frac{11}{2}-}  \rnorm_{L^1 \to L^\infty} \les \la t\ra^{-2}.
		$$
		If either threshold is not a regular point of the spectrum of
		$\mathcal H$ on $\R^4$, then there exists a time dependent, finite rank operator $F_t$  satisfying
		$\lnorm F_t\rnorm_{L^1\to L^\infty} \les 1/\log t$ such that, for $t>2$,
		$$
		\lnorm  e^{it\mathcal H}P_{ac}(\mathcal H)\la \mathcal H\ra^{-\frac{11}{2}-} - F_t \rnorm_{L^1 \to L^\infty} \les t^{-1}.
		$$
		\begin{enumerate}[i)]
			\item If there is a resonance at zero but no eigenvalue, $F_t$ is rank at most four provided
			$\delta>5$.
			\item If there is an eigenvalue at zero but no resonance, then $F_t=0$ provided
			$\delta>8$.
			\item If there is an eigenvalue and a resonance at zero, $F_t$ is finite rank provided 		$\delta>8$.
		\end{enumerate}

	\end{theorem}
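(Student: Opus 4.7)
The strategy is to split the evolution into a low-energy contribution handled directly by Theorem~\ref{thm:main}, and a high-energy contribution controlled via a Born series expansion of $\mR_V^\pm$ combined with integration by parts in the spectral variable. Concretely, write
\[
e^{it\mathcal H}P_{ac}(\mathcal H)\la\mathcal H\ra^{-\frac{11}{2}-} = e^{it\mathcal H}\chi(\mathcal H) P_{ac}(\mathcal H) \la\mathcal H\ra^{-\frac{11}{2}-} + e^{it\mathcal H}(1-\chi(\mathcal H)) P_{ac}(\mathcal H) \la\mathcal H\ra^{-\frac{11}{2}-}.
\]
On the support of $\chi$ the weight $\la\mathcal H\ra^{-\frac{11}{2}-}$ is a bounded smooth function of $\mathcal H$, so the first summand inherits the bounds of Theorem~\ref{thm:main} (absorbing the bounded multiplier into the finite-rank piece $F_t$ and into the remainder estimate). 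The nontrivial work lies in the high-energy summand.

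For the high-energy piece, apply Stone's formula to write
\[
e^{it\mathcal H}(1-\chi(\mathcal H))P_{ac}(\mathcal H)\la\mathcal H\ra^{-\frac{11}{2}-} = \frac{1}{2\pi i}\int_{|\lambda|\geq m} e^{it\lambda}(1-\chi(\lambda))\la\lambda\ra^{-\frac{11}{2}-}[\mR_V^+-\mR_V^-](\lambda)\,d\lambda,
\]
and expand via the symmetric resolvent identity $\mR_V^\pm = \mR_0^\pm - \mR_0^\pm V \mR_V^\pm$, iterated $N$ times, into a finite Born series plus a tail $(\mR_0^\pm V)^N \mR_V^\pm (V\mR_0^\pm)^N$. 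Each free Born term $\mR_0^\pm (V\mR_0^\pm)^k$ is analyzed by invoking the representation $\mR_0^\pm(\lambda)=(D_m+\lambda)R_0^\pm(\lambda^2-m^2)$ together with the large-argument Hankel-function asymptotics of the four-dimensional Schr\"odinger resolvent. Two integrations by parts in $\lambda$ against the oscillation $e^{it\lambda\pm i\lambda|x-y|}$ yield the target $\la t\ra^{-2}$ decay, with derivatives distributed across the resolvent kernels, the cutoff $1-\chi$, and the weight $\la\lambda\ra^{-\frac{11}{2}-}$. The decay rate $|V|\les\la x\ra^{-\delta}$ with $\delta>5$ is used to control the spatial integrals after the phase integrations by parts have moved factors of $|x-y|$ onto $V$.

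The tail term is handled by the Agmon limiting absorption principle for $\mR_V^\pm$ on weighted $L^2$ spaces, together with the high-energy decay of $\|\mR_V^\pm(\lambda)\|_{L^{2,\sigma}\to L^{2,-\sigma}}$ for $\sigma$ slightly larger than $1/2$; factoring $V = \la x\ra^{-\delta/2}\cdot \la x\ra^{-\delta/2}V^{1/2}$ style weights produces Hilbert--Schmidt bounds between the two outer free resolvents, converting the $L^2$-type estimates into an $L^1\to L^\infty$ bound by Cauchy--Schwarz. Because we are bounded away from both thresholds in this regime, no resonance or eigenvalue structure enters, so the high-energy contribution contains no $F_t$ term, and the two pieces combine to yield the stated bound.

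Main obstacle: The principal technical difficulty is the careful accounting of the $\lambda$-growth in the Dirac Born series. Each factor $\mR_0^\pm(\lambda)$ carries an additional $(D_m+\lambda)$ relative to the Schr\"odinger case of \cite{EGG4}; differentiating the Hankel kernels in $\lambda$ introduces further polynomial and logarithmic corrections, and the Dirac spectral parameter ranges over $|\lambda|\geq m$ on both sheets. Verifying that $\la\mathcal H\ra^{-\frac{11}{2}-}$ is precisely the correct power to absorb the aggregate growth coming from $N$ free resolvents, two integrations by parts, and the Hilbert--Schmidt factorization, while still delivering an $L^1\to L^\infty$ (rather than merely weighted $L^2$) bound, is the delicate point where $\delta>5$ enters through both Born series convergence and the decay required for the limiting absorption principle on the tail.
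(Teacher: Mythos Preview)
Your overall architecture---split into low and high energy, handle low energy by Theorem~\ref{thm:main}, use a Born/resolvent expansion plus the limiting absorption principle for the tail at high energy---matches the paper's. The gap is in the mechanism you propose for extracting $\la t\ra^{-2}$ decay at high energy.

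You write that ``two integrations by parts in $\lambda$ against the oscillation $e^{it\lambda\pm i\lambda|x-y|}$ yield the target $\la t\ra^{-2}$ decay.'' This fails for two reasons. First, the resolvent oscillation in the massive case is $e^{\pm i\sqrt{\lambda^2-m^2}|x-y|}$, not $e^{\pm i\lambda|x-y|}$; more importantly, the combined phase $t\lambda-\sqrt{\lambda^2-m^2}\,r$ (equivalently $t\sqrt{z^2+m^2}-zr$ after the paper's change of variables) has a genuine critical point whenever $r\approx t$, so naive integration by parts against $\partial_\lambda$ of the full phase is not available. Second, if instead you integrate by parts against $e^{it\lambda}$ alone, each $\lambda$-derivative that lands on the oscillatory kernel produces a factor of $|x-y|$, which is unbounded in the $L^1\to L^\infty$ setting and cannot be absorbed by the decay of $V$ (the outer resolvents connect to the free variables $x,y$, not to points sandwiched by potentials). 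Either way the plan as stated does not produce the decay.

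The paper's remedy, which your proposal is missing, is a \emph{dyadic} decomposition $z\approx 2^j$ together with a stationary-phase lemma adapted to each dyadic piece (Lemma~\ref{lem:high stat phase2} and Lemma~\ref{stat phase general}). The Hessian of $\sqrt{z^2+m^2}$ is $\sim z^{-3}$, so on the shell $z\approx 2^j$ one rescales the phase by $2^{3j}$ to get Hessian $\approx 1$, applies stationary phase, and pays $2^{3j/2}$ for each half-power of $t^{-1}$. Combined with one honest integration by parts and the $z^{3/2}$ growth from $\mR_H^\pm$, this yields the dyadic bound $\min(2^{4j},\,2^{11j/2}|t|^{-2})$ of Proposition~\ref{prop:high energy}; summing in $j$ is exactly what forces the weight $\la\mathcal H\ra^{-\frac{11}{2}-}$. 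Without this dyadic stationary-phase step you have no route to the exponent $\tfrac{11}{2}$ and no way to control the critical-point region $r\approx t$.
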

	The negative powers of $\la \mathcal H\ra$ can be interpreted as the need for smoothness on the initial data.  The amount of smoothness required here is not optimal; due to the relationship between the massive Dirac and Klein-Gordon equation one expects to require at least $\frac{n+3}{2}$ derivatives of the initial data should be in $L^1$ to obtain a $|t|^{-\frac{n-1}{2}}$ decay rate.  In $n=4$ spatial dimensions one has the added complication that in general smoothness on the potential is required to obtain dispersive bounds even for the Schr\"odinger equation due to the counterexamples constructed by Goldberg and Visan, \cite{GV}.   We prove high energy estimates, see Proposition~\ref{prop:high energy} below, by restricting to dyadic frequency intervals to utilize the non-zero curvature of the phase to provide a lower bound for the Hessian of the phase when localized to these dyadic intervals.
	
	We define the complementary cut-off $\widetilde \chi(z)=1-\chi(z)$ to be a smooth, even cut-off away from the threshold energies.  The effect of threshold obstructions only occurs in the low energy evolution.
	In light of Theorem~\ref{thm:main}, we need only prove high energy bounds for the perturbed Dirac evolution.   By decomposing the spectrum into dyadic intervals when $z\approx 2^j$, then summing the bounds in Proposition~\ref{prop:high energy} in $j$, we obtain high energy bounds of the form: 
	\begin{align*}
		\|e^{-it\mathcal H} P_{ac}(\mathcal H)\widetilde \chi(\mathcal H) \la \mathcal H\ra^{-\frac{11}{2}-}\|_{L^{1}\to L^{\infty}} &\les \la t\ra^{-2},\\
		\|e^{-it\mathcal H} P_{ac}(\mathcal H)\widetilde \chi(\mathcal H) \la \mathcal H\ra^{-4-}\|_{L^{1}\to L^{\infty}} &\les 1.
	\end{align*}
	Here we note that, as in the massless three-dimensional case \cite{GLLSS}, we can adapt our expansions to prove boundedness of the solution operator with weaker, nearly optimal decay and smoothness assumptions.
	\begin{theorem}\label{thm:min bd}
		
		Assume that $V$ is self-adjoint with continuous entries satisfying $|V(x)|\les \la x\ra^{-\delta}$ for some $\delta>\f52$.   
		If the thresholds are regular points, then
		$$
		\|e^{-it\mathcal H} P_{ac}(\mathcal H) \la \mathcal H\ra^{-4-}\|_{L^{1}\to L^{\infty}} \les 1.
		$$	
	\end{theorem}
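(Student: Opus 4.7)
The plan is to split the evolution using $\chi(\mathcal H) + \widetilde\chi(\mathcal H) = 1$ and handle the low-energy and high-energy pieces separately. Because the conclusion asks only for uniform-in-time boundedness rather than a decay rate, it suffices to bound the $L^1 \to L^\infty$ operator norm of the integrand in Stone's formula \eqref{eqn:stone} by an envelope that is integrable in $\lambda$, with no need to extract time decay by integration by parts in $\lambda$. This is precisely what allows both the decay hypothesis on $V$ and the smoothing weight $\la \mathcal H\ra^{-4-}$ to be relaxed compared to Theorems~\ref{thm:main} and~\ref{thm:full results}.

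For the low energy piece $e^{-it\mathcal H} P_{ac}(\mathcal H) \chi(\mathcal H)$, I would proceed via the symmetric resolvent identity based on \eqref{eqn:resolvdef}. Write $V = v^* u$ with $|u|, |v| \les \la x\ra^{-\delta/2}$, and expand
$$
\mR_V^\pm(\lambda) = \mR_0^\pm(\lambda) - \mR_0^\pm(\lambda)\, v^* \bigl(M^\pm(\lambda)\bigr)^{-1} u\, \mR_0^\pm(\lambda),
$$
with $M^\pm(\lambda) = \mathbbm 1 + u\, \mR_0^\pm(\lambda)\, v^*$. In the regular case, $(M^\pm(\lambda))^{-1}$ extends to a bounded operator on $L^2$ uniformly in a neighborhood of the threshold. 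The free Dirac kernel inherits the logarithmic threshold behavior of the Schr\"odinger resolvent $R_0^\pm(\lambda^2 - m^2)$, but the pointwise bound $|u(x) v(y)| \les \la x\ra^{-\delta/2} \la y\ra^{-\delta/2}$ with $\delta > 5/2$ is enough to dominate the kernel of $[\mR_V^+ - \mR_V^-](\lambda)$, viewed as a map $L^1 \to L^\infty$, by a function of $\lambda$ integrable on $\{|\lambda - m| < z_1\}$. Stone's formula then yields the low-energy bound without time integration by parts.

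For the high energy piece $e^{-it\mathcal H} P_{ac}(\mathcal H) \widetilde\chi(\mathcal H) \la \mathcal H\ra^{-4-}$, I would follow a dyadic decomposition of the spectrum into shells $|\lambda| \sim 2^j$ for $j \geq 0$, summing the Stone's formula contributions across $j$. On each shell, a short Born expansion of the perturbed resolvent in $V$ produces kernel bounds whose worst-case $\lambda$-growth is polynomial; pairing with the spectral weight $\la \lambda\ra^{-4-}$ then renders the sum over $j$ geometrically convergent. Because we are only proving boundedness and not extracting a $\la t\ra^{-2}$ decay rate, the argument does not require the stationary-phase reduction that forces the stronger weight $\la \mathcal H\ra^{-11/2-}$ in Theorem~\ref{thm:full results}, and the requisite high-energy resolvent bounds hold under the weaker decay $\delta > 5/2$.

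The main obstacle is the low-energy analysis under the nearly minimal decay $\delta > 5/2$. In four dimensions the free Schr\"odinger, and hence free Dirac, resolvent kernel contains a $\log|\lambda^2 - m^2|$ factor near the threshold, so a direct Neumann series for $(M^\pm(\lambda))^{-1}$ does not converge uniformly as $\lambda \to m$. One must instead isolate the near-threshold singular part of $M^\pm$ and invert via a Feshbach--Schur reduction of the type developed for four-dimensional Schr\"odinger operators in \cite{EGG4}, verifying that the reduced inverse is bounded on $L^2$ with only $\delta > 5/2$ decay available on $V$. Showing that these weights absorb both the threshold logarithm and the slow decay of the Dirac resolvent kernel---uniformly in $\lambda$ and with an envelope integrable against $d\lambda$---is the crux of the argument; once that quantitative bound is in hand, the uniform-in-$t$ boundedness is immediate from Stone's formula.
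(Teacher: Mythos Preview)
Your high-level split into low and high energy is right, and the high-energy argument is essentially what the paper does: on dyadic shells the kernel is bounded by $2^{4j}$ with only $|V|\les\la x\ra^{-2-}$, so $\la\mathcal H\ra^{-4-}$ makes the sum converge. But the low-energy argument has a genuine gap, and you have also misidentified the obstacle.

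The gap is in the sentence ``the pointwise bound $|u(x)v(y)|\les\la x\ra^{-\delta/2}\la y\ra^{-\delta/2}$ with $\delta>5/2$ is enough to dominate the kernel of $[\mR_V^+-\mR_V^-](\lambda)$ as an $L^1\to L^\infty$ map.'' A single application of the symmetric resolvent identity produces $\mR_0^\pm v^*(M^\pm)^{-1}u\,\mR_0^\pm$, and to use the $L^2\to L^2$ bound on $(M^\pm)^{-1}$ you need $\mR_0^\pm(z)(x,\cdot)v^*(\cdot)\in L^2$. But in four dimensions $\mG_0(x,y)$ has a local singularity of order $|x-y|^{-3}$ (from the $-i\alpha\cdot\nabla G_0$ term), and $|x-y|^{-6}$ is not locally integrable in $\R^4$; no amount of decay on $v^*$ fixes this. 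The paper resolves this not by strengthening the decay but by a \emph{selective iteration} of the resolvent identity on low energy (mirroring the high-energy scheme): write $\mR_0=\mR_L+\mR_H$, use $V\mR_V V=V-v^*M^{-1}v$, and keep iterating on any branch that has $\mR_L$ adjacent to $\mR_V$ until either an $\mR_H$ appears (which \emph{is} locally $L^2$) or there are enough $\mR_L$'s and $\mR_0$'s to smooth the singularity. The resulting spatial integrals are where the constraint $\delta>\tfrac52$ actually enters.

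Separately, your worry that ``a direct Neumann series for $(M^\pm)^{-1}$ does not converge uniformly as $\lambda\to m$'' and that a Feshbach--Schur reduction is needed is misplaced in the regular case. Regularity means $T_0=M^\pm(0)=U+v\mG_0v^*$ is invertible, and with $\delta>\tfrac52$ one has $M^\pm(z)=T_0+O_0(z^{1/2})$ as absolutely bounded operators (the error is $v[\mR_0^\pm-\mG_0]v^*$, which is dominated by $z^{1/2}$ times a combination of fractional integral operators bounded on $L^2$ when $|v|\les\la x\ra^{-5/4-}$). The Neumann series then gives $(M^\pm)^{-1}=D_0+O_0(z^{1/2})$ directly, with no threshold singularity to excise. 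The logarithm you are worried about sits inside $g_1^\pm(z)=z^2(a_1\log z+b_1)$, which is already small as $z\to 0$.
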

	The assumptions here are nearly optimal with respect to differentiability of the initial data and decay of $V$ at infinity.  In Lemma~\ref{lem:free hi}, one sees that in the case of $V=0$, one need $\la D_m\ra^{-4-}$ for the free operator to be bounded for all $t$.  In light of the limiting absorption principle, \cite{EGG}, and scaling considerations, one expects $\delta>2$ to be optimal for high energy.  
	
	We note that, as in previous analyses of the Dirac equation \cite{egd,EGT,egd1,GLLSS}, one can obtain Strichartz estimates as a corollary of the dispersive bounds via a $T^*T$ argument as in \cite{GV}.  Furthermore, as in Remark~4.7 of \cite{egd}, our resolvent expansions for $\mR_V^\pm(\lambda)$ in the regular case imply a limiting absorption principle in a neighborhood of the threshold and consequently that there are only finitely many eigenvalues in the spectral gap $[-m,m]$.
	
	The paper is organized as follows.  First in Section~\ref{sec:reg}  we develop expansions of the limiting free resolvent operators in a neighborhood of the threshold.  We further use these expansions to derive expansions of the spectral measure and its derivatives near the threshold when they are regular.  In Section~\ref{sec:reg dis} we use these expansions in the Stone's formula, \eqref{eqn:stone}, to reduce to oscillatory integral bound that suffice to prove Theorem~\ref{thm:main} in the case when the thresholds are regular.   In Section~\ref{sec:non reg} we develop expansions of $\mR_V^\pm$ in a neighborhood of the threshold when the threshold is not regular, from which we prove the claims in Theorem~\ref{thm:main} when at least one threshold is not regular.  In Section~\ref{sec:esa} we characterize the existence of threshold obstructions as resonances and eigenvalues while also relating the obstructions to the spectral measure constructed in Sections~\ref{sec:non reg}.  Finally in Section~\ref{sec:hi} we control the high energy portion of the evolution.  We further establish Theorem~\ref{thm:min bd} here as the argument for this bound on low energy is inspired by the selective iteration scheme we use in the high energy regime.

	\section{Resolvent Expansions}\label{sec:reg}
	In light of the Stone's formula, \eqref{eqn:stone}, our analysis relies on detailed expansions of the limiting Dirac resolvent operators.  To construct and analyze the differentiability properties of the spectral measure $[\mR_V^+-\mR_V^-](\lambda)$ in a sufficiently small neighborhood of the threshold(s) at $\lambda =\pm m$, we construct the perturbed resolvents $\mR_V^\pm(\lambda)$ as a perturbation from the free Dirac resolvents $\mR_0^\pm$.  We consider the positive branch of the spectrum $[m,\infty)$.  The negative branch $(-\infty,-m]$ may be analyzed similarly with fairly straightforward modifications, see Remark~\ref{rmk:neg branch} below.
	We first consider the free Dirac resolvent operators $\mR_0^\pm(\lambda)$ in a neighborhood of the threshold to establish the needed expansions to prove the low-energy bounds time decay bounds of Theorem~\ref{thm:main} when the thresholds are regular.
	
	The identities \eqref{dirac_schro_free} and \eqref{eqn:resolvdef}, lead us to the change of variables $\lambda=\sqrt{z^2+m^2}$ to rewrite the limiting Dirac resolvent operators
	\begin{align}\label{eq:Dmpm}
		\mR_0^\pm(z):=(D_m+\sqrt{z^2+m^2})R_0^\pm(z^2).
	\end{align}
	This is a slight abuse of notation, but greatly clarifies the exposition below.  This allows us to utilize the more well-studied Schr\"odinger resolvent operators to develop the needed expansions for the Dirac resolvents.  Using the expansions of the Schr\"odinger resolvent in four dimensions from \cite{EGG4}, we have
	$$
	R_0^\pm (z^2)=G_0+g_1^\pm(z)+z^2 G_1+g_2^\pm (z)G_2+\dots
	$$
	where
	\begin{align}
		G_0f(x)&=-\frac{1}{4\pi^2}\int_{\R^4} 
		\frac{f(y)}{|x-y|^2}\,dy=(-\Delta)^{-1} f(x) , \label{G0 def}\\
		G_1f(x)&=-\frac{1}{8\pi^2}\int_{\R^4} \log(|x-y|) f(y)\, dy,\label{G1 def}\\ 
		\label{G2 def} G_2f(x)&=c_2 \int_{\R^4} |x-y|^2 f(y)
		\, dy\\
		\label{G3 def} G_3f(x)&=c_3 \int_{\R^4}|x-y|^2
		\log(|x-y|)f(y)\, dy
	\end{align}
	Here $c_2,c_3$ are certain real-valued constants, the
	exact values are unimportant for our analysis.  These were obtained using the relationship between the Schr\"odinger resolvent and the Bessel functions of order one,
	\begin{align}\label{eqn:res bessel}
		R_0^\pm(z^2)=\pm \frac{iz}{8\pi}[J_1(z|x-y|)\pm iY_1(z|x-y|)].
	\end{align}
	We will use $G_j(x,y)$ to denote the integral kernel of the operator $G_j$.  In 
	addition, the following functions appear  naturally,
	\begin{align}
		\label{g1 def}
		g^+_1(z)=\overline{g_1^-(z)}&=z^2(a_1\log(z)+b_1)\\
		\label{g2 def}
		g^+_2(z)=\overline{g_2^-(z)}&=z^4(a_2\log(z)+b_2).
	\end{align}
	Here  $a_j\in \R\setminus\{0\}$ and $b_j\in \mathbb C
	\setminus \R$.  Then, writing $\lambda =\sqrt{z^2+m^2}=m+(\sqrt{z^2+m^2}-m)$ and
	\begin{align*}
		\mR_0(z)&=(D_m+mI)R_0(z^2)+(\sqrt{z^2+m^2}-m)R_0(z^2)\\
		&=(-i\alpha \cdot \nabla +2mI_{uc})R_0(z^2)+\frac{z^2}{2m}R_0(z^2)+ O_4(z^4)R_0(z^2).
	\end{align*}
	Here we use that $\sqrt{z^2+m^2}=m+\frac{z^2}{2m}+ O_4(z^4)$ where $ O_4$ indicates a remainder that has four derivatives where differentiation in $z$ is bounded by division by $z$, i.e. $\partial_z  O_4(z^k)= O_3(z^{k-1})$. 
	To account for the matrix-valued structure of the Dirac operator, we define the following matrices and related operators.  We define the matrices
	\begin{align}
		I_{uc}=\frac12(\beta+I_{4\times 4})=\begin{pmatrix} 1 & 0 & 0 &0 \\ 0& 1 &0 &0\\ 0 & 0 & 0 &0\\ 0 & 0 & 0 &0\end{pmatrix}, \qquad I_{lc}=\frac12(I_{4\times 4}-\beta)=\begin{pmatrix} 0 & 0 & 0 &0 \\ 0& 0 &0 &0\\ 0 & 0 & 1 &0\\ 0 & 0 & 0 &1\end{pmatrix}.
	\end{align}
	We define the operator $M_{uc}$ and $M_{lc}$ as the operators with integral kernel $I_{uc}$ and $I_{lc}$ respectively, so that
	$$
	M_{uc}f(x)= \int_{\R^4} I_{uc}f(y)\, dy=(c_1,c_2,0,0)^T, \quad \text{and} \quad M_{lc}f(x)=(0,0,c_3,c_4)^T,
	$$
	where $c_j=\int_{\R^4} f_j(y)\, dy$ where $f_j$ is the entry in the $j^{th}$ component of the $\C^4$-valued function $f$.
	
	We define the operators $\mG_j$ by their integral kernels
	\begin{align}
		\mathcal{G}_0 (x,y) &= (D_m+mI)G_0(x,y)= [ -i \alpha \cdot \nabla + 2m I_{uc} ] G_0(x,y)=\frac{i\alpha \cdot (x-y)}{2\pi^2|x-y|^4} - \frac{mI_{uc}}{2\pi^2 |x-y|^2},
		\label{eq:G0 def}  \\
		\mathcal{G}_1(x,y)& = \frac{  m}{2 \pi^2}M_{uc}(x,y), \label{eq:G1 def} \\
		\mathcal{G}_2(x,y)&=  [ - i \alpha \cdot \nabla +2m I_{uc} ] G_1(x,y) - \frac{1}{2m} G_0 (x,y)\label{eq:G2 def}, \\
		\mG_3(x,y)&=[ - i \alpha \cdot \nabla +2m I_{uc} ] G_2(x,y).
	\end{align}
	Here the scalar-valued $G_0(x,y)$ in \eqref{eq:G1 def} is understood to be a $G_0(x,y)\mathbbm 1_{\mathbb C^{4}}$
	So that, when $z|x-y|\les 1$ we have the expansions
	$$
	\mR_0^\pm(z)(x,y)=\mG_0(x,y)+g_1^\pm(z)\mG_1(x,y)+z^2\mG_0+g_2^\pm(z)\mG_3(x,y)+\dots
	$$
	These expansions can be extended in terms of larger powers of $z$ and $g_j^{\pm}(z)=z^{2j}(a_j\log \lambda +b_j)$ when $z|x-y|\les 1$.  To clarify that these expansions are valid when $z|x-y|\les 1$, we write $\mR_L^\pm(z)(x,y):=\mR_0^\pm(z)(x,y)\chi(z|x-y|)$.  Then, we have
	\begin{align}\label{eqn:RL low def}
		\mR_L^\pm(z)(x,y)=\mG_0(x,y)+g_1^\pm(z)\mG_1(x,y)+z^2 \mG_2(x,y)+ O_2(z^{4-}(|x-y|^2+|x-y|^{-2})).
	\end{align}
	In addition, we note that $g_1^+(z)\mG_1+z^2 \mG_2=\overline{g_1^-(z)\mG_1+z^2 \mG_2}=[-i\alpha \cdot \nabla+2mI_{uc}] z^2(a_1\log(z|x-y|)+b_1)-\frac{z^2}{2m}G_0(x,y)$ so that (for some constants $c_1,c_2\neq 0$)
	\begin{align*}
		\partial_z\mR_L^\pm(z)(x,y)&=[2z(a_1\log(z|x-y|)+b_1)+a_1z]I_{uc}+z\bigg[
		\frac{c_1I}{|x-y|^2}+c_2 \frac{\alpha\cdot (x-y)}{|x-y|}\bigg]\\
		&\qquad\qquad+ O_1(z^{3-}(|x-y|^2+|x-y|^{-2})),\\
		\partial_z^2\mR_L^\pm(z)(x,y)&=[2(a_1\log(z|x-y|)+b_1) +3a_1]I_{uc}+\bigg[
		\frac{c_1I}{|x-y|^2}+c_2 \frac{\alpha\cdot (x-y)}{|x-y|}\bigg]\\
		&\qquad\qquad+O(z^{2-}(|x-y|^2+|x-y|^{-2})).
	\end{align*}
	In particular, we have
	\begin{align}\label{eqn:RL low bds}
		|\mR_L^\pm(z)(x,y)|&\les \frac{1}{|x-y|^3}+\frac{1}{|x-y|^2}, \\
		|\partial_z^k \mR_L^\pm(z)(x,y)|&\les z^{2-k}(1+\log(z|x-y|)+|x-y|^{-2}), \quad k=1,2.\nn
	\end{align}
	Furthermore,
	$$
	(\mR_L^+-\mR_L^-)(z)(x,y)=2\Im(g_1^+(z))\mG_1+ O_2(z^4|x-y|^2).
	$$
	This follows from the relationship $\mR_0^+-\mR_0^+=(D_m+\sqrt{z^2+m^2})[R_0^+-R_0^-]$, along with the identity
	\eqref{eqn:res bessel}.  In the difference of + and - resolvents, only the contribution of $J_1$ remains and (c.f \cite{AS}) has a power series expansion near zero.
	In particular $2\Im(g_1^+(z))=cz^2$ for some constant $c$ and for $k=0,1,2$,
	\begin{align}\label{eqn:RL diff bds}
		|\partial_z^k\big[ (\mR_L^+-\mR_L^-)(z)(x,y)-cz^2\mG_1(x,y) \big] |\les z^{4-k}|x-y|^2.
	\end{align}

	When $z|x-y|\gtrsim 1$ we use the asymptotic expansions of the Bessel functions, \cite{AS}, and \eqref{eqn:res bessel} to see 
	\begin{align}
		\mR_H^\pm(z)(x,y)&:=\mR_0^\pm (z)(x,y)\widetilde{\chi}(z|x-y|)=(D_m+\sqrt{z^2+m^2}I)\bigg( \frac{\pm iz}{8\pi |x-y|}e^{\pm iz|x-y|}\omega_{\pm}(z|x-y|)\bigg)\nn \\
		&= e^{\pm i z|x-y|} \widetilde \omega_{\pm}(z|x-y|), \qquad |\partial_z^k  \widetilde\omega_{\pm}(z|x-y|)|\les \frac{z^{\frac12-k}}{r^{\frac32}}.\label{eqn:RH defn}
	\end{align}
	Combining these we have
	\begin{lemma}\label{lem:R0exp}
		
		We have the following expansions of the Dirac resolvent when $0<z<1$:
		\begin{align*}
			\mR_0^\pm (z)(x,y)&=\mG_0(x,y)+\mathcal E_0(z)(x,y),\\
			&=\mG_0(x,y)+g_1^\pm(z)\mG_1(x,y)+z^2\mG_2(x,y)+\mathcal E_1(z)(x,y),\\
			&=\mG_0(x,y)+g_1^\pm(z)\mG_1(x,y)+z^2\mG_2(x,y)+g_2^\pm(z)\mG_3(x,y),\\
			&\qquad\qquad		+\frac{z^2g_1^\pm(z)}{2m}G_1(x,y) +z^4\mG_4(x,y)			+\mathcal E_2(z)(x,y),
		\end{align*}
		where
		\begin{align*}
			\sup_{0<z<z_1} \big[z^{-2+}|\mathcal E_0(z)|+z^{-1+}|\partial_z \mathcal E_0(z)|\big] \les \frac{1}{|x-y|^2}+1,\\
			\sup_{0<z<z_1} \big[z^{-2+}|\mathcal E_0(z)|+z^{-1+}|\partial_z \mathcal E_0(z)|+z^{0+}|\partial_z^2 \mathcal E_0(z)|\big] \les \frac{1}{|x-y|^2}+ |x-y|^{\frac12},\\
			\sup_{0<z<z_1} \big[z^{-2-}|\mathcal E_1(z)|+z^{-1-}|\partial_z \mathcal E_1(z)|
			\big] \les \frac{1}{|x-y|^2}+1,\\
			\sup_{0<z<z_1} \big[z^{-4+}|\mathcal E_2(z)|+z^{-3+}|\partial_z \mathcal E_2(z)|\big] \les |x-y|^2+\frac{1}{|x-y|^2}.
		\end{align*}
		
	\end{lemma}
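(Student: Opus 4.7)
The plan is to split $\mR_0^\pm(z)(x,y) = \mR_L^\pm(z)(x,y) + \mR_H^\pm(z)(x,y)$ via the cutoff $\chi(z|x-y|)$ and analyze each region separately. On the low region $z|x-y|\les 1$, the first and second expansions follow directly from \eqref{eqn:RL low def}, while the third expansion is obtained by extending the Taylor expansion of the four-dimensional Schr\"odinger resolvent $R_0^\pm(z^2)$ from \cite{EGG4} one further order and applying $(D_m+\sqrt{z^2+m^2})$ term by term, using the sub-expansion $\sqrt{z^2+m^2}=m+z^2/(2m)+O_4(z^4)$. This produces the explicit terms $g_2^\pm(z)\mG_3$ and $z^4\mG_4$ from the next Schr\"odinger coefficients, together with the cross-term $z^2 g_1^\pm(z) G_1/(2m)$ arising from the $(\sqrt{z^2+m^2}-m)R_0^\pm(z^2)$ factor. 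The low-region remainders inherit the sizes recorded in \eqref{eqn:RL low bds} and their derivative analogues.

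On the high region $z|x-y|\gtrsim 1$, each contribution must be estimated in turn. For $\mR_H^\pm$, the bound \eqref{eqn:RH defn} combined with the trade $1/|x-y|\les z$ available on the support of $\widetilde\chi(z|x-y|)$ yields the needed pointwise decay. The explicit terms must also be controlled on this region: $|\mG_0(x,y)|\les |x-y|^{-2}\les z^2$, $|g_1^\pm(z)\mG_1(x,y)|\les z^{2-}$, and the higher $\mG_j$ and $G_j$ contribute at most logarithmic growth in $|x-y|$, absorbed by $|\log z|$ since $|x-y|\les 1/z$ on the high support. The $z$-derivatives are handled by analogous case analysis: each $\partial_z$ either lowers an explicit power of $z$ (possibly producing an extra $\log z$ through $\partial_z g_j^\pm$), reduces the order of a remainder, falls on the cutoff $\chi(z|x-y|)$ producing a factor $|x-y|$ supported on the transition zone $z|x-y|\sim 1$ where $|x-y|\sim z^{-1}$ trades cleanly for a power of $z$, or differentiates the oscillatory factor in \eqref{eqn:RH defn}.

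The main technical obstacle is the second-derivative bound on $\mathcal E_0$, whose right-hand side weakens to $|x-y|^{-2}+|x-y|^{1/2}$. Differentiating the low-region remainder of \eqref{eqn:RL low def} twice produces a term of size $z^{2-}(|x-y|^2+|x-y|^{-2})$, and pairing this with the allowed $z^{0+}$ weight forces a balancing step: one trades a small power of $|x-y|$ for a power of $z$ via the support condition $z|x-y|\les 1$ to replace the formal $|x-y|^2$ growth by $|x-y|^{1/2}$. Similar care is required on the transition zone, where $\partial_z^2 \chi(z|x-y|)$ generates two factors of $|x-y|$ partially offset by $|x-y|\sim z^{-1}$ on its support. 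Once these pointwise estimates are assembled across the low, transition, and high regions, the four bounds of the lemma follow uniformly in $0<z<z_1$.
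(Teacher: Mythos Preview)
Your proposal is correct and follows essentially the same approach as the paper. The paper does not supply a formal proof of this lemma; it states ``Combining these we have'' immediately before the lemma, where ``these'' refers to the preceding development of the low/high decomposition, the expansions \eqref{eqn:RL low def}, the bounds \eqref{eqn:RL low bds}, and the high-region estimate \eqref{eqn:RH defn}. Your proposal simply fills in the details of that combination, and your identification of the second-derivative bound on $\mathcal E_0$ as the most delicate step is accurate: the $|x-y|^{1/2}$ growth arises both from the high-region contribution $\partial_z^2\mR_H^\pm$ (where two derivatives falling on $e^{\pm iz|x-y|}$ produce $|x-y|^2\cdot z^{1/2}|x-y|^{-3/2}$) and, as you note, from trading on the low-region support.
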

	We recall the following definition, which is standard in the analysis of dispersive estimates.  An operator $K:L^2(\R^4)\to L^2(\R^4)$ with kernel
	$K(\cdot,\cdot)$ is absolutely bounded if the operator with kernel
	$|K(\cdot,\cdot)|$ is bounded from $L^2(\R^4)$ to $L^2(\R^4)$.
	In particular, Hilbert-Schmidt and
	finite rank operators are absolutely bounded.
	
	To obtain expansions for the perturbed resolvent operators $\mathcal{R}^{\pm}_V(z)$,  we recall the symmetric resolvent identity. As in previous analyses of the Dirac equation, \cite{egd,EGT,EGT2,EGG2,GLLSS}, the assumption that $ V \colon \R^4 \to \C^{4\times 4}$ is self-adjoint allows us to use the symmetric resolvent identity for $\mR_V^\pm$.  First, by the spectral theorem for self-adjoint matrices we may write 
	\begin{gather*}
		V= B^{*} \Lambda B= B^{*}  |\Lambda|^{\f12} U |\Lambda|^{\f12} B =: v^{*} U v,\,\,\,\text{ where} \\
		\Lambda=\text{diag}(\lambda_1,\lambda_2,\lambda_3,\lambda_4), \text{ with }\lambda_j \in \R, \\
		|\Lambda|^{\f12}= \text{diag}(|\lambda_1|^{\f12},|\lambda_2|^{\f12},|\lambda_3|^{\f12},|\lambda_4|^{\f12}),\\
		U= \text{diag}(\text{sign}(\lambda_1), \text{sign}(\lambda_2),\text{sign}(\lambda_3),\text{sign}(\lambda_4)).
	\end{gather*}
	We note that if the entries of $V(x)$ are all bounded by $\la x\ra^{-\delta}$, then the entries of $v(x)$ and $v^*(x)$ are bounded by $\la x\ra^{-\delta/2}$.
	Defining $ M^\pm(z) = U + v\mathcal{R}^\pm_0(z) v^*$,  the symmetric resolvent identity yields
	\begin{align}\label{eq:pert1}
		\mathcal{R}^{\pm}_V(z) =\mathcal{R}^{\pm}_0(z) - \mathcal{R}^{\pm}_0(z) v^{*} M_{\pm}^{-1} (z) v \mathcal{R}^{\pm}_0(z). 
	\end{align} 
	For notational convenience, we write $M_\pm^{-1}(z)$ in place of $(M^\pm)^{-1}(z)$.
	We consider $M^\pm(z)$ as a perturbation of $M^\pm(0) = U + v\mR_0^\pm(0)v^* = U + v\mG_0v^*$.  We denote this operator by $T_0 \coloneqq M^\pm(0)$, and define $T_j=v\mG_jv^*$ for $j\geq 1$.
	The expansions for $\mR_0^\pm$ in Lemma~\ref{lem:R0exp} immediately yield  the expansions below.
	\begin{lemma}\label{lem:M exp}
		If $|V(x)|\les \la x\ra^{-\delta}$ for some $\delta>0$ then we have the expansions when $0<z<1$.
		\begin{align*}
			M^\pm (z)&=T_0+\mathcal M_0^\pm(z)\\
			&=T_0 +g_1^\pm(z)T_1+z^2T_2+\mathcal M_1^\pm(z)\\
			&=T_0 +g_1^\pm(z)T_1+z^2T_2+g_2^\pm(z)T_3+z^4T_4+\mathcal M_2^\pm(z),
		\end{align*}
		where if $\delta >4$ we have (for a sufficiently small $0<z_1\ll 1$)
		\begin{align*}
			\sup_{0<z<z_1} \big[z^{-2+}|\mathcal M_0^\pm(z)|+z^{-1+}|\partial_z \mathcal M_0^\pm(z)|\big],\\
			\sup_{0<z<z_1} \big[z^{-2-}|\mathcal M_1^\pm(z)|+z^{-1-}|\partial_z \mathcal M_1^\pm(z)|\big] ,
		\end{align*}
		are absolutely bounded operators on $L^2(\R^4)$.  For control over two derivative we require $\delta>5$, in which case we have 
		\begin{align*}
			\sup_{0<z<z_1} \big[z^{-2+}|\mathcal M_0^\pm(z)|+z^{-1+}|\partial_z \mathcal M_0^\pm(z)|+z^{0+}|\partial_z^2 \mathcal M_0^\pm(z)|\big],\\
			\sup_{0<z<z_1} \big[z^{-2-}|\mathcal M_1^\pm(z)|+z^{-1-}|\partial_z \mathcal M_1^\pm(z)|+z^{-0-}|\partial_z^2 \mathcal M_1^\pm(z)|\big].
		\end{align*}	
		If we further assume $\delta>8$, then
		\begin{align*}
			\sup_{0<z<z_1} \big[z^{-4-}|\mathcal M_2^\pm (z)|+z^{-3-}|\partial_z \mathcal M_2^\pm (z)|\big] 
		\end{align*}
		is an absolutely bounded operator on $L^2(\R^4)$.
		
	\end{lemma}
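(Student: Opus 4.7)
The plan is to substitute the free resolvent expansions of Lemma~\ref{lem:R0exp} directly into the definition $M^\pm(z) = U + v\mR_0^\pm(z) v^*$. The leading terms then read off immediately as $T_0 = U + v\mG_0 v^*$ and $T_j = v\mG_j v^*$ for $j \geq 1$, while the remainders take the form $\mathcal{M}_j^\pm(z) = v \mathcal{E}_j(z) v^*$. The entire content of the lemma is therefore to promote the pointwise bounds on $\mathcal{E}_j$ and their $z$-derivatives from Lemma~\ref{lem:R0exp} to absolute boundedness of the corresponding operators on $L^2(\R^4)$ after conjugation by $v$ and $v^*$, both of which are pointwise $\les \la x\ra^{-\delta/2}$.

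In each case, the listed bounds dominate $z^{-a+k}|\partial_z^k \mathcal{E}_j(z)(x,y)|$ by a finite sum of kernels of the form $|x-y|^{-2}$, $1$, $|x-y|^{1/2}$, or $|x-y|^2$. For the three non-singular pieces I would compute a Hilbert-Schmidt norm directly: the integral
\[
\iint \la x\ra^{-\delta} \Phi(x,y)^2 \la y\ra^{-\delta} \, dx\, dy
\]
converges precisely when $\delta > 4$ for $\Phi(x,y) = 1$, when $\delta > 5$ for $\Phi(x,y) = |x-y|^{1/2}$, and when $\delta > 8$ for $\Phi(x,y) = |x-y|^2$. These three thresholds correspond exactly to the three decay hypotheses stated in the lemma: $\delta > 4$ for the expansions containing only a single derivative of $\mathcal{E}_0$ or $\mathcal{E}_1$, $\delta > 5$ to accommodate the second derivative of $\mathcal{E}_0$ (which introduces the $|x-y|^{1/2}$ term), and $\delta > 8$ for $\mathcal{E}_2$ (which introduces the $|x-y|^2$ term).

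The main obstacle is the diagonal singularity $|x-y|^{-2}$, which appears in every estimate of Lemma~\ref{lem:R0exp}. Since $\int_{|x-y|<1} |x-y|^{-4}\, dy = \infty$ in four dimensions, the Hilbert-Schmidt test fails on this piece no matter how large $\delta$ is chosen. The saving observation is that this kernel is, up to a positive constant, the Green's function $G_0(x,y)$ of $(-\Delta)^{-1}$ and is therefore sign-definite; hence absolute boundedness of the operator with kernel $|v(x)||x-y|^{-2}|v^*(y)|$ is equivalent to the plain $L^2$-boundedness of $\la x\ra^{-\delta/2} G_0 \la y\ra^{-\delta/2}$. I would deduce the latter by the Hardy-Littlewood-Sobolev inequality, factoring $L^2 \to L^{4/3} \to L^4 \to L^2$ where the outer maps are multiplication by $\la \cdot \ra^{-\delta/2}$ (bounded once $\delta > 2$) and the middle map is $G_0$. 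Differentiation in $z$ produces no new diagonal singularities and only the explicit negative powers of $z$ already recorded in Lemma~\ref{lem:R0exp}, so the arguments for $\partial_z \mathcal{M}_j^\pm$ and $\partial_z^2 \mathcal{M}_j^\pm$ are structurally identical to the one for $\mathcal{M}_j^\pm$ itself.
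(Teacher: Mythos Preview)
Your proposal is correct and follows essentially the same strategy as the paper: substitute the expansions from Lemma~\ref{lem:R0exp} into $M^\pm(z)=U+v\mR_0^\pm(z)v^*$ and verify that each spatial weight appearing in the $\mathcal E_j$ bounds yields an absolutely bounded operator once sandwiched by $v,v^*$. The only cosmetic difference is in the treatment of the $|x-y|^{-2}$ singularity: you invoke Hardy--Littlewood--Sobolev via the factorization $L^2\to L^{4/3}\to L^4\to L^2$, while the paper cites Jensen's weighted mapping property $\mathcal I_2:L^{2,1+}\to L^{2,-1}$ for the fractional integral operator; both routes give the same $\delta>2$ threshold for this piece and are interchangeable.
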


	The proof follows immediately from the expansions in Lemma~\ref{lem:R0exp} and the decay of $V$.  For the contribution of $|x-y|^{-2}$ we view this as a multiple of  the fractional integral operator $\mathcal I_2(x,y)$ is the integral kernel of the fractional integral operator $\mathcal I_2$.  From Lemma 2.3 in \cite{Jen}, $\mathcal I_2:L^{2,1+}\to L^{2,-1}$, so $v\mathcal I_2v^*$ is an absolutely bounded operator on $L^2$.
	
	We utilize the following notation,  for an absolutely bounded operator $T(z)$ on $L^2(\R^n)$ we write $T(z)=O_N(z^j)$ to mean that
	\be
	\big\|\sup_{0<z<z_1}  \sup_{0\leq \ell \leq N} z^{\ell-j}|\partial_z^\ell T(z)|\,\big\|_{L^2\to L^2}\les 1.
	\ee
	In particular, we note that $\mathcal M_0^\pm(z)=O_2(z^{2-})$ and $\mathcal M_j^\pm(z)=O_1(z^{2j+})$ for $j=1,2$.

	Due to the similarity of the resonance structure of the Dirac operator to that of the Schr\"odinger operator, we utilize the terminology from \cite{EGG4} to characterize threshold resonances and associated subspaces of $L^2(\R^4)$.   We define regularity for the threshold $\lambda=m$ (which corresponds to $z=0$), as in Remark~\ref{rmk:neg branch} this may be adapted to the negative threshold at $\lambda=-m$
	\begin{defin}\label{resondef}
		\begin{enumerate}
			\item We say the threshold $\lambda=m$ is a regular point of the spectrum
			of $\mathcal H = D_m+ V$ provided $T_0$ is invertible on $ L^2(\mathbb R^4)$.
			
			\item Assume that threshold $\lambda=m$ is not a regular point of the spectrum. Let $S_1$ be the Riesz projection
			onto the kernel of $ T_0 $ as an operator on $ L^2(\mathbb R^4)$.
			Then $ T_0 +S_1$ is invertible on $ L^2(\mathbb R^4)$.  Accordingly, we define $D_0=( T_0 +S_1)^{-1}$ as an operator
			on $ L^2(\R^4)$.
			We say there is a resonance of the first kind if the operator $T_1:= S_1 T_1 S_1$ is invertible on
			$S_1L^2(\mathbb R^4)$.
			
			\item  Assume that $T_1$ is not invertible on
			$S_1L^2(\mathbb R^4)$. Let $S_2$ be the Riesz projection onto the kernel of $T_1$ as an operator on $S_1L^2(\R^4)$.   Then $T_1+S_2$ is invertible on
			$S_1 L^2(\R^4)$.
			We say there is a resonance of the second kind   if $S_2=S_1$.
			If $S_1-S_2\neq 0$,  we say there is a resonance of the third kind. 
		\end{enumerate}
	\end{defin}
	
	\noindent
	{\bf Remarks.  i)} We note that $S_1-S_2\neq 0$ corresponds to the existence of a resonance
	at the threshold energy, and $S_2\neq 0$ corresponds to the existence of an
	eigenvalue (see 
	Section~\ref{sec:esa} below).  That is, a resonance of
	the first kind means that there is a resonance at the threshold $\lambda=m$
	only, a resonance of the second kind means that there is
	an eigenvalue only, and a resonance of the third
	kind means that   there is both a resonance and an 
	eigenvalue at the threshold.  We utilize a streamlined technique to allow us to invert
	$M^{\pm}(z)$ for the different types of resonances based on the techniques developed in the massless two dimensional case, \cite{EGG2}. Some care is needed since $S_1-S_2$ can have rank greater than one. 
	\\
	{ \bf ii)} Since $ T_0 $ is self-adjoint, $S_1$ is the orthogonal projection onto the kernel of $ T $, and we have
	(with $D_0=( T_0 +S_1)^{-1}$) $$S_1D_0=D_0S_1=S_1.$$
	A similar statement also valid for $S_2$ and $(T_1+S_2)^{-1}$. \\
	{\bf iii)} Since $T_0$ is a compact perturbation of the invertible operator $U$, the Fredholm alternative
	guarantees that $S_1$ and $S_2$ are finite-rank projections in all cases.
	
	See Section~\ref{sec:esa} below for a
	full characterization of the spectral subspaces of $L^2$ associated
	to $\mathcal H=D_m+V$ at energy $\lambda=m$.

	\begin{lemma}\label{d0bounded}
		The operator $ D_0 $ is absolutely bounded  provided $|V(x)|\les \la x\ra^{-2-}$.
	\end{lemma}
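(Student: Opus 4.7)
The plan is to write $T_0+S_1=U+K$ with $K := v\mathcal G_0 v^*+S_1$, verify that $K$ is absolutely bounded, and then obtain an absolutely bounded representation for $D_0$ from a finite iteration of the Dyson identity whose remainder is Hilbert--Schmidt. Since $U=\mathrm{diag}(\pm 1)$ is unitary with $U^2=I$ and $S_1$ is a finite-rank projection, the absolute boundedness of $K$ reduces to that of $v\mathcal G_0 v^*$. The kernel of the latter is controlled entrywise by $\langle x\rangle^{-1-}(|x-y|^{-3}+|x-y|^{-2})\langle y\rangle^{-1-}$, using $|v|\lesssim\langle x\rangle^{-1-}$ from $|V|\lesssim\langle x\rangle^{-2-}$. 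The $|x-y|^{-2}$ piece is absolutely bounded on $L^2(\R^4)$ by the remark following Lemma~\ref{lem:M exp} via $\mathcal I_2:L^{2,1+}\to L^{2,-1}$; the $|x-y|^{-3}$ piece is handled analogously by Hardy--Littlewood--Sobolev $\mathcal I_1:L^{4/3}(\R^4)\to L^2(\R^4)$ combined with $v\in L^4\cap L^\infty$ (so that multiplication by $v^*$ maps $L^2$ to $L^{4/3}$ by H\"older).

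From $D_0(U+K)=I$ and $U^2=I$ we obtain $D_0=U-D_0\, KU$, and substituting this identity into itself twice yields the finite Dyson expansion
\begin{equation*}
D_0=U-UKU+U(KU)^2-D_0(KU)^3.
\end{equation*}
Each of the first three summands is a finite product of absolutely bounded operators (using that $U$ is trivially absolutely bounded and that absolute boundedness is closed under products and sums), so each is absolutely bounded. Since $D_0$ is $L^2$-bounded and the Hilbert--Schmidt class is an ideal in $\mathcal B(L^2)$ sitting inside the absolutely bounded class, it suffices to verify that $(KU)^3$ is Hilbert--Schmidt.

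Expanding $K=v\mathcal G_0 v^*+S_1$, every summand of $(KU)^3$ containing a factor of $S_1$ is finite rank (hence Hilbert--Schmidt), so the only term requiring attention is $(v\mathcal G_0 v^*U)^3$. Its kernel is bounded entrywise by an integral over two intermediate variables of three $|x-y|^{-3}+|x-y|^{-2}$ singularities interlaced with two intermediate weights $V(z)\lesssim\langle z\rangle^{-2-}$. Two applications of the Riesz composition identity $|x|^{-\alpha}*|x|^{-\beta}\sim|x|^{4-\alpha-\beta}$ in $\R^4$ reduce the diagonal singularity to at worst $|x-y|^{-1}$, and then
\begin{equation*}
\int_{\R^4}\int_{\R^4}\frac{\langle x\rangle^{-2-}\langle y\rangle^{-2-}}{|x-y|^2}\,dx\,dy<\infty,
\end{equation*}
which is the Hilbert--Schmidt criterion. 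The main technical hurdle is propagating the polynomial decay at infinity through the two Riesz compositions; this is handled by splitting each intermediate integral into regions (near $x$, near $y$, near each other, or at large $|z|$), in each of which either the Riesz composition bound applies directly or the $\langle z\rangle^{-2-}$ decay of $V$ absorbs the weight growth that the Riesz composition would otherwise introduce.
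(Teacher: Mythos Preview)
Your overall strategy matches the paper's: show that $v\mathcal G_0 v^*$ is absolutely bounded via the fractional integral operators $\mathcal I_1,\mathcal I_2$, then iterate the resolvent identity $D_0=U-D_0KU$ until the remainder is Hilbert--Schmidt. The paper does exactly this, invoking Lemma~\ref{lem:spatial estimates} and stating only that $[Uv\mG_0 v^*]^k$ is Hilbert--Schmidt ``for large enough $k$'' without committing to a value.

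There is, however, a concrete gap in your execution. The displayed Hilbert--Schmidt criterion
\[
\int_{\R^4}\int_{\R^4}\frac{\langle x\rangle^{-2-}\langle y\rangle^{-2-}}{|x-y|^{2}}\,dx\,dy
\]
diverges: by the last clause of Lemma~\ref{lem:spatial estimates} one has $\int_{\R^4}\langle x\rangle^{-2-\epsilon}|x-y|^{-2}\,dx\lesssim\langle y\rangle^{-\epsilon}$, and then $\int_{\R^4}\langle y\rangle^{-2-2\epsilon}\,dy=\infty$ for $\epsilon$ small. The outer weights $\langle x\rangle^{-1-}\langle y\rangle^{-1-}$ coming from a single $v,v^*$ are simply too weak for $L^2(\R^8)$ integrability. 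Your Riesz--composition step $|x|^{-\alpha}*|x|^{-\beta}\sim|x|^{4-\alpha-\beta}$ captures only the local singularity and discards the far-field decay contributed by the intermediate weights $\langle z_i\rangle^{-2-}$; that extra decay is exactly what is needed and cannot be omitted from the displayed bound.

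The fix is either (i) to replace the pure Riesz identity by the full piecewise estimate of Lemma~\ref{lem:spatial estimates}, which does propagate decay and gives a kernel bounded by $\langle x\rangle^{-3-}\langle y\rangle^{-3-}$ away from the diagonal together with the $|x-y|^{-1}$ local bound---this is in $L^2(\R^8)$---or (ii) simply to iterate further, as the paper does, so that additional factors of $V$ supply enough decay to make the estimate routine. Your final sentence gestures at (i), but the argument as written does not establish it.
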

	The proof is standard and similar to previous analysis of inverse operators for both Dirac and Schr\"odinger operators in various dimensions, see \cite{EGG,eg2,EGT} for example.  Dominating the integral kernel of $|v\mG_0v^*|$ pointwise by a constant multiple of $\la x\ra^{-1-}(\mathcal I_1+\mathcal I_2)(x,y)\la y\ra^{-1-}$ shows that it is absolutely bounded on $L^2$ since the fractional integral operators $\mathcal I_j:L^{2,1+}(\R^4)\to L^{2,-1-}(\R^4)$ when $j=1,2$, \cite{Jen}.  By 
	iterating the resolvent identity
	$D_0=U-U(v\mG_0v^*+S_1)D_0$ sufficiently to write $D_0$ as a sum of finite rank and Hilbert-Schmidt operators, from which the claim follows. One can easily show $[Uv\mG_0v^*]^k$ is Hilbert-Schmidt for large enough $k$ using \eqref{eqn:RL low bds} and Lemma~\ref{lem:spatial estimates} below, we leave the details to the reader.

	For the convenience of the reader, when the exact formulation of an operator is not needed we write $\Gamma_j$ to indicate an absolutely bounded operator that is independent of $z$ and the choice of limiting values $+$ or $-$.  The exact operator is allowed to change from line to line, when exact formulations are required, we  note them explicitly.

	In the case when $\lambda=m$ is regular, $T_0$ is an invertible operator on $L^2(\R^4)$. In this case, we have
	\begin{lemma}\label{lem:Minv reg}
		
		If $m$ is a regular point of the spectrum and $|V(x)|\les \la x\ra^{-\delta}$ for some $\delta>5$, then the operators $M^\pm(z)$ are invertible in a sufficiently small neighborhood of $z=0$.  Furthermore we have the expansion
		\be\label{eqn:Minv reg}
		M_\pm^{-1}(z)=D_0+g_1^\pm(z)D_0T_1D_0+z^2D_0T_2D_0+O_2(z^{2+})=O_2(z^0).
		\ee
		If $m$ is not a regular point of the spectrum, then the operators $M^\pm(z)+S_1$ are invertible provided $\delta>4$.  Furthermore, we have the bounds
		$$
		(M^\pm(z)+S_1)^{-1}=D_0+g_1^\pm(z)D_0T_1D_0+z^2D_0T_2D_0+O_1(z^{2+})=O_1(z^0),
		$$
		in a sufficiently small neighborhood of zero. 
		If $\delta>8$, there are are absolutely bounded, $z$-independent operators $\Gamma_{i,j}$ so that
		\begin{multline}\label{eqn:M+S inv long}
			(M^\pm(z)+S_1)^{-1}=D_0+g_1^\pm(z)D_0T_1D_0+z^2 D_0T_2D_0+(g_1^\pm(z))^2\Gamma_{4,2}\\
			+[g_2^\pm(z)+z^2g_1^\pm(z)]\Gamma_{4,1}
			+z^4\Gamma_{4,0}+O_1(z^{4+}).
		\end{multline}

	\end{lemma}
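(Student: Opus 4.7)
The plan is to invert $M^\pm(z)$ in the regular case (and $M^\pm(z)+S_1$ in the non-regular case) via a Neumann series around the invertible operator $T_0$ (respectively $T_0+S_1$). The two ingredients are the smallness of the perturbation supplied by Lemma~\ref{lem:M exp} and the absolute boundedness of $D_0=(T_0+S_1)^{-1}$ supplied by Lemma~\ref{d0bounded}. In the regular case, $S_1=0$, so $D_0=T_0^{-1}$ is available directly.

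For the regular statement, write $M^\pm(z)=T_0+E(z)$ with $E(z):=g_1^\pm(z)T_1+z^2T_2+\mathcal M_1^\pm(z)$. Under $\delta>5$, Lemma~\ref{lem:M exp} gives $\mathcal M_1^\pm(z)=O_2(z^{2+})$, while $g_1^\pm(z)=O_2(z^{2-})$; hence $\|D_0E(z)\|\to 0$ as $z\to 0$ and
\[
M_\pm^{-1}(z)=\sum_{k\ge 0}(-D_0E(z))^k D_0 = D_0 - D_0E(z)D_0 + D_0E(z)D_0E(z)D_0-\cdots
\]
converges in operator norm on a neighborhood of $0$. Separating out the $g_1^\pm T_1$ and $z^2 T_2$ contributions of $-D_0E(z)D_0$ (the minus sign being absorbed into the $D_0T_jD_0$ convention) gives the two explicit terms in \eqref{eqn:Minv reg}; the residual piece $-D_0\mathcal M_1^\pm(z)D_0$ is $O_2(z^{2+})$, and the tail $\sum_{k\ge 2}(-D_0E(z))^kD_0$ is of size $\|E(z)\|^2=O_2(z^{4-})$, which is absorbed. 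The non-regular short expansion is proved identically after replacing $T_0$ by $T_0+S_1$; under the weaker hypothesis $\delta>4$, Lemma~\ref{lem:M exp} controls only one derivative of $\mathcal M_1^\pm$, which is the origin of the $O_1$ notation.

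For the long expansion \eqref{eqn:M+S inv long} we instead use the third line of Lemma~\ref{lem:M exp}, available under $\delta>8$:
\[
M^\pm(z)+S_1=(T_0+S_1)+g_1^\pm(z)T_1+z^2T_2+g_2^\pm(z)T_3+z^4T_4+\mathcal M_2^\pm(z),
\]
with $\mathcal M_2^\pm(z)=O_1(z^{4+})$. Iterating the Neumann series to second order produces the linear term $-D_0E(z)D_0$ and the quadratic term $D_0E(z)D_0E(z)D_0$, with a cubic tail of size $\|E(z)\|^3=O_1(z^{6-})$. Expanding the quadratic term yields contributions of three distinct orders: $(g_1^\pm)^2 D_0T_1D_0T_1D_0$; cross terms $g_1^\pm z^2(D_0T_1D_0T_2D_0+D_0T_2D_0T_1D_0)$; and $z^4 D_0T_2D_0T_2D_0$. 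Since $z^2g_1^\pm(z)$ and $g_2^\pm(z)$ are both of the form $z^4(a\log z+b)$, we group the $z^4\log z$-type contributions from $-g_2^\pm D_0T_3D_0$ in the linear part together with the $z^2 g_1^\pm$ cross terms from the quadratic part into the single coefficient $[g_2^\pm(z)+z^2g_1^\pm(z)]\Gamma_{4,1}$. The pure $z^4$ contributions (from $-z^4D_0T_4D_0$ and $z^4D_0T_2D_0T_2D_0$) collapse into $z^4\Gamma_{4,0}$, and the remaining $(g_1^\pm)^2$ contribution is $(g_1^\pm)^2\Gamma_{4,2}$. The residual $-D_0\mathcal M_2^\pm(z)D_0$ and the cubic Neumann tail are both $O_1(z^{4+})$, yielding \eqref{eqn:M+S inv long}.

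The routine pieces are the Neumann convergence and the algebraic grouping of terms; the main obstacle is the derivative bookkeeping. Showing an $O_N(z^j)$ bound (rather than merely an operator-norm estimate) requires distributing $\partial_z^\ell$ across each factor in each iterate via the Leibniz rule and checking that every resulting product remains in the weighted-supremum class determined by Lemma~\ref{lem:M exp}; in particular the loss from differentiating $g_1^\pm(z)$ (which produces a logarithm) must be absorbed into the $z^{j\pm}$ tolerances, and the reduction from two derivatives to one in the non-regular case is precisely the mechanism that forces the $\delta>5$ versus $\delta>4$ dichotomy in the hypotheses.
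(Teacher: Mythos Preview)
Your proposal is correct and follows essentially the same route as the paper: both argue by perturbing around $T_0$ (or $T_0+S_1$), invoking the expansions of Lemma~\ref{lem:M exp} and the absolute boundedness of $D_0$ from Lemma~\ref{d0bounded}, and then reading off the explicit terms. The only cosmetic difference is that the paper phrases the iteration via the resolvent identity $A^{-1}=B^{-1}+B^{-1}(B-A)A^{-1}$ (applied twice, then three times for the long expansion) with the full inverse sitting in the remainder, whereas you sum the Neumann series and bound the tail; these are equivalent, and your derivative-bookkeeping remarks and the $\delta>5$ versus $\delta>4$ explanation match the paper's reasoning.
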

	
	\begin{proof}
		
		The bounds on the error term in Lemma~\ref{lem:M exp} may be rephrased as $\mathcal M_0^\pm(z)=O_2(z^{2-})$, so that
		$M^\pm(z) =T_0+O_2(z^{2-})$ with $T_0$ an invertible operator with absolutely bounded inverse $D_0:=T_0^{-1}$.  A Neumann series expansion immediately shows that $M_\pm^{-1}(z)=D_0+O_2(z^{2-})$.   To obtain a more detailed expansion we apply the resolvent identity $A^{-1}=B^{-1}+B^{-1}(B-A)A^{-1}$  with $A=M^\pm (z)$ and $B=T_0$ twice.  Noting that $B-A=M^\pm(z)-T_0=g_1^\pm(z)T_1+z^2T_2+\mathcal M_1^\pm(z)$,  we have
		\begin{multline*}
			(M^\pm(z)+S_1)^{-1}=D_0-D_0(M^\pm(z)-T_0)D_0
			+[D_0(M^\pm(z)-T_0)]^2(M^\pm(z)+S_1)^{-1}\\
			=D_0+g_1^\pm(z)D_0T_1D_0+z^2D_0T_2D_0+O_2(z^{2+}).
		\end{multline*}	
		One can obtain further powers of $z$ at the cost of requiring larger $\delta$ by noting that $\mathcal M_1^\pm(z)=g_2^\pm(z)T_3+z^2T_4+M_{2}^\pm(z)$ and choosing not to truncate the expansion as soon.  The extra decay requirement from the potential is needed to control the growth in the kernels of $T_3$, $T_4$.
		
		In the case that $T_0$ is not invertible, the operator $T_0+S_1$ is invertible.  With a small abuse of notation we denote $D_0:=(T_0+S_1)^{-1}$, this agrees with the previous definition since $S_1=0$ if $T_0$ is invertible.  The previous expansion is valid and proves the second claim.
		Iterating the resolvent identity three times noting that $B-A=g_1^\pm(z)T_1+z^2T_2+g_2^\pm(z)T_3+z^4T_4+O_1(z^{4+})$ in this case yields the longer expansion provided $\delta>8$.
		\begin{multline*}
			(M^\pm(z)+S_1)^{-1}=D_0-D_0(M^\pm(z)-T_0)D_0
			+[D_0(M^\pm(z)-T_0)]^2D_0\\
			-[D_0(M^\pm(z)-T_0)]^3(M^\pm(z)+S_1)^{-1}\\
			=D_0+g_1^\pm(z)D_0T_1D_0+z^2 D_0T_2D_0+(g_1^\pm(z))^2\Gamma_{4,2}+g_2^\pm(z)\Gamma_{4,1}
			+z^4\Gamma_{4,0}+O_1(z^{4+})
		\end{multline*}
		Where $\Gamma_{4,2}=D_0T_1D_0T_1D_0$, $\Gamma_{4,1}=D_0T_3D_0+D_0T_1D_0T_2D_0+D_0T_2D_0T_1D_0$, and 
		$\Gamma_{4,0}=D_0T_2D_0T_2D_0$.
		We needed to iterate three times to get an error term of size $z^{4+}$ with respect to the spectral parameter.
		
	\end{proof}
	
	In particular, we note that, from \eqref{eqn:Minv reg}, and the discussion after \eqref{eqn:RL low bds}, for some $z$-independent absolutely bounded operator $\Gamma_0$,  we have
	\begin{align}\label{eqn:Minv pm diff}
		M_+^{-1}(z)-M_-^{-1}(z)=z^2 \Gamma_0+O_2(z^{2+})
	\end{align}
	
	\begin{rmk}\label{rmk:neg branch}
		
		As mentioned in the introduction of this section, we complete our analysis on the positive branch of the spectrum $[m,\infty)$.  To adapt to the negative branch, when $\lambda$ is in a neighborhood of $-m$, we utilize the change of variables $\lambda=-\sqrt{m^2+z^2}$ and make the required adaptations; replacing $I_{uc}$ and $M_{uc}$ with $I_{lc}$ and $M_{lc}$ and adjusting the signs to account for  expanding $-\sqrt{z^2+m^2}=-m-\frac{z^2}{2m}+  O_4(z^4)$.  The proofs follow analogously in this case.
		
	\end{rmk}

	\section{Dispersive Bounds When Zero Is Regular}\label{sec:reg dis}
	Having established expansions for $M_{\pm}^{-1}(z)$ as an absolutely bounded operator on $L^2(\R^4)$ in Lemma~\ref{lem:Minv reg} in the previous section, we now show that the integral kernel of the solution operator satisfies the claimed pointwise bounds in Theorem~\ref{thm:main}.  That is, we show that the integral kernel of the low energy evolution satsifies
	$$
		\sup_{x,y\in\R^4}|e^{-it\mathcal H}\chi(\mathcal H)P_{ac}(\mathcal H)(x,y)|\les \la t\ra^{-2}.
	$$
	We utilize the Stone's formula, \eqref{eqn:stone}, to reduce to oscillatory integrals in the spectral parameter $z$. 
	We iterate the resolvent identity and the symmetry resolvent identity to arrive at the following expansion for the perturbed resolvents:
	\begin{align}\label{eqn:BS}
		\mR_V^\pm(z)=\sum_{k=0}^{2M+2} \mR_0^\pm(z)(-V\mR_0^\pm(z))^k+(\mR_0^\pm(z)V)^M\mR_0^\pm(z) v^*M_\pm^{-1}(z)v\mR_0^\pm(z)(V\mR_0^\pm(z))^{M}.
	\end{align}
	In light of the spatial singularities of the resovlents, \eqref{eqn:RL low bds}, we select $M$ large enough to ensure the iterated resolvents on either side of $M_{\pm}^{-1}(z)$ are locally $L^2$.
	One needs a different argument for the terms of the finite Born Series expansion, and the tail of the Born series.  Heuristically, one would like to integrate by parts twice to obtain the desired time decay.  The oscillatory behavior of the free resolvents presents a challenge, \eqref{eqn:RH defn} shows that the second derivative of the integral kernel of the resolvent grows in the spatial variables.  Accordingly, we recall the following oscillatory integral lemma from \cite{eg2}. 
	\begin{lemma}\label{lem:high stat phase}   If 
		$$
		|a(z)|\les 	\frac{z\chi(z) \widetilde \chi(zr)}{(1+zr)^{\f12}} ,
		\qquad |\partial_z a(z)|\les 
		\frac{\chi(z) \widetilde \chi(zr)}{(1+zr)^{\f12}},
		$$
		then we have the bound
		\begin{align*}
			\bigg|\int_0^\infty e^{-it \phi_\pm(z) } a(z)\, dz\bigg| \les \la t\ra^{-1},
		\end{align*}
		where $\phi_\pm(z)=\sqrt{z^2+m^2}\mp \frac{z r}{t}$.
	\end{lemma}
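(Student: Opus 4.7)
The plan is to split into $t\le 1$ and $t>1$. For $t\le 1$, the bound follows immediately from the hypothesis: since $|a(z)|\les z\chi(z)$ is supported in $[0,z_1]$, $\int_0^\infty|a(z)|\,dz\les 1$, hence $|\int e^{-it\phi_\pm}a\,dz|\les 1\les \la t\ra^{-1}$. For the main case $t>1$, the crucial structural fact is that
\[
\phi_\pm'(z) = \frac{z}{\sqrt{z^2+m^2}}\mp \frac{r}{t}, \qquad \phi_\pm''(z) = \frac{m^2}{(z^2+m^2)^{3/2}},
\]
so on $\mathrm{supp}(\chi)$, where $z<z_1\ll m$, the second derivative is uniformly comparable to $m^{-1}$ from above and below. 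This provides the curvature needed for stationary-phase type estimates.

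For $\phi_-$ there is no stationary point, since $\phi_-'(z)\ge r/t>0$, and one has the stronger bound $\phi_-'(z)\gtrsim z/m+r/t$. I would integrate by parts once; the boundary terms vanish because $\widetilde\chi(0)=0$ forces $a(0)=0$ and $\chi$ forces $a$ to vanish for $z\ge z_1$. Pulling out the resulting $1/t$ factor, it suffices to show $\int|\partial_z(a/\phi_-')|\,dz\les 1$ uniformly in $r,t$. Using the hypothesis bounds on $|a|$ and $|a'|$, the integrand is controlled by
\[
\frac{1}{(1+zr)^{1/2}(z+r/t)} + \frac{z}{(1+zr)^{1/2}(z+r/t)^2};
\]
a change of variables $u=zr$ on the support $zr\gtrsim 1$, together with splitting the $z$-domain according to whether $z\lessgtr r/t$, yields a uniform bound.

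For $\phi_+$, either $r/t\ge 2z_1/m$, in which case $|\phi_+'|\gtrsim r/t$ is bounded below and the previous non-stationary argument applies, or the unique stationary point $z_s$ (satisfying $z_s/\sqrt{z_s^2+m^2}=r/t$, so $z_s\sim mr/t$) lies in $\mathrm{supp}(\chi)$. In the latter case I decompose the integral into $|z-z_s|\le t^{-1/2}$ and its complement. On the complement, the mean value theorem together with $\phi_+''\sim m^{-1}$ gives $|\phi_+'(z)|\gtrsim|z-z_s|/m$, so integration by parts again produces a $1/t$ factor, and the remaining integral is treated analogously to the $\phi_-$ case (with $z+r/t$ replaced by $|z-z_s|$). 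On the inner region, of measure $\sim t^{-1/2}$, I use the trivial bound together with
\[
|a(z)|\les \frac{z}{(1+zr)^{1/2}}\les \sqrt{m/t} \quad \text{for } z \text{ near } z_s,
\]
which follows from a two-case split on whether $z_sr=mr^2/t$ exceeds $1$; the inner contribution is thus $\les t^{-1/2}\cdot\sqrt{m/t}\les t^{-1}$.

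The main obstacle is the careful bookkeeping for the integrals produced by integration by parts, particularly the outer-region estimate in the $\phi_+$ case and the analogous bound for $\phi_-$. The factor $(1+zr)^{-1/2}$ in both $|a|$ and $|a'|$ is precisely tuned to absorb the behavior of $1/\phi_\pm'$ after splitting the $z$-domain at the natural thresholds ($z=r/t$ for $\phi_-$, $z=z_s$ for $\phi_+$), and the interplay between the regimes $r^2\lessgtr t$ is what drives these case splits and determines which term dominates in each piece.
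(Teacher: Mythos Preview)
The paper does not actually prove this lemma; it is quoted verbatim from \cite{eg2} (see the sentence preceding the statement). Your sketch is essentially the standard van der Corput / stationary-phase argument that underlies the result in \cite{eg2}: separate $\phi_-$ (no critical point, single integration by parts using $\phi_-'\gtrsim z/m + r/t$) from $\phi_+$ (possible critical point $z_s\sim mr/t$, inner--outer decomposition at scale $t^{-1/2}$), and exploit the uniform bound $\phi_\pm''\sim m^{-1}$ on $\operatorname{supp}\chi$.

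A couple of points worth tightening when you write it out. First, in the non-stationary $\phi_+$ subcase ($r/t\ge 2z_1/m$), you only record $|\phi_+'|\gtrsim r/t$; to reuse the $\phi_-$ bookkeeping verbatim you need $|\phi_+'|\gtrsim z+r/t$, which does follow here since $r/t\ge 2z_1/m\ge 2z/m$ forces $r/t\sim z+r/t$ on the support. Second, in the outer region for $\phi_+$, integration by parts over $\{|z-z_s|>t^{-1/2}\}$ produces a boundary term at $z=z_s\pm t^{-1/2}$; this is harmless (it contributes $\les t^{-1}\cdot |a|/|\phi_+'|\les t^{-1}\cdot (m/t)^{1/2}\cdot m t^{1/2}\les_m t^{-1}$), but should be recorded. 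Third, the outer integral $\int (zr)^{-1/2}|z-z_s|^{-1}\,dz$ over $z\in[z_s+t^{-1/2},2z_s]$ produces a $\log(z_s t^{1/2})$ factor; you should note explicitly that it is controlled by the accompanying $(z_s r)^{-1/2}\sim t^{1/2}/r$, since $r\gtrsim t^{1/2}$ in the regime where this interval is nonempty. With these bookkeeping details filled in, the argument is complete.
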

	This will allow us to obtain the second power of time decay without the need for spatial weights.  To illustrate, we prove the low energy  dispersive estimates for the free Dirac evolution.
	
	\begin{lemma}\label{lem:free low disp}
		
		We have the bounds
		\begin{align*}
			\sup_{x,y,\in\R^4}\bigg|\int_{0}^{\infty}e^{-it\sqrt{z^2 + m^2}}\frac{z\chi(z)}{\sqrt{z^2 + m^2}}[\mR_0^+-\mR_0^-](x,y)\,dz
			\bigg|\les \la t\ra^{-2}.
		\end{align*}
		Consequently, the free Dirac solution operator satisfies the bound
		$$
		\|e^{-itD_m}\chi(D_m)\|_{L^1\to L^\infty}\les \la t\ra^{-2}.
		$$
		
	\end{lemma}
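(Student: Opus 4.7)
The plan is to establish the pointwise kernel bound $|K(x,y,t)|\les \la t\ra^{-2}$ uniformly in $x,y$, from which the operator bound is immediate since $\|T\|_{L^1\to L^\infty}=\sup_{x,y}|T(x,y)|$ for integral operators. The kernel equals (up to a constant) the displayed oscillatory integral via Stone's formula \eqref{eqn:stone}, the change of variables $\lambda=\sqrt{z^2+m^2}$ on the positive branch, and the analogous negative-branch contribution from Remark~\ref{rmk:neg branch}. Fixing $r=|x-y|$, I split the resolvent difference using the cutoffs $\chi(zr),\widetilde\chi(zr)$ introduced in \eqref{eqn:RL low def} and \eqref{eqn:RH defn}, writing $I=I_L+I_H$.

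For $I_L$, the key input is the $z^2$ vanishing at the threshold: \eqref{eqn:RL diff bds} gives $(\mR_L^+-\mR_L^-)(z)=cz^2\mG_1(x,y)+O_2(z^4 r^2)$. Combined with the algebraic identity
\[
\frac{z}{\sqrt{z^2+m^2}}\,e^{-it\sqrt{z^2+m^2}}=-\tfrac{1}{it}\partial_z e^{-it\sqrt{z^2+m^2}},
\]
this permits two successive integrations by parts: the first directly, and the second after multiplying and dividing by $z/\sqrt{z^2+m^2}$. All boundary terms vanish---at infinity by $\chi(z)$, and at $z=0$ because the post-first-IBP amplitude vanishes like $z$, cancelling the $1/z$ singularity coming from $\phi'(z)^{-1}=\sqrt{z^2+m^2}/z$. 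The doubly-IBP'd integrand is bounded on the support $\{z\leq z_1\}\cap\{zr\leq 1\}$, whose length is $\les 1$, producing $|I_L|\les t^{-2}$; the $O_2(z^4 r^2)$ remainder is handled identically using $z^5 r^2\les z^3$ on $zr\leq 1$.

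For $I_H$, I invoke the representation $\mR_H^\pm(z)=e^{\pm izr}\widetilde\omega_\pm(zr)$ with $|\partial_z^k\widetilde\omega_\pm|\les z^{1/2-k}r^{-3/2}$ from \eqref{eqn:RH defn}. One IBP via the same identity (with vanishing boundary terms: at infinity by $\chi(z)$, at $z=0$ by $\widetilde\chi(0)=0$) and pulling $e^{\pm izr}$ into the phase yields a pair of oscillatory integrals $\frac{1}{it}\int_0^\infty e^{-it\phi_\pm(z)}a_\pm(z)\,dz$ with $\phi_\pm(z)=\sqrt{z^2+m^2}\mp zr/t$ and amplitudes $a_\pm(z)=\partial_z\{\chi(z)\widetilde\chi(zr)\widetilde\omega_\pm(zr)\}\pm ir\chi(z)\widetilde\chi(zr)\widetilde\omega_\pm(zr)$. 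The bounds on $\widetilde\omega_\pm$ translate into $|a_\pm(z)|\les z(1+zr)^{-1/2}$ and $|a_\pm'(z)|\les (1+zr)^{-1/2}$ in the region $zr\gtrsim 1$ (the extra factor of $r$ from differentiating $e^{\pm izr}$ is exactly absorbed by $|\widetilde\omega_\pm|\les z^{1/2}r^{-3/2}$). Hence Lemma~\ref{lem:high stat phase} applies and yields an extra factor of $\la t\ra^{-1}$, whence $|I_H|\les \la t\ra^{-2}$. For $t\leq 1$ the trivial bound $|I|\les 1$ suffices.

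The main obstacle is the second IBP in $I_L$: introducing $\phi'(z)^{-1}\sim m/z$ inside the integral creates a singularity at $z=0$ that must be absorbed by the amplitude. This is only possible because of the $z^2$ vanishing of $(\mR_L^+-\mR_L^-)$ from \eqref{eqn:RL diff bds}, which ensures that, after the first IBP, the product $(\sqrt{z^2+m^2}/z)F(z)$ (with $F$ the post-IBP amplitude) has its two candidate singularities at $z=0$ precisely cancelled, leaving a bounded amplitude with support of length $\les\min(z_1,1/r)\les 1$; verifying this explicit cancellation, together with tracking the contributions from $\chi'(z)$ and $r\chi'(zr)$, is the key calculation underlying the claim.
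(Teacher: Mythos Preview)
Your proof takes essentially the same approach as the paper: split via $\chi(zr)$ and $\widetilde\chi(zr)$, handle $I_L$ by two integrations by parts using the $z^2$ vanishing from \eqref{eqn:RL diff bds}, and handle $I_H$ by one integration by parts followed by Lemma~\ref{lem:high stat phase}.

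There is one inaccuracy in your treatment of $I_L$: you claim all boundary terms vanish in the second integration by parts, but this is not so. The post-first-IBP amplitude behaves like $2cz\,\mG_1(x,y)$ near $z=0$; multiplying by $\sqrt{z^2+m^2}/z$ gives a quantity that tends to $2cm\,\mG_1(x,y)\neq 0$ as $z\to 0^+$. So the boundary term at $z=0$ from the second IBP is $-(it)^{-2}e^{-itm}\cdot 2cm\,\mG_1(x,y)$, which is nonzero. However, since $\mG_1$ has a bounded kernel by \eqref{eq:G1 def}, this boundary term is $O(|t|^{-2})$ uniformly in $x,y$, so your conclusion survives. The paper handles this correctly by noting the boundary term is bounded rather than vanishing.
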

	
	\begin{proof}
		We consider when $z|x-y|\les 1 $ and $z|x-y|\gtrsim 1$ separately, that is we write $\mR_0^\pm=\mR_L^\pm+\mR_H^\pm$ and consider two integrals.  We consider $z|x-y|\les 1$ first, and seek to control
		\begin{align}\label{eqn:RL free low}
			\int_{0}^{\infty}e^{-it\sqrt{z^2 + m^2}}\frac{z\chi(z)}{\sqrt{z^2 + m^2}}[\mR_L^+-\mR_L^-](z)(x,y)\,dz.
		\end{align}
		By \eqref{eqn:RL diff bds}, the integrand is bounded uniformly in $x,y$ on the support of $\chi(z)$, hence the integral is bounded.  By  \eqref{eqn:RL diff bds}, we may integrate by parts once with no boundary terms due to the smallness of $\mR_L^+-\mR_L^-$ at $z=0$ and the support of the cut-off. Accordingly, we may rewrite \eqref{eqn:RL free low} as
		\begin{multline*}
			\bigg(\frac{e^{-it\sqrt{z^2+m^2}} \sqrt{z^2+m^2}}{(-it)^2z}\partial_z[\chi(z)[\mR_L^+-\mR_L^-])(z)(x,y)]\bigg)\bigg|_{z=0}\\
			-\frac{1}{(-it)^2}
			\int_{0}^{\infty}e^{-it\sqrt{z^2 + m^2}}\partial_z \bigg(\frac{\sqrt{z^2+m^2}}{z}\partial_z[\chi(z)[\mR_L^+-\mR_L^-])(z)(x,y)\bigg)\,dz.
		\end{multline*}
		The boundary term from the second integration by parts is bounded by $|t|^{-2}$ uniformly in $x,y$ by \eqref{eqn:RL diff bds}, the division by $z$ is canceled exactly by the smallness of $\mR_L^+-\mR_L^-$.  We do not explicitly consider when $\partial_z$ acts on a cut off as $|\chi'(z)|\approx 1$ on the set $z\approx 1$, or may be bounded by division by $z$.  In all cases, if the derivatives act on a cut-off, we may bound by the arguments when the derivative acts on other pieces of the integrand.  By the triangle inequality, we need only bound
		\begin{multline*}
			\frac{1}{|t|^2}\int_{0}^\infty \bigg|\partial_z \bigg(\frac{\sqrt{z^2+m^2}}{z}\partial_z(\chi(z)[\mR_L^+-\mR_L^-])\bigg)(z)(x,y)\bigg|\, dz\\
			\les \frac{1}{|t|^2} \int_0^\infty \bigg|\partial_z \bigg(\frac{\sqrt{z^2+m^2}}{z}\partial_z[\chi(z) \chi(z|x-y|) (z^2\mG_1(x,y) + O_2( z^{4}(|x-y|^2))]\bigg|\, dz\\
			\les \frac{1}{|t|^2}\int_0^\infty \bigg| \partial_z[ \sqrt{z^2+m^2}\chi(z)\chi(z|x-y|)(\mG_1(x,y)+ z^{2}|x-y|^2)]\bigg|\, dz\ \les \frac{1}{|t|^2}.
		\end{multline*}
		By \eqref{eq:G1 def}, we see that the first term is clearly bounded, while for the second term we note
		that 
		$$
		\int_0^\infty z|x-y|^2 \chi(z|x-y|)\, dz\les \int_0^{|x-y|^{-1}} z|x-y|^2\, dz \les 1.
		$$
		We now turn our attention to when $z|x-y|\gtrsim 1$, here we do not use the cancellation between the + and - terms, but estimate them individually.  In this regime we recall \eqref{eqn:RH defn}, multiplying by $(z|x-y|)^{\f32}\gtrsim 1$,   we have $|\mR_H^\pm(z)(x,y)|\les z^2$, so the integral is again bounded uniformly in $x,y$.  We now may integrate by parts once without boundary terms and we seek to bound
		\begin{multline*}
			\frac{1}{-it} \int_0^\infty e^{-it\sqrt{z^2+m^2}}\partial_z \big( e^{\pm iz|x-y|}\chi(z)\widetilde \chi(z|x-y|)\widetilde \omega_{\pm}(z|x-y|) \big)\, dz\\
			=\frac{1}{-it} \int_0^\infty e^{-it\sqrt{z^2+m^2}\pm iz|x-y|}\big( \pm i|x-y|\chi(z)\widetilde \chi(z|x-y|)\widetilde \omega_{\pm}(z|x-y|)\\
			+\partial_z( \chi(z)\widetilde \chi(z|x-y|)\widetilde \omega_{\pm}(z|x-y|)) \big)\, dz:=\frac{1}{-it} \int_0^\infty e^{-it\sqrt{z^2+m^2}\pm iz|x-y|}a(z)\, dz.
		\end{multline*}
		Where by \eqref{eqn:RH defn}, we see that $a(z)$ satisfies the bounds of Lemma~\ref{lem:high stat phase} with $r=|x-y|$.  Hence, one obtains that $\la t\ra^{-2}$ bound uniformly in $x,y\in \R^4$.
	\end{proof}
	
	For the remaining terms of the Born series, those summands in \eqref{eqn:BS} where $k>0$, we utilize the integral kernels of the resolvents and the iterated integrals over $\R^{4k}$.  This necessitates control over the spatial variables in the iterated resolvents to ensure that our estimates are uniform in $x,y\in \mathbb R^4$.
	We recall the following integral estimates (c.f. Lemma 6.3 in \cite{EG1} and Lemma 3.8 in \cite{goldVis}) :
	\begin{lemma}\label{lem:spatial estimates}
		Fix $x,y\in \R^n$, with $0 \leq k, \ell < n$, $\delta > 0$, $k + \ell + \delta \geq n$, $k + \ell \neq n$:
		\begin{align*}
			\int_{\R^n} \frac{\ang{z}^{-\delta-}}{|z-x|^k |y-z|^\ell} dz 
			\lesssim
			\left\{ \begin{array}{ll}
				|x - y|^{-\max \{0, k + \ell - n\}}                & \text{if } |x-y|   <  1; \\
				|x - y|^{-\min \{k, \ell, k + \ell + \delta - n\}} & \text{if } |x-y| \geq 1.
			\end{array}\right..
		\end{align*}
		Consequently, we have that
		\begin{align}\label{eqn:R3kell}
			\int_{\R^n} \frac{\ang{z}^{-\delta-}}{|z-x|^k |y-z|^\ell} dz 
			\lesssim \frac{1}{|x-y|^p},
		\end{align}
		where we may take any $p\in [\max \{0, k + \ell - n\},\min \{k, \ell, k + \ell + \delta - n\}]$ as desired. 
		
		Further, if $0\leq \delta,k<n$ with $k+\delta>n$, then
		$$
		\int_{\R^n}\frac{\la x\ra^{-\delta}}{|x-y|^k}\, dx \les \la y\ra^{n-k-\delta}.
		$$
		
	\end{lemma}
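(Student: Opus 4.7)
The plan is to establish the main two-case estimate by a standard three-region geometric decomposition of $\R^n$, and then to obtain \eqref{eqn:R3kell} and the concluding weighted bound as close variants of the same argument. Setting $r := |x-y|$, I would decompose $\R^n$ into the near-$x$ ball $A := \{|z-x|<r/2\}$, the near-$y$ ball $B := \{|z-y|<r/2\}$, and the far complement $C$. On $A$ one has $|z-y|\sim r$ by the triangle inequality, on $B$ symmetrically $|z-x|\sim r$, and on $C$ both denominators are $\gtrsim r$.

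In the short-distance regime $r<1$ the weight $\ang{z}^{-\delta-}$ is essentially constant on the relevant sets and I would simply replace it by $1$. On $A$ factor out $r^{-\ell}$ and apply the local integrability $\int_{|z-x|<r/2}|z-x|^{-k}\,dz\les r^{n-k}$ (valid since $k<n$) to obtain a contribution $r^{n-k-\ell}$; $B$ is identical by symmetry. On $C\cap\{|z|\leq 1\}$ both factors being $\gtrsim r$ on a bounded set again gives $r^{-(k+\ell-n)}$ when $k+\ell>n$ and $O(1)$ otherwise, while on the tail $C\cap\{|z|>1\}$ the weight $\ang{z}^{-\delta-}$ integrates absolutely. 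Summing yields $r^{-\max\{0,k+\ell-n\}}$, with the borderline $k+\ell=n$ excluded precisely to avoid a logarithmic factor. In the long-distance regime $r\geq 1$ the weight carries the decay. On $A$ I would factor $r^{-\ell}$ and split further into $\{|z-x|<1\}$ versus $\{1\leq|z-x|<r/2\}$: the inner piece contributes $\ang{x}^{-\delta-}$ via local integrability of $|z-x|^{-k}$, while on the outer piece $\ang{z}$ is comparable either to $\ang{x}$ or to $|z-x|$, and the tail convergence afforded by $k+\delta>n$ (the relevant case of $k+\ell+\delta\geq n$) produces $\ang{x}^{n-k-\delta-}\les r^{n-k-\delta-}$. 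Combining with the extracted $r^{-\ell}$ gives $r^{-(k+\ell+\delta-n)+}$, with $B$ handled symmetrically. On $C$ both singular factors are $\gtrsim r$ and $\int\ang{z}^{-\delta-}\,dz<\infty$, producing $r^{-(k+\ell)}$. The worst of the three exponents is precisely $\min\{k,\ell,k+\ell+\delta-n\}$, yielding the stated bound.

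The intermediate consequence \eqref{eqn:R3kell} is immediate from the two regime bounds together with the observation that for any $p$ in the stated interval, $r^{-p}$ dominates both $r^{-\max\{0,k+\ell-n\}}$ on $r<1$ and $r^{-\min\{k,\ell,k+\ell+\delta-n\}}$ on $r\geq 1$. The final display is obtained by an analogous single-variable decomposition: split the $x$-integration into $\{|x-y|<\ang{y}/2\}$, on which $\ang{x}\sim\ang{y}$ and local integrability of $|x-y|^{-k}$ yields $\ang{y}^{-\delta}\cdot\ang{y}^{n-k}$, and $\{|x-y|\geq\ang{y}/2\}$, on which $|x-y|^{-k}\les\ang{y}^{-k}$ and $\int_{|x|\les\ang{y}}\ang{x}^{-\delta}\,dx$ provides the matching factor, the tail $\{|x|\gg\ang{y}\}$ being controlled by $k+\delta>n$. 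The principal obstacle throughout is careful bookkeeping of the several cases and of the two independent integrability hypotheses $k<n$ (for local convergence at the poles) and $k+\delta>n$ (for convergence of the weighted tail), which are exactly what the stated hypotheses provide in order to exclude the logarithmic borderline scenarios.
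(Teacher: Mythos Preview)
The paper does not prove this lemma; it is stated as a recalled result with citations to Lemma~6.3 of \cite{EG1} and Lemma~3.8 of \cite{goldVis}, so there is no paper argument to compare against. Your three-region decomposition is the standard route and is essentially correct.

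Two places in your sketch are imprecise and would need tightening in a full write-up. First, in the $r\geq 1$ analysis of region $A$ you invoke ``tail convergence afforded by $k+\delta>n$'' as ``the relevant case of $k+\ell+\delta\geq n$,'' but the hypothesis $k+\ell+\delta\geq n$ does not imply $k+\delta>n$; when $k+\delta<n$ the integral $\int_A\ang{z}^{-\delta-}|z-x|^{-k}\,dz$ is not uniformly bounded but instead grows like $r^{n-k-\delta}$, which after multiplying by the extracted $r^{-\ell}$ still gives the correct contribution $r^{-(k+\ell+\delta-n)}$. You should treat both subcases explicitly. Second, for region $C$ you write that ``$\int\ang{z}^{-\delta-}\,dz<\infty$'' after pulling out $r^{-(k+\ell)}$, but this requires $\delta>n$, which is not assumed. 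The fix is to use $|z-x|\sim|z-y|$ on $C$ and extract only $r^{-a}$ for a well-chosen $a\in(\max\{0,k+\ell-n\},\,k+\ell+\delta-n)$, leaving a convergent integral $\int\ang{z}^{-\delta-}|z-x|^{-(k+\ell-a)}\,dz$. With these two refinements your argument goes through.
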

	In the case that $k+\ell=n$, one can use that $a^{-k}b^{-\ell}\les a^{-k+}b^{-\ell}+a^{-k-}b^{-\ell}$ to avoid logarithmic bounds.  We further need to collect appropriate bounds on derivatives of the free resolvents.  Combining \eqref{eqn:RL low bds} and \eqref{eqn:RH defn} we have (for $k=1,2$)
	\begin{align}\label{eqn:R0 for bs}
		|\partial_z^k \mR_0^\pm(z)(x,y)|\les 	z^{2-k}(1+\log(z|x-y|)+|x-y|^{-2})\chi(z|x-y|)
		+\frac{z^\f12 \widetilde\chi(z|x-y|)}{|x-y|^{\f32-k}}.
	\end{align}
	While for $k=0,1,2$ we have
	\begin{align}\label{eqn:R0 diff for bs}
		|\partial_z^k [\mR_0^+-\mR_0^-](z)(x,y)|\les 	z^{2-k}(1+O(z^2|x-y|^2))\chi(z|x-y|)
		+\frac{z^\f12\widetilde\chi(z|x-y|)}{|x-y|^{\f32-k}}.
	\end{align}
	With these tools in hand, we now control the contribution of the finite Born series terms to the Stone's formula. We recall the algebraic identity
	\begin{align}\label{eqn:alg identity}
		\prod_{k = 0}^M A_k^+ - \prod_{k = 0}^M A_k^-
		= \sum_{\ell=0}^M \bigg(\prod_{k = 0}^{\ell - 1}A_k^-\bigg)
		\big(A_\ell^+ - A_\ell^-\big)\bigg(
		\prod_{k = \ell + 1}^M A_k^+\bigg).
	\end{align} 
	In light of this, we need to control the contribution of
	\begin{lemma}\label{lem:born low}
		Fix $k\in \mathbb N\cup\{0\}$. Assume that $V(x) \lesssim \langle x\rangle^{-\delta-}$ for some $\delta>\f72$, then
		\begin{align*}
			\sup_{x,y,\in\R^4}\bigg|\int_{0}^{\infty}e^{-it\sqrt{z^2 + m^2}}\frac{z\chi(z)}{\sqrt{z^2 + m^2}}(\cR_0^+V)^{k} \cR_0^+ - (\cR_0^-V)^{k} \cR_0^-(z)(x,y)\,dz
			\bigg|\les \la t\ra^{-2}.
		\end{align*}
	\end{lemma}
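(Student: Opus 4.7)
The plan is to mimic the strategy of Lemma~\ref{lem:free low disp}, adapted to handle the iterated spatial integrals over $\R^{4k}$ in the intermediate variables. I will split each of the $k+1$ free resolvents into its low part $\mR_L^\pm$ (where $z|x_i-x_{i+1}|\lesssim 1$) and high part $\mR_H^\pm$ (where $z|x_i-x_{i+1}|\gtrsim 1$), then obtain the $|t|^{-2}$ decay from the $z$-integral while controlling the spatial integrations uniformly in $x,y$ via Lemma~\ref{lem:spatial estimates}.

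First I apply the algebraic identity \eqref{eqn:alg identity} with $M=k$ to write the difference
\begin{align*}
(\cR_0^+ V)^k\cR_0^+ - (\cR_0^- V)^k\cR_0^- = \sum_{\ell=0}^{k}\Big(\prod_{j<\ell}\cR_0^- V\Big)(\cR_0^+-\cR_0^-)\Big(V\prod_{j>\ell}\cR_0^+\Big),
\end{align*}
so that in each summand exactly one factor is a resolvent difference, which by \eqref{eqn:R0 diff for bs} vanishes to second order at $z=0$. I then decompose each of the $k+1$ resolvent (or resolvent difference) factors into its $\mR_L$ and $\mR_H$ pieces, producing $2^{k+1}$ subcases that I treat separately.

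For the all-low subcase I invoke the pointwise bounds \eqref{eqn:RL low bds} and \eqref{eqn:RL diff bds} on the resolvents and their first two $z$-derivatives. Integrating by parts twice in $z$, the quadratic vanishing of $\mR_L^+-\mR_L^-$ at $z=0$ cancels the $1/z$ that arises from the amplitude $z\chi(z)/\sqrt{z^2+m^2}$ and from the Jacobian of the phase, so all boundary terms vanish and one picks up a prefactor $|t|^{-2}$. The remaining spatial integral is dominated by
\begin{align*}
\int_{\R^{4k}}\prod_{i=0}^{k}\Big(|x_i-x_{i+1}|^{-3}+|x_i-x_{i+1}|^{-2}\Big)\prod_{i=1}^{k}\langle x_i\rangle^{-\delta-}\,dx_1\cdots dx_k,
\end{align*}
with $x_0=x$, $x_{k+1}=y$, and this is finite and uniform in $x,y$ by iterated application of Lemma~\ref{lem:spatial estimates} once $\delta>\tfrac72$ (the worst case being all factors of order $|x_i-x_{i+1}|^{-3}$, which produces an exponent of $\tfrac72$ on $\langle x_i\rangle$ at each step). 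For any subcase containing at least one high factor, I extract the combined oscillatory phase $e^{\pm iz R}$ with $R=\sum_{i\in\text{high}}|x_i-x_{i+1}|$ and absorb it with $e^{-it\sqrt{z^2+m^2}}$ into the unified phase $\phi_\pm(z)=\sqrt{z^2+m^2}\mp zR/t$ of Lemma~\ref{lem:high stat phase}. One explicit integration by parts in $z$ produces the first $1/t$, and the resulting amplitude, using \eqref{eqn:R0 for bs}, satisfies the hypotheses of Lemma~\ref{lem:high stat phase} (with $r=R$), which supplies the second $\langle t\rangle^{-1}$. The spatial integrals are again uniformly bounded by Lemma~\ref{lem:spatial estimates}, with each high factor contributing an integrable $|x_i-x_{i+1}|^{-3/2}$ amplitude.

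The main obstacle is the bookkeeping in the mixed low/high subcases: after one integration by parts the derivative may fall on a high factor (acceptable, since $\partial_z\widetilde\omega_\pm$ loses one power of $z$ and satisfies the hypothesis of Lemma~\ref{lem:high stat phase}), on a low factor (which gains a weight like $|x_i-x_{i+1}|^{-2}$ or $\log$, still integrable by Lemma~\ref{lem:spatial estimates}), on the weight $\sqrt{z^2+m^2}/z$, or on a cutoff (where $|\partial_z\chi(z|x_i-x_{i+1}|)|\lesssim z^{-1}$ on the transition region, which is absorbed into the existing powers). One must verify that in each of these branches the resulting amplitude has an $L^1_z$-bound uniform in the spatial parameters, and simultaneously that the iterated spatial integrand remains dominated by a product to which Lemma~\ref{lem:spatial estimates} applies; the requirement $\delta>\tfrac72$ is consumed precisely here, to close the worst chain of iterated convolutions.
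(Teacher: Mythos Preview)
Your overall organization---split every factor into $\mR_L/\mR_H$ first, then treat ``all low'' by two integrations by parts and ``at least one high'' by one integration by parts plus Lemma~\ref{lem:high stat phase}---is a reasonable alternative to the paper's more targeted scheme, but there is a genuine gap in the high case. In a fixed summand of \eqref{eqn:alg identity} the resolvents to the left of position $\ell$ are $\mR_0^-$ and those to the right are $\mR_0^+$, so when high pieces occur on both sides the accumulated oscillation is $e^{iz(R_+-R_-)}$ with $R_\pm=\sum_{j\gtrless\ell,\ j\text{ high}}|x_j-x_{j+1}|$, not $e^{\pm izR}$ for a single unsigned $R=\sum_{\text{high}}|x_i-x_{i+1}|$. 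To apply Lemma~\ref{lem:high stat phase} you would then need $r=|R_+-R_-|$, but your amplitude is supported where each $z|x_j-x_{j+1}|\gtrsim1$, not where $z|R_+-R_-|\gtrsim1$; since $|R_+-R_-|$ can be arbitrarily small (even zero) while the individual distances are large, the hypothesis $|a(z)|\les z\chi(z)\widetilde\chi(zr)(1+zr)^{-1/2}$ fails outright. The paper avoids this by integrating by parts once \emph{before} any $\mR_L/\mR_H$ decomposition, then splitting only the single factor hit by $\partial_z$; if that factor is $\mR_H$, only \emph{its} phase is extracted (so $r=|x_j-x_{j+1}|$ for one fixed $j$), and every other high oscillation remains inside $a(z)$, where differentiating it costs a factor $|x_{j'}-x_{j'+1}|$ that is harmlessly absorbed by the $|x_{j'}-x_{j'+1}|^{-3/2}$ decay of $\widetilde\omega_\pm$. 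Your scheme can be repaired in exactly this way.

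Two smaller inaccuracies. First, in the all-low case the boundary term from the second integration by parts does not vanish: the leading piece $\partial_z(\mR_L^+-\mR_L^-)=2cz\mG_1+O(z^3)$ survives division by $z$ and gives a finite nonzero contribution at $z=0$ (bounded, as in Lemma~\ref{lem:free low disp}, just not zero). Second, the constraint $\delta>\tfrac72$ does not come from the all-low chain you describe (that chain closes already for much smaller $\delta$); it is forced by the case in which both $z$-derivatives land on an \emph{inner} $\mR_0^\pm$, producing $|x_j-x_{j+1}|^{1/2}$ growth via \eqref{eqn:R0 for bs} that must be split between the two adjacent copies of $V$.
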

	\begin{proof}
		When $k=0$, this is the evolution of the free Dirac operator considered in Lemma~\ref{lem:free low disp}. 
		Fix $k>0$ and
		let $k_1, k_2 \in \mathbb N\cup\{0\}$ be such that $k_1 + k_2 = k$.  Then, it suffices to control the contribution of
		\begin{align*}
			\int_{0}^{\infty}e^{-it\sqrt{z^2 + m^2}}\frac{z\chi(z)}{\sqrt{z^2 + m^2}}(\cR_0^-V)^{k_1}(\cR_0^+ - \cR_0^-)(V\cR_0^+)^{k_2}(z)(x,y)\,dz.
		\end{align*}
		Combining \eqref{eqn:RL diff bds}, \eqref{eqn:RH defn}, and \eqref{eqn:RL low bds}, when $0<z<1$, we have
		\begin{align}\label{eqn:R0 bds bs helpful}
			|[\mR_0^+-\mR_0^-](z)(x,y)|\les z^2, \quad
			|\mR_0^\pm(z)(x,y)|\les \frac{1}{|x-y|^3}+\frac{1}{|x-y|^{\f32}}.
		\end{align}
		So that, with $x_0=x$ and $x_{k+1}=y$, we have
		\begin{multline*}
			\bigg|\int_{0}^{\infty}e^{-it\sqrt{z^2 + m^2}}\frac{z\chi(z)}{\sqrt{z^2 + m^2}}(\cR_0^-V)^{k_1}(\cR_0^+ - \cR_0^-)(V\cR_0^+)^{k_2}(z)(x,y)\,dz\bigg|\\
			=\bigg|\int_{0}^{\infty}e^{-it\sqrt{z^2 + m^2}}\frac{z\chi(z)}{\sqrt{z^2 + m^2}}\int_{\R^{4k}}
			\prod_{j=1}^{k_1}(\cR_0^-(z)(x_{j-1},x_j)V(x_j))\\
			\qquad\qquad\qquad\qquad\qquad(\cR_0^+ - \cR_0^-)(z)(x_{k_1},x_{k_1+1})
			\prod_{i=k_1+1}^{k_2}(V(x_i)\cR_0^+(z)(x_i,x_{i+1})\,d\vec x \, dz\bigg|\\
			\les \int_0^\infty \chi(z) \int_{\R^{4k}} 
			\prod_{j=1}^{k_1}\frac{|V(x_j)|\la x_{j-1}-x_{j}\ra^{\f32}}{|x_{j-1}-x_{j}|^3}
			\prod_{i=k_1+1}^{k_2}\frac{|V(x_i)|\la x_i-x_{i+1}\ra^{\f32}}{|x_i-x_{i+1}|^3}\, d\vec x.
		\end{multline*}
		All terms in the integral are positive, we may integrate in any order.  We consider the integral in $k_1$ first,
		$$
		\int_{\R^4}\frac{|V(x_{k_1})|\la x_{k_1-1}-x_{k_1}\ra^{\f32}}{|x_{k_1-1}-x_{k_1}|^3}\, dx_{k_1}\les \int_{\R^4}\bigg(\frac{\la x_{k_1}\ra^{-\delta-}}{|x_{k_1-1}-x_{k_1}|^3}
		+\frac{\la x_{k_1}\ra^{-\delta-} }{|x_{k_1-1}-x_{k_1}|^{\f32}}\bigg) \, dx_{k_1} \les 1.
		$$
		We note that this bound requires only $|V(x)|\les \la x\ra^{-\delta}$ for some $\delta>\f52$.
		By integrating in $x_{k_1}$ first we remove any singularities in the spatial variables, which are a concern for small $k$.  We may then integrate `outward'; integrating in $x_{k_1\pm1}$, then $x_{k_1\pm2}$ and so on.  This separates the singularities and the remaining integrals in the spatial variables are all now dominated by the integral above, using Lemma~\ref{lem:spatial estimates} repeatedly shows that 
		\begin{align*}
			\sup_{x,y,\in\R^4}\bigg|\int_{0}^{\infty}e^{-it\sqrt{z^2 + m^2}}\frac{z\chi(z)}{\sqrt{z^2 + m^2}}(\cR_0^+V)^{k} \cR_0^+ - (\cR_0^-V)^{k} \cR_0^-(z)(x,y)\,dz
			\bigg|\les 1,
		\end{align*}
		showing the boundedness, provided that $\delta>\f52$.
		
		To establish the time decay we integrate by parts once, then have to consider cases.
		We first consider the case of $k = 1$. By symmetry, it suffices to consider
		\begin{multline*}
			\int_{0}^{\infty}e^{-it\sqrt{z^2 + m^2}}\frac{z}{\sqrt{z^2 + m^2}}\chi(z)[\cR_0^+ - \cR_0^-]V\cR_0^+(z)(x,y)dz\\
			=\frac{e^{-it\sqrt{z^2+m^2}}\chi(z)[\cR_0^+ - \cR_0^-]V\cR_0^+(z)(x,y) }{-it}\bigg|_{z=0}\\
			+\frac{1}{it}\int_0^\infty e^{-it\sqrt{z^2 + m^2}}
			\partial_z\big[\chi(z)[\cR_0^+ - \cR_0^-]V\cR_0^+ \big](z)(x,y)\, dz.
		\end{multline*}
		As in the proof of Lemma~\ref{lem:free low disp}, we don't explicitly consider when the derivatives act on cut-offs.
		By \eqref{eqn:R0 bds bs helpful}, we see that there are no boundary terms.  We consider cases based on where the derivative acts.  If the derivative acts on the difference of resolvents, we decompose them into $\mR_L^\pm$ and $\mR_H^\pm$.  When we have the difference of $\mR_L^\pm$, we use \eqref{eqn:RL diff bds} to write
		\begin{multline*}
			\frac{1}{it}\int_0^\infty e^{-it\sqrt{z^2 + m^2}}
			\big[\chi(z)\partial_z[\cR_L^+ - \cR_L^-]V\cR_0^+ \big](z)(x,y)\, dz\\
			=\frac{1}{it}\int_0^\infty \int_{\R^4} e^{-it\sqrt{z^2 + m^2}}
			\big[\chi(z)(2cz\mG_1(x,x_1)V(x_1)\cR_0^+ (z)(x_1,y)\\
			+O_1(z^3|x-x_1|^2)\chi(z|x-x_1|) V(x_1)\cR_0^+ (z)(x_1,y)\big]\, dx_1 dz.
		\end{multline*}
		By \eqref{eqn:R0 for bs}, when $0<z<1$ and dividing by powers of $z|x-y|$ when $z|x-y|\les 1$, we have
		\begin{align}\label{eqn:R0 deriv in bs}
			|\partial_z \mR_0^\pm(z)(x,y)|\les \frac{z}{|x-y|^{2}}+\frac{z^{\f12}}{|x-y|^{\f12}},
		\end{align}
		which suffices to allow us to integrate by parts again.   The only boundary terms that appear arise when the second derivative in $z$ acts on the $2cz\mG_1$, so we may bound the contribution by
		\begin{multline*}
			\frac{1}{|t|^2} \int_{\R^4} 
			\big|2c\mG_1(x,x_1) V(x_1)\cR_0^+ (0)(x_1,y)\big|\, dx_1\\
			+\frac{1}{|t|^2}\int_0^\infty \int_{\R^4} \bigg|
			\partial_z\big[\sqrt{z^2+m^2}
			\chi(z)(1+O_1(z^2|x-x_1|^2)\chi(z|x-x_1|) V(x_1)\cR_0^+ (z)(x_1,y)\big]\, dx_1 dz.
		\end{multline*}
		The first term is controlled by \eqref{eq:G1 def}, \eqref{eqn:R0 bds bs helpful},
		and Lemma~\ref{lem:spatial estimates}, which shows the contribution of the boundary term is bounded by $|t|^{-2}$ uniformly in $x,y$.  For the remaining piece, using \eqref{eqn:R0 bds bs helpful} its contribution is dominated by
		\begin{multline*}
			\frac{1}{|t|^2}\int_0^\infty \int_{\R^4} \la x_1\ra^{-\f72-}
			(1+z|x-x_1|^2\chi(z|x-x_1|))\bigg(\frac{1}{|x_1-y|^3}+\frac{1}{|x_1-y|^{\f32}}\bigg)\, dx_1\, dz\\
			+\frac{1}{|t|^2}\int_0^\infty \int_{\R^4} \la x_1\ra^{-\f72-}\bigg(\frac{1}{|x_1-y|^3}+\frac{1}{|x_1-y|^{\f12}}\bigg) \, dz\, dx_1.
		\end{multline*}
		The $z$ integral is bounded, the resulting spatial integral in $x_1$ is bounded uniformly in $x,y$ by Lemma~\ref{lem:spatial estimates}.  The spatial integrals in the second term are similarly bounded uniformly in $x,y$ by Lemma~\ref{lem:spatial estimates}.
		
		We next consider the case when the resolvents on the left are $\mR_H$.  Here we cannot use the cancellation, but instead bound each individually.  Here we note that by \eqref{eqn:RH defn},
		$$
		\partial_z \mR_H^\pm(z)(x,y)=e^{\pm iz|x-y|}\omega_\pm(z|x-y|), \qquad |\partial_z^k \omega_{\pm}(z|x-y|)|\les z^{\f12-k}\widetilde \chi(z|x-y|) \la z|x-y|\ra^{-\f12}.
		$$
		Accordingly, we seek to control (ignoring the spatial integral for the moment)
		$$
		\int_0^\infty e^{-it\sqrt{z^2+m^2}\pm iz|x-y|} \chi(z) \omega_\pm(z|x-x_1|)V(x_1) \mR_0^+(z|x_1-y|) \, dz
		$$
		As in the proof of Lemma~\ref{lem:free low disp}, we employ Lemma~\ref{lem:high stat phase}. this time with
		$$
		a(z)=\chi(z)\omega_\pm(z|x-x_1|)V(x_1)
		\mR_0^+(z|x_1-y|).
		$$
		By \eqref{eqn:RH defn}, \eqref{eqn:R0 bds bs helpful}, and \eqref{eqn:R0 deriv in bs} we have the bounds
		\begin{align*}
			|a(z)|\les \frac{z\chi(z)\widetilde \chi(z|x-x_1|)}{(1+z|x-x_1|)^{\f12}} \la x_1\ra^{-\f72-}\bigg(\frac{1}{|x_1-y|^3}+\frac{1}{|x_1-y|^{\f32}} \bigg),\\
			|a'(z)|\les \frac{\chi(z)\widetilde \chi(z|x-x_1|)}{(1+z|x-x_1|)^{\f12}} \la x_1\ra^{-\f72-}\bigg(\frac{1}{|x_1-y|^2}+\frac{1}{|x_1-y|^{\f12}} \bigg).
		\end{align*}
	 	The desired $|t|^{-2}$ bound follows from applying Lemma~\ref{lem:high stat phase} for the $z$ integral and then Lemma~\ref{lem:spatial estimates} for the $x_1$ integral, that is
		\begin{multline*}
			\sup_{x,y\in \R^4}\bigg|\int_0^\infty e^{-it\sqrt{z^2+m^2}\pm iz|x-y|} \chi(z) \omega_\pm(z|x-x_1|)V(x_1) \mR_0^+(z|x_1-y|) \, dz|\\
			\les \la t\ra^{-1} \sup_{x,y\in\R^4}\int_{\R^4} \la x_1\ra^{-\f72 -}\bigg(\frac{1}{|x_1-y|^3}+\frac{1}{|x_1-y|^2} \bigg)\, dx_1 \les \la t\ra^{-1}.
		\end{multline*}
		
		The remaining possibilities when $k=1$,  controlling the contribution of
		$$
		\frac{1}{it}\int_0^\infty e^{-it\sqrt{z^2 + m^2}}
		\chi(z)[\cR_L^+ - \cR_L^-]V\big[\partial_z\cR_0^+ \big](z)(x,y)\, dz
		$$
		or when the derivative acts on the cut-off $\chi(z)$
		are controlled by slight variations of the analysis above.  We note that in when $\partial_z\cR_0^+$ is supported on `low' argument, one obtains further smallness in the spectral parameter $z$ that  simplifies the analysis.  Whereas if $\partial_z \mR_0^+$ is supported on `high' argument, one applies Lemma~\ref{lem:high stat phase} as above.
		
		We now argue that $k>1$ may be reduced to the arguments presented above.  In all cases, we begin by one integration by parts, there are no boundary terms due to the $\mR_0^+-\mR_0^-$, and we seek to bound
		$$
		\frac{1}{it}\int_{0}^{\infty}e^{-it\sqrt{z^2 + m^2}}\partial_z \big[\chi(z)(\cR_0^-V)^{k_1}(\cR_0^+ - \cR_0^-)(V\cR_0^+)^{k_2}\big](z)(x,y)\,dz\bigg|\\
		$$
		For iterated resolvents, it suffices to show that iterated resolvents may be bounded by the same upper bounds used for single resolvents in the argument for $k=1$.  Notice that by \eqref{eqn:R0 for bs} and Lemma~\ref{lem:spatial estimates} we have
		\begin{multline*}
			|[\mR_0^\pm V\mR_0^\pm](z)(x,y)|=\bigg|
			\int_{\R^4} \mR_0^\pm(z)(x,x_1)V(x_1)\mR_0^\pm(z)(x_1,y)\, dx_1\bigg|\\
			\les \int_{\R^4} \frac{\la x_1\ra^{-\f72-}}{|x-x_1|^3|x_1-y|^3}+\frac{\la x_1\ra^{-\f72-}}{|x-x_1|^{\f32}|x_1-y|^{\f32}}\, dx_1 \les \frac{1}{|x-y|^3}+\frac{1}{|x-y|^{\f32}}.
		\end{multline*}
		If one derivative acts on a free resolvent, using \eqref{eqn:R0 deriv in bs} we have (when $0<z<1$)
		\begin{multline*}
			|[\mR_0^\pm V\partial_z\mR_0^\pm](z)(x,y)|=\bigg|
			\int_{\R^4} \mR_0^\pm(z)(x,x_1)V(x_1)\partial_z\mR_0^\pm(z)(x_1,y)\, dx_1\bigg|\\
			\les \int_{\R^4} \frac{z\la x_1\ra^{-\f72-}}{|x-x_1|^3|x_1-y|^2}+\frac{z^{\f12}\la x_1\ra^{-\f72-}}{|x-x_1|^{\f32}|x_1-y|^{\f12}}\, dx_1 \les \frac{z}{|x-y|^2}+\frac{z^{\f12}}{|x-y|^{\f12}}.
		\end{multline*}
		That is, we can bound the iterated resolvents by the same bounds we used for a single resolvent in the $k=1$ case.
		This allows us to implement the $k=1$ argument whenever the derivative from the first integration by parts acts on either the leading or lagging free resolvent.
		
		Consider when the derivative acts on the leading resolvent.  We again decompose into $\mR_L$ and $\mR_H$.  Consider first $\mR_L$, here we may integrate by parts again by \eqref{eqn:RL low bds} and \eqref{eqn:R0 diff for bs}, the integrand is $O_1(z^{\f52})$ with respect to the spectral variable, so no boundary terms arise and we bound
		\begin{multline*}
			\frac{1}{|t|^2}\int_{0}^{\infty}e^{-it\sqrt{z^2 + m^2}}\partial_z \bigg[\frac{\sqrt{z^2+m^2}}{z} \big[\chi(z)(\partial_z\mR_L^-)(\cR_0^-V)^{k_1-1}(\cR_0^+ - \cR_0^-)(V\cR_0^+)^{k_2}\big]\bigg](z)(x,y)\,dz\bigg|
		\end{multline*}
		Utilizing \eqref{eqn:RL low bds} and \eqref{eqn:R0 diff for bs} we may dominate this by integrals of the form
		$$
		\frac{1}{|t|^2}\int_0^\infty \chi(z) \int_{\R^{4k}} 
		(1+|x-x_1|^{-2})\prod_{j=2}^{k_1}\frac{|V(x_j)|\la x_{j-1}-x_{j}\ra^{\f52}}{|x_{j-1}-x_{j}|^3}
		\prod_{i=k_1+1}^{k}\frac{|V(x_i)|\la x_i-x_{i+1}\ra^{\f52}}{|x_i-x_{i+1}|^3}\, d\vec x
		$$
		Here we used that $|\partial_z^k [\mR_0^+-\mR_0^-](z)(x,y)|\les z^{2-k}$ for $k=0,1,2$.  As in the argument for boundedness, we may integrate in $x_{k_1}$ first, the assumed decay of $V$ more than suffices for repeated use of Lemma~\ref{lem:spatial estimates} to show the spatial integrals are bounded uniformly in $x,y$.  Here we note that the powers of $\la x_{j-1}-x_j\ra^{\f52}$ are not a sharp bound, as at most one of these resolvents can be differentiated.  The argument for a leading $\mR_H$ follows similarly, with the iterated resolvents contributions again bounded by repeated use of Lemma~\ref{lem:spatial estimates}.

		Here we note that if the derivative acts on an inner resolvent, we may safely integrate by parts again without considering cases.  This case necessitates the assumed decay on $V$ since
		\begin{align*}
			|V(x_j)\partial_z^2 \mR_0^\pm(z)(x_j,x_{j+1}) V(x_{j+1})|&\les  z^{0-} \la x_j\ra^{-\f72-}\bigg[|x_j-x_{j+1}|^{\f12}+|x_j-x_{j+1}|^{-2}\bigg] \la x_{j+1}\ra^{-\f72-}\\
			&\les z^{0-}\la x_j\ra^{-3-}(1+|x_j-x_{j+1}|^{-2})\la x_{j+1}\ra^{-3-}.
		\end{align*}
		The decay on $V$ is most constrained by the contribution of
		$$
		\frac{1}{(it)^2}\int_0^\infty  e^{-it\sqrt{z^2+m^2}} \frac{\sqrt{z^+m^2}}{z}\chi(z)\bigg[[\mR_L^+-\mR_L^-]V[\partial_z^2\mR_0^+] V\mR_H^+\bigg](z)(x,y)\,   dz
		$$
		By the bounds above, we dominate by
		$$
		\frac{1}{|t|^2}\int_0^\infty \int_{\R^8} z^{1-} \chi(z)(z|x-x_1|)^{-\ell} \la x_1\ra^{-3-}\frac{\la x_2 \ra^{-3-}}{|x_2-y|^{\f32}} \,dx_1\, dx_2\, dz.
		$$
		Here we take advantage of the fact that $z|x-x_1|\les 1$ and select $\ell=1-$ to dominate by
		$$
		\frac{1}{|t|^2}\int_0^\infty \int_{\R^8} z^{-1+}\chi(z) \frac{ \la x_1\ra^{-3-}}{|x-x_1|^{1-}}\frac{\la x_2 \ra^{-3-}}{|x_2-y|^{\f32}} \,dx_1\, dx_2\, dz \les \frac1{|t|^2},
		$$
		uniformly in $x,y$.
		Here we see that we must have $\delta>\f72$ for the $x_1$ integral to be bounded.
	\end{proof}

	We now work towards bounding the tail of the Born series, the summand in \eqref{eqn:BS} containing the operators $M_{\pm}^{-1}(z)$.  In the symmetric resolvent identity we bound $M_{\pm}^{-1}(z)$ and its derivatives as absolutely bounded operators on $L^2(\R^4)$. The following consequences of Lemma~\ref{lem:spatial estimates} allow us to control the $L^2$ norms of iterated resolvents.
	\begin{lemma}\label{lem:abstractSpatialIntegrals}
		Let $0\leq a_0 < 4$, $-\f12 \leq a_1 < 4$, $-\f12\leq a_2 < 4$ satisfy $a_0 + a_1 + a_2 < 10$, and  $\delta>5$, then
		\begin{equation*}
			\sup_{x\in \mathbb R^4}\left\|\int_{\R^8} \frac{\langle x_1\rangle^{-\delta}\langle x_2\rangle^{-\delta}\langle x_3\rangle^{-\delta/2}}{|x - x_1|^{a_0}|x_1 - x_2|^{a_1}|x_2 - x_3|^{a_2}}dx_1dx_2\right\|_{L^2_{x_3}}\lesssim 1.
		\end{equation*}
		Further, if $0\leq a_0 < 4$, $-1/2 \leq a_1,a_2,a_3 < 4$, satisfy $a_0 + a_1 + a_2 + a_3 < 14$, and   $\delta>5$, then
		\begin{equation*}
			\sup_{x\in \R^4}\left\|\int_{\R^{12}} \frac{\langle x_1\rangle^{-\delta}\langle x_2\rangle^{-\delta}\langle x_3\rangle^{-\delta}\langle x_4\rangle^{-\delta/2}}{|x - x_1|^{a_0}|x_1 - x_2|^{a_1}|x_2 - x_3|^{a_2}|x_3 - x_4|^{a_3}}dx_1dx_2dx_3\right\|_{L^2_{x_4}}\lesssim 1.
		\end{equation*}
	\end{lemma}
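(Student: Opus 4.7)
The plan is to prove both estimates by integrating out the intermediate variables one at a time using Lemma~\ref{lem:spatial estimates}, reducing to a one-variable $L^2$ estimate that is in turn handled by the final clause of the same lemma.

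The first step is a normalization. For any $a_i\in[-\f12,0)$ I would dominate $|x_i-x_{i+1}|^{-a_i}\les\la x_i\ra^{|a_i|}\la x_{i+1}\ra^{|a_i|}$ and absorb the polynomial growth into the adjacent weights. Since $|a_i|\leq\f12$ and $\delta>5$, each weight still decays at rate at least $\delta-1>4$, so without loss of generality I may assume $0\leq a_0,a_1,a_2<4$ with the sum condition preserved. Then I apply Lemma~\ref{lem:spatial estimates} to the $x_1$ integral to obtain
\begin{equation*}
\int_{\R^4}\frac{\la x_1\ra^{-\delta}}{|x-x_1|^{a_0}|x_1-x_2|^{a_1}}\,dx_1 \les \frac{1}{|x-x_2|^p}
\end{equation*}
for any admissible $p\in[\max\{0,a_0+a_1-4\},\min\{a_0,a_1,a_0+a_1+\delta-4\}]$, and repeat on the $x_2$ integral to obtain an outer bound of $|x-x_3|^{-q}$ with $q\in[\max\{0,p+a_2-4\},\min\{p,a_2,p+a_2+\delta-4\}]$. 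Choosing the minimal values $p=\max\{0,a_0+a_1-4\}$ and $q=\max\{0,p+a_2-4\}$, the hypothesis $a_0+a_1+a_2<10$ gives $q<2$. If a critical equality $k+\ell=4$ arises in either application, I bypass the logarithmic case via the splitting $|y|^{-k}\les|y|^{-k+}+|y|^{-k-}$ noted after Lemma~\ref{lem:spatial estimates}.

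Having reduced the problem to controlling $\|\la x_3\ra^{-\delta/2}|x-x_3|^{-q}\|_{L^2_{x_3}}$, I square and estimate
\begin{equation*}
\int_{\R^4}\frac{\la x_3\ra^{-\delta}}{|x-x_3|^{2q}}\,dx_3 \les \la x\ra^{4-2q-\delta} \les 1,
\end{equation*}
where the first inequality is the last clause of Lemma~\ref{lem:spatial estimates} (valid because $q<2$ gives $2q<4$, and $\delta>5$ gives $2q+\delta>4$), and the final bound is uniform in $x$ because $4-2q-\delta<0$. The second estimate is proved by the same scheme with one additional iteration: integrate in $x_1,x_2,x_3$ in turn, using $a_0+a_1+a_2+a_3<14$ to guarantee that the final power $r$ on $|x-x_4|$ satisfies $r<2$, then apply the identical $L^2$-in-$x_4$ argument.

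The only delicate point is bookkeeping: verifying at each step that the chosen $p,q$ (resp.\ $p,q,r$) lie in the admissible interval from Lemma~\ref{lem:spatial estimates}. The upper-bound constraints reduce to $a_i<4$ and $\delta>5$, both already assumed, while the lower-bound constraints collapse to the sum conditions $a_0+a_1+a_2<10$ and $a_0+a_1+a_2+a_3<14$ which were designed precisely to force $q<2$ (resp.\ $r<2$). I expect this purely combinatorial verification of the exponent ranges, together with the avoidance of the borderline $k+\ell=4$ case, to be the only real obstacle; there is no oscillation or cancellation involved.
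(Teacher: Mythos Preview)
Your proof is correct and uses the same ingredients as the paper's argument: iterated application of Lemma~\ref{lem:spatial estimates} together with the domination $|x-y|^{1/2}\les\la x\ra^{1/2}\la y\ra^{1/2}$ to handle negative exponents, plus the $\epsilon$-splitting to avoid the borderline $k+\ell=4$. The only difference is organizational---the paper checks two extreme parameter configurations and asserts that the remaining cases follow by superposition, whereas you normalize to nonnegative exponents first and then run the iteration uniformly; your version is slightly more systematic but the underlying argument is the same.
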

	
	\begin{proof}
		We consider the first claim and consider the most extreme cases, when $a_0,a_1,a_2>3$, and $a_0=0,a_1=a_2=-\f12$.  The remaining cases may be bounded by superpositions of these cases. First, in the $a_0,a_1,a_2>3$ case,
		applying Lemma~\ref{lem:spatial estimates} in the $x_1$ integral, since $a_0+a_1>6$, then in the $x_2$ integral, $1<a_0 + a_1 + a_2 - 8<2$,   we have the $L^2$ norm is dominated by
		\begin{align*}
			\left\|\int_{\R^4} \frac{\langle x_2\rangle^{-\delta}\langle x_3\rangle^{-\delta/2}}{|x - x_2|^{a_0 + a_1 - 4}|x_2 - x_3|^{a_2}} dx_2\right\|_{L^2_{x_3}} 
			\lesssim\left\|\frac{\langle x_3\rangle^{-\delta/2}}{|x - x_3|^{a_0 + a_1 + a_2 - 8}}\right\|_{L^2_{x_3}} 
			\lesssim 1,
		\end{align*}
		uniformly in $x$.  Here one makes obvious adjustments if $a_0+a_1=4$ and consider a sum of terms replacing $a_0$ with $a_0+\epsilon$ and $a_0-\epsilon$ for a sufficiently small $\epsilon$.
		In the last bound we use that $a_0 + a_1 + a_2 < 10$ to see that the integrand is locally $L^2$.
		
		Now we consider the case where $a_0=0$ and $a_1=a_2=\f12$, here there is growth in $|x_1-x_2|$ and $|x_2-x_3|$. Using that $|x-y|^{\f12} \lesssim \langle x\rangle^{\f12}\langle y\rangle^{\f12}$. Since $\delta > 5$, we dominate by
		\begin{multline*}
			\left\|\int_{\R^8} \langle x_1\rangle^{-\delta}|x_1 - x_2|^{\f12}\langle x_2\rangle^{-\delta}|x_2 - x_3|^{\f12}\langle x_3\rangle^{-\delta/2}dx_1dx_2\right\|_{L^2_{x_3}} \\
			\lesssim \left\|\int_{\R^8}\langle x_1\rangle^{-\delta+\f12}\langle x_2\rangle^{-\delta+1}\langle x_3\rangle^{-\delta/2+\f12}dx_1 dx_2\right\|_{L^2_{x_3}} 
			\lesssim \left\|\langle x_3\rangle^{-(\delta-1)/2}\right\|_{L^2_{x_3}} 
			\lesssim 1.
		\end{multline*}
		This bound is uniform in $x\in \R^4$ provided $\delta-1>4$.  The remaining combinations of $a_0,a_1,a_2$ are dominated by these, hence the first claim follows.
		
		The second claim follows along similar lines.  We again consider the extreme cases, first if $a_0,a_1,a_2,a_3>3$ case, by repeated use of Lemma~\ref{lem:spatial estimates}, we have
		\begin{align*}
			\int_{\R^{12}} \frac{\langle x_1\rangle^{-\delta}\langle x_2\rangle^{-\delta}\langle x_3\rangle^{-\delta}\langle x_4\rangle^{-\delta/2}}{|x - x_1|^{a_0}|x_1 - x_2|^{a_1}|x_2 - x_3|^{a_2}|x_3 - x_4|^{a_3}}dx_1dx_2dx_3\les \frac{\langle x_4\rangle^{-\delta/2}}{|x - x_4|^{a_0 + a_1 + a_2 - 12}}.
		\end{align*}
		By the assumption that $a_0 + a_1 + a_2 + a_4 < 14$, this is in $L^2$ uniformly in $x$.  Similarly when $a_0=1$ and the remaining $a_j=-\f12$, repeated use of $|x-y|^{\f12}\lesssim  \langle x\rangle^{\f12}\langle y\rangle^{\f12}$ and Lemma~\ref{lem:spatial estimates} bounds the integral by $\la x_4\ra^{-(\delta-1)/2}$ uniformly in $x$, which is in $L^2$ provided $\delta>5$.  As before, we make the obvious adjustments to avoid logarithmic terms.
	\end{proof}

	We now seek to prove the following theorem to control the tail of the Born series.  For notational convenience, we denote
	\begin{align}\label{eq:Omega def}
		\Omega^\pm:= [(V\cR_0^\pm )^3  v^* M_\pm^{-1}v (\cR_0^\pm V )^3](z).
	\end{align}
	\begin{prop}\label{thm:RegularCaseMainTheorem}
		Suppose the thresholds of $\mathcal H$ are regular, and that $|V(x)| \lesssim \ang{x}^{-5-}$. Then
		\begin{align*}
			\sup_{x, y \in \R^4}\bigg|\int_{0}^{\infty}e^{-it\sqrt{z^2+m^2}}\frac{z\chi(z)}{\sqrt{z^2+m^2}}&\left(\cR_0^+ \Omega^+ \cR_0^+
			- \cR_0^- \Omega^- \cR_0^-\right)(z)(x, y)\, dz\bigg| \lesssim \frac{1}{\la t\ra^2}.
		\end{align*}
	\end{prop}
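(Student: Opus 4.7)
The plan is to mimic the strategy of Lemma~\ref{lem:born low} but now with the middle factor $M_\pm^{-1}(z)$ handled via the $L^2$-boundedness provided by Lemma~\ref{lem:Minv reg} and the iterated spatial integrals of Lemma~\ref{lem:abstractSpatialIntegrals}. First I would use the algebraic identity \eqref{eqn:alg identity} applied to the product $\cR_0^{\pm}\,\Omega^\pm\,\cR_0^{\pm}$ viewed as a product of nine factors (the seven free resolvents, together with the two copies of $M_\pm^{-1}(z)$, with the constant potentials and $v,v^*$ absorbed), to write the integrand as a sum of terms in each of which exactly one factor appears as a $+/-$ difference and the remaining factors are either all $+$ or all $-$. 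By \eqref{eqn:R0 diff for bs} and \eqref{eqn:Minv pm diff}, the difference factor is $O_2(z^2)$, while each remaining $\cR_0^\pm$ is a bounded operator uniformly on $z\in (0,z_1)$ pointwise via \eqref{eqn:R0 bds bs helpful}, and $M_\pm^{-1}$ is $O_2(z^0)$ as an absolutely bounded operator on $L^2(\R^4)$.

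The boundedness in $t$ for each summand follows by a Cauchy--Schwarz pairing around $M_\pm^{-1}(z)$: the pieces $\cR_0^\pm(z)(x,x_1) V(x_1)\cR_0^\pm(z)(x_1,x_2)V(x_2) \cR_0^\pm(z)(x_2,x_3)V(x_3) v(x_3)$ and the analogous block on the right are controlled in $L^2_{x_3}$ and $L^2_{x_4}$ uniformly in $x,y$ by \eqref{eqn:R0 for bs}, the decay $|V|\les \la x\ra^{-5-}$, and the second assertion of Lemma~\ref{lem:abstractSpatialIntegrals}. The $z$-integral is then bounded because the integrand is $O(z^2)$ on the support of $\chi$.

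To extract the $\la t\ra^{-2}$ decay, I would integrate by parts twice in $z$, using $\partial_z[e^{-it\sqrt{z^2+m^2}}] = -it\,z(z^2+m^2)^{-1/2} e^{-it\sqrt{z^2+m^2}}$. The $z^2$ smallness of the difference factor kills the boundary term at $z=0$ after one differentiation, and the cutoff $\chi$ kills it at the upper endpoint; after the second differentiation, the boundary term at $z=0$ again vanishes because the first $z$-derivative of $[\cR_0^+-\cR_0^-](z)$ and of $[M_+^{-1}-M_-^{-1}](z)$ are still $O(z)$ by \eqref{eqn:R0 diff for bs} and \eqref{eqn:Minv pm diff}. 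What remains is to bound the resulting $z$-integral of a product in which at most two $z$-derivatives are distributed among the nine factors. Derivatives on $M_\pm^{-1}$ are harmless by the $O_2$ control of Lemma~\ref{lem:Minv reg}; derivatives on an $\cR_L^\pm$ factor only improve matters via \eqref{eqn:RL low bds}; derivatives on cutoffs are dominated by the other cases as in Lemma~\ref{lem:born low}. The $L^2$ pairing around $M_\pm^{-1}$, combined with Lemma~\ref{lem:abstractSpatialIntegrals} applied to the differentiated iterated resolvents, then yields the $|t|^{-2}$ bound uniformly in $x,y$.

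The main obstacle, exactly as in Lemma~\ref{lem:born low}, is when a $z$-derivative lands on an $\cR_H^\pm$ factor that is adjacent to $x$ or $y$ (the leading or trailing free resolvent, which lie outside the $L^2$-paired block); here $\partial_z \mR_H^\pm$ grows spatially and a second $z$-integration by parts is not straightforward. In that case I would perform only a single integration by parts and apply Lemma~\ref{lem:high stat phase} along the stationary-phase line to the $z$-integral (with the exponent $e^{-it\sqrt{z^2+m^2}\pm i z|x-x_1|}$), obtaining $\la t\ra^{-1}$ from the oscillatory integral; the remaining $\la t\ra^{-1}$ is then recovered from the $z^2$ smallness of the difference factor, which upon one differentiation still yields a $z$ in the amplitude, pushing the effective decay to $\la t\ra^{-2}$. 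The spatial integrals that remain after applying Lemma~\ref{lem:high stat phase} are again $L^2$-paired around $M_\pm^{-1}$ and handled by Lemma~\ref{lem:abstractSpatialIntegrals}, with the assumption $\delta>5$ ensuring all hypotheses of that lemma are met.
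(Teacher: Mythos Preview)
Your overall strategy matches the paper's: split via the algebraic identity, integrate by parts twice where possible, and use Lemma~\ref{lem:high stat phase} after a single integration by parts when a derivative lands on an edge $\cR_H^\pm$. Two points in your write-up are wrong and would need correction.

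First, the boundary term after the second integration by parts does \emph{not} vanish. After one integration by parts you divide by $z$ (coming from $(\partial_z\sqrt{z^2+m^2})^{-1}=\sqrt{z^2+m^2}/z$), and since $\partial_z[\cR_0^+-\cR_0^-](z)=2cz\,\mG_1+O(z^3)$ and $\partial_z[M_+^{-1}-M_-^{-1}](z)=2z\Gamma_0+O(z^{1+})$, the quantity $z^{-1}\partial_z[\cdots]$ has a nonzero finite limit as $z\to0^+$. The paper handles this by showing the resulting boundary contribution is bounded uniformly in $x,y$ (so it contributes $|t|^{-2}$ with a finite constant), not by claiming it is zero. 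You need to bound it, not discard it.

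Second, your account of the edge-$\cR_H$ case misidentifies the source of the second $\la t\ra^{-1}$. After one integration by parts (which gives the first $t^{-1}$) with the derivative on the leading $\cR_H^\pm(z)(x,x_1)$, the amplitude $a(z)=\chi(z)e^{-iz|x-x_1|}\partial_z\cR_H^\pm\cdot(\text{rest})$ already satisfies $|a(z)|\les z\chi(z)\widetilde\chi(zr)(1+zr)^{-1/2}$ and $|a'(z)|\les \chi(z)\widetilde\chi(zr)(1+zr)^{-1/2}$ times spatially integrable factors, purely from the structure of $\cR_H$ in \eqref{eqn:RH defn}; the $z^2$ smallness of a $+/-$ difference elsewhere plays no role here. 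Lemma~\ref{lem:high stat phase} then directly yields the second $\la t\ra^{-1}$. In the paper's treatment (Lemma~\ref{HighOnEdgeLemma}) the $+/-$ cancellation is not used at all once an edge resolvent is $\cR_H$; the two signs are bounded individually. Your sentence about ``recovering the remaining $\la t\ra^{-1}$ from the $z^2$ smallness of the difference factor'' is therefore incorrect as a mechanism, although the conclusion $\la t\ra^{-2}$ is right.
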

	We prove this claim through a series of lemmas.  As might be expected in even spatial dimensions, due to the different behavior of the free resolvent on `large' versus `small' arguments, we decompose the leading and lagging free resolvents into the high and low components, $\mR_H$ and $\mR_L$ respectively, and consider the resulting cases one at a time.  We note that one can generalize the result of Proposition~\ref{thm:RegularCaseMainTheorem} to replace $M_{\pm}^{-1}(z)$ with any absolutely bounded operators on $L^2$ that satisfy the same bounds in $z$ in \eqref{eqn:Minv reg} and \eqref{eqn:Minv pm diff}.
	
	We first consider the `low-low' case when the leading and lagging resolvents are $\mR_L$, for which we have the following bound.
	\begin{lemma}\label{lem:NotAllLowLemma1}
		Under the assumptions of Proposition~\ref{thm:RegularCaseMainTheorem}, we have the bound
		\begin{align*}
			\sup_{x, y \in \R^4}\bigg|\int_{0}^{\infty}e^{-it\sqrt{z^2+m^2}}\frac{z\chi(z)}{\sqrt{z^2+m^2}}&\left(\cR_L^+\Omega^+\cR_L^+- \cR_L^- \Omega^-\cR_L^-\right)(z)(x, y)dz\bigg| \lesssim \frac{1}{\la t\ra^2}.
		\end{align*}
	\end{lemma}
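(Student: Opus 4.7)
The plan is to apply the algebraic identity \eqref{eqn:alg identity} to reduce the difference $\mR_L^+\Omega^+\mR_L^+ - \mR_L^-\Omega^-\mR_L^-$ to a sum of terms each carrying $O(z^2)$ smallness in the spectral parameter, and then integrate by parts twice in $z$ to extract the claimed $|t|^{-2}$ decay, with spatial integrals bounded uniformly in $x,y$ via Lemma~\ref{lem:abstractSpatialIntegrals}. Viewing $\mR_L^\pm \Omega^\pm \mR_L^\pm$ as a product of nine $\pm$-dependent factors (the leading $\mR_L^\pm$, the three $\mR_0^\pm$ inside $(V\mR_0^\pm)^3$, the middle $M_\pm^{-1}$, the three $\mR_0^\pm$ inside $(\mR_0^\pm V)^3$, and the lagging $\mR_L^\pm$), I would apply \eqref{eqn:alg identity} to write the difference as a sum over the nine positions, each summand containing exactly one differenced factor flanked by non-differenced ``$-$'' and ``$+$'' factors. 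By \eqref{eqn:RL diff bds}, the bound $|\partial_z^k[\mR_0^+-\mR_0^-]|\les z^{2-k}$ from \eqref{eqn:R0 diff for bs}, and \eqref{eqn:Minv pm diff}, each differenced factor is $O(z^2)$ with controllable $z$-derivatives.

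Boundedness of each summand uniformly in $x,y$ then follows immediately from the compact support of $\chi$ and the $O(z^2)$ smallness. For the $|t|^{-2}$ decay, I would use the identity $e^{-it\sqrt{z^2+m^2}}\f{z}{\sqrt{z^2+m^2}} = -\f{1}{it}\partial_z e^{-it\sqrt{z^2+m^2}}$ and integrate by parts twice, exactly as in the proof of Lemma~\ref{lem:free low disp}. The first integration by parts produces no boundary terms, since the $O(z^2)$ smallness of the differenced factor kills the $z=0$ contribution and $\chi$ handles the upper endpoint. For the second integration by parts, I would rewrite $\partial_z[\chi(z)T_\ell(z)] = \f{z}{\sqrt{z^2+m^2}} \cdot \f{\sqrt{z^2+m^2}}{z} \partial_z[\chi(z) T_\ell(z)]$; the factor of $z$ produced by differentiating the $z^2$-small factor exactly cancels the $1/z$, so both the boundary term at $z=0$ and the remaining oscillatory integral contribute $|t|^{-2}$.

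To control the resulting spatial kernels uniformly in $x,y$, I would express each summand as an $L^2$-bilinear pairing through $M_\pm^{-1}$,
\begin{equation*}
[\mR_L^\pm A^\pm v^* M_\pm^{-1} v B^\pm \mR_L^\pm](z)(x,y) = \la v(A^\pm)^*(\mR_L^\pm)^*(\cdot,x),\, M_\pm^{-1}(z)\, v B^\pm \mR_L^\pm(\cdot,y)\ra_{L^2},
\end{equation*}
where $A^\pm$ and $B^\pm$ denote whatever iterated products of $V$ and $\mR_0^\pm$ (possibly with one factor differenced or one derivative acting on a factor) lie between the outer $\mR_L^\pm$ and the $v^*, v$. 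Cauchy--Schwarz combined with the bound on $M_\pm^{-1}$ and its derivatives up to order two as absolutely bounded operators on $L^2$, supplied by Lemma~\ref{lem:Minv reg}, reduces the problem to controlling the $L^2$ norms of the flanking vectors uniformly in $x$ (resp.\ $y$). These norms are estimated by Lemma~\ref{lem:abstractSpatialIntegrals}, using the pointwise kernel bounds \eqref{eqn:RL low bds}, \eqref{eqn:R0 for bs}, and \eqref{eqn:R0 diff for bs}; the remaining $z$-integral has the form $\int_0^{z_1} z^{\alpha-}(1+|\log z|)^k\,dz$ with $\alpha>0$, which converges absolutely because the $z^2$ gain from the differenced factor more than absorbs both the two $z$-derivatives and the $1/z$ introduced at the second integration by parts.

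The main obstacle is bookkeeping: one must track how the two $z$-derivatives redistribute across the nine $\pm$-dependent factors and the cut-off $\chi$, verify in each case that the resulting combination of spatial singularities falls within the range allowed by Lemma~\ref{lem:abstractSpatialIntegrals} (the constraint $a_0+a_1+a_2+a_3<14$ is tight on the longest chains), and absorb the logarithmic growth in \eqref{eqn:RL low bds} that appears when derivatives land on $\mR_L^\pm$. The hypothesis $\delta>5$ on $V$ enters precisely to make Lemma~\ref{lem:abstractSpatialIntegrals} applicable to the longest iterated integrals arising here.
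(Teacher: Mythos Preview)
Your proposal is correct and rests on the same core mechanism as the paper: the algebraic identity \eqref{eqn:alg identity} to isolate one $O(z^2)$ differenced factor, two integrations by parts against $e^{-it\sqrt{z^2+m^2}}$, and control of the flanking iterated-resolvent vectors in $L^2$ via Lemma~\ref{lem:abstractSpatialIntegrals} together with the absolute boundedness of $M_\pm^{-1}$ and its $z$-derivatives from Lemma~\ref{lem:Minv reg}. The organizational difference is that the paper first isolates the sub-case where every inner $\mR_0^\pm$ in $\Omega^\pm$ is replaced by $\mR_L^\pm$ (this is Lemma~\ref{lem:allLow}, where the $\pm$ cancellation is indispensable), and then treats the complementary case---at least one inner resolvent contributing $\mR_H^\pm$---\emph{without} any $\pm$ cancellation, because a single inner $\mR_H^\pm$ already supplies $z^{1/2}$ smallness and forces the integrand to vanish near $z=0$ (see \eqref{eqn:iter RLRH L2}). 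Your uniform treatment skips this split at the price of carrying the high-argument pieces of the inner $\mR_0^\pm$'s through all derivative distributions; this works because those pieces are sandwiched between potentials (so the spatial growth $|x_j-x_{j+1}|^{1/2}$ from differentiating the phase is absorbed by $V$) and carry the cut-off $\widetilde\chi(z|x_j-x_{j+1}|)$ (so the $z$-integral is effectively restricted to $z\gtrsim |x_j-x_{j+1}|^{-1}$). One small correction: your claim that the remaining $z$-integral is always of the form $\int_0^{z_1} z^{\alpha-}(1+|\log z|)^k\,dz$ with $\alpha>0$ is accurate for the purely low-argument contributions but not literally for the high-argument ones, where the pointwise bound can be as singular as $z^{-3/2}|x_j-x_{j+1}|^{-1/2}$; it is the support condition $z|x_j-x_{j+1}|\gtrsim 1$, not a positive $z$-power, that renders those contributions finite after the spatial integration.
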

	There is a distinction between when all of the resolvents in $\Omega^\pm$ are supported on `low' argument versus when there is (at least) on that is not.  When all resolvents are supported on low argument, when $z|x_j-x_{j+1}|\les 1$, the cancellation between the `+' and `-' terms is crucial and must be captured to ensure integrability in $z$ and avoid premature boundary terms when integrating by parts.  Here we need to capture the leading terms in the expansions of the resolvents and $M_{\pm}^{-1}(z)$ exactly, as the exact behavior is needed whereas in other cases one can use coarser bounds.  For the convenience of the reader, we provide the following lemma that considers this case in detail before proving the more general result of Lemma~\ref{lem:NotAllLowLemma1}.  This is the most delicate case and allows us to develop the tools needed to more easily consider the cases when $\mR_L$ are replaced by $\mR_H$ and/or $\mR_0$ in the proofs of Lemmas~\ref{lem:NotAllLowLemma1} and \ref{HighOnEdgeLemma} below. 
	\begin{lemma}\label{lem:allLow}
		Under the assumptions of Proposition~\ref{thm:RegularCaseMainTheorem}, we have the bound
		\begin{multline*}
			\sup_{x, y \in \R^4}\bigg|\int_0^\infty  e^{-it\sqrt{z^2+m^2}}\frac{z\chi(z)}{\sqrt{z^2+m^2}}\left(\cR_L^+(V\cR_L^+ )^3  v^* M_+^{-1}v (\cR_L^+ V)^3  \cR_L^+\right . \\
			-\left .\mR_L^- (V\cR_L^- )^3  v^* M_-^{-1}v (\cR_L^- V)^3  \cR_L^-\right)(z)(x,y) dz  \bigg| \lesssim \frac{1}{\la t\ra^2}.
		\end{multline*}
	\end{lemma}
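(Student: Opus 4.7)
The plan is to exploit the $+/-$ cancellation to obtain a factor of $z^2$ from a single difference, then integrate by parts twice against the phase $e^{-it\sqrt{z^2+m^2}}$, with the spatial integrals controlled by the absolute boundedness of $M_\pm^{-1}(z)$ combined with Cauchy--Schwarz and the iterated integral bounds of Lemma~\ref{lem:abstractSpatialIntegrals}. Applying the algebraic identity \eqref{eqn:alg identity} to the product of the nine $\pm$-dependent factors $\cR_L^\pm,\,V\cR_L^\pm,\,v^* M_\pm^{-1}v,\,\cR_L^\pm V,\,\cR_L^\pm$ writes the difference as a finite sum in which each summand contains exactly one factor $[\cR_L^+-\cR_L^-](z)$ or $[M_+^{-1}-M_-^{-1}](z)$ sandwiched between $+$ and $-$ copies of the remaining factors. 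By \eqref{eqn:RL diff bds} and \eqref{eqn:Minv pm diff} this single-difference factor is $O_2(z^2)$ (with spatial growth absorbed into the $v$ and $V$ weights). Since the remaining factors at $z=0$ reduce to $\mG_0$ or $D_0$, the combined prefactor $z\chi(z)/\sqrt{z^2+m^2}$ makes each summand of size $O(z^3)$ as $z\to 0$, supported in $z\les 1$.

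The second step is a double integration by parts using $\partial_z e^{-it\sqrt{z^2+m^2}}=-it(z/\sqrt{z^2+m^2})e^{-it\sqrt{z^2+m^2}}$. The first pass absorbs the explicit $z/\sqrt{z^2+m^2}$ and produces no boundary contribution since the integrand vanishes at $z=0$ and $\chi$ is compactly supported; this yields an overall factor of $|t|^{-1}$. For the second pass we multiply and divide by $z/\sqrt{z^2+m^2}$ and integrate by parts again. The boundary term at $z=0$ is already of size $|t|^{-2}$: the extra $1/z$ introduced in $\sqrt{z^2+m^2}/z$ is cancelled exactly by the remaining $z^2$ smallness, leaving a value uniformly bounded in $x,y$ (the factor $\mG_1$ has constant kernel by \eqref{eq:G1 def}, so the evaluation at $z=0$ reduces to a product of $\mG_0$ resolvents, which are controlled by Cauchy--Schwarz around $D_0$ and Lemma~\ref{lem:abstractSpatialIntegrals}). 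It then remains to bound, uniformly in $x,y\in\R^4$, the $L^1_z$-norm of
\[
\partial_z\!\left(\frac{\sqrt{z^2+m^2}}{z}\,\partial_z\!\left[\chi(z)F^\pm(z)(x,y)\right]\right),
\]
where $F^\pm$ denotes a typical summand; the logarithmic factors inherited from $g_1^\pm(z)$ and from $\partial_z^2\cR_L^\pm$ are harmless since they contribute only $O(z\log z)$ singularities, which are in $L^1_{\rm loc}$.

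The spatial integrations are handled by treating $vM_\pm^{-1}(z)v^*$ (or $v[M_+^{-1}-M_-^{-1}](z)v^*$ when the difference resides there) as an absolutely bounded operator on $L^2(\R^4)$ via Lemma~\ref{lem:Minv reg}. A Cauchy--Schwarz pairing across the sandwich reduces the nested spatial integral to the product of $L^2$ norms of the three-resolvent chains on each side, which match the hypotheses of Lemma~\ref{lem:abstractSpatialIntegrals} after inserting the pointwise bounds $|\partial_z^k\cR_L^\pm(z)(x_j,x_{j+1})|\les z^{2-k}(1+|\log(z|x_j-x_{j+1}|)|+|x_j-x_{j+1}|^{-2})$ from Lemma~\ref{lem:R0exp} and the decay $|V(x)|\les \la x\ra^{-\delta}$ with $\delta>5$. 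Each $\cR_L^\pm$ contributes a spatial exponent $a_j\in\{2,3\}$ strictly less than $4$, and any polynomial growth $|x_j-x_{j+1}|^\ell$ from the tails of the expansions is absorbed via $|x_j-x_{j+1}|^\ell\les \la x_j\ra^\ell\la x_{j+1}\ra^\ell$ into the neighboring potential weights.

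The main obstacle is bookkeeping the distribution of the two $z$-derivatives across the product, since each derivative may fall on $\chi(z)$, on one of the $\cR_L^\pm$, or on the sandwiched $M_\pm^{-1}(z)$ (or on its difference). A derivative on $\cR_L^\pm$ trades a power of $z$ for an extra spatial singularity $|x_j-x_{j+1}|^{-1}$ (still within the admissible range of Lemma~\ref{lem:abstractSpatialIntegrals}) and possibly a logarithm; a derivative on $M_\pm^{-1}(z)$ uses Lemma~\ref{lem:Minv reg} to produce $O(z^{1-})$ growth as an absolutely bounded operator. The most delicate configuration is when both derivatives land on $M_\pm^{-1}(z)$, leaving only $O(z^{0-})$ control: there the required $z^2$ smallness is supplied either by a $[\cR_L^+-\cR_L^-]$ elsewhere in the product, or by the leading $z^2\Gamma_0$ when the difference resides in the sandwich itself via \eqref{eqn:Minv pm diff}; combined with the $L^1_{\rm loc}$ nature of the logarithms, this keeps the $L^1_z$ bound finite and uniform in $x,y$, yielding the claimed $\la t\ra^{-2}$ decay.
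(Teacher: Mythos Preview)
Your overall strategy matches the paper's: apply \eqref{eqn:alg identity} to isolate a single $+/-$ difference, extract the $z^2$ smallness from \eqref{eqn:RL diff bds} or \eqref{eqn:Minv pm diff}, integrate by parts twice, and control the spatial integrals by Cauchy--Schwarz across the absolutely bounded $M_\pm^{-1}$ together with Lemma~\ref{lem:abstractSpatialIntegrals}. The boundary-term analysis after the second integration by parts is also correctly identified.

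There is, however, a genuine gap when the difference $[\mR_L^+-\mR_L^-]$ sits at an \emph{outer} position, say with argument $(x,x_1)$. After two integrations by parts the remainder in \eqref{eqn:RL diff bds} leaves a contribution of order $z|x-x_1|^2\chi(z|x-x_1|)$ multiplied by $\Omega_L^+\mR_L^+$, and your proposal to absorb the spatial growth via $|x-x_1|^2\les \la x\ra^2\la x_1\ra^2$ and ``neighboring potential weights'' fails: there is no potential weight attached to the external variable $x$, so this does not produce a bound uniform in $x\in\R^4$. The paper instead exploits the support of the low cutoff: since $z|x-x_1|\les 1$ on $\text{supp}\,\chi(z|x-x_1|)$, one has
\[
\int_0^\infty z|x-x_1|^2\chi(z|x-x_1|)\,dz\les \int_0^{|x-x_1|^{-1}} z|x-x_1|^2\,dz\les 1
\]
uniformly in $|x-x_1|$; this is the same device already used in Lemma~\ref{lem:free low disp}. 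A smaller point: your claim that a $z$-derivative on $\mR_L^\pm$ ``trades a power of $z$ for an extra spatial singularity $|x_j-x_{j+1}|^{-1}$'' is backwards. By \eqref{eqn:RL low bds} the undifferentiated kernel carries the worse $|x-y|^{-3}$ singularity, while $\partial_z\mR_L^\pm$ is bounded by $z(1+|\log(z|x-y|)|+|x-y|^{-2})$; differentiation actually improves the spatial behavior, which is why Lemma~\ref{lem:abstractSpatialIntegrals} applies comfortably after the product rule.
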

	
	\begin{proof}
		Noting \eqref{eqn:RL low bds}, Lemma~\ref{lem:abstractSpatialIntegrals}, and Lemma~\ref{lem:Minv reg}, we see that 
		\begin{multline*}
			\bigg|\int_0^\infty  e^{-it\sqrt{z^2+m^2}}\frac{z\chi(z)}{\sqrt{z^2+m^2}}
			\bigg[\mR_L^\pm(V\cR_L^\pm )^3 v^* M_\pm^{-1}v (\cR_L^\pm V)^3  \cR_L^\pm \bigg](z)(x,y)\, dz\bigg|\\
			\int_0^\infty z\chi(z) \|\mR_L^\pm(V \mR_L^\pm)^3\mR_L^\pm v^*\|_{L^2} \| |M_{\pm}^{-1}(z)|\|_{L^2\to L^2} \|v(\mR_L^\pm V)^3\mR_L^\pm\|_{L^2}\, dz \les 1,
		\end{multline*}
		uniformly in $x,y\in \mathbb R^4$.

		To establish time decay, we take utilize cancellation between the `+' and `-' terms as $z\to 0$.  For convenience, we denote
		$$
			\Omega_L^\pm:=[(V\cR_L^\pm )^3  v^* M_\pm^{-1}v (\cR_L^\pm V )^3](z).
		$$
		From the algebraic identity \eqref{eqn:alg identity}, we note that
		\begin{align}\label{eqn:RLOmegaRL diff}
			\mR_L^+\Omega_L^+\mR_L^+-\mR_L \Omega_L^-\mR_L^-
			=(\mR_L^+-\mR_L^-)\Omega_L^+\mR_L^+
			+\mR_L^-[\Omega_L^+-\Omega_L^-]\mR_L^+
			+\mR_L^-\Omega_L^-[\mR_L^+-\mR_L^-].
		\end{align}
		By \eqref{eqn:RL diff bds} and \eqref{eqn:Minv pm diff}, this quantity is bounded by $z^2$ with respect to the spectral parameter.  By Lemma~\ref{lem:abstractSpatialIntegrals} and \eqref{eqn:RL low bds}, the iterated resolvent $\mR_L(V\mR_L)^3v^*$ and $v(\mR_LV)^3\mR_L$ are in $L^2$, Lemma~\ref{lem:Minv reg} ensures that $M_{\pm}$ are absolutely bounded, which ensures that the accompanying the spatial integrals converge.  Further, \eqref{eqn:RLOmegaRL diff} is differentiable in $z$, though we expect to have a boundary term from the second integration by parts.
		
		We consider only the first and second terms in \eqref{eqn:RLOmegaRL diff}, the third term follows by symmetry considerations.
		We now consider the contribution of the first term in \eqref{eqn:RLOmegaRL diff} to the Stone's formula,
		\begin{equation*}
			\int_0^\infty e^{-it\sqrt{z^2+m^2}}\frac{z\chi(z)}{\sqrt{z^2+m^2}}\left[(\cR_L^+-\cR_L^-) \Omega^+_L \cR_L^+\right](z)(x,y) \,dz.
		\end{equation*}	
		Here, we integrate by parts twice, as noted above there is no boundary term from the first integration by parts, to obtain
		\begin{multline}\label{eqn:Tail RLdiff front}
			 -\frac{1}{(2it)^2} e^{-it\sqrt{z^2+m^2}}\frac{\sqrt{z^2 + m^2}}{z}\partial_z\left(\chi(z)(\cR_L^+-\cR_L^-) \Omega_L^+\cR_L^+\right) \, \bigg\vert_{z = 0} \\
			+ \frac{1}{(2it)^2}\int_0^\infty e^{-it\sqrt{z^2+m^2}}\partial_z\bigg(\frac{\sqrt{z^2 + m^2}}{z}\partial_z\left(\chi(z)\left((\cR_L^+-\cR_L^-) \Omega_L^+\cR_L^+\right)\right)\bigg)\, dz.
		\end{multline}	
		Here the boundary term is understood as the limiting value as $z\to 0^+$.  The division by $z$ after the derivative in $z$ ensures a non-zero boundary term.  We note that
		\begin{multline}
			(\cR_L^+-\cR_L^-) \Omega^+_L \cR_L^+(z)(x,y)\\
			=\int_{\R^{32}}[z^2+O_1(z^2|x-x_1|)]\chi(z|x-x_1|)
			\prod_{j=1}^3 V(x_j)\mR_L^+(z)(x_j,x_{j+1})\\ v^*(x_4)M_+^{-1}(z)(x_3,x_5)v(x_5)\prod_{j=5}^7\mR_L^+(z)(x_j,x_{j+1})V(x_{j+1})\mR_L^+(z)(x_8,y)\, d\vec x
		\end{multline}
		We note, from \eqref{eqn:RL low bds}, \eqref{eqn:RL diff bds} and Lemma~\ref{lem:Minv reg}, any term where the derivative acts on $\mR_L$ or $M_+^{-1}$ creates extra $z$ smallness so that the resulting function is of order $z^{1+}$ as $z\to 0$.  The only way to obtain non-zero boundary terms is when the derivative acts on the leading $z^2$ portion of $\mR_L^+-\mR_L^-$.  In all computations we ignore the case when the derivative acts on cut-offs.  In this case, we have
		\begin{multline*}
			\bigg|\lim_{z\to 0^+}\frac{1}{z}\partial_z(\cR_L^+-\cR_L^-) \Omega^+_L \cR_L^+(z)(x,y)\bigg|\\
			=\lim_{z\to 0^+}\bigg|\int_{\R^{32}}\chi(z|x-x_1|)
			\prod_{j=1}^3 V(x_j)\mR_L^+(z)(x_j,x_{j+1}) v^*(x_4)M_+^{-1}(z)(x_3,x_5)\\
			v(x_5)\prod_{j=5}^7\mR_L^+(z)(x_j,x_{j+1})V(x_{j+1})\mR_L^+(z)(x_7,y)\, d\vec x\bigg|\\
			\les \sum \int_{\R^{32}} \prod_{j=1}^3 \frac{\la x_j\ra^{-\delta}}{|x_j-x_{j+1}|^{k_j}} \la x_4\ra^{-\delta/2} |D_0(x_4,x_5)| \la x_5\ra^{-\delta/2}
			\prod_{j=5}^7 \frac{\la x_{j+1}\ra^{-\delta}}{|x_j-x_{j+1}|^{k_j}} \frac{1}{|x_8-y|^{k_8}}\, d\vec x,
		\end{multline*}
		where the sum is taken over $k_j\in \{2,3\}$.  By Lemmas~\ref{lem:abstractSpatialIntegrals} and \ref{d0bounded}, these integral converge uniformly in $x,y\in \mathbb R^4$.  This suffices to show that the boundary term in \eqref{eqn:Tail RLdiff front} satisfies the $|t|^{-2}$ bound.
		
		We note that $\Omega_L^+\mR_L^+$ is a product of integral kernels of $\mR_L^+$, potentials, and $M_+^{-1}$.  Let $\mathcal I=\{1,2,3,5,6,7\}$, then the integral kernel of $\Omega_L^+\mR_L^+$ is of the form
		\begin{align}\label{eqn:OmRL prod def}
			\Omega_L^+\mR_L^+(z)(\vec x)=
			\prod_{j\in \mathcal I} \mR_L^+(z)(x_j,x_{j+1})
			[v^*(x_4)M_+^{-1}(z)(x_4,x_5)v(x_5)] \prod_{i\in\mathcal I} V(x_i)
		\end{align}
		As in the preceding argument by \eqref{eqn:RL low bds}, Lemma~\ref{lem:Minv reg} and Lemma~\ref{lem:spatial estimates}, we see that that
		\begin{align}\label{eqn:OmRL prod 0der}
			\sup_{y\in \mathbb R^4}  \sup_{0<z<z_1}|\Omega_L^+\mR_L^+(z)(x_1,y)| \les |V(x_1)|.
		\end{align}
		We now seek to control the first two derivatives in $z$ of this term.  We note that a consequence of \eqref{eqn:Minv reg} in Lemma~\ref{lem:Minv reg} is the following
		\begin{multline}\label{eqn:Minv sup}
			\left\|\sup_{0<z<z_1} |M_\pm^{-1}(z)|\,\right\|_{L^2\to L^2}+
			\left\|\sup_{0<z<z_1} z^{-1+} |\partial_z M_\pm^{-1}(z)|\,\right\|_{L^2\to L^2} \\
			+\left\|\sup_{0<z<z_1} z^{0+} |\partial_z^2 M_\pm^{-1}(z)|\,\right\|_{L^2\to L^2}<\infty.
		\end{multline}
		Denote $\mathcal I_k=\mathcal I\setminus \{k\}$.
		From \eqref{eqn:OmRL prod def} and the product rule, we have
		\begin{multline*}
			\partial_z[\Omega_L^+\mR_L^+](z)(\vec x)\\
			= \sum_{k\in \mathcal I} \partial_z \mR_L^+(z)(x_k,x_{k+1})
			\prod_{j\in \mathcal I_k} \mR_L^+(z)(x_j,x_{j+1})
			[v^*(x_4)M_+^{-1}(z)(x_4,x_5)v(x_5)] \prod_{i\in\mathcal I}V(x_i)\\
			+\prod_{j\in \mathcal I} \mR_L^+(z)(x_j,x_{j+1})
			[v^*(x_4)\partial_z M_+^{-1}(z)(x_4,x_5)v(x_5)] \prod_{i\in\mathcal I}V(x_i).
		\end{multline*}
		Combining \eqref{eqn:RL low bds}, \eqref{eqn:Minv reg}, and Lemma~\ref{lem:abstractSpatialIntegrals} we see that
		\begin{align}\label{eqn:OmRL prod der}
			\sup_{y\in \mathbb R^4}  \sup_{0<z<z_1}|z^{-1+} \partial_z[\Omega_L^+\mR_L^+](z)(x_1,y)| \bigg|\les |V(x_1)|.
		\end{align}
		Similarly, with $\mathcal I'=\mathcal I\setminus \{k_1,k_2\}$ we have \begin{multline*}
			\partial_z^2[\Omega_L^+\mR_L^+](z)(\vec x)\\
			= \bigg(\sum_{k_1,k_2\in \mathcal I} \partial_z \mR_L^+(z)(x_{k_1},x_{k_1+1}) \partial_z \mR_L^+(z)(x_{k_2},x_{k_2+1})
			\prod_{j\in \mathcal I'} \mR_L^+(z)(x_j,x_{j+1})
			[v^*M_+^{-1}v](z)(x_4,x_5)  \\
			+\sum_{k\in \mathcal I} \partial_z \mR_L^+(z)(x_k,x_{k+1})\prod_{j\in \mathcal I_k} \mR_L^+(z)(x_j,x_{j+1})
			[v^*(x_4)\partial_z M_+^{-1}(z)(x_4,x_5)v(x_5)] \\
			+\prod_{j\in \mathcal I} \mR_L^+(z)(x_j,x_{j+1})
			[v^*(x_4)\partial_z^2 M_+^{-1}(z)(x_4,x_5)v(x_5)]\bigg) \prod_{i\in\mathcal I},
		\end{multline*}
		where if $k_1=k_2$ instead of a product of $\partial_z \mR_L^+(z)(x_{k_1},x_{k_1+1}) \partial_z \mR_L^+(z)(x_{k_2},x_{k_2+1})$, we have $\partial_z^2 \mR_L^+(z)(x_{k_1},x_{k_1+1})$.  From this and \eqref{eqn:RL low bds}, \eqref{eqn:Minv reg}, and Lemma~\ref{lem:abstractSpatialIntegrals}, we have
		\begin{align}\label{eqn:OmRL prod 2der}
			\sup_{y\in \mathbb R^4}  \sup_{0<z<z_1}[z^{0+}| \partial_z^2\Omega_L^+\mR_L^+(z)(x_1,y)|  ]\les |V(x_1)|.
		\end{align}
		We now consider the integral term in \eqref{eqn:Tail RLdiff front}.  As in the boundary term, more care is required when the derivative acts on the leading $\mR_L^+-\mR_L^-$ to avoid a logarithmic singularity in the $z$ integral.  That is, we consider two cases
		$$
			\partial_z \big( [\mR_L^+-\mR_L^-]\Omega_L^+\mR_L^+\big)
			=\partial_z  [\mR_L^+-\mR_L^-]\big(\Omega_L^+\mR_L^+\big)
			+  [\mR_L^+-\mR_L^-]\partial_z\big(\Omega_L^+\mR_L^+\big)
		$$
		By \eqref{eqn:RL diff bds}, we have
		$$
			\frac{1}{z}\partial_z[\mR_L^+-\mR_L^-](z)(x,x_1)=\mG_1(x,x_1)+O_1(z^2|x-x_1|^2)\chi(z|x-x_1|)
		$$
		In particular, this shows that
		$$
			\bigg|\partial_z\bigg(\frac{1}{z}\partial_z[\mR_L^+-\mR_L^-]\bigg)(z)(x,x_1)\bigg|\les z|x-x_1|^2\chi(z|x-x_1|)
		$$
		The claimed time decay for this term follows by showing the following is bounded uniformly in $x,y\in \R^4$:
		\begin{multline}\label{eqn:OmegaL RLdiff int 2ibp}
			\bigg|\int_0^\infty e^{-it\sqrt{z^2+m^2}}\partial_z\bigg(\frac{\sqrt{z^2 + m^2}}{z}\partial_z\left(\chi(z)\left((\cR_L^+-\cR_L^-) \Omega_L^+\cR_L^+\right)\right)\bigg)\, dz\bigg|\\
			\les \int_0^\infty \chi(z)\bigg|
			\partial_z\bigg(\frac{\partial_z[\mR_L^+-\mR_L^-]}{z}\bigg)
			[\Omega_L^+\mR_L^+](z)\bigg| +\bigg|\frac{[\mR_L^+-\mR_L^-]}{z}\partial_z^2
			[\Omega_L^+ \mR_L^+](z)\bigg|\, dz\\
			+\int_0^\infty \bigg| \frac{1}{\sqrt{z^2+m^2}} \partial_z\left(\chi(z)\left((\cR_L^+-\cR_L^-) \Omega_L^+\cR_L^+\right)\right)\bigg|\, dz.
		\end{multline}
		By \eqref{eqn:OmRL prod 0der}, \eqref{eqn:OmRL prod der}, \eqref{eqn:OmRL prod 2der}, the computations above, and H\"older's inequality we see that
		\begin{multline*}
			|\eqref{eqn:OmegaL RLdiff int 2ibp}|\les \sup_{x,y\in \mathbb R^4} \int_0^\infty \int_{\R^{4}}\bigg[\sup_{0<z<z_1} |\Omega_L^+\mR_L^+(z)(x_1,y)|\bigg] 
			 z|x-x_1|^2 \chi(z|x-x_1|)\,dz \, dx_1 \\
			+\sup_{y\in \mathbb R^4}\int_0^\infty z^{1-}\chi(z) \int_{\R^{4}} \bigg[\sup_{0<z<z_1}|z^{-1+}\partial_z[\Omega_L^+\mR_L^+](z)(x_1,y)\bigg] \, dx_1
			\, dz\\
			+\sup_{x,y\in \mathbb R^4} \int_{\R^{4}}\int_0^\infty z^{0-}\chi(z) \bigg[ \sup_{0<z<z_1} z^{0+}|\partial_z^2\Omega_L^+\mR_L^+(z)(x_1,y)|\bigg] 
			 \, dz\, dx_1\les \|V\|_{L^1_{x_1}}\les 1,
		\end{multline*}
		uniformly in $x$.  Hence, we have $|\eqref{eqn:Tail RLdiff front}|\les |t|^{-2}$ uniformly in $x,y$.
		
		We note turn to the contribution of  $\mR_L^-[\Omega_L^+-\Omega_L^-]\mR_L^+$.  
		By \eqref{eqn:alg identity} and \eqref{eqn:OmRL prod def}, the argument above and Fubini's Theorem suffice to control all possibilities except when the +/- difference acts on the operators $M_{\pm}^{-1}(z)$.  In this case, we need to bound the contribution of		
		\begin{align}\label{eqn:tail Mdiff}
			\int_0^\infty e^{-it\sqrt{z^2+m^2}}\frac{z\chi(z)}{\sqrt{z^2+m^2}}\left(\cR_L^- (V\cR_L^-)^3 v^* (M_+^{-1}-M_-^{-1})v (\cR_L^+ V)^3\cR_L^+\right)(z)(x,y)\,dz.
		\end{align}
		As before, we may integrate by parts once without a boundary term, since \eqref{eqn:Minv pm diff} gives us that $[M_+^{-1}-M_-^{-1}](z)=z^2\Gamma_0+O_2(z^2+)$ as an absolutely bounded operator, with $\Gamma_0$ $z$-independent.
		\begin{multline*}
			-\frac{e^{-it\sqrt{z^2+m^2}}\sqrt{z^2 + m^2}}{(it)^2z}\partial_z\left(\chi(z)\left((\cR_L^- V)^3 \cR_L^- v^* (M_+^{-1}-M_-^{-1})v \cR_L^+ (V \cR_L^+)^3 \right)\right)(z)(x,y)\bigg\vert_{z = 0} \\
			+ \frac{1}{(it)^2}\int_0^\infty e^{-it\sqrt{z^2+m^2}}
			\partial_z\bigg(\frac{\sqrt{z^2 + m^2}}{z}
			\partial_z\big(\chi(z)\big((\cR_L^- V)^3\cR_L^- v^*\\ 
			(M_+^{-1}-M_-^{-1})v \cR_L^+ (V \cR_L^+)^3\big)\big)\bigg)(z)(x,y)\,  dz .
		\end{multline*}
		First, we notice that \eqref{eqn:RL low bds}, along with Lemma~\ref{lem:abstractSpatialIntegrals} shows that (for $k=1,2$)
		\begin{align}\label{eqn:iter RL L2}
			\sup_{y\in \R^4}\|[v \cR_L^\pm (V \cR_L^\pm)^3(z)(\cdot,y)]\|_{L^2} \les 1,\quad
			\sup_{y\in \R^4}\| \partial_z^k [v \cR_L^\pm (V \cR_L^\pm)^3(z)(\cdot,y)]\|_{L^2} \les z^{2-k-}.
		\end{align}
		We first handle the boundary term. By \eqref{eqn:iter RL L2} and \eqref{eqn:Minv pm diff}, we see the only contribution occurs when the derivative acts on $(M_+^{-1} - M_-^{-1})(z)$.  Accordingly, we need only show the limiting value as $z\to 0^+$ of the boundary term is bounded.
		\begin{align*}
			\int_{\R^{32}}e^{-it\sqrt{z^2+m^2}}\frac{\sqrt{z^2 + m^2}}{z}\chi(z)\left((\cR_L^- V)^3 \cR_L^- v^* \partial_z(M_+^{-1}-M_-^{-1})v \cR_L^+ (V \cR_L^+)^3 \right)d\vec{x} \bigg\vert_{z = 0} \\
			\lesssim  \left\|(\cR_L^- V)^3 
			\cR_L^-v^*\right\|_{L^2}
			\left\| \sup_{0<z<z_1} z^{-1}|\partial_z(M_+^{-1}-M_-^{-1})|\right\|_{L^2 \to L^2}
			\left\|v \cR_L^+ (V \cR_L^+)^3 \right\|_{L^2} 
			\lesssim 1,
		\end{align*}
		uniformly in $x,y$ by Lemma~\ref{lem:abstractSpatialIntegrals}, \eqref{eqn:iter RL L2} and \eqref{eqn:Minv pm diff}. 
		
		Next, we consider the integral as a sum of terms of the form
		\begin{multline}\label{eqn:Minv diff RL case}
			\sum_{k_j} \int_0^\infty e^{-it\sqrt{z^2+m^2}}\chi(z)
			\partial_z^{k_1}\bigg(\sqrt{z^2 + m^2}  (\cR_L^- V)^3\cR_L^- v^*\bigg)\\ \partial_z^{k_2}\bigg(\frac{\partial_z^{k_3}(M_+^{-1}-M_-^{-1})}{z}\bigg) 
			\partial_{z}^{k_4}\big(v \cR_L^+ (V \cR_L^+)^3  \big)(z)(x,y)\,  dz ,
		\end{multline}
		where $k_j\in\{0,1,2\}$, $k_2,k_3\in \{0,1\}$, and $\sum k_j=2$.  The derivative is harmless if it acts on the cut off or $\sqrt{z^2+m^2}$.  We need only show that we can bound the derivative terms by a function of $z$ that is integrable in a neighborhood of zero uniformly in $x,y$.  
		
		From \eqref{eqn:Minv pm diff}, since $0\leq k_2+k_3\leq 2$, we have
		$$
			\left\|   \bigg| \sup_{0<z<z_1}z^{\alpha(k_2,k_3)} \partial_z^{k_2}\bigg(\frac{\partial_z^{k_3}(M_+^{-1}-M_-^{-1})}{z}\bigg)\bigg|
			\right\|_{L^2\to L^2}\les 1, \qquad \alpha(k_2,k_3)=\left\{\begin{array}{ll}
				0 & k_2+k_3<2\\
				1- & k_2+k_3=2
				\end{array} \right..
		$$
		Now, using \eqref{eqn:iter RL L2} and \eqref{eqn:Minv diff RL case}, we see
		\begin{multline*}
			|\eqref{eqn:Minv diff RL case}| \les \sum_{k_j} \int_0^1 \left\|\partial_z^{k_1}\bigg(\sqrt{z^2 + m^2}  (\cR_L^- V)^3\cR_L^- v^*\bigg)\right\|_{L^2} \\ \left\| \sup_{0<z<z_1}z^{\alpha(k_2,k_3)} \bigg| \partial_z^{k_2}\bigg(\frac{\partial_z^{k_3}(M_+^{-1}-M_-^{-1})}{z}\bigg)\bigg|
			\right\|_{L^2\to L^2}
			\left\|\partial_{z}^{k_4}\big(v \cR_L^+ (V \cR_L^+)^3  \big)\right\|_{L^2}\,  dz \\
			\les \int_0^1 z^{-1+}\, dz\les 1,
		\end{multline*}
		uniformly in $x,y\in \mathbb R^4$.
	\end{proof}
	
	We now extend the argument from Lemma~\ref{lem:allLow} to control the contribution to the Stone's formula when we replace the inner low resolvents $\mR_L$ with the $\mR_0$.  Roughly speaking, when any inner resolvent $\mR_0$ contributes $\mR_H$, we no longer need to utilize the difference between the `+' and `-' terms as the integrand is no longer supported in a neighborhood of $z=0$.  The argument here is less delicate, and benefits from the arguments in the proof of Lemma~\ref{lem:allLow}.
	
	\begin{proof}[Proof of Lemma~\ref{lem:NotAllLowLemma1}]
		If all inner resolvents contribute $\mR_L$ to $\Omega^\pm(z)$, then we are done by Lemma \ref{lem:allLow}. Otherwise, at least one resolvent is $\mR_H$ and after expanding the spatial variable dependence, we have we have $z|x_j-x_{j+1}|\gtrsim 1$ for some $j\in \{1,2,3,5,6,7\}$ and we no longer gain by considering the difference of the $+$ and $-$ terms, so we seek to bound
		\begin{equation*}
			\int_0^\infty  e^{-it\sqrt{z^2+m^2}}\frac{z\chi(z)}{\sqrt{z^2+m^2}}\cR_L^+(z)(V\cR_0^+(z))^3v^*M_+^{-1}v(\cR_0^+(z)V)^3 \cR_L^+(z)(x,y) \,\, dz.
		\end{equation*}
		Without loss of generality, we consider the '+` resolvents.
		Here one of the $\mR_0^+$ is $\mR_H^+$, by \eqref{eqn:RH defn} we have
		\be\label{eqn:RH in Marg}
			|\partial_z^k \mR_H(z)(x,y)|\les \frac{z^{\f12}}{|x-y|^{\f32-k}}.
		\ee
		Effectively, we may obtain further smallness in $z$ at the cost of growth in spatial variables.  Since this occurs only for an inner resolvent, this is easily controlled by the decay of the potentials on either side.  Similar to \eqref{eqn:iter RL L2}, \eqref{eqn:RL low bds}, \eqref{eqn:RH in Marg} 	
		and Lemma~\ref{lem:abstractSpatialIntegrals} show that
		\begin{align}\label{eqn:iter RLR0 L2}
			\sup_{y\in \R^4}\|[v (\cR_0^+ V)^3 \cR_L^+(z)(\cdot,y)]\|_{L^2} \les 1,\quad
			\sup_{y\in \R^4}\| \partial_z^k [v (\cR_0^+ V)^3 \cR_L^+(z)(\cdot,y)]\|_{L^2} \les z^{2-k-}.
		\end{align}
		This alone is not enough to integrate by parts twice in $z$ since we haven't yet accounted for the assumption that one resolvent is $\mR_H$.  Without loss of generality, we consider the case when the resolvent closest to $v$ is $\mR_H$, as this case constrains the amount of decay needed on the potential.  In this case we note that
		\begin{multline*}
			|\partial_z^k [v\mR_H^+ V(\cR_0^+ V)^2 \cR_L^+(z)(x,y)]|\\
			\les \sum |v(x)|\int_{\R^{12}} z^{\f12}|x-x_1|^{k_1-\f32} |V(x_1)| |\partial_z^{k_2}\big(\mR_0^+(z)V \big)^2(x_1,x_2,x_3)| |\partial_z^{k_3}\mR_L^+(z)(x_3,y)|
		\end{multline*}
		where the sum is taken of non-negative $k_j$ with $k_1+k_2+k_3=k$.  We consider only the case of $k\in\{0,1,2\}$,
		applying \eqref{eqn:R0 for bs} for the inner resolvents we have and \eqref{eqn:RL low def} and \eqref{eqn:RL low bds} to bound this by
		\begin{multline*}
			\sum \la x\ra^{-\f52-}\int_{\R^{12}} z^{\f12}|x-x_1|^{k_1-\f32} \\
			\prod_{j=1}^2 \la x_j\ra^{-5-} \bigg(\frac{\delta_{k_j}}{|x_j-x_{j+1}|^3}
			+(1-\delta_{k_j})z^{-2-k_j-}(1+|x_j-x_{j+1}|^{-2}) +\frac{z^\f12}{|x_j-x_{j+1}|^{\f32-k_2}}\bigg)\\
			\bigg(\frac{\delta_{k_3}}{|x_3-y|^3}+\frac{\delta_{k_3}}{|x_3-y|^2}+(1-\delta_{k_3})+z^{-2-k_3-}(1+|x_4-y|^{-2})\bigg) \, dx_1\, dx_2\, dx_3
		\end{multline*}
		where $\delta_{k_j}=1$ if $k_j=0$ and is zero otherwise. The case that constrains the required decay on the potential is when $k_1=2$, then using $|x-x_1|^\f12\les \la x\ra^\f12\la x_1\ra^{\f12}$ we have
		\begin{multline*}
			\la x\ra^{-2-}\int_{\R^{12}} z^{\f12}
			\prod_{j=1}^2 \la x_j\ra^{-\f72-} \bigg(\frac{1}{|x_j-x_{j+1}|^3}
		 	+\frac{z^\f12}{|x_j-x_{j+1}|^{\f32}}\bigg)
			\bigg(\frac{1}{|x_3-y|^3}+\frac{1}{|x_3-y|^2}\bigg) \, dx_1\, dx_2\, dx_3
		\end{multline*}
		By Lemma~\ref{lem:spatial estimates}, and noting that $0<z<1$, this is bounded by $z^{\f12}\la x\ra^{-2-}$ uniformly in $y$, which is just enough to be in $L^2_x(\R^4)$.  Applying Lemma~\ref{lem:abstractSpatialIntegrals} to the remaining cases, we see that		
		\begin{align}\label{eqn:iter RLRH L2}
			\sup_{y\in \R^4}\| \partial_z^k [v \mR_H^+(\cR_0^+ V)^2 \cR_L^+(z)(\cdot,y)]\|_{L^2} \les z^{\f12-}.
		\end{align}
		This, along with \eqref{eqn:Minv sup} suffices to show that we may integrate by parts twice with no boundary terms.  The resulting integrand is bounded uniformly in $x,y$ by $z^{-\f12-}$, which is integrable on the support of $\chi(z)$.  All other possibilities for the appearance of $\mR_H$ obey the same bounds and we have
		$$
			\sup_{x,y\in\R^4} \bigg|\int_0^\infty  e^{-it\sqrt{z^2+m^2}}\frac{z\chi(z)}{\sqrt{z^2+m^2}}\cR_L^+(z)(V\cR_0^+(z))^3v^*M_+^{-1}v(\cR_0^+(z)V)^3 \cR_L^+(z)(x,y) \, dz|\les \frac{1}{\la t\ra^2},
		$$
		as desired.
	\end{proof}
	
	We now consider the case when one of the leading or lagging resolvents is $\mR_H$.  Here we cannot safely integrate by parts twice as $|\partial_z^2\mR_H^\pm(z)(x,x_1)|\les z^\f12|x-x_1|^{\f12}$, whose contribution to the Stone's formula we cannot bound uniformly in $x$.  Here we need to utilize the oscillatory integral bounds in Lemma~\ref{lem:high stat phase} to obtain a time decay that is uniform in $x$.  In this regime we bound the `+' and `-' terms individually.
	\begin{lemma}\label{HighOnEdgeLemma}
		Under the assumptions of Proposition~\ref{thm:RegularCaseMainTheorem} we have the following bounds
		\begin{align*}
			\sup_{x, y \in \R^4}\bigg|\int_{0}^{\infty}e^{-it\sqrt{z^2+m^2}}\frac{z\chi(z)}{\sqrt{z^2+m^2}}\cR_H^\pm\Omega^\pm\cR_0^\pm(z)(x, y)dz\bigg| \lesssim \frac{1}{\la t\ra ^2}.
		\end{align*}
	\end{lemma}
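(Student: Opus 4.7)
The strategy mirrors the high-energy arguments of Lemmas~\ref{lem:free low disp} and \ref{lem:born low}: using the representation $\cR_H^\pm(z)(x,x_1)=e^{\pm iz|x-x_1|}\widetilde\omega_\pm(z|x-x_1|)$ from \eqref{eqn:RH defn}, I plan to combine the oscillation $e^{\pm iz|x-x_1|}$ with $e^{-it\sqrt{z^2+m^2}}$ to produce the phase $\phi_\pm(z)=\sqrt{z^2+m^2}\mp z|x-x_1|/t$ appearing in Lemma~\ref{lem:high stat phase}. Two factors of $\la t\ra^{-1}$ then arise in sequence: one from a single integration by parts in $z$ against $e^{-it\sqrt{z^2+m^2}}$, and a second from applying Lemma~\ref{lem:high stat phase} to the remaining $z$-oscillatory integral. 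By symmetry it suffices to treat the $+$ case, and the sub-case in which the $\cR_0^\pm$ on the right is itself $\cR_H^\pm$ is handled identically by fixing both endpoints $x_1$ and $x_8$ and taking $r=|x-x_1|+|x_8-y|$, since the two $\cR_H^+$ phases add constructively.

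After fixing the endpoint $x_1$ and performing the inner spatial integrations over $x_2,\dots,x_8$ first, the integration by parts produces
$$\frac{1}{it}\int_{\R^4} V(x_1)\int_0^\infty e^{-it\sqrt{z^2+m^2}}\,\partial_z\bigl[\chi(z)\,e^{iz|x-x_1|}\widetilde\omega_+(z|x-x_1|)\,F(z;x_1,y)\bigr]\,dz\,dx_1,$$
with no boundary terms, since $\widetilde\chi(z|x-x_1|)$ vanishes at $z=0$. Here $F(z;x_1,y)$ denotes the compressed inner block $\cR_0^+(V\cR_0^+)^2 v^* M_+^{-1} v (\cR_0^+ V)^3\cR_0^+$ evaluated at $(x_1,y)$, which, by the same $L^2$-arguments as in Lemma~\ref{lem:allLow} (using Lemmas~\ref{lem:abstractSpatialIntegrals} and \ref{lem:Minv reg}), satisfies $\sup_{z,y}|F(z;x_1,y)|\les 1$ and $\sup_{z,y}|z^{-1+}\partial_z F(z;x_1,y)|\les 1$ uniformly in $x_1$.

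Distributing $\partial_z$ inside the bracket yields three types of terms: (a) the derivative acts on $\chi(z)$ or $\widetilde\omega_+$, (b) it acts on the oscillatory factor, producing $i|x-x_1|$, or (c) it acts on $F(z;x_1,y)$. Factoring out $e^{iz|x-x_1|}$, the resulting amplitude $a(z)$ in each case is verified, via \eqref{eqn:RH defn} and the bounds on $F$, to satisfy the hypotheses $|a(z)|\les \frac{z\chi(z)\widetilde\chi(zr)}{(1+zr)^{1/2}}$ and $|\partial_z a(z)|\les \frac{\chi(z)\widetilde\chi(zr)}{(1+zr)^{1/2}}$ of Lemma~\ref{lem:high stat phase} with $r=|x-x_1|$. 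The essential check is case (b): the growth $|x-x_1|$ combines with $|\widetilde\omega_+|\les z^{1/2}|x-x_1|^{-3/2}$ to give $z^{1/2}|x-x_1|^{-1/2}$, which on the support of $\widetilde\chi(zr)$ (where $1+zr\sim zr$) is indeed dominated by $z(1+zr)^{-1/2}$; cases (a) and (c) are treated similarly, with the $z^{-1+}$ derivative penalty on $F$ absorbed by the extra factor of $z^{1/2}$ from $\widetilde\omega_+$ and the cutoff $\widetilde\chi(zr)$.

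Invoking Lemma~\ref{lem:high stat phase} yields the second factor of $\la t\ra^{-1}$ uniformly in $x,x_1,y$, and the remaining $x_1$-integration is absolutely convergent and uniformly bounded in $x,y$ by Lemma~\ref{lem:spatial estimates}, thanks to $|V(x_1)|\les\la x_1\ra^{-\delta-}$ with $\delta>5$. I anticipate the main technical obstacle will be the bookkeeping in case (b), together with tracking the inner block when some of the interior resolvents $\cR_0^+$ are themselves $\cR_H^+$: as in the proof of Lemma~\ref{lem:NotAllLowLemma1}, each interior $\cR_H^+$ contributes further $z^{1/2}$-smallness at the cost of algebraic spatial growth, but this is controlled by the decay of $V$ on either side via Lemma~\ref{lem:abstractSpatialIntegrals} together with the assumption $\delta>5$.
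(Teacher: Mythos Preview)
Your proposal is correct and follows essentially the same strategy as the paper: one integration by parts against $e^{-it\sqrt{z^2+m^2}}$, then Lemma~\ref{lem:high stat phase} with $r=|x-x_1|$ (or $r=|x-x_1|+|x_8-y|$ when both ends are $\cR_H$) to extract the second factor of $\la t\ra^{-1}$. The only organizational difference is in your case~(c): when the $z$-derivative lands on the inner block $F$, the paper does not push through Lemma~\ref{lem:high stat phase} but instead defers to the second-integration-by-parts argument already established in Lemma~\ref{lem:NotAllLowLemma1}. Your route works too, but note that verifying $|\partial_z a(z)|$ in case~(c) implicitly requires a bound of order $z^{0-}$ on $\partial_z^2 F$, which you did not state explicitly though it follows from the same ingredients (Lemma~\ref{lem:Minv reg}, \eqref{eqn:RL low bds}, \eqref{eqn:RH defn}, and Lemma~\ref{lem:abstractSpatialIntegrals}).
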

	
	\begin{proof}
		We first consider the case when the lagging resolvent is $\mR_L$ and treat the `+' resolvents, the argument for the `-' resolvents is nearly identical.
		\begin{align*}
			\int_{0}^{\infty}e^{-it\sqrt{z^2 + m^2}}\chi(z)\cR_H^{+}(V\cR_0^{+})^3v^*M_{+}^{-1}v(\cR_0^{+}V)^3\cR_L^{+}(z)(x,y)\, dz.
		\end{align*}
		By \eqref{eqn:iter RLR0 L2}, \eqref{eqn:iter RLRH L2} and Lemma~\ref{lem:Minv reg} we can see that this is bounded uniformly in $x,y$.  Furthermore, the support of $\mR_H$ ensures that we may integrate by parts without boundary terms.
		
		After the first integration by parts, if the derivative acts on anything other than the leading $\mR_H$, the argument in Lemma~\ref{lem:NotAllLowLemma1} suffices to provide the desired bounds.  Accordingly, we need only consider
		\begin{multline*}
			\frac{1}{2it} \int_{0}^{\infty}e^{-it\sqrt{z^2 + m^2}}\chi(z)[\partial_z\cR_H^{+}](V\cR_0^{+})^3v^*M_{+}^{-1}v(\cR_0^{+}V)^3\cR_L^{+}\, dz   \\
			= \frac{1}{2it}\int_{0}^{\infty}\int_{\R^4}
			e^{-it\sqrt{z^2 + m^2} + iz|x - x_1|}\chi(z)e^{-iz|x - x_1|}\\
			([\partial_z\cR_H^{+}](V\cR_0^{+})^3v^*M_{+}^{-1}v(\cR_0^{+}V)^3\cR_L^{+} )(z,x_1,y)\, dz\, dx_1   .
		\end{multline*}
		Here we cannot integrate by parts again, but instead bound the integral with Lemma~\ref{lem:high stat phase}.  To this end, we define
		\begin{align*}
			a(z, x, x_1, y) = \chi(z)e^{-iz|x - x_1|}\partial_z\cR_H^{+}(z)(x,x_1)V(x_1)[ \cR_0^+(V\cR_0^+)^2v^*M_{+}^{-1}v(\cR_0^{+}V)^3\cR_L^{+} ](z,x_1,y).
		\end{align*}
		By \eqref{eqn:RH defn}, we have
		\begin{align*}
			\left|\partial_z^k\left(\chi(z)e^{-iz|x - x_1|}\partial_z\cR_H^{+}(z)(x,x_1)\right)\right| \lesssim \frac{z^{1 - k}\chi(z)\widetilde{\chi}(z|x - x_1|)}{(1+z|x - x_1|)^{\f12}}, \quad k = 0, 1.
		\end{align*}
		Furthermore, by \eqref{eqn:R0 bds bs helpful}, \eqref{eqn:R0 deriv in bs}, Lemmas~\ref{lem:abstractSpatialIntegrals}, Lemma~\ref{lem:Minv reg}, and \eqref{eqn:iter RLR0 L2}
		\begin{align*}
			|\partial_z^k[\cR_0^+(V\cR_0^+)^2v^*M_{+}^{-1}v(\cR_0^{+}V)^3\cR_L^{+}](z)(x_1,y)|\les 1 \qquad k=0,1
		\end{align*}
		uniformly in $z$, $x_1$ and $y$ on the support of $\chi(z)$.  Taken together, we have 
		$$
			|\partial_z^k a(z)|:=\bigg|\int_{\R^4} \partial_z^k a(z, x, x_1, y) \, dx_1\bigg|
			\les \int_{\R^4}\frac{z^{1 - k}\chi(z)\widetilde{\chi}(z|x - x_1|)}{(1+z|x - x_1|)^{\f12}} |V(x_1)|\, dx_1.
		$$
		Now, applying Lemma~\ref{lem:high stat phase}, and H\"older in the $x_1$ integral we have
		\begin{align*} 			
			\bigg|\frac{1}{t}\int_{\mathbb{R}^4}\int_{0}^{\infty}e^{-it\sqrt{z^2 + m^2} + iz|x - x_1|}a(z, x, x_1, y)dz \, dx_1\bigg| \les \frac{1}{t^2}\| V\|_1 \les \frac{1}{t^2},
		\end{align*}
		uniformly in $x,y$
		as desired.
	
		The remaining case when the last resolvent is $\mR_H$ follows a nearly identical argument.  If the derivative from the first integration by parts acts on the leading or lagging $\mR_H$, the argument above using Lemma~\ref{lem:high stat phase} provides the desired bound, while if the derivative acts anywhere in the middle, we may integrate by parts again safely as in Lemma~\ref{lem:NotAllLowLemma1}.
	\end{proof}
	
	Lemmas~\ref{lem:NotAllLowLemma1}, \ref{lem:allLow}, and \ref{HighOnEdgeLemma} suffice to prove Proposition~\ref{thm:RegularCaseMainTheorem}.  This, along with \eqref{eqn:BS} and Lemma~\ref{lem:born low} prove Theorem~\ref{thm:main} when the thresholds are regular.

	\section{Low Energy Dispersive Bounds when $m$ is not regular}\label{sec:non reg}
	
	We now consider the evolution of the solution operator in the low energy regime when a threshold is not regular.  As discussed in Definition~\ref{resondef} and proven in Section~\ref{sec:esa} below, there may be either threshold resonances or eigenvalues.  We develop a more unified inversion scheme here and show, in Proposition~\ref{prop:M inv res} below, that $M_{\pm}^{-1}(z)$ is far more singular than in the regular case as $z\to 0$.  The first bound for the low energy, when there is a resonance but no eigenvalue, paradoxically requires less decay on the potential since we don't need control on the second derivative in $z$ of the error term.  When there are eigenvalues, we need to assume further decay on the potential to ensure the boundedness of certain operators that arise in the longer expansions needed.  Throughout this section we assume that $V$ is self-adjoint and its entries satisfy $|V(x)|\les \la x\ra^{-\delta}$ from some $\delta>4$ and the threshold $\lambda=m$ is not regular.  In particular, the projection $S_1\neq 0$.

	When $\lambda=m$ is not regular, we utilize the inversion process of Jensen-Nenciu, see Lemma~2.1 in \cite{JN}.  From this, one has the formula
	\be\label{eqn:JN exp}
	M_\pm^{-1}(z)=(M^\pm(z)+S_1)^{-1}+(M^\pm(z)+S_1)^{-1}S_1 (B^\pm(z))^{-1}(M^\pm(z)+S_1)^{-1}.
	\ee
	We need to study the operator
	\be\label{eqn:Bdef}
	B^\pm(z)=S_1-S_1(M^\pm(z)+S_1)^{-1}S_1.
	\ee
	Accordingly, we need to prove that   
	\begin{align}\label{B defn}
		B^{\pm}(z)=S_1-S_1(M^\pm(z)+S_1)^{-1}S_1
	\end{align}
	has an inverse in $S_1L^2(\mathbb R^4)$. We have, using Lemma~\ref{lem:Minv reg} with $\delta>4$,  and the fact that $S_1D_0=D_0S_1=S_1$,
	\begin{align*}
		B^\pm (z)&= S_1-S_1(M^\pm (\lambda)+S_1)^{-1}S_1\\
		&=S_1-S_1 \bigg[ D_0+g_1^\pm(z)D_0T_1D_0+z^2D_0T_2D_0+O_1(z^{2+})\bigg] S_1\\
		&=-g_1^\pm(z) D_0+z^2 S_1T_1S_1+S_1O_1(z^{2+})S_1.
	\end{align*}
	To invert this operator on $S_1L^2(\R^4)$ we invoke the Feshbach inversion formula and write $B^\pm(z)$ in block form with respect to the decomposition $S_1L^2(\R^4)=QL^2(\R^4)\oplus S_2L^2(\R^4)$ where $QL^2(\R^4)$ is the finite-dimensional orthogonal complement to $S_2(L^2(\R^4))$.  By Corollary~\ref{cor:2d}, we note that $S_2L^2(\R^4)$ has dimension at most two.  Our approach is inspired by the treatment of the two dimensional massless Dirac equation, \cite{EGG2}, due to the similarity of the threshold resonance structure, though we require longer expansions for technical reasons.  As we show below, the four dimensional case has an at most two dimensional resonance space and a finite dimensional eigenspace at each threshold.  The logarithmic separation between terms in the expansion with respect to the spectral parameter complicates a repeated use of the Jensen-Nenciu inversion machinery.
	We write:
	\begin{align}\label{eqn:B defn}
		B^\pm(z)&=  A^\pm (z)-S_1O_1(z^{2+}) S_1,\\
		A^\pm(z)&=S_1v(g_1^\pm(z)\mG_{1}+z^2\mG_{2}  )v^*S_1. \label{eqn:A defn}
	\end{align}
	The exact formulation of the error term in \eqref{eqn:B defn} depends on which statement in Proposition~\ref{prop:M inv res} we use and how much decay we assume on $V$.
	We now turn to inverting $A^\pm(z)$ in a neighborhood of zero under different spectral assumptions.  Due to the need to preserve the logarithmic behavior in the spectral parameter, we denote $T(z)=O_1(f(z))$ to mean that $T$ is an absolutely bounded operator satisfying
	$$
	\big\|\sup_{0<z<z_1} |f(z)|^{-1} |T(z)|+\sup_{0<z<z_1} |f'(z)|^{-1} |\partial_z T(z)|\, \big\|_{L^2\to L^2}\les 1.
	$$
	
	\begin{prop}\label{lem:Ainverse}
		Assume that $|V(x)|\les \la x\ra^{-4-}$. For sufficiently small $z>0$, the operators $A^\pm(z)$ are invertible on $S_1L^2$.  Further, when $S_1\neq S_2$, there are $z$ independent, absolutely bounded operators $\Gamma_0,\Gamma_1$ that are independent of the choice of $\pm$ so that
		$$
		A^\pm(z)^{-1}=\frac{1}{z^2}[S_2v\mG_{2}v^*S_2]^{-1}+\frac{1}{g_1^\pm(z)}\Gamma_0+\frac{z^2}{(g_1^\pm(z))^2} \Gamma_1+ O_1\bigg(\frac{1}{z^2(\log z)^3} \bigg),	
		$$
		as an operator on $S_1L^2$.  Moreover 
		$$
		A^+(z)^{-1}-A^-(z)^{-1}=\bigg(\frac{1}{g^+(z)}-\frac{1}{g^-(z)}\bigg)\Gamma_0+ O_1\bigg(\frac{1}{z^2(\log z)^3} \bigg).
		$$
		Here $\Gamma_0$ and the error term are are finite rank operators.  We note that when $S_2=0$ the  operator corresponding to the most singular term in the spectral parameter is identically zero.	
		Furthermore, if $S_1=S_2$, we have
		$$
		A^\pm(z)^{-1}= \frac{1}{z^2}[S_2v\mG_{2}v^*S_2]^{-1},
		$$
		which is independent of the choice of $+$ or $-$.
		
	\end{prop}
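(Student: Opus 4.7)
The plan is to invoke the Feshbach (Schur complement) inversion formula with respect to the orthogonal decomposition $S_1L^2(\R^4)=QL^2\oplus S_2L^2$, where $Q:=S_1-S_2$. Writing $\widetilde T_j:=S_1 v\mG_j v^*S_1$, so that $A^\pm(z)=g_1^\pm(z)\widetilde T_1+z^2\widetilde T_2$ on $S_1L^2$, the key structural input is that $\widetilde T_1 S_2=S_2\widetilde T_1=0$ by definition of $S_2$ as the orthogonal projection onto $\ker\widetilde T_1$ in $S_1L^2$. Consequently, in block form,
\begin{equation*}
A^\pm(z)=\begin{pmatrix} g_1^\pm(z)\,Q\widetilde T_1Q+z^2\,Q\widetilde T_2Q & z^2\,Q\widetilde T_2S_2\\ z^2\,S_2\widetilde T_2Q & z^2\,S_2\widetilde T_2S_2\end{pmatrix}.
\end{equation*}
By construction $Q\widetilde T_1 Q$ is invertible on $QL^2$, and the invertibility of $S_2\widetilde T_2 S_2$ on $S_2L^2$ will be established in Section~\ref{sec:esa} via the eigenspace characterization of $S_2L^2$.

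Next I invert the $(Q,Q)$-block. Since $z^2/g_1^\pm(z)=(a_1\log z+b_1)^{-1}\to 0$ as $z\to0^+$, a Neumann series yields
\begin{equation*}
(A^\pm_{QQ}(z))^{-1}=\frac{(Q\widetilde T_1Q)^{-1}}{g_1^\pm(z)}-\frac{z^2}{(g_1^\pm(z))^2}(Q\widetilde T_1Q)^{-1}(Q\widetilde T_2Q)(Q\widetilde T_1Q)^{-1}+O_1\!\left(\frac{z^4}{(g_1^\pm(z))^3}\right).
\end{equation*}
Inserting this into the Schur complement $S(z):=A^\pm_{S_2S_2}-A^\pm_{S_2Q}(A^\pm_{QQ})^{-1}A^\pm_{QS_2}$ gives
$S(z)=z^2\bigl[S_2\widetilde T_2S_2+O_1(1/\log z)\bigr]$, and a second Neumann series then produces
\begin{equation*}
S(z)^{-1}=\frac{[S_2\widetilde T_2S_2]^{-1}}{z^2}+\frac{\Gamma_0'}{g_1^\pm(z)}+\frac{z^2\,\Gamma_1'}{(g_1^\pm(z))^2}+O_1\!\left(\frac{1}{z^2(\log z)^3}\right),
\end{equation*}
where $\Gamma_0',\Gamma_1'$ are finite-rank operators on $S_2L^2$ built from $\pm$-independent data.

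Assembling the four blocks of the Feshbach formula isolates the contributions by order. The most singular $1/z^2$ term arises only from the $(S_2,S_2)$ entry and is manifestly $\pm$-independent, giving $z^{-2}[S_2\widetilde T_2S_2]^{-1}$. The $1/g_1^\pm(z)$ and $z^2/(g_1^\pm(z))^2$ coefficients collect contributions from all four blocks, but each is built solely from the $\pm$-independent operators $(Q\widetilde T_1Q)^{-1}$, $Q\widetilde T_2Q$, $Q\widetilde T_2S_2$, $S_2\widetilde T_2Q$, and $[S_2\widetilde T_2S_2]^{-1}$; they therefore define fixed absolutely bounded operators $\Gamma_0,\Gamma_1$, which are finite rank because $S_1$ is. For the difference $A^+(z)^{-1}-A^-(z)^{-1}$, the $1/z^2$ term cancels exactly and the $z^2/(g_1^\pm(z))^2$ term is absorbed into the error, since
\begin{equation*}
\left|\frac{z^2}{(g_1^+(z))^2}-\frac{z^2}{(g_1^-(z))^2}\right|\lesssim \frac{z^2\,|g_1^+-g_1^-|\,|g_1^++g_1^-|}{|g_1^+g_1^-|^2}\les \frac{1}{z^2(\log z)^3},
\end{equation*}
leaving only $(1/g_1^+-1/g_1^-)\Gamma_0$. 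In the degenerate case $S_1=S_2$ one has $Q=0$, so $A^\pm(z)$ collapses to $z^2S_2\widetilde T_2S_2$, inverting trivially to $z^{-2}[S_2\widetilde T_2S_2]^{-1}$ independent of $\pm$. The main obstacle is bookkeeping: tracking the three logarithmic orders $1/\log z$, $1/(\log z)^2$, $1/(\log z)^3$ cleanly through the Feshbach assembly, and verifying the $O_1$ derivative estimates by differentiating the Neumann series termwise using $\partial_z(1/g_1^\pm(z))=O(1/(z^3\log z))$ and similarly for higher powers.
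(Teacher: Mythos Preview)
Your proposal is correct and follows essentially the same approach as the paper: both invert $A^\pm(z)$ via the Feshbach/Schur complement formula with respect to the decomposition $S_1L^2=QL^2\oplus S_2L^2$, using that $\widetilde T_1$ vanishes on $S_2L^2$ and that $S_2\widetilde T_2S_2$ is invertible (Lemma~\ref{lem:esa3}). The only minor difference is the pivot: the paper inverts the $S_2$-block $a_{22}=z^2S_2\widetilde T_2S_2$ directly and then forms the Schur complement in the $Q$-block (reducing to Lemma~\ref{lem:QAQ invert}), whereas you invert the $Q$-block first by a Neumann series and take the Schur complement in the $S_2$-block; the resulting expansions are identical.
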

	
	We note that these operators are finite rank on $L^2$ since $S_1L^2$ is a finite-dimensional subspace.  We note that the operator $S_2v\mG_2v^*S_2$ is always invertible on $S_2L^2(\R^4)$ by Lemma~\ref{lem:esa3} below.

	As above, we write projection $S_1=Q\oplus S_2$ where $Q$ is orthogonal to $S_2$. We note that by Lemmas~\ref{lem:esa1} and \ref{lem:esa2}, $Q$ is related to a projection onto the resonance space.  By Corollary~\ref{cor:2d}, $Q=S_1-S_2$  has rank at most two.  We first note that when $Q=0$, the statement follows \eqref{eqn:A defn} and the orthogonality property that $S_2v\mG_{1}=0$ in Lemma~\ref{cor:S2 orth} below.  
	The invertibility of the resulting operator is guaranteed by Lemma~\ref{lem:esa3}.  The following lemma implies the proposition when $S_2\neq0$.
	
	\begin{lemma}\label{lem:QAQ invert}
		
		When  $S_1\neq S_2$, that is $Q\neq 0$, the operators $QA^\pm(z)Q$ are invertible for sufficiently small $z>0$ and $\delta>4$.  Further, there are $z$-independent finite rank operators $\Gamma_0,\Gamma_1$ that are independent of the choice of $\pm$ so that
		$$
		(QA^\pm(z)Q)^{-1}=\frac{1}{g_1^\pm(z)} \Gamma_0+\frac{z^2}{(g_1^\pm(z))^2} \Gamma_1+ O_1\bigg(\frac{1}{z^2(\log z)^3} \bigg),	
		$$
		as an operator on $QL^2$.  Moreover 
		$$
		(QA^+(z)Q)^{-1}-(QA^-(z)Q)^{-1}
		=\bigg(\frac{1}{g^+(z)}-\frac{1}{g^-(z)}\bigg)\Gamma_0+
		O_1\bigg(\frac{1}{z^2(\log z)^3} \bigg).
		$$
		
	\end{lemma}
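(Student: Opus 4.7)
My plan is to exploit the finite-dimensionality of $QL^2$ together with the invertibility of $QT_1Q$ on this subspace, reducing the inversion of $QA^\pm(z)Q$ to a Neumann series in the small scalar parameter $z^2/g_1^\pm(z)$. The first step is to write
\[
QA^\pm(z)Q = g_1^\pm(z)\,QT_1Q + z^2\, Qv\mG_2v^*Q,
\]
and observe that by Definition~\ref{resondef} the Riesz projection $S_2$ is exactly the kernel of $T_1$ restricted to $S_1L^2$. Since $T_1$ is self-adjoint, its range on $S_1L^2$ is the orthogonal complement $QL^2$, so $QT_1Q$ is invertible on $QL^2$ with absolutely bounded inverse $\cD := (QT_1Q)^{-1}$ (absolute boundedness is automatic because $Q$ is finite rank). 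Factoring gives
\[
QA^\pm(z)Q = g_1^\pm(z)\Bigl[\,QT_1Q + \tfrac{z^2}{g_1^\pm(z)}\, Qv\mG_2v^*Q\,\Bigr],
\]
and the scalar $z^2/g_1^\pm(z) = 1/(a_1\log z + b_1)$ tends to zero as $z\to 0^+$.

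For $z$ small enough I would then invert the bracketed operator by Neumann series, keeping the two leading orders, to obtain
\[
(QA^\pm(z)Q)^{-1} = \frac{1}{g_1^\pm(z)}\,\cD \;-\; \frac{z^2}{(g_1^\pm(z))^2}\,\cD\, Qv\mG_2v^*Q\,\cD \;+\; R^\pm(z),
\]
with $\Gamma_0 = \cD$ and $\Gamma_1 = -\cD\, Qv\mG_2v^*Q\,\cD$ manifestly $z$-independent and independent of the choice of $\pm$. The remainder collects Neumann terms with $k\geq 2$, whose $k$-th summand has size of order $z^{2k}/|g_1^\pm(z)|^{k+1}\les 1/(z^2(\log z)^{k+1})$; the worst ($k=2$) term gives exactly the claimed $O_1(1/(z^2(\log z)^3))$ bound. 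Derivative bounds follow by Leibniz from $|\partial_z(z^2/g_1^\pm(z))|\les 1/(z(\log z)^2)$ and $|\partial_z(1/g_1^\pm(z))|\les 1/(z^3 \log z)$, which together yield $|\partial_z R^\pm(z)|\les 1/(z^3(\log z)^3)$, as required for membership in the $O_1$ class.

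For the difference claim I would subtract the two expansions term-by-term. The leading piece is $(1/g_1^+(z) - 1/g_1^-(z))\Gamma_0$. The second-order piece $z^2\bigl[(g_1^+(z))^{-2} - (g_1^-(z))^{-2}\bigr]\Gamma_1$ has size
\[
\frac{z^2\,|g_1^+(z) - g_1^-(z)|\,|g_1^+(z) + g_1^-(z)|}{|g_1^+(z)\, g_1^-(z)|^2}\;\asymp\; \frac{z^2\cdot z^2\cdot z^2|\log z|}{z^8(\log z)^4} \;=\; \frac{1}{z^2(\log z)^3},
\]
since $g_1^+(z) - g_1^-(z) = O(z^2)$ and $g_1^+(z)+g_1^-(z) = O(z^2\log z)$; this piece, together with $R^+(z)-R^-(z)$, is absorbed into the $O_1(1/(z^2(\log z)^3))$ error. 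The main obstacle is essentially bookkeeping: carefully tracking the $z$-derivatives through the Neumann expansion and through the $\pm$-difference to confirm $O_1$ membership. Beyond that, the argument is a straightforward finite-rank inversion enabled by the non-vanishing of $QT_1Q$ on $QL^2$.
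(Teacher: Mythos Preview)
Your argument is correct and follows the same overall strategy as the paper---factor out the dominant $g_1^\pm(z)$ piece and invert by a Neumann series in the small parameter $z^2/g_1^\pm(z)$---but you establish the crucial invertibility of $QT_1Q$ on $QL^2$ by a cleaner abstract route. The paper instead works in an explicit orthonormal basis $\{\phi_1,\phi_2\}$ of $QL^2$, writes $z^{-2}QA^\pm(z)Q = g_0^\pm(z)A_0 + A_1$ with $A_0$ the $2\times 2$ Gram-type matrix $[\langle \mG_1 v^*\phi_i,\mG_1 v^*\phi_j\rangle_{\C^2}]$, and proves $A_0$ is invertible by showing the vectors $\mG_1 v^*\phi_j$ are linearly independent. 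That independence argument relies on forward references to the threshold classification in Section~\ref{sec:esa} (specifically the representation $\psi = -\mG_0 v^*\phi$ and the $L^2$ characterization of eigenfunctions versus resonances). Your observation that $S_2$ is \emph{by definition} the kernel of the self-adjoint finite-rank operator $S_1v\mG_1v^*S_1$ on $S_1L^2$, so that its restriction to the orthogonal complement $QL^2$ is automatically an isomorphism, bypasses that forward reference entirely and handles the rank-one and rank-two cases uniformly. The paper's coordinate computation has the minor advantage of making the connection to the resonance structure explicit, but your argument is more self-contained. The remainder and difference estimates you sketch are the same as the paper's, just phrased operator-theoretically rather than as $2\times 2$ matrix manipulations.
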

	
	\begin{proof}
		We begin by showing that $z^{-2}QA^\pm(z)Q$ is invertible on $QL^2$.  In the case that $Q$ has rank one,  then using \eqref{eqn:A defn} we can see that $z^{-2}QA^\pm(z)Q$ is a scalar of the form
		$$
		(c_1g_0^\pm(z)+c_2)Q, \qquad c_1\in \mathbb R\setminus \{0\},
		$$
		where $g_0(z)^\pm =a_1\log(z)+b_1^\pm$ where $a_1\in\mathbb R\setminus\{0\}$ and $b_1^\pm \in \mathbb C\setminus \R$ with $\overline{b_1^+}=b_1^-$ following from
		\eqref{g1 def}.  This suffices to show that claim.

		We now consider the case when $Q$ has rank two.
		We may select an orthonormal basis for $QL^2(\R^4)$, $\{\phi_1, \phi_2 \}$.  We note that $ \mG_{1}v^* \phi_1$ and $ \mG_{1}v^* \phi_2$ are linearly independent.  Assume   not, then by Lemma~\ref{lem:esa1}, $\psi_j=-\mG_{0} v^* \phi_j$, $j=1,2$.  
		Then for some $c$, (see \eqref{eq:g1g2} and \eqref{eq:psi2} below) we have
		$$ 
		\psi_1-c\psi_2=\mG_{0}v^*(c\phi_2-\phi_1)
		=\big(\mG_{0}+\frac{1}{ \la x\ra^2} \mG_{1}-\frac{1}{ \la x\ra^2} \mG_{1}\big)v^*(c\phi_2-\phi_1)\in L^2.
		$$
		By the proof of Lemma~\ref{lem:esa2}, $(\mG_0+\frac{1}{\la x\ra ^2}\mG_1)v^*\phi_j\in L^2$, that is $\psi_j=-\frac{1}{ \la x\ra^2} \mG_{1}v^*\phi_j(x)+O_{L^2}(1)$. Hence $\{\phi_1,\phi_2\}$ can only span a one-dimensional subspace of $QL^2$.  This proves our claim.
		
		Noting that, as a matrix, the kernel of $\mG_1$ is a constant multiple of $I_{uc}$.  Since $I_{uc}^2=I_{uc}$, writing $z^{-2}QA^\pm (z)Q$ with respect to the  basis $\{\phi_1, \phi_2 \}$:
		$$
		z^{-2}QA^\pm(z)Q=g_0^\pm(z) \left[\begin{array}
			{ll}|\mG_{1}v^*\phi_1|^2_{\mathbb C^2} & \la \mG_{1}v^*\phi_1, \mG_{1}v^*\phi_2\ra_{\mathbb C^2} \\
			\overline{\la \mG_{1}v^*\phi_1, \mG_{1}v^*\phi_2\ra_{\mathbb C^2}}
			& |\mG_{1}v^*\phi_2|^2_{\mathbb C^2} \end{array}
		\right] + A_1:=g_0^\pm(z)A_0+A_1,
		$$
		where $A_1$ is a $2\times 2$ matrix of constants given by the contributions of $\phi_i v\mG_{2}v^* \phi_j$. Since $ \mG_{1}v^* \phi_1$ and $ \mG_{1}v^* \phi_2$ are linearly independent $A_0$, the first matrix above, is invertible and hence, for sufficiently small $z>0$, $z^{-2}QA^\pm(z)Q$ is invertible. By a Neumann series expansion, one has
		$$
		[z^{-2}QA^\pm(z)Q]^{-1}=\frac{1}{g_0^\pm(z)}A_0^{-1}+\frac{1}{(g_0^\pm(z))^2}A_0^{-1}A_1A_0^{-1}+O_1\bigg(\frac{1}{(\log z)^3}\bigg).
		$$
		This allows us to deduce the bounds on $(QA^{\pm}(z)Q)^{-1}$ by dividing through by $z^2$ recalling that $g_1^\pm(z)=z^2g_0^\pm(z)$.
		The final claim follows from the observation that $(g_1^+(z)-g_1^-(z))=c z^2$, which follows from \eqref{g1 def}.
	\end{proof}
	
	We now show that $A^\pm(z)$ are invertible on the larger space $S_1L^2$.

	\begin{proof}[Proof of Proposition~\ref{lem:Ainverse}]	
		We now consider the case when both $Q,S_2\neq 0$, as the case when $S_2=0$ follows immediately from Lemma~\ref{lem:QAQ invert}.
		We  employ the Feshbach formula, see for example Lemma~2.3 in \cite{JN}.  If $A(z)=\left[\begin{array}{ll} a_{11} & a_{12}\\ a_{21} & a_{22}\end{array} \right]$, the invertibility of $A(z)$ follows if both $a_{22}$ is invertible and  $a:=(a_{11}-a_{12}a_{22}^{-1}a_{21})^{-1}$ exists.  Then, we have
		\begin{align}\label{eqn:Ainv Fesh}
			A(z)^{-1}=\left[
			\begin{array}{ll}
				a & -aa_{12}a_{22}^{-1}\\
				-a_{22}^{-1}a_{21}a & a_{22}^{-1}a_{21} a a_{12} a_{22}^{-1}+a_{22}^{-1}
			\end{array}
			\right].
		\end{align}
		In our case we write the operators $A^\pm(z)$ in block form with respect to the decomposition $S_1L^2=QL^2\oplus S_2 L^2$, then
		$a_{22}=S_2v\mG_{2}v^*S_2$ which is invertible by Lemma~\ref{lem:esa3}.  In our case we have
		$$
		a=\big(  QA^\pm(z) Q -Qv\mG_{1}v^* S_2(S_2v\mG_{2}v^* S_2)^{-1} S_2 v\mG_{1}v^* Q
		\big)^{-1},
		$$
		exists for sufficiently small $z$ since $QA^\pm(z)Q$ is invertible by Lemma~\ref{lem:QAQ invert}, while the second summand is a $z$ independent $2\times2$ matrix.  We note that any portion of \eqref{eqn:Ainv Fesh} containing $a$ has a projection $Q$; which makes its contribution to $A(z)^{-1}$ a finite rank operator.  Accordingly we have,
		$$
		A^\pm(z)^{-1}=\frac{1}{z^2}[S_2v\mG_{2}v^*S_2]^{-1}+\frac{1}{g_1^\pm(z)} \Gamma_{Q,0}+\frac{z^2}{(g_1^\pm(z))^2} \Gamma_{Q,1}+O_1\bigg(\frac{1}{z^2(\log z)^3} \bigg)\Gamma_{Q,0},
		$$
		where $\Gamma_{Q,j}$ are absolutely bounded and $z$ independent.  The claim about finite rank follows since each summand in the expansion is of finite rank.
		
	\end{proof}	
	
	We now argue that the operator $B^\pm(z)$ is invertible for small $z>0$ by considering it as a perturbation of $A^\pm(z)$.
	
	\begin{prop}\label{prop:Binv}
		
		Assume $|V(x)|\les \la x\ra^{-\delta}$ for $\delta >4$.  Then the operators $B^\pm(z)$ are invertible for sufficiently small $z>0$.  When $S_1\neq 0$ and $S_2=0$ there are finite rank, $z$-independent operators $\Gamma_0, \Gamma_1$ that are independent of the choice of $\pm$ so that
		$$
		(B^\pm(z))^{-1}= \frac{1}{g_1^\pm(z)}\Gamma_0+\frac{z^2}{(g_1^\pm(z))^2} \Gamma_1+O_1\bigg(\frac{1}{z^2(\log z)^3}\bigg)S_1.
		$$ 
		Furthermore, 
		$$
		(B^+(z))^{-1}-(B^{-}(z))^{-1}=\bigg(\frac{1}{g^+(z)}-\frac{1}{g^-(z)}\bigg)\Gamma_0+O_1\bigg(\frac{1}{z^2(\log z)^3}\bigg)S_1.
		$$
		When $\delta>8$ and $S_1\neq S_2\neq 0$, we have
		\begin{multline*}
			(B^\pm(z))^{-1}=A^{\pm}(z)^{-1}+\frac{g_2^\pm(z)+z^2g_1^\pm(z)}{z^4}\Gamma_0+\frac{g_2^\pm(z)+z^2g_1^\pm(z)}{(g_1^\pm(z))^2}\Gamma_{Q,0}+\Gamma_1\\
			+\frac{z^4}{(g_1^\pm(z))^2}\Gamma_{Q,1}+\frac{z^2}{g_1^\pm(z)}\Gamma_{Q,2}+O_1(z^{0+}) .
		\end{multline*}
		Here operators $\Gamma_{Q,j}$ are independent of $z$ and $\pm$, and are finite rank.
		Finally, when $S_1=S_2\neq 0$, we have
		$$
		(B^\pm(z))^{-1}=-\frac{D_2}{z^2}+\frac{g_2^\pm(z)}{z^4}\Gamma_0+\Gamma_1++O_1(z^{0+}),
		$$
		for some finite rank, $z$  and $\pm$ independent operators $\Gamma_j$.
		
	\end{prop}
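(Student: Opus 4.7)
The plan is to invert $B^\pm(z)$ by treating it as a perturbation of $A^\pm(z)$, whose inverse has been constructed in Proposition~\ref{lem:Ainverse}. We write $B^\pm(z) = A^\pm(z) + E^\pm(z)$, where the error $E^\pm(z)$ has order in $z$ depending on the assumed decay of $V$. Once we verify that $E^\pm(z)\, A^\pm(z)^{-1}$ is small in operator norm on $S_1 L^2$ as $z \to 0^+$, the standard resolvent identity gives
\begin{equation*}
B^\pm(z)^{-1} = A^\pm(z)^{-1} \sum_{k=0}^\infty \bigl[-E^\pm(z)\, A^\pm(z)^{-1}\bigr]^k,
\end{equation*}
and each stated expansion follows by reading off the first few terms of this series and absorbing the tail into the stated remainder.

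In the first case, with only $\delta > 4$ and $S_2 = 0$, the computation leading to \eqref{eqn:B defn} gives $E^\pm(z) = S_1 O_1(z^{2+}) S_1$, while Proposition~\ref{lem:Ainverse} yields $\|A^\pm(z)^{-1}\| \lesssim (z^2 |\log z|)^{-1}$ since $S_2 = 0$ deletes the leading $z^{-2}$ contribution. Hence $E^\pm(z) A^\pm(z)^{-1} = O_1(z^{0+}/|\log z|)$, the Neumann series converges, and the tail beyond the leading $A^\pm(z)^{-1}$ is absorbed into $O_1(z^{-2}(\log z)^{-3}) S_1$ using that $z^{0+}$ decays faster than $|\log z|^{-N}$ for any fixed $N$ as $z \to 0^+$. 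The leading $A^\pm(z)^{-1}$ itself supplies the $(1/g_1^\pm)\Gamma_0 + (z^2/(g_1^\pm)^2)\Gamma_1$ contributions from Proposition~\ref{lem:Ainverse}, and the formula for $(B^+)^{-1} - (B^-)^{-1}$ is inherited directly from the corresponding statement for $(A^\pm)^{-1}$.

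For the remaining cases with $S_2 \neq 0$ we strengthen the hypothesis to $\delta > 8$ and use the longer expansion of $(M^\pm + S_1)^{-1}$ from Lemma~\ref{lem:Minv reg}. Substituting into $B^\pm(z) = S_1 - S_1(M^\pm + S_1)^{-1} S_1$ and applying $S_1 D_0 = D_0 S_1 = S_1$ (so $S_1 D_0 T_j D_0 S_1 = S_1 T_j S_1$) yields, up to a global sign,
\begin{equation*}
B^\pm(z) = A^\pm(z) + (g_1^\pm)^2 S_1 \Gamma_{4,2} S_1 + (g_2^\pm + z^2 g_1^\pm) S_1 \Gamma_{4,1} S_1 + z^4 S_1 \Gamma_{4,0} S_1 + S_1 O_1(z^{4+}) S_1.
\end{equation*}
Since $\|A^\pm(z)^{-1}\| \lesssim z^{-2}$ when $S_2 \neq 0$, we have $E^\pm(z) A^\pm(z)^{-1} = O_1(z^{0+})$, so the Neumann series converges. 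The zeroth iterate contributes $A^\pm(z)^{-1}$, and the first correction $-A^\pm(z)^{-1} E^\pm(z) A^\pm(z)^{-1}$, expanded against the three-term form of $A^\pm(z)^{-1}$, produces exactly the listed $(g_2^\pm + z^2 g_1^\pm)/z^4 \cdot \Gamma_0$, $(g_2^\pm + z^2 g_1^\pm)/(g_1^\pm)^2 \cdot \Gamma_{Q,0}$, $z^4/(g_1^\pm)^2 \cdot \Gamma_{Q,1}$, $z^2/g_1^\pm \cdot \Gamma_{Q,2}$, and constant $\Gamma_1$ pieces; crucially, the orthogonality relation $S_2 v \mG_1 = 0$ from Lemma~\ref{cor:S2 orth} annihilates the would-be $(g_1^\pm)^2/z^4$ cross terms that otherwise grow like $(\log z)^2$, so higher Neumann iterates and all residues absorb into $O_1(z^{0+})$. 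In the subcase $S_1 = S_2$, $Q = 0$ forces $A^\pm(z)^{-1} = z^{-2} D_2$ to be independent of $\pm$ with $D_2 = [S_2 v \mG_2 v^* S_2]^{-1}$, and the same orthogonality kills every correction containing a $g_1^\pm$-only factor, leaving the simplified expansion $-D_2/z^2 + (g_2^\pm/z^4) \Gamma_0 + \Gamma_1 + O_1(z^{0+})$.

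The main technical obstacle is the bookkeeping: tracking how the logarithmically separated pieces of $A^\pm(z)^{-1}$ pair with each component of $E^\pm(z)$ to reproduce the precise coefficients in the stated formulas, while simultaneously verifying that the derivative bounds required by the $O_1$ notation are preserved at every step. The systematic suppression of $(\log z)^k$ growth in the cross terms via repeated application of $S_2 v \mG_1 = 0$ is essential, and the thin margin of $z^{0+}$ or $z^{0+}/|\log z|$ between successive Neumann iterates leaves little room for slack in the error estimates.
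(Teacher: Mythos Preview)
Your proposal is correct and follows essentially the same approach as the paper: treat $B^\pm(z)$ as a perturbation of $A^\pm(z)$, invoke the longer expansion \eqref{eqn:M+S inv long} for the $\delta>8$ cases, and use the orthogonality $S_2 v\mG_1=0$ (equivalently $S_2\Gamma_{4,2}=\Gamma_{4,2}S_2=0$, since $\Gamma_{4,2}=D_0T_1D_0T_1D_0$) to suppress the otherwise unbounded $(g_1^\pm)^2/z^4$ cross terms. The paper phrases the perturbation step as a single resolvent-identity iteration $B^{-1}=A^{-1}+A^{-1}(B-A)A^{-1}$ with the higher iterates absorbed into the remainder, which is exactly the truncation of your Neumann series; your bookkeeping of which pairings produce which $\Gamma$ and $\Gamma_{Q,j}$ is in fact slightly more explicit than the paper's.
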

	
	\begin{proof}
		The first claim follows from \eqref{eqn:B defn} and Lemma~\ref{lem:QAQ invert} via a Neumann series argument, noting that if $S_2=0$ the most singular $z^{-2}$ contribution is identically zero.
		For the remaining claims, we note the longer expansion \eqref{eqn:M+S inv long} in Lemma~\ref{lem:Minv reg}, from which we have
		\begin{multline*}
			B^\pm(z)=S_1\\
			-S_1[D_0+g_1^\pm(z)D_0T_1D_0+z^2 D_0T_2D_0+(g_1^\pm(z))^2\Gamma_{4,2}
			+[g_2^\pm(z)+z^2g_1^\pm(z)]\Gamma_{4,1}
			+z^4\Gamma_{4,0}+O_1(z^{4+})]S_1\\
			=A^\pm(z)+(g_1^\pm(z))^2S_1\Gamma_{4,2}S_1
			+[g_2^\pm(z)+z^2g_1^\pm(z)]S_1\Gamma_{4,1}S_1
			+z^4S_1\Gamma_{4,0}S_1+O_1(z^{4+})S_1
		\end{multline*}
		When $S_1\neq S_2\neq 0$, we note utilize the resolvent identity $B^{-1}=A^{-1}+A^{-1}(B-A)A^{-1}$ and note that
		$$
		B^\pm(z)-A^\pm(z)=(g_1^\pm(z))^2S_1\Gamma_{4,2}S_1
		+[g_2^\pm(z)+z^2g_1^\pm(z)]S_1\Gamma_{4,1}S_1
		+z^4S_1\Gamma_{4,0}S_1+O_1(z^{4+})S_1
		$$
		Here it is important to note the explicit formulation of $\Gamma_{4,2}=D_0T_1D_0T_1D_0$ from the proof of Lemma~\ref{lem:Minv reg}, in particular $S_2\Gamma_{4,2}=\Gamma_{4,2}S_2=0$.
		Hence, 
		\begin{multline*}
			(B^\pm)^{-1}(z)=A^{\pm}(z)^{-1}+\frac{g_2^\pm(z)+z^2g_1^\pm(z)}{z^4}\Gamma_0+\frac{g_2^\pm(z)+z^2g_1^\pm(z)}{(g_1^\pm(z))^2}\Gamma_{Q,0}+\Gamma_1\\
			+\frac{z^4}{(g_1^\pm(z))^2}\Gamma_{Q,1}+\frac{z^2}{g_1^\pm(z)}\Gamma_{Q,2}+O_1(z^{0+})
		\end{multline*}
		Here the subscript $Q$ indicates that the projection $Q$ appears in the composition of operators, making the resulting operator finite rank.
		
		When $S_1=S_2$ there is substantial cancellation from Corollary~\ref{cor:S2 orth} and \eqref{eqn:M+S inv long}, from which we obtain
		$$
		B^\pm(z)=-z^2 S_1T_2S_1+g_2^\pm(z)S_1\Gamma_{4,1}S_1+z^4S_1\Gamma_{4,0}S_1+O_1(z^{4+}).
		$$
		$S_1T_2S_1$ is invertible by Lemma~\ref{lem:esa3}, via a Neumann series expansion we have
		\begin{align*}
			(B^\pm(z))^{-1}=-\frac{D_2}{z^2}+\frac{g_2^\pm(z)}{z^4}\Gamma_0+S_1\Gamma_{4,0}S_1+O_1(z^{0+}).
		\end{align*}
		
	\end{proof}
	
	In terms of threshold obstructions, when $S_1\neq S_2$ we have a threshold resonance, there may or may not be an eigenvalue.  When $S_1=S_2\neq 0$, there is an eigenvalue, but no threshold resonance.
	
	Putting this all together, we arrive at
	\begin{prop}\label{prop:M inv res}
		
		If $0\neq S_1$, $S_2=0$, and $|V(x)|\les \la x\ra^{-\delta-}$ for some $\delta>4$, then the operators $M^{\pm}(z)$ are invertible for sufficiently small $z>0$.  Furthermore there are $z$-independent and $\pm$ independent operators $\Gamma_j$  such that
		\begin{multline*}
			M_\pm^{-1}(z)=\frac{1}{g_1^\pm(z)}S_1\Gamma_0S_1+\frac{z^2}{(g_1^\pm(z))^2} \Gamma_1+ \frac{z^2}{g_1^\pm(z)}\Gamma_2+\Gamma_3 +O_1\bigg(\frac{1}{z^2(\log z)^3}\bigg)S_1+O_1\bigg(\frac{1}{(\log z)^2}\bigg).
		\end{multline*}
		When $S_1\neq S_2\neq 0$ and $\delta>8$, we have
		\begin{multline*}
			M_\pm^{-1}(z)=-\frac{D_2}{z^2}+\frac{1}{g_1^\pm(z)}\Gamma_{Q,1}+\frac{z^2}{(g_1^\pm(z))^2} \Gamma_{Q,2}+ \frac{z^2}{g_1^\pm(z)}\Gamma_{Q,3} +O_1\bigg(\frac{1}{z^2(\log z)^3}\bigg)\Gamma_{Q,4}\\
			+\Gamma_1
			+\frac{g_2^\pm(z)}{z^4}\Gamma_2+
			\frac{g_2^\pm(z)+z^2g_1^\pm(z)}{z^4}\Gamma_3
			+\frac{g_2^\pm(z)+z^2g_1^\pm(z)}{(g_1^\pm(z))^2}\Gamma_{Q,5}+\Gamma_4\\
			+\frac{z^4}{(g_1^\pm(z))^2}\Gamma_{Q,6}+\frac{z^2}{g_1^\pm(z)}\Gamma_{Q,7}+O_1(z^{0+}).
		\end{multline*}
		When $0\neq S_1=S_2$, provided $\delta>8$, we have
		$$
		M_\pm^{-1}(z)=-\frac{D_2}{z^2}+\Gamma_0+\frac{g_2^\pm(z)}{z^4}\Gamma_1 +O_1(z^{0+}).
		$$
		
	\end{prop}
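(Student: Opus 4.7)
The plan is to substitute the Jensen--Nenciu formula \eqref{eqn:JN exp} into the three cases by plugging in the expansions of Proposition~\ref{prop:Binv} for $(B^\pm(z))^{-1}$ and of Lemma~\ref{lem:Minv reg} for $(M^\pm(z)+S_1)^{-1}$, then multiplying out and collecting terms by their order in $z$ and $g_j^\pm(z)$. Throughout we exploit $S_1D_0=D_0S_1=S_1$, which kills many cross terms, and the orthogonality $S_2v\mG_1=0$ of Corollary~\ref{cor:S2 orth}, which suppresses the most singular contributions whenever the $S_2$ projection appears on either side.

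First, in the case $S_1\neq 0$, $S_2=0$, Proposition~\ref{prop:Binv} gives $(B^\pm(z))^{-1}=\frac{1}{g_1^\pm(z)}\Gamma_0+\frac{z^2}{(g_1^\pm(z))^2}\Gamma_1+O_1(\frac{1}{z^2(\log z)^3})S_1$ and Lemma~\ref{lem:Minv reg} gives $(M^\pm+S_1)^{-1}=D_0+g_1^\pm(z)D_0T_1D_0+z^2D_0T_2D_0+O_1(z^{2+})$ (with $\delta>4$). Multiplying out $(M^\pm+S_1)^{-1}S_1(B^\pm)^{-1}S_1(M^\pm+S_1)^{-1}$ and using $S_1D_0=S_1$, the leading factor on each side of $(B^\pm)^{-1}$ becomes $S_1$ plus $O_1(g_1^\pm(z))$ corrections, yielding the stated expansion with the $\frac{1}{g_1^\pm(z)}S_1\Gamma_0S_1$ leading term and the $\frac{z^2}{(g_1^\pm(z))^2}\Gamma_1$, $\frac{z^2}{g_1^\pm(z)}\Gamma_2$ cross terms, absorbing the unperturbed $(M^\pm+S_1)^{-1}$ piece into $\Gamma_3$ plus $O_1(\frac{1}{(\log z)^2})$.

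Second, when $S_1\neq S_2\neq 0$ with $\delta>8$, we use the longer expansion \eqref{eqn:M+S inv long} and the corresponding longer expansion for $(B^\pm)^{-1}$ in Proposition~\ref{prop:Binv}, which contains $A^\pm(z)^{-1}$ whose most singular piece is $\frac{1}{z^2}[S_2v\mG_2v^*S_2]^{-1}$ from Proposition~\ref{lem:Ainverse}. The sandwich by $(M^\pm+S_1)^{-1}$ on either side produces the $-D_2/z^2$ leading term (where $D_2$ is the inverse of $S_2v\mG_2v^*S_2$ extended to $L^2$ by zero on the orthogonal complement), together with all the $Q$-containing finite rank contributions $\Gamma_{Q,j}$ inherited from $(QA^\pm Q)^{-1}$. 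The remaining terms $\Gamma_1,\Gamma_2,\Gamma_3,\Gamma_4$ collect the products where the $g_j^\pm(z)T_j$ pieces of \eqref{eqn:M+S inv long} meet $(B^\pm)^{-1}$; the condition $\delta>8$ is exactly what allows the $T_3,T_4$ terms to appear as absolutely bounded operators.

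Third, when $S_1=S_2\neq 0$, the orthogonality $S_2v\mG_1=0$ causes the $g_1^\pm(z)T_1$ term to vanish when flanked by $S_1=S_2$, so the sandwiches $S_1(M^\pm+S_1)^{-1}=S_1+z^2 S_1T_2D_0+\cdots$ simplify dramatically. Using the expansion $(B^\pm(z))^{-1}=-D_2/z^2+g_2^\pm(z)/z^4 \Gamma_0+\Gamma_1+O_1(z^{0+})$ from Proposition~\ref{prop:Binv} and the fact that $D_2=D_2S_2=S_2D_2$, the leading $-D_2/z^2$ emerges directly and the next non-trivial contributions are $z^2/z^2=1$ order (absorbed into $\Gamma_0$) and the $g_2^\pm(z)/z^4$ term, giving the final stated expansion.

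The main obstacle will be the bookkeeping in the mixed case $S_1\neq S_2\neq 0$: one must carefully identify which products carry the projection $Q=S_1-S_2$ (and are therefore finite rank, labeled $\Gamma_{Q,j}$) versus which do not, and verify that all terms of size more singular than $O_1(z^{0+})$ either appear explicitly or cancel. The identities $S_2\Gamma_{4,2}=\Gamma_{4,2}S_2=0$ from the proof of Lemma~\ref{lem:Minv reg} (since $\Gamma_{4,2}=D_0T_1D_0T_1D_0$ and $T_1S_2=S_2T_1=0$) are essential here to ensure the enumeration of terms closes with the claimed error.
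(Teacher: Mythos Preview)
Your proposal is correct and follows essentially the same approach as the paper: both arguments substitute the expansions of Lemma~\ref{lem:Minv reg} and Proposition~\ref{prop:Binv} into the Jensen--Nenciu formula \eqref{eqn:JN exp}, exploit $S_1D_0=D_0S_1=S_1$, the orthogonality $S_2v\mG_1=0$, and the vanishing $S_2\Gamma_{4,2}=\Gamma_{4,2}S_2=0$ to organize the resulting products. The paper carries out the same multiplication explicitly in each of the three cases, and your identification of the bookkeeping in the $S_1\neq S_2\neq 0$ case as the main obstacle matches what the paper does (and indeed the paper defers most of that case to ``similar to the previous cases'').
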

	
	\begin{proof}

		In the case when $S_1\neq S_2$, using Lemma~\ref{lem:Minv reg} and Proposition~\ref{prop:Binv} in \eqref{eqn:JN exp} establish the claim.  For the first term in \eqref{eqn:JN exp}, we can truncate the expansion in Lemma~\ref{lem:Minv reg} to see it is of the form $D_0+O_1(z^{0})$.  The second term requires the longer expansion due to the singularities of $(B^\pm(z))^{-1}$.
		\begin{multline*}
			(M^\pm(z)+S_1)^{-1}S_1 (B^\pm(z))^{-1}S_1(M^\pm(z)+S_1)^{-1}\\
			=
			[D_0+g_1^\pm(z)D_0T_1D_0+z^2D_0T_2D_0+O_1(z^{2+})]S_1\bigg(\frac{1}{g_1^\pm(z)}\Gamma_0+\frac{z^2}{(g_1^\pm(z))^2} \Gamma_1+O_1\bigg(\frac{1}{z^2(\log z)^3}\bigg)S_1\bigg)S_1\\
			[D_0+g_1^\pm(z)D_0T_1D_0+z^2D_0T_2D_0+O_1(z^{2+})]\\
			=\frac{S_1\Gamma_0S_1}{g_1^\pm(z)}+S_1\Gamma_0S_1T_1D_0+D_0T_1S_1\Gamma_0S_1+\frac{z^2}{g_1^\pm(z)}[ S_1\Gamma_0S_1T_2D_0+D_0T_2S\Gamma_0S_1 ]\\
			+O_1\bigg(\frac{1}{z^2(\log z)^3}\bigg)S_1.
		\end{multline*}
		When $S_1=S_2$, many of the logarithmic terms are annihilated.  In particular, by Corollary~\ref{cor:S2 orth} we have $S_1v\mathcal G_1=\mathcal G_1vS_1=0$.  Using the longer expansion in \eqref{eqn:M+S inv long}, where we utilize the exact form of the $\Gamma_{4,j}$ operators given in the proof to see that $S_1\Gamma_{4,2}=\Gamma_{4,2}S_1=0$, while the other $\Gamma_{4,j}$ aren't in the kernel of $S_1$.  From this we have
		\begin{multline*}
			S_1(M^\pm(z)+S_1)^{-1}=S_1(D_0+g_1^\pm(z)D_0T_1D_0+z^2 D_0T_2D_0+(g_1^\pm(z))^2\Gamma_{4,2}\\
			+[g_2^\pm(z)+z^2g_1^\pm(z)]\Gamma_{4,1}
			+z^4\Gamma_{4,0}+O_1(z^{4+}))\\
			=S_1+z^2 S_1T_2D_0+g_2^\pm(z)S_1\Gamma_{4,1}+z^4S_1 \Gamma_{4,2}+O_1(z^{4+}).
		\end{multline*}
		Similarly,
		\begin{align*}
			(M^\pm(z)+S_1)^{-1}S_1
			=S_1+z^2 D_0T_2S_1 +g_2^\pm(z)\Gamma_{4,1}S_1+z^4 \Gamma_{4,2}S_1+O_1(z^{4+}).
		\end{align*}
		So the second term in \eqref{eqn:JN exp} is
		\begin{multline*}
			[ S_1+z^2 S_1T_2D_0+g_2^\pm(z)S_1\Gamma_{4,1}+z^4S_1 \Gamma_{4,2}+O_1(z^{4+})] \big[ -\frac{D_2}{z^2}+\frac{g_2^\pm(z)}{z^4}\Gamma_0+\Gamma_1++O_1(z^{0+})\big]\\
			[S_1+z^2 D_0T_2S_1 +g_2^\pm(z)\Gamma_{4,1}S_1+z^4 \Gamma_{4,2}S_1+O_1(z^{4+})]\\
			=-\frac{D_2}{z^2}+\Gamma_0+\frac{g_2^\pm(z)}{z^4}\Gamma_1+O_1(z^{0+}).
		\end{multline*}
		The case when $S_1\neq S_2\neq 0$ is similar to the previous cases using the longer expansion in Proposition~\ref{prop:Binv}.
	\end{proof}
	
	For our purposes, we also need to account for the difference between the + and - limiting operators.  Specifically, we have
	\begin{corollary}\label{cor:M diff}
		
		If $0\neq S_1$ and $S_2=0$ with $|V(x)|\les \la x\ra^{-\delta-}$ for some $\delta>4$, then 
		$$
			(M_-^{-1}-M_+^{-1})(z)=\bigg(\frac{1}{g^+(z)}-\frac{1}{g^-(z)}\bigg)S_1\Gamma_0S_1+
			O_1\bigg(\frac{1}{z^2(\log z)^3}\bigg)S_1+O_1\bigg(\frac{1}{(\log z)^2}\bigg),
		$$
		When $0\neq S_1=S_2$, provided $\delta>8$, we have
		$$
			(M_-^{-1}-M_+^{-1})(z)= \Gamma_0+O_1(z^{0+}).
		$$
		
	\end{corollary}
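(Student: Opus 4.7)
The approach is a direct computation from the expansions of $M_\pm^{-1}(z)$ given in Proposition~\ref{prop:M inv res}: subtract the $+$ and $-$ expansions term by term and identify which contributions survive. The key enabling fact is elementary: from \eqref{g1 def} and \eqref{g2 def},
$$g_1^+(z)-g_1^-(z) = 2i\,\mathrm{Im}(b_1)\,z^2, \qquad g_2^+(z)-g_2^-(z) = 2i\,\mathrm{Im}(b_2)\,z^4,$$
so every logarithmic factor in a difference such as $1/g_1^- - 1/g_1^+$ or $z^2/(g_1^-)^2 - z^2/(g_1^+)^2$ is controlled by an explicit power of $z$.

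For the first case ($0\neq S_1$, $S_2=0$), all $z$- and $\pm$-independent contributions—most importantly the operator $\Gamma_3$—cancel under subtraction, and so do the $O_1$ error terms, since by the convention preceding Proposition~\ref{prop:M inv res} they are independent of the $\pm$ choice. What remains are the three $\pm$-dependent coefficient pairs $\tfrac{1}{g_1^\pm}$, $\tfrac{z^2}{(g_1^\pm)^2}$, and $\tfrac{z^2}{g_1^\pm}$, each multiplying a fixed absolutely bounded operator. A direct computation using $g_1^+ - g_1^- = c z^2$ yields $\tfrac{z^2}{(g_1^-)^2} - \tfrac{z^2}{(g_1^+)^2} = O(z^{-2}|\log z|^{-3})$ and $\tfrac{z^2}{g_1^-} - \tfrac{z^2}{g_1^+} = O(|\log z|^{-2})$, so these two differences absorb into the stated $O_1(1/(z^2(\log z)^3))S_1$ and $O_1(1/(\log z)^2)$ error terms. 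The remaining piece $\bigl(\tfrac{1}{g_1^-}-\tfrac{1}{g_1^+}\bigr)S_1\Gamma_0 S_1$ is exactly the claimed leading term (after renaming $\Gamma_0\mapsto -\Gamma_0$, which is permitted by the generic $\Gamma_j$ convention).

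For the second case ($0\neq S_1 = S_2$), the cancellation is even cleaner: the singular term $-D_2/z^2$ and the $\pm$-independent operator $\Gamma_0$ both vanish upon subtraction, leaving only $\tfrac{g_2^-(z)-g_2^+(z)}{z^4}\Gamma_1 + O_1(z^{0+})$. By \eqref{g2 def}, $g_2^+ - g_2^- = c z^4$, so this difference reduces to a constant multiple of $\Gamma_1$, which is a $z$- and $\pm$-independent absolutely bounded operator. Absorbing this constant into the $\Gamma_0$ of the statement yields the claimed expansion.

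The main obstacle is not algebraic but rather a matter of bookkeeping: each quantity we claim to absorb into an $O_1(\cdot)$ symbol must satisfy the derivative bound as well as the size bound. A short verification confirms that differentiating $\tfrac{z^2}{(g_1^-)^2} - \tfrac{z^2}{(g_1^+)^2}$ costs at most a factor of $1/z$ up to $\log$ corrections, matching the derivative of $1/(z^2(\log z)^3)$; the analogous computation for $\tfrac{z^2}{g_1^-} - \tfrac{z^2}{g_1^+}$ matches the derivative of $1/(\log z)^2$. These checks are routine once the explicit forms of $g_1^\pm$ and $g_2^\pm$ from \eqref{g1 def}--\eqref{g2 def} are written out, so the entire argument reduces to careful accounting.
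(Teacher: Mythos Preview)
Your approach is correct in substance and more direct than the paper's, but one step is misjustified. You claim the $O_1$ error terms in Proposition~\ref{prop:M inv res} cancel upon subtraction because ``by the convention preceding Proposition~\ref{prop:M inv res} they are independent of the $\pm$ choice.'' This is not so: that convention applies only to the named operators $\Gamma_j$, not to the $O_1$ remainders, which arise from Neumann series involving $g_1^\pm(z)$ and genuinely depend on the sign. The repair is immediate and even simpler than cancellation: since each of the two remainders is individually $O_1(f(z))$ in the sense of the paper's definition, their difference is also $O_1(f(z))$ by the triangle inequality, and that is all you need. The same correction applies in the $S_1=S_2$ case with the $O_1(z^{0+})$ remainders.

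With that fix your term-by-term subtraction works and is cleaner than the paper's argument, which returns to the Jensen--Nenciu representation \eqref{eqn:JN exp}, applies the algebraic identity \eqref{eqn:alg identity} to split the difference according to whether the $\pm$-difference lands on $(M^\pm+S_1)^{-1}$ or on $(B^\pm)^{-1}$, and then estimates each piece separately via Proposition~\ref{prop:Binv} and Lemma~\ref{lem:Minv reg}. The paper's route makes the provenance of each contribution more transparent---useful elsewhere when finite-rank structure and the placement of $S_1$ projections must be tracked---but for the corollary as stated your direct subtraction of the final expansions in Proposition~\ref{prop:M inv res} suffices.
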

	
	For the sake of brevity, we omit the longer expansion when $S_1\neq S_2\neq 0$ from the statement and proof of this corollary, it is implicitly in Proposition~\ref{prop:M inv res}.
	\begin{proof}
		We first consider the case when $S_1\neq S_2$.
		Similar to \eqref{eqn:Minv pm diff}, we have
		$$
		(M^+(z)+S_1)^{-1}-(M^-(z)+S_1)^{-1}=z^2 \Gamma_0+O_2(z^{2+}).
		$$
		We recall the algebraic identity \eqref{eqn:alg identity} and combine it with the last two equalities with Propositions~\ref{prop:Binv} and \ref{prop:M inv res}, there are essentially two cases to consider.  First, if the +/- difference acts on $(M(z)+S_1)^{-1}$, one has
		\begin{multline*}
			[(M^+(z)+S_1)^{-1}-(M^-(z)+S_1)^{-1}](B^{-})^{-1}(z)(M^{-}(z)+S_1)^{-1}\\
			=\frac{z^2}{g_1^+(z)}\Gamma_1+\frac{z^2}{g_1^-(z)}\Gamma_2+O_1\bigg(\frac{1}{(\log(z))^2}\bigg),
		\end{multline*}
		for some absolutely bounded operators $\Gamma_j$. The remaining case is when the difference is on the operators $B^{-1}(z)$, then by Propositions~\ref{prop:Binv} and \ref{prop:M inv res} we have
		\begin{multline*}
			(M^+(z)+S_1)^{-1}S_1[(B^{+})^{-1}-(B^{-})^{-1}](z)S_1(M^{-}(z)+S_1)^{-1}\\
			=
			[D_0+g_1^+(z)D_0T_1D_0+O_2(z^{2})]S_1\bigg[\bigg(\frac{1}{g^+(z)}-\frac{1}{g^-(z)}\bigg)\Gamma_0+O_1\bigg(\frac{1}{z^2(\log z)^3}\bigg)S_1\bigg]\\
			\qquad\qquad
			S_1[D_0+g_1^-(z)D_0T_1D_0+O_2(z^{2})]\\
			=O_1\bigg(\frac{1}{z^2(\log z)^2}\bigg)S_1\Gamma_1S_1+O_1(z^{0+}).
		\end{multline*}
		Putting these together, establishes the claim when $S_2=0$.
		
		Next, when $S_1=S_2$, we note that Corollary~\ref{cor:S2 orth} tells us that $S_2v\mG_1=S_1v\mG_1=0$ and Definition~\ref{resondef} tells us that $S_2D_0=S_1D_0=0$ in this case.  From Proposition~\ref{prop:Binv} and the computations above, we have
		\begin{multline*}
			[(M^+(z)+S_1)^{-1}-(M^-(z)+S_1)^{-1}]S_1(B^{-})^{-1}(z)S_1(M^{-}(z)+S_1)^{-1}\\
			=[z^2\Gamma_0+O_2(z^{2+}) ]S_1\bigg( \frac{-D_2}{z^2} +O_1\bigg(\frac{1}{\log(z)}\bigg) \bigg)S_1[D_0+g_1^-(z)D_0T_1D_0]=O_1(z^{2-}).
		\end{multline*}
		Here we note that the leading $D_0$ on the right is in the kernel of $S_1=S_2$.  On the other hand, 
		\begin{multline*}
			(M^+(z)+S_1)^{-1}S_1[(B^{+})^{-1}-(B^{-})^{-1}](z)(S_1M^{-}(z)+S_1)^{-1}\\
			=	[D_0+g_1^+(z)D_0T_1D_0+O_2(z^{2})]S_1(O_1(z^0))S_1[D_0+g_1^-(z)D_0T_1D_0+O_2(z^{2})]\\
			=S_1\Gamma_0S_1+O_1(z^{2-} ).
		\end{multline*}

	\end{proof}

	Having developed the appropriate expansions of $M_\pm^{-1}(z)$ in a neighborhood of the threshold, we now seek to prove the dispersive bounds in Theorem~\ref{thm:main}. We now show that the contribution of these new singular terms to the spectral measure in the Stone's formula, \eqref{eqn:stone}, may be bounded appropriately. As usual, we reduce to oscillatory integral estimates.
	
	\begin{lemma}\label{lem:osc log}
		
		Let $\mathcal E(z)$ be supported on $(0,z_1)$ for some $0<z_1\ll 1$ satisfying
		$|\mathcal E(z)|\les z^{-2}(\log z)^{-2}$ and $|\mathcal E'(z)|\les z^{-3}(\log z)^{-2}$.  Then, for $t>2$ we have the following bound
		$$
		\bigg|\int_0^\infty e^{-it\sqrt{z^2+m^2}}\frac{z}{\sqrt{z^2+m^2}}\mathcal E(z)\, dz\bigg| \les \frac{1}{\log(t)}.
		$$
		
	\end{lemma}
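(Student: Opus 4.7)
The plan is to split the integral at the natural scale $z = t^{-1/2}$, use absolute bounds on the low piece, and integrate by parts on the high piece via the observation that $\frac{z}{\sqrt{z^2+m^2}} = \frac{d}{dz}\sqrt{z^2+m^2}$. The decisive identity $\frac{d}{dz}[-1/\log z] = 1/(z(\log z)^2)$ is what produces the $1/\log t$ rate at the cutoff.

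On $(0,t^{-1/2})$ I bound the oscillatory factor by $1$ and use $\frac{z}{\sqrt{z^2+m^2}}\le z/m$:
\begin{align*}
\left|\int_0^{t^{-1/2}} e^{-it\sqrt{z^2+m^2}}\frac{z}{\sqrt{z^2+m^2}}\mathcal E(z)\,dz\right|
\les \int_0^{t^{-1/2}}\frac{dz}{z(\log z)^2}
= -\frac{1}{\log z}\bigg|_{0}^{t^{-1/2}}
= \frac{2}{\log t}.
\end{align*}

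On $(t^{-1/2},z_1)$ set $\phi(z) = \sqrt{z^2+m^2}$, so that $\phi'(z) = z/\sqrt{z^2+m^2}$, and integrate by parts once to obtain
\begin{align*}
\int_{t^{-1/2}}^{z_1}e^{-it\phi(z)}\phi'(z)\mathcal E(z)\,dz
= \left[\frac{e^{-it\phi(z)}\mathcal E(z)}{-it}\right]_{z = t^{-1/2}}^{z = z_1}
+ \frac{1}{it}\int_{t^{-1/2}}^{z_1} e^{-it\phi(z)}\mathcal E'(z)\,dz.
\end{align*}
The boundary term at $z_1$ is $O(1/t)$, while at $z = t^{-1/2}$ it is controlled by $|\mathcal E(t^{-1/2})|/t \les (1/t)\cdot t/(\log t)^2 = (\log t)^{-2}$. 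For the remaining integral, the substitution $u = 1/z^2$ sends $[t^{-1/2},z_1]$ to $[1/z_1^2,t]$ and transforms $dz/(z^3(\log z)^2)$ into $-2\,du/(\log u)^2$, giving
\begin{align*}
\frac{1}{t}\int_{t^{-1/2}}^{z_1}\frac{dz}{z^3(\log z)^2}
= \frac{2}{t}\int_{1/z_1^2}^{t}\frac{du}{(\log u)^2}
\les \frac{1}{(\log t)^2},
\end{align*}
by the standard asymptotic $\int_2^t du/(\log u)^2 \sim t/(\log t)^2$. Combining, every contribution from the IBP piece is $O((\log t)^{-2}) + O(1/t) \les 1/\log t$.

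The main obstacle is that the decay hypotheses on $\mathcal E$ are exactly critical: $\int_0^{z_1} dz/(z(\log z)^2)$ is only logarithmically convergent, and $\int_0^{z_1} dz/(z^3(\log z)^2)$ is only logarithmically better than $\int dz/z^3$. Choosing the cutoff at $z = t^{-1/2}$ balances the two contributions, and the exact evaluation of the antiderivative $-1/\log z$ at the cutoff (rather than a crude $L^\infty$ bound on $(\log z)^{-2}$) is what extracts the factor of $1/\log t$; any softer estimate would destroy the gain at the threshold of the $z$-integration.
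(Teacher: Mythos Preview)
Your proof is correct and follows essentially the same strategy as the paper: split at $z=t^{-1/2}$, bound the low piece absolutely via $\int_0^{t^{-1/2}} dz/(z(\log z)^2)$, and integrate by parts once on the high piece. The only minor difference is in estimating the post-IBP integral $\frac{1}{t}\int_{t^{-1/2}}^{z_1} dz/(z^3(\log z)^2)$: the paper splits it further at $z=t^{-1/4}$ and uses crude $L^\infty$ bounds on $(\log z)^{-2}$ on each subinterval, whereas you use the substitution $u=1/z^2$ and the asymptotic $\int_2^t du/(\log u)^2\sim t/(\log t)^2$, which is a slightly more elegant route to the same $(\log t)^{-2}$ bound.
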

	
	\begin{proof}
		
		We break the integral up into two domains, when $0<z<t^{-\frac12}$ and when $z>t^{-\frac12}	$.  For the first region, the oscillation cannot be used instead the triangle inequality suffices to establish the bound
		\begin{align*}
			\bigg|\int_0^{t^{-\f12}} e^{-it\sqrt{z^2+m^2}}\frac{z}{\sqrt{z^2+m^2}}&\mathcal E(z)\, dz\bigg|\les \int_0^{t^{-\f12}} |\mathcal E(z)|\, dz\les \int_0^{t^{-\f12}} \frac{1}{z(\log z)^2}\, dz\les \frac{1}{\log t}.
		\end{align*}
		For the region supported away from zero, to utilize the oscillation we integrate by parts once
		\begin{align*}
			\int_{t^{-\f12}}^\infty e^{-it\sqrt{z^2+m^2}}\frac{z}{\sqrt{z^2+m^2}}\mathcal E(z)\, dz
			&=\frac{e^{-it\sqrt{z^2+m^2}}\mathcal E(z)}{-it}\bigg|^{\infty}_{t^{-\f12}}+\frac{1}{it}\int{t^{-\f12}}^\infty e^{-it\sqrt{z^2+m^2}} \mathcal E'(z)\, dz.
		\end{align*}
		By the assumptions on $\mathcal E(z)$, the boundary term's contribution is dominated by $t^{-1}|\mathcal E(t^{-\f12})|\les (\log t)^{-2}$.  For the second integral, we further divide the region of integration into regions where $|\log z|\approx \log t$ and where $|\log z|\gtrsim 1$.  By the assumption that $t>2$ and the support of $\mathcal E(z)$, $|\log z|$ is a decreasing function on the region considered and we have
		\begin{align*}
			\bigg|\frac{1}{it}\int_{t^{-\f12}}^\infty e^{-it\sqrt{z^2+m^2}} \mathcal E'(z)\, dz\bigg| &\les \frac{1}{t}\int_{t^{-\f12}}^{t^{-\f14}} \frac{1}{z^3(\log z)^2}\, dz+\frac{1}{t}\int_{t^{-\f14}}^{\f12}\frac{1}{z^3}\, dz\\
			&\les \frac{1}{t(\log t)^2}\int_{t^{-\f12}}^{t^{-\f14}}\frac{dz}{z^3} +\frac{1}{t}\int_{t^{-\f14}}^\infty \frac{dz}{z^3}\les \frac{1}{(\log t)^2}+\frac{1}{t^{\f12}} \les \frac{1}{\log t}.
		\end{align*}
		Extending the upper limit of integration of the second integral is harmless for our desired bound which is constrained by its behavior on $0<z<t^{-\f12}$ above.	 
	\end{proof}
	
	This estimate is needed for the time decay in  the finite-rank operator $F_t$ in Theorem~\ref{thm:main}.  For the $\la t\ra^{-1}$ error term in Theorem~\ref{thm:main}, we note that that if $\mathcal E(z)$ is supported on a small neighborhood of zero with $|\mathcal E(z)|\les \frac{1}{\log (z)}$ and $|\mathcal E'(z)|\les \frac{1}{z(\log z)^2}$, then
	$$
	\bigg|\int_0^\infty e^{-it\sqrt{z^2+m^2}}\frac{z}{\sqrt{z^2+m^2}}\mathcal E(z)\, dz\bigg| \les \frac{1}{\la t \ra}.
	$$
	The integral is clearly bounded by the triangle inequality and assumption on $\mathcal E(z)$.  The time decay follows by a single integration by parts.

	\begin{proof}[Proof of Theorem~\ref{thm:main} when $m$ is not regular]
		As usual, we employ a Born Series expansion to write
		$$
		\mR_V^\pm=\sum_{j=0}^{7}\mR_0^\pm(-V\mR_0^\pm)^j
		+(\mR_0^\pm V)^3\mR_0^\pm v^*M_\pm^{-1}(z)v(\mR_0^\pm V)^3 \mR_0^{\pm}
		$$
		From the treatment in the case when the threshold is regular, we need only consider the final term. Our goal is to control its contribution to the Stone's formula, which is
		\begin{align}\label{eqn:J goal}
			\int_0^\infty e^{-it\sqrt{z^2+m^2}} \frac{z\chi(z)}{\sqrt{z^2+m^2}} \mathcal J(z)(x,y)\, dz,
		\end{align}
		where
		\begin{align*}
			\mathcal J(z)= [(\mR_0^+ V)^3\mR_0^+ v^*M_{+}^{-1}v(\mR_0^+ V)^3 \mR_0^{+}
			-(\mR_0^- V)^3\mR_0^- v^*M_{-}^{-1}v(\mR_0^- V)^3 \mR_0^{-}](z).
		\end{align*}
		For the moment we suppress the dependence on the spatial variables.  The uniform boundedness of the spatial integrals follows as in the regular case since the iterated resolvents and their first derivative on either side are $L^2$ uniformly in $x,y$.
		
		We first consider the case when $S_1\neq 0$ and $S_2=0$, that is when there is only a threshold resonance.  
		By \eqref{eqn:alg identity}, it suffices to consider two cases.  Either the +/- difference acts on a free resolvent, or it acts on $M^{-1}$.  In the first case, we consider 
		\begin{align*}
			\int_0^\infty e^{-it\sqrt{z^2+m^2}} \frac{z\chi(z)}{\sqrt{z^2+m^2}}[\mR_0^+-\mR_0^-](V\mR_0^+)^3  v^*M_{+}^{-1}v(\mR_0^+ V)^3 \mR_0^{+}](z)(x,y)\, dz.
		\end{align*}
		The other cases, the $\mR_0^+-\mR_0^-$ is an inner resolvent are similar and follow from the argument we present here.  It suffices to show that we can integrate by parts in $z$ once here to obtain the desired time decay.  
		
		By Lemma~\ref{lem:R0exp}, we note that (in terms of the spectral parameter $z$) $\mR_0-\mG_0=O_1(z^{2-})$ with no growth in the spatial variables.  By replacing each $\mR_0^\pm(z)$ with $\mG_0+\mathcal E_0^\pm(z)$, any term of the expansion with contribution of multiple $\mathcal E_0^\pm(z)$ yields an integrand that is $O_1(z)$ with respect to the spectral parameter and, as above, can be integrated by parts once in $z$ to obtain a bound on the $z$ integral of size $\la t\ra^{-1}$.  As in the analysis of the regular case, the iterated integrals in the spatial variables is bounded uniformly in $x,y$.  Hence, it suffices to consider the leading terms. 
		
		Here we need to adjust the approach depending on whether the difference of the resolvents has a large or small argument.  By Lemma~\ref{lem:R0exp}, \eqref{eqn:RL diff bds} and the discussion directly after it,
		\begin{multline}\label{eqn:Ro dif res}
			[\mR_0^+-\mR_0^-](z)(x,x_1)=[cz^2\mG_1(x,x_1)+ O_1(z^4|x-x_1|^{2})]\chi(z|x-x_1|)\\
			+\widetilde \chi(z|x-x_1|)e^{\pm i z|x-x_1|}  O_1(z^{\f12}|x-x_1|^{-\f32}),\
		\end{multline}
		We first consider the contribution of the first summand.
		By Proposition~\ref{prop:M inv res}, the most singular term is of the form
		$$
		\bigg[\frac{cz^2 \chi(z|x-x_1|)}{g_1^+(z)}\mG_1(x,x_1)+
		O_1(z^2 |x-x_1|^2)\chi(z|x-x_1|) \bigg] (V\mG_0)^3v^*S_1\Gamma_0S_1 v(\mG_0V)^3\mG_0.
		$$
		Where we use that $(\log z)^{-1}\les 1$ on the support of $\chi(z)$ for the second term.
		Hence, from \eqref{eq:G1 def} it suffices to control
		\begin{align*}
			\int_0^\infty e^{-it\sqrt{z^2+m^2}} \frac{z\chi(z)}{\sqrt{z^2+m^2}}\bigg[ \frac{cz^2}{g_1^-(z)} + O_1(z |x-x_1|) \bigg]\chi(z|x-x_1|)\, dz.
		\end{align*}
		
		By the discussion following Lemma~\ref{lem:osc log}, we have $\mathcal E(z)= \frac{cz^2}{g_1^-(z)}$, which by \eqref{g1 def} is $\mathcal E(z)= \frac{c}{a_1\log z+b_1}$ and satisfies the size estimates $|\mathcal E(z)|\les (\log z)^{-1}$ and $|\mathcal E'(z)|\les z^{-1}(\log z)^{-2}$. Hence the integral is bounded and we may integrate by parts once to get a bound of size $\la t\ra^{-1}$. For the second term, we may integrate by parts once then use the support of the cut-off to see that
		$$
		\int_0^\infty z|x-x_1|^2\chi(z|x-x_1)|\, dz \les 1.
		$$
		The resulting spatial integrals are bounded as in the analysis in the regular case.
		The remaining terms where the +/- difference acts on a resolvent is handled similarly.
		We now consider the contribution of the terms supported on $z|x-x_1|\gtrsim 1$.  In this regime, we use that $|g_1(z)|\gtrsim z^2$ and effectively ignore the logarithm(s) in the denominator.  By the support of the cut-offs, we may integrate by parts once without boundary terms to see
		\begin{multline}\label{eqn:low diff ugly res}
			\bigg|\int_0^\infty e^{-it\sqrt{z^2+m^2}} \frac{z\chi(z)}{\sqrt{z^2+m^2}} \frac{1}{{g_1^-(z)}}\widetilde \chi(z|x-x_1|)e^{\pm i z|x-x_1|}  O_1(z^{\f12}|x-x_1|^{-\f32})\, dz \bigg|\\
			\les \frac{1}{t}\int_0^\infty \bigg|\partial_z \bigg[ \frac{1}{{g_1^-(z)}} \widetilde \chi(z|x-x_1|)e^{\pm i z|x-x_1|}  O_1(z^{\f12}|x-x_1|^{-\f32}) \bigg]\bigg| \, dz\\
			\les \frac{1}{t} \int_0^\infty \chi(z)\widetilde \chi(z|x-x_1|)\big(z^{-\f52}|x-x_1|^{-\f32}+z^{-\f32}|x-x_1|^{-\f12}
			\big)\, dz \les \frac1t.
		\end{multline}
		
		The bottleneck for the time decay is when the +/- difference acts on $M^{-1}(z)$.  
		By Corollary~\ref{cor:M diff}, we have
		$$
		(M^+(z))^{-1}-(M^+(z))^{-1}
		=\bigg(\frac{1}{g^+(z)}-\frac{1}{g^-(z)}\bigg)S_1\Gamma_0S_1+
		O_1\bigg(\frac{1}{z^2(\log z)^3}\bigg)S_1+O_1\bigg(\frac{1}{(\log z)^3}\bigg).
		$$
		In this contribution to \eqref{eqn:J goal}, we further decompose $\mR_0^\pm(z)=\mG_0+(\mR_0^\pm(z)-\mG_0)$ to obtain a sum of terms
		\begin{multline*}
			(\mR_0^- V)^3\mR_0^- v^*[M_{+}^{-1}-M_-^{-1}]v(\mR_0^+ V)^3 \mR_0^{+}
			=(\mG_0 V)^3\mG_0 v^*[M_{+}^{-1}-M_-^{-1}](z)v(\mG_0 V)^3 \mG_0]]\\
			+(\mR_0^--\mG_0) (V\mG_0)^3 v^*[M_{+}^{-1}-M_-^{-1}](z)v(\mG_0 V)^3 \mG_0\\
			+\mG_0V(\mR_0^--\mG_0) ( V\mG_0)^2 v^*[M_{+}^{-1}-M_-^{-1}](z)v(\mG_0 V)^3 \mG_0\\
			+\mG_0V\mG_0V(\mR_0^--\mG_0)V\mG_0  v^*[M_{+}^{-1}-M_-^{-1}](z)v(\mG_0 V)^3 \mG_0+\dots
		\end{multline*}
		Here the remaining terms are those with all $\mG_0$ on the left and one $\mR_0^+-\mG_0$ on the right, or ones with a contribution of multiple $\mR_0^\pm(z)-\mG_0$.
		We note that any summand with more than one $\mR_0^\pm(z)-\mG_0$ gains nearly four powers of $z$ by Lemma~\ref{lem:R0exp}, and can be bounded by $ O_1(z^{2-})$ with respect to the spectral parameter, for which we can comfortably integrate by part for the $\la t\ra^{-1}$ bound.  The spatial variable dependence is bounded uniformly in $x,y$ by an argument similar to the regular case.  We focus on the first term above, and note that by Corollary~\ref{cor:M diff}, the first two terms in the expansion of $[M_{+}^{-1}-M_-^{-1}](z)$ contribute
		\begin{align}\label{eqn:Ft def}
			F_t=\int_0^\infty e^{-it\sqrt{z^2+m^2}} \frac{z\chi(z)}{\sqrt{z^2+m^2}} (\mG_0V)^3\mG_0v^*O_1\bigg(\frac{1}{z^2(\log z)^2}\bigg)S_1v(\mG_0V)^3\mG_0 \, dz.
		\end{align}
		Lemma~\ref{lem:osc log} implies the claim about the time decay.  The claim about rank follows since $S_1$ is of rank at most two.  The contribution of the error term in Corollary~\ref{cor:M diff} allows for one integration by parts as in the discussion following Lemma~\ref{lem:osc log}, since it is of the form
		$$
			\int_0^\infty e^{-it\sqrt{z^2+m^2}} \frac{z\chi(z)}{\sqrt{z^2+m^2}} (\mG_0V)^3\mG_0v^*O_1\bigg(\frac{1}{(\log z)^2}\bigg)S_1v(\mG_0V)^3\mG_0 \, dz.
		$$
		
		Next, we must consider the case when all but one resolvent are replaced by $\mG_0$.  We consider the case of
		$$
			\int_0^\infty e^{-it\sqrt{z^2+m^2}} \frac{z\chi(z)}{\sqrt{z^2+m^2}} (\mR_0^--\mG_0)(z) V\mG_0V\mG_0 v^*[M_{+}^{-1}-M_-^{-1}](z)v(\mG_0 V)^2 \mG_0\, dz.
		$$
		We decompose $(\mR_0^--\mG_0)(z)(x,x_1)$ into two pieces; one when $z|x-x_1|\gtrsim 1$ and the second when $z|x-x_1|\les 1$.  When $z|x-x_1|\gtrsim1$, we note that for $k=0,1$ that
		$|\partial_z^k(\mR_0^--\mG_0)(z)(x,x_1)\widetilde\chi(z|x-x_1|)|\les z^{2-k}$.  This provides the required smallness in $z$ needed to integrate by parts once and bound its contribution by $\la t\ra^{-1}$.  
		
		We now consider $(\mR_0^--\mG_0)(z)(x,x_1)\chi(z|x-x_1|)$.
		Since the next terms in the expansion are of order $z^2\log(z|x-x_1|)$, the argument here is quite delicate.  We mimic the approach in \cite{EGG4} for the Schr\"odinger equation and consider the integral in the spectral parameter.   Define $\log^-r=-\log(r)\chi_{0<r<1}$ and let  $r=|x-x_1|$.  
		We need to bound an integral of the form
		$$
		\int_0^\infty e^{-it\sqrt{z^2+m^2}} \frac{z\chi(z)}{\sqrt{z^2+m^2}}\bigg(\frac{1}{g_1^+(z)}-\frac{1}{g_1^-(z)}\bigg) (z^2\log(zr)+\widetilde E(z,r))\chi(zr)\, dz 
		$$
		where $\widetilde E(z,r)=cz^2(1+r^{-2})+O_1(z^{4-}(r^{-2}+r^{2}))$, for which we may integrate by parts in $z$ to  bound by $\la t\ra^{-1}$ after considering the spatial integrals.  The dependence on $r$ is then controlled in the iterated spatial integrals, as negative powers of $r$ may be integrated while growth if $r$ is large is avoided by the support of $\chi(zr)$.
		Now we consider the leading term, we integrate by parts and  to bound the $z$ integral by (ignoring the terms when the derivative hits the cutoff functions)  
		\begin{multline*}
			\frac{1}{t} \int_0^\infty \chi(z) \Big[\frac{\chi(z r)|\log(z r)|}{z |a\log(z)+b|^3 } + \frac{ \chi(z r)z r^2|\log(z r)|}{|a\log(z)+b|^2} + \frac{1}{z |a\log(z)+b|^2}+z^{0-}\Big]    dz
			\\ \les \frac{1}{t} [1+\log^-(r)+r^{-2}].
		\end{multline*}
		To obtain the last inequality note that the last two summands are clearly integrable. The second summand can be estimated by noting that the denominator is bounded away from zero and using the change of variable $z r \to z$. Finally, the first summand can be estimated by using the inequality 
		$$\chi(z) \chi(z r)|\log(z r)|\les 1+|\log(z)|+\log^-(r).$$
		This yields the required inequality asserted in Theorem~\ref{thm:main} by noting that
		$$ 
		\sup_{x\in \R^4} v(y)(\mG_0V)^3(1+\log^-(|\cdot -x|)+|\cdot-x|^{-2})\in L^2_y(\R^4).
		$$
		This is easily verified by applying Lemma~\ref{lem:abstractSpatialIntegrals} along with the bound $\log^{-}|x_1-x|\les |x_1-x|^{-\epsilon}$.  The contribution of the error term of size $O_1(z^{-2}(\log z)^{-3})$ in $M_+^{-1}-M_{-}^{-1}$ is slightly less delicate, but is handled similarly.

		Now, if $S_1=S_2\neq 0$, by Proposition~\ref{prop:M inv res} we have
		$$
		M_\pm^{-1}(z)=-\frac{D_2}{z^2}+\Gamma_0+\frac{g_2^\pm(z)}{z^4}\Gamma_1 +O_1(z^{0+}).
		$$
		By Corollary~\ref{cor:M diff}, and the argument above, it is clear that when the +/- difference acts on the $M_\pm^{-1}(z)$ that the $\la t\ra^{-1}$ bound holds except for the contribution of $\frac{g_2^\pm(z)}{z^4}\Gamma_1$, here we need only recall \eqref{g2 def} to see that that $\frac{g_2^+(z)-g_2^-(z)}{z^4}=2\Im(b_2)\neq 0$ is a constant.

		To control the contribution when the +/- difference acts on a free resolvent.  We need only consider the contribution of most singular term, as the remaining terms clearly allow for an integration by parts.  Hence, we need to bound
		$$
		\int_0^{\infty}e^{-it\sqrt{z^2+m^2}} \frac{z\chi(z)}{\sqrt{z^2+m^2}}(\mR_0^+-\mR_0^-)(z) (V\mG_0)^3 v^*\bigg[-\frac{D_2}{z^2}\bigg]v(\mG_0 V)^3 \mG_0\, dz
		$$
		Now, the bound in \eqref{eqn:Ro dif res} allows us to rewrite this when $z|x-x_1|\les 1$, in terms of $z$, as an integral of the form
		$$
		\int_0^{\infty}e^{-it\sqrt{z^2+m^2}} \frac{z\chi(z)}{\sqrt{z^2+m^2}} \bigg[ c + O_1(z^2 |x-x_1|^2) \bigg]\chi(z|x-x_1|)\, dz.
		$$
		This suffices to allow us to integrate by parts once in $z$ and bound with $\la t\ra^{-1}$.  The contribution of the resolvents when $z|x-x_1|\gtrsim 1$ is controlled by the same argument in \eqref{eqn:low diff ugly res}. The remaining spatial variable dependence is easily seen to be bounded.
		So all pieces can be integrated by parts once in $z$ to gain the $\la t\ra^{-1}$ bound.  
		
		Finally, the case when $S_1\neq S_2\neq 0$ is controlled by the arguments above noting that the finitely many operators $\Gamma_{Q,j}$ in the expansion for $M_{\pm}^{-1}(z)$ in Proposition~\ref{prop:M inv res} have finite rank, hence their contribution is finite rank.  The remaining leading terms are controlled by the arguments above noting that
		$$
			\frac{z^2}{(g_1^\pm(z))^2}=O_1\bigg(\frac{1}{z^2(\log z)^2} \bigg), \quad 
			\frac{z^2}{(g_1^+(z))^2}-\frac{z^2}{(g_1^-(z))^2}=O_1\bigg(\frac{1}{z^2(\log z)^3} \bigg)
		$$
		and $\frac{z^2}{g_1^\pm(z)}=(a_1\log z+b_1)^{-1}$.
	\end{proof}

	\section{Classification of threshold spectral subspaces} \label{sec:esa}
	
	We now relate the spectral subspaces $S_jL^2(\R^4)$ and the invertibility of operators arising in $M^\pm(z)$ in Definition~\ref{resondef} to the existence of threshold resonances; distributional solutions to $\mathcal H\psi=\pm m\psi$ with $\mathcal H=D_m+V$.
	We note that the proofs mirror those in the three dimensional massive case, \cite{EGT} with modifications to the different structure in four dimensions.  The resonance structure mimics that of the Schr\"odinger operator, see \cite{EGG4}, having a space of `p-wave' resonances in addition to threshold eigenvalues.  One important difference from the three dimensional massive Dirac analysis is the need for a fourth $\alpha_j$ matrix in the definition of the Dirac operator, as well as the logarithmic behavior of the resolvent as the spectral parameter approaches a threshold.  We consider the characterization at the positive threshold energy $\lambda=m$.  The analysis may be adapted to the negative threshold, $-m$, with straight-forward adaptations of the argument presented here as discussed in Remark~\ref{rmk:neg branch}.  We leave the details to the interested reader.
	
	We begin by relating the subspace $S_1L^2(\R^4)$ to the existence of threshold obstructions.  
	\begin{lemma}\label{lem:esa1}
		Assume $|v(x)| \les \la x \ra ^{-{2}-} $.  $\phi \in S_1 L^2(\R^4) \setminus \{0\} $ if and only if $\phi= Uv \psi $ for some  $\psi \in L^{2,-{0}-}(\R^4)\setminus \{0\} $ which  is a distributional solution of  $ ( D_m+V-mI)\psi=0 $. Furthermore, $\psi=- \mathcal{G}_0v^{*}\phi$ and $\psi$ is a bounded function.
	\end{lemma}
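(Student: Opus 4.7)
The plan is to prove both implications by unpacking $S_1$ as $\ker T_0$ where $T_0 = U + v\mG_0 v^*$, and using the factorization $V = v^*Uv$ together with the formal identity $(D_m - mI)\mG_0 = I$ that comes from \eqref{eqn:resolvdef} at $z = 0$.

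For the forward direction, suppose $\phi \in S_1 L^2(\R^4)\setminus\{0\}$, so $U\phi + v\mG_0 v^*\phi = 0$. Setting $\psi := -\mG_0 v^*\phi$ and left-multiplying by $U$ (with $U^2 = I$) immediately yields $\phi = Uv\psi$. Applying $(D_m - mI)$ distributionally gives $(D_m - mI)\psi = -v^*\phi = -(v^*Uv)\psi = -V\psi$, which is the required PDE. The weighted $L^2$ bound follows from the pointwise estimate $|\mG_0(x,y)| \les |x-y|^{-3} + |x-y|^{-2}$ from \eqref{eq:G0 def} combined with $|v^*\phi(y)| \les \la y\ra^{-2-}|\phi(y)|$: a Cauchy--Schwarz estimate in $y$ with weight $\la y\ra^{-\beta}$ for $\beta > 2$, followed by two applications of Lemma~\ref{lem:spatial estimates}, yields $\int \la x\ra^{-2\epsilon}|\psi|^2\, dx \les \|\phi\|_2^2$ for every $\epsilon > 0$, whence $\psi \in L^{2,-0-}$. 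Nontriviality $\psi \neq 0$ is automatic, since $\phi = Uv\psi$ would otherwise vanish.

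For the reverse direction, given $\psi \in L^{2,-0-}\setminus\{0\}$ solving $(D_m + V - mI)\psi = 0$, set $\phi := Uv\psi$; a direct weighted estimate using $|v| \les \la x\ra^{-2-}$ shows $\phi \in L^2$. Rearranging the PDE gives $(D_m - mI)\psi = -V\psi = -v^*\phi$, so $(D_m - mI)(\psi + \mG_0 v^*\phi) = 0$ distributionally. Since $\psi + \mG_0 v^*\phi \in L^{2,-0-}$ (the second summand by the estimate just used in the forward direction), and since any distributional solution $\chi$ of $(D_m - mI)\chi = 0$ in $L^{2,-0-}$ must vanish---indeed $-\Delta\chi = (D_m + mI)(D_m - mI)\chi = 0$ componentwise, forcing $\chi$ to be harmonic, and no nontrivial harmonic polynomial of degree $k$ lies in $L^{2,-\sigma}(\R^4)$ for $\sigma \leq k+2$---we conclude $\psi = -\mG_0 v^*\phi$. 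Substituting back, $U\phi = v\psi = -v\mG_0 v^*\phi$, i.e., $T_0\phi = 0$, so $\phi \in S_1 L^2$. If $\phi = 0$ then $v\psi = 0$, so $V\psi = 0$ and the equation reduces to $(D_m - mI)\psi = 0$; the same Liouville argument forces $\psi = 0$, contradicting the hypothesis.

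The main technical obstacle is the boundedness of $\psi$. The pointwise bound $|\mG_0(x,y)| \les |x-y|^{-3}$ is not locally $L^2$ in $\R^4$, so direct Cauchy--Schwarz cannot upgrade $\phi \in L^2$ to $\psi \in L^\infty$. I would therefore split the argument: locally, the equation $(D_m + V - mI)\psi = 0$ is a first-order elliptic system with bounded $V$, so bootstrapping from $\psi \in L^2_{\mathrm{loc}}$ via Sobolev embedding ($H^1 \hookrightarrow L^4$, then $W^{1,4} \hookrightarrow L^\infty$ in $\R^4$) yields $\psi \in L^\infty_{\mathrm{loc}}$. For boundedness at infinity, split $|\psi(x)| \les \int(|x-y|^{-3} + |x-y|^{-2})|v^*\phi(y)|\, dy$ into the regions $|y| \leq |x|/2$, where the kernels are bounded by negative powers of $|x|$ paired against the finite norm $\|v^*\phi\|_1 \les \|v\|_2\|\phi\|_2$, and $|y| > |x|/2$, where the rapid decay $\la y\ra^{-2-}|\phi(y)|$ of $v^*\phi$ combined with a Cauchy--Schwarz estimate produces smallness in $|x|$.
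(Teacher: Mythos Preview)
Your treatment of the two implications is essentially the paper's argument: set $\psi=-\mG_0v^*\phi$, use $(D_m-mI)\mG_0 v^*\phi=v^*\phi$, and in the converse direction show $\psi+\mG_0v^*\phi$ is harmonic in $L^{2,-0-}$ hence zero. The paper obtains $\psi\in L^{2,-0-}$ slightly differently, splitting $\psi=\psi_1+\psi_2$ with $\psi_1=-i\alpha\cdot\nabla G_0v^*\phi$ and $\psi_2=2mI_{uc}G_0v^*\phi$ and invoking Jensen's fractional integral bounds to get $\psi_1\in L^2$ (not merely $L^{2,-0-}$) and $\psi_2\in L^{2,-0-}$; this decomposition is reused later in Lemma~\ref{lem:esa2}. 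Your Cauchy--Schwarz/Lemma~\ref{lem:spatial estimates} route is plausible but the weight condition $\beta>2$ looks too weak---running the Schur-type estimate through Lemma~\ref{lem:spatial estimates} actually forces $\beta$ close to $4$ for the $x$-integral to converge with weight $\la x\ra^{-2\epsilon}$.

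The boundedness argument has a genuine gap. Your tail estimate for $|y|>|x|/2$ claims a Cauchy--Schwarz bound, but the kernel $|x-y|^{-3}$ squared is $|x-y|^{-6}$, which is not locally integrable in $\R^4$; since the singular point $y=x$ lies in the region $|y|>|x|/2$, no direct Cauchy--Schwarz against $\phi\in L^2$ can close. Your elliptic bootstrap gives $\psi\in L^\infty_{\mathrm{loc}}$, but with constants depending on the local $L^2$ norm of $\psi$, so at best $|\psi(x)|\les \la x\ra^{0+}$, not uniform boundedness. The paper sidesteps all of this by iterating the identity $\psi=-\mG_0V\psi$ to $\psi=(-\mG_0V)^k\psi$; repeated application of Lemma~\ref{lem:spatial estimates} reduces the order of the singularity in the composed kernel until it is locally bounded, and the accumulated decay of $V$ then gives $\psi\in L^\infty$ directly. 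One step of this iteration, fed your almost-bounded estimate $|\psi|\les\la x\ra^{0+}$, would already suffice to repair your argument---but at that point you are essentially running the paper's proof.
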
 
	\begin{proof} If  $\phi \in S_1 L^2(\R^4)  \setminus \{0\}, $ then by Definition~\ref{resondef}, $(U+ v \mathcal{G}_0 v^{*}) \phi =0$.   Since $U^2=I$,  
		\begin{align}\label{S1G0}
			\phi= -U v \mathcal{G}_0 v^{*} \phi = U v \psi,\,\, \text{ where }\,\,  \psi:=-  \mathcal{G}_0 v^{*} \phi.
		\end{align}
		Using \eqref{eq:G0 def}  and \eqref{eq:Dmpm} with $\lambda=m$, we obtain
		\begin{multline}\label{eq:eigeniden}
			(D_m - mI) \psi   = -(D_m - mI)\mathcal{G}_0 v^{*}\phi = -(D_m - mI)(D_m +mI)G_0v^{*}\phi \\
			= \Delta G_0 v^{*}\phi = - v^{*} \phi =-v^*Uv\psi =-V\psi.
		\end{multline}  
		Therefore,    $ ( D_m +V -mI)\psi=0 $ in the sense of distributions. In the fourth equality above, 
		we used  the fact that $\Delta G_0 v^{*}\phi=-v^{*}\phi$ holds since  $v^{*}\phi \in L^{2,2+}$, see Lemma~2.4 in \cite{Jen2} for example.  
		
		Now we prove that   $\psi \in L^{2,-{0}-}(\R^4) $. Note that 
		\begin{multline} 
			\psi =- [ - i\alpha \cdot \nabla + 2m I_{uc} ] G_0 v^{*} \phi 
			=- [ - i\alpha \cdot \nabla + 2m I_{uc} ]  \frac{1}{2 \pi^2} \int_{\R^4} \frac{v^{*}(y) \phi (y)}{|x-y|^2} \\
			=   \frac{1}{2 \pi^2} \int_{\R^4}  i \alpha \cdot(x-y) \frac{v^{*}(y) \phi (y)}{|x-y|^4} - m I_{uc}  \frac{1}{2 \pi^2} \int_{\R^4} \frac{v^{*}(y) \phi (y)}{|x-y|^2} 
			=:  \psi_1+ \psi_2. \label{eq:g1g2}
		\end{multline}
		Since the integrals in equation can be bounded by fractional integral operators, by Lemma 2.3 in \cite{Jen}, we have $\psi_1\in L^2(\R^4) \subseteq L^{2,-{0}-}(\R^4)  $ provided $|v(x)| \les \la x \ra ^{-1} $; and  $\psi_2\in L^{2,-0-}(\R^4)$ provided $|v(x)| \les \la x \ra ^{-2-} $. 
		
		Conversely, assume that $\phi= Uv \psi $ for some $\psi \in L^{2,-0-}(\R^4) $ satisfying $ (D_m+V-mI)\psi=0 $. We need only show that $T_0\phi=0$.
		By a calculation similar to \eqref{eq:eigeniden}, we have 
		$$
		(D_m - mI) \psi   = -V\psi= - v^{*} \phi= \Delta G_0 v^{*}\phi = -(D_m - mI)(D_m +mI)G_0v^{*}\phi =  -(D_m - mI)\mathcal{G}_0 v^{*}\phi.
		$$ 
		Thus, also using \eqref{eq:Dmpm} with $\lambda=-m$, we have 
		$$ \Delta (\psi +  \mathcal{G}_0 v^{*}\phi)=(D_m+mI)(D_m-mI)(\psi +  \mathcal{G}_0 v^{*}\phi)  = 0. 
		$$
		Noting that $\psi +  \mathcal{G}_0 v^{*}\phi\in L^{2,-0-}(\R^4)$ is harmonic, we conclude that (see \cite{JenKat})
		$  \psi +   \mathcal{G}_0 v^{*}\phi=0.$
		Therefore,
		$$
		(U+ v \mathcal{G}_0 v^{*}) \phi = v\psi  +v\mathcal{G}_0 v^* \phi=0,
		$$
		and hence $\phi\in S_1L^2$.
		
		Since $\phi= Uv \psi $, if $\phi\neq 0 $, then $ \psi\neq 0$. The reverse implication  follows from $\psi=- \mathcal{G}_0 v^{*} \phi$.
		Finally, using \eqref{S1G0} we have $\psi =-\mG_0V\psi $. Iterating this identity we obtain $\psi =(-\mG_0V)^k\psi $ for sufficiently large $k$. Therefore, applying integral estimates in Lemma~\ref{lem:spatial estimates}, we see that $\psi $ is bounded.
	\end{proof}

	\begin{lemma}\label{lem:esa2} Suppose $|v(x)| \les \la x \ra ^{-2-} $. Fix $\phi=Uv\psi\in S_1L^2$, where $\psi \in L^{2,-0-}(\R^4)\setminus \{0\} $  is a distributional solution of  $ ( D_m+V-mI)\psi=0 $. Then $\phi \in S_2  L^2(\R^4) $ if and only if   $\psi \in  L^2(\R^4)$.  Moreover, any threshold eigenfunction, $\psi$, is a bounded function.
	\end{lemma}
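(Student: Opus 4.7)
My plan is to characterize $S_2 L^2$ via an explicit quadratic form identity, then match this to the large$-|x|$ asymptotics of the second piece $\psi_2$ of the decomposition \eqref{eq:g1g2}. The key observation is that the kernel of $\mathcal G_1$ is the constant matrix $\tfrac{m}{2\pi^2}I_{uc}$, so
\[
(v\mathcal G_1 v^*\phi)(x)=\frac{m}{2\pi^2}v(x)I_{uc}\,a,\qquad a:=\int_{\R^4}v^*(y)\phi(y)\,dy.
\]
Using that $(v^*)^*=v$ entrywise, a short computation gives
$\langle \phi,v\mathcal G_1v^*\phi\rangle=\frac{m}{2\pi^2}\,a^*I_{uc}a=\frac{m}{2\pi^2}\|I_{uc}a\|^2\ge 0$.
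Since $T_1=v\mathcal G_1 v^*$ is self-adjoint and non-negative, the operator $S_1T_1S_1$ is self-adjoint and non-negative on $S_1L^2$, so $\phi\in S_2L^2=\ker(S_1T_1S_1)$ if and only if $I_{uc}a=0$, i.e., the upper two components of $a$ vanish.

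The next step is to relate $I_{uc}a=0$ to $\psi\in L^2$. Recall from \eqref{eq:g1g2} that $\psi=\psi_1+\psi_2$, where $\psi_1\in L^2$ by the proof of Lemma~\ref{lem:esa1}, and
\[
\psi_2(x)=-\frac{m\,I_{uc}}{2\pi^2}\int_{\R^4}\frac{v^*(y)\phi(y)}{|x-y|^2}\,dy.
\]
The local integrability of $\psi_2$ (in any fixed compact set) follows from the fractional integral estimate $\mathcal I_2:L^{2,2+}\to L^{2,-0-}$ of \cite{Jen}, which is applicable because $v^*\phi\in L^{2,2+}$. The whole question therefore concerns the behavior at infinity. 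For $|y|\le |x|/2$ one has $\bigl||x-y|^{-2}-|x|^{-2}\bigr|\lesssim |y|/|x|^{3}$, so splitting off the leading term yields
\[
\psi_2(x)=-\frac{m\,I_{uc}a}{2\pi^{2}\,|x|^{2}}+R(x),\qquad |R(x)|\lesssim \la x\ra^{-3}\ \text{for}\ |x|\gg 1,
\]
with the $R$ term manifestly in $L^2$ at infinity. Since $|x|^{-2}\notin L^2(\R^4)$ near infinity, $\psi_2\in L^2$ if and only if its leading coefficient $I_{uc}a$ vanishes. Combining this with Step~1 gives the equivalence $\phi\in S_2L^2\Longleftrightarrow\psi\in L^2$.

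Finally, for boundedness of a threshold eigenfunction: if $\psi\in L^2$ solves $(D_m+V-mI)\psi=0$, then as in the end of the proof of Lemma~\ref{lem:esa1} we have $\psi=-\mathcal G_0 V\psi$, and iterating a fixed finite number of times produces $\psi=(-\mathcal G_0 V)^k\psi$. Using the pointwise bound $|\mathcal G_0(x,y)|\lesssim |x-y|^{-2}+|x-y|^{-3}$ together with the decay $|V|\lesssim \la\cdot\ra^{-\delta}$ (recall $\delta>2$ is amply available here), repeated application of the integral estimates of Lemma~\ref{lem:spatial estimates} improves the integrability of each iterate and ultimately yields $\psi\in L^\infty(\R^4)$.

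The only delicate step is pinning down the precise asymptotics of $\psi_2$ and verifying that the quadratic form computation is valid despite the fact that $v\ne v^*$ in general; this is handled by writing out components and using $(v^*)^*=v$ entrywise. The remaining steps are bookkeeping on integrability via Lemma~\ref{lem:spatial estimates} and the fractional integration bounds already invoked in Lemma~\ref{lem:esa1}.
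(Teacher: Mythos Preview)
Your overall strategy is the paper's: reduce $\phi\in S_2L^2$ to the vanishing of $I_{uc}\int v^*\phi$ via the quadratic form, then show this is exactly the obstruction to $\psi_2\in L^2$ by isolating the $|x|^{-2}$ tail. The boundedness argument is likewise identical to the paper's (which simply cites Lemma~\ref{lem:esa1}).

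There is, however, a genuine gap in your remainder estimate. Under the stated hypothesis $|v(x)|\les\la x\ra^{-2-}$ you only have $|y|\,|v^*(y)\phi(y)|\les \la y\ra^{-1-}|\phi(y)|$, and $\la y\ra^{-1-}\notin L^2(\R^4)$, so $\int |y|\,|v^*(y)\phi(y)|\,dy$ need not be finite. Hence the bound $|R(x)|\les \la x\ra^{-3}$ does not follow from the Taylor estimate $\bigl||x-y|^{-2}-|x|^{-2}\bigr|\les |y|/|x|^3$ on $|y|\le|x|/2$; and you have not treated the region $|y|>|x|/2$ at all. The paper avoids this by writing
\[
\Big|\frac{1}{|x-y|^2}-\frac{1}{\la x\ra^2}\Big|\les \frac{\la y\ra}{|x-y|^2\la x\ra}+\frac{\la y\ra}{|x-y|\la x\ra^2},
\]
keeping powers of $|x-y|$ in the denominator so that the remainder is $\la x\ra^{-1}\mathcal I_2(\la\cdot\ra v^*\phi)+\la x\ra^{-2}\mathcal I_1(\la\cdot\ra v^*\phi)$, and then invoking the mapping properties $\mathcal I_2:L^{2,1+}\to L^{2,-1}$, $\mathcal I_1:L^{2,1+}\to L^{2,-2}$ to place the remainder directly in $L^2$ without any pointwise bound. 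You can repair your argument by adopting this device, or by being more careful to obtain $|R(x)|\les \la x\ra^{-2-}$ (which suffices) rather than $\la x\ra^{-3}$.
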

	
	\begin{proof} The boundedness of $\psi$ and the equality $ ( D_m+V-mI)\psi=0 $ were obtained in the   previous lemma.  
		First note that if $\phi  \in S_2 L^2(\R^4) $, namely $S_1T_1S_1\phi= S_1v\mathcal{G}_1v^*\phi  =0$, recalling \eqref{eq:G1 def} we have 
		\begin{align*}
			0= \la vM_{uc} v^*\phi ,\phi  \ra =  \la M_{uc} v^*\phi , M_{uc}v^*\phi  \ra_{\C^4} = \|M_{uc} v^* \phi \|^2_{\C^4}, 
		\end{align*} 
		Hence, $M_{uc} v^*\phi =0$. It is also clear that  if $M_{uc} v^*\phi =0$, then $\phi\in S_2L^2$.
		
		Also note that in the proof of Lemma~\ref{lem:esa1}, we showed that $\psi=\psi_1+\psi_2$ and  $\psi_1 \in L^2(\R^4) $. 
		Therefore it suffices to prove that $M_{uc}v^*\phi=0$ if and only if $\psi_2\in L^2$. 
		Recalling \eqref{eq:g1g2} we can write  $\psi_2$  as
		\be\label{eq:psi2}
		\psi_2(x) =    \frac{-m}{2 \pi^2} I_{uc}  \int_{\R^4} v^{*}(y) \phi (y) \bigg[ \frac{1}{|x-y|^2} - \frac{1}{\la x\ra^2} \bigg] dy
		- \frac{m}{2 \pi^2 \la x \ra^2}  [M_{uc}v^* \phi]. 
		\ee
		By the Mean Value Theorem, we have
		$$
		\bigg| \frac{1}{|x-y|^2}-\frac1{\la x\ra^2}\bigg| \les \frac{\la y\ra}{|x-y|^2\la x\ra}+\frac{\la y\ra}{|x-y|\la x\ra^2}\les \la y\ra \mathcal I_2(x,y)\la x\ra^{-1}+\la y\ra \mathcal I_1(x,y)\la x\ra^{-2},
		$$
		where $\mathcal I_j(x,y)$ is the integral kernel of the fractional integral operator $\mathcal I_j$.  Since $\mathcal I_2:L^{2,1+}\to L^{2,-1}$ and $\mathcal I_1:L^{2,1+}\to L^{2,-2}$.  The decay of $v^*$ along with the powers of $x$ and $y$  allow us to deduce that the first term in \eqref{eq:g1g2} is in $L^2(\mathbb R^4)$.  Therefore, if $M_{uc}v^*\phi=0$, then $\psi_2\in L^2$ and hence $\psi \in L^2$. 
		
		On the other hand, if $\psi\in L^2$, then we must have that $\psi_2\in L^2$.  Since $\la x\ra^{-2}\notin L^2(\mathbb R^4)$, we must have that $M_{uc}v^*\phi=0$.  As shown above, this requires that $T_1\phi=0$ i.e. $\phi\in S_2L^2(\R^4)$.
		
	\end{proof}

	One immediate consequence of the proof, which we use in the argument for the dispersive bounds and inverse expansions, is the following orthogonality condition. 
	\begin{corollary}\label{cor:S2 orth}
		We have the identities
		\begin{align*}
			S_2v\mathcal G_1=\mathcal G_1v^*S_2=\frac{m}{2\pi^2}M_{uc}v^* S_2=\frac{m}{2\pi^2}S_2vM_{uc}=0.
		\end{align*}
	\end{corollary}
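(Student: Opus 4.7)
The plan is to observe that the four quantities in the corollary are essentially the same: the two middle expressions differ from the two outer ones only by the scalar $m/(2\pi^2)$, since $\mathcal G_1 = (m/(2\pi^2)) M_{uc}$ by \eqref{eq:G1 def}, and the ``left'' pair is the formal adjoint of the ``right'' pair. Hence it suffices to establish a single identity, namely $M_{uc} v^* S_2 = 0$, and deduce the others.

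First I would appeal directly to Lemma~\ref{lem:esa2}. In its proof the identity $\langle v M_{uc} v^* \phi,\phi\rangle = \|M_{uc} v^*\phi\|^2_{\mathbb C^4}$ was established, which shows that $\phi \in S_2 L^2$ (i.e.\ $S_1 T_1 S_1 \phi = 0$) is equivalent to $M_{uc} v^* \phi = 0$. Thus $M_{uc} v^* S_2 = 0$ as an operator from $L^2$ into $\mathbb C^4$. Scaling by $m/(2\pi^2)$ and invoking $\mathcal G_1 = (m/(2\pi^2))M_{uc}$ gives $\mathcal G_1 v^* S_2 = 0$ at once.

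For the two remaining identities I would take adjoints. The projection $S_2$ is orthogonal, so $S_2^* = S_2$; the matrix $I_{uc}$ is real and diagonal, hence Hermitian, so the constant kernel of $M_{uc}$ is self-adjoint as a matrix kernel, giving $M_{uc}^* = M_{uc}$ in the formal sense and consequently $\mathcal G_1^* = \mathcal G_1$. Adjoining the vanishing relations already obtained then yields $S_2 v M_{uc} = (M_{uc} v^* S_2)^* = 0$ and $S_2 v \mathcal G_1 = (\mathcal G_1 v^* S_2)^* = 0$, closing the chain of equalities.

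The only point of care, and the sole potential obstacle, is that $M_{uc}$ is not bounded on $L^2$: it produces constant vectors rather than $L^2$ functions. To make the adjoint step rigorous I would either interpret it in the natural pairing, or verify it by hand: for $\phi \in S_2 L^2$ and $f \in L^2$ with enough decay for $M_{uc} f$ to make sense, one has $\langle \phi, v M_{uc} f\rangle = \langle v^* \phi, M_{uc} f\rangle$, and the latter is a linear combination of the components of $M_{uc} v^* \phi$, which vanish by the first step. This is pure bookkeeping, and once done the four identities are all simultaneously established.
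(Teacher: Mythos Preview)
Your proposal is correct and follows exactly the approach the paper intends: the paper states that the corollary is ``an immediate consequence of the proof'' of Lemma~\ref{lem:esa2}, and your write-up simply spells out that deduction, using $M_{uc}v^*\phi=0$ for $\phi\in S_2L^2$ together with $\mathcal G_1=\tfrac{m}{2\pi^2}M_{uc}$ and self-adjointness. Your remark about interpreting the adjoint of $M_{uc}$ via the pairing is the right way to make the bookkeeping precise.
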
  
	Further, the rank of $S_1-S_2$ is at most two at the threshold $\lambda=m$.
	\begin{corollary}\label{cor:2d}
		Assume $|v(x)| \les \la x \ra ^{-2-}$.  Then the rank of $S_1-S_2$ is at most two.
	\end{corollary}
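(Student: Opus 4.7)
The plan is to exhibit an injective linear map from the quotient $S_1L^2/S_2L^2$ into $\mathbb{C}^2$ and then conclude that $\dim(S_1L^2/S_2L^2)=\mathrm{rank}(S_1-S_2)\leq 2$. The key observation is that the entire logarithmic/resonant part of the associated distributional solution $\psi=-\mathcal G_0 v^*\phi$ is controlled by only two complex numbers, namely the first two components of $\int v^*(y)\phi(y)\,dy$, because the matrix $I_{uc}$ appearing in the definition of $\mathcal{G}_1$ and $M_{uc}$ has rank two.

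Concretely, define the linear map
\[
\Phi:S_1L^2(\mathbb R^4)\longrightarrow \mathbb C^4,\qquad \Phi(\phi):=M_{uc}v^*\phi=\Bigl(\textstyle\int (v^*\phi)_1,\int (v^*\phi)_2,0,0\Bigr)^{T}.
\]
The image of $\Phi$ is contained in the two-dimensional subspace $\{(c_1,c_2,0,0):c_j\in\mathbb C\}$ because $I_{uc}$ projects $\mathbb{C}^4$ onto its first two coordinates. Hence $\dim(\mathrm{Im}\,\Phi)\leq 2$.

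Next, I would identify $\ker\Phi\cap S_1L^2$ with $S_2L^2$. This is essentially contained in the proof of Lemma~\ref{lem:esa2}: one shows $\phi\in S_2L^2$ iff $S_1 v\mathcal G_1 v^*\phi=0$, and pairing with $\phi$ together with \eqref{eq:G1 def} yields $\|M_{uc}v^*\phi\|^2_{\mathbb C^4}=0$, i.e.\ $\Phi(\phi)=0$; conversely if $\Phi(\phi)=0$ then $v\mathcal G_1 v^*\phi=\tfrac{m}{2\pi^2}vM_{uc}v^*\phi=0$, so $\phi\in S_2L^2$. Therefore $\Phi$ descends to an injective linear map
\[
\overline{\Phi}:S_1L^2/S_2L^2\longhookrightarrow \mathrm{Im}\,\Phi\subseteq\mathbb C^2,
\]
from which $\mathrm{rank}(S_1-S_2)=\dim(S_1L^2/S_2L^2)\leq 2$ follows immediately.

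The only mild subtlety is identifying the kernel of $\Phi|_{S_1L^2}$ with $S_2L^2$, which the proof of Lemma~\ref{lem:esa2} essentially already establishes. Everything else is bookkeeping about the block structure of $I_{uc}$, so I expect no real obstacle. Note that the argument is specific to the positive threshold $\lambda=m$; at $\lambda=-m$ the matrix $I_{uc}$ is replaced by $I_{lc}$ per Remark~\ref{rmk:neg branch}, which again has rank two, and the same bound holds there.
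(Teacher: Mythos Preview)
Your proof is correct and takes essentially the same approach as the paper. The paper phrases the argument in terms of the decomposition $\psi=\frac{1}{\la x\ra^2}(a_1,a_2,0,0)^T+O_{L^2}(1)$ from \eqref{eq:psi2}, while you package the same content as the linear map $\Phi(\phi)=M_{uc}v^*\phi$ with kernel $S_2L^2$; both rest on the identification $\phi\in S_2L^2\iff M_{uc}v^*\phi=0$ established in the proof of Lemma~\ref{lem:esa2}.
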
  
	\begin{proof}
		We consider the representation in \eqref{eq:g1g2}.  We have already shown that $\psi_1\in L^2$. By \eqref{eq:psi2} and the discussion following it, we can write  $\psi_2$ as 
		$$
		\frac{-m}{2 \pi^2 \la x \ra^2}  [M_{uc}v^* \phi]+O_{L^2}(1)=\frac{1}{ \la x\ra^2} (a_1, a_2, 0, 0)^T+O_{L^2}(1).
		$$	
		The constants $a_j= -\frac{m}{2\pi^2} \int_{\R^4} [v^*(y)\phi(y)]_j \, dy$ are finite by the assumed decay of $v^*$.
	\end{proof}

	\begin{lemma}\label{lem:esa3}
		Assume $|v(x)| \les \la x \ra ^{-2-}$. Then $S_2v\mathcal{G}_2v^*S_2$ is invertible  as an operator in $S_2 L^2(\R^4) $.  
	\end{lemma}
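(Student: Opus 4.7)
The plan is to exploit the fact that $S_2 L^2$ is finite-dimensional (a consequence of the Fredholm alternative applied earlier), so invertibility of the self-adjoint operator $S_2 v \mathcal{G}_2 v^* S_2$ reduces to showing that the associated quadratic form $\phi \mapsto \langle \mathcal{G}_2 v^* \phi, v^* \phi\rangle$ is non-degenerate on $S_2 L^2$. In fact I will show it is strictly negative definite. First I would verify self-adjointness of $v\mathcal{G}_2 v^*$ by observing that $\mathcal{G}_2 = (D_m+mI) G_1 - \tfrac{1}{2m} G_0$ is a sum of self-adjoint scalar convolution operators composed with $(D_m+mI)$, which is itself self-adjoint and commutes with $G_1$ (both having constant-coefficient / translation-invariant structure).

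The core computation is the identity
\begin{equation}\label{eqn:quadform goal}
\langle \mathcal{G}_2 v^* \phi, v^* \phi\rangle = -\frac{\|\psi\|_2^2}{2m}, \qquad \phi \in S_2 L^2,
\end{equation}
where $\psi = -\mathcal{G}_0 v^*\phi \in L^2(\mathbb{R}^4)$ is the associated threshold eigenfunction provided by Lemmas~\ref{lem:esa1} and \ref{lem:esa2}. To derive \eqref{eqn:quadform goal} I use the fundamental relations $v^*\phi = (mI - D_m)\psi$ and $\psi = -(D_m+mI) g$ with $g := G_0 v^*\phi$, together with the factorization identity $(D_m+mI)(D_m - mI) = -\Delta$ and the pointwise identity $\Delta G_1 = G_0$ (which follows from $\Delta_x \log|x-y| = 2/|x-y|^2$ in $\mathbb{R}^4$). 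Using that $G_1$ commutes with $D_m$, the first term becomes
\begin{align*}
\langle (D_m+mI) G_1 v^*\phi, v^*\phi\rangle
&= \langle G_1 \Delta \psi, (mI-D_m)\psi\rangle \\
&= \langle G_1 \Delta v^* \phi, \psi\rangle = \langle g, \psi \rangle.
\end{align*}
For the second term, similar manipulation using $(mI-D_m) g = 2m g + \psi$ yields $\langle G_0 v^*\phi, v^*\phi\rangle = 2m\langle g, \psi\rangle + \|\psi\|_2^2$. Subtracting these two contributions with the correct coefficients produces \eqref{eqn:quadform goal}.

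Once \eqref{eqn:quadform goal} is established, the conclusion is immediate: if $\phi \in S_2 L^2$ satisfies $S_2 v \mathcal{G}_2 v^* S_2 \phi = 0$, then pairing against $\phi$ gives $\|\psi\|_2 = 0$, whence $\psi = 0$ and therefore $\phi = Uv\psi = 0$ (using the injectivity of the map $\phi \mapsto \psi$ from Lemma~\ref{lem:esa1}). Since $S_2 L^2$ is finite-dimensional, injectivity is equivalent to invertibility on $S_2 L^2$.

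The main technical delicacy is justifying the integrations by parts and operator commutations, since $g = G_0 v^* \phi$ need not itself lie in $L^2(\mathbb{R}^4)$ (the generic Newton potential decays only like $|x|^{-2}$ in four dimensions). This is where the orthogonality condition $M_{uc} v^* \phi = 0$ from Corollary~\ref{cor:S2 orth} becomes essential: together with the decay $|v(x)| \lesssim \langle x\rangle^{-2-}$, it forces enough cancellation in the far-field asymptotics of $g$ for each pairing $\langle g, \psi\rangle$, $\langle g, (D_m+mI) g\rangle = -\langle g, \psi\rangle$, etc.\ to be absolutely convergent; a standard approximation argument with a suitable mollification or cutoff of $g$ then makes each of the integration-by-parts steps rigorous. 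This verification is the only non-routine piece of the argument.
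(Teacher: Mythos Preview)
Your argument is correct and arrives at the same key identity the paper obtains, namely
\[
\langle \mathcal{G}_2 v^*\phi, v^*\phi\rangle = -\tfrac{1}{2m}\,\|\mathcal{G}_0 v^*\phi\|_2^2 = -\tfrac{1}{2m}\,\|\psi\|_2^2,
\]
but you reach it by a genuinely different route. The paper rewrites $\langle\mathcal{G}_2 v^*\phi, v^*\phi\rangle$ as a limit of $z^{-2}\langle(\mathcal{R}_0^+-\mathcal{G}_0)v^*\phi, v^*\phi\rangle$, passes to the Fourier side, computes the resulting $4\times 4$ matrix kernel $K(\omega,\xi)$ explicitly, diagonalizes it to exhibit monotonicity in $\omega$, and invokes monotone convergence to take the limit; positivity of $K(0,\xi)$ then forces $\widehat{v^*\phi}=0$. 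Only after this does the paper observe the factorization $K(0,\xi)=\tfrac{1}{2m}|\xi|^{-4}[\cdot]^2$ yielding the identity above. Your derivation instead stays entirely in physical space, exploiting the algebraic relations $(D_m+mI)(mI-D_m)=\Delta$, $\Delta G_1=G_0$ in $\mathbb{R}^4$, and the commutation of $D_m$ with the scalar convolution operators $G_0, G_1$; the identity then drops out from two lines of bookkeeping. Your approach is more elementary (no matrix eigenvalue analysis, no limiting argument), while the paper's approach is more robust with respect to justification: the monotone convergence step and Fourier-side positivity completely sidestep the integration-by-parts issues you flag. Your acknowledgment of that delicacy is appropriate; the pairings do converge because the orthogonality $M_{uc}v^*\phi=0$ forces $\psi=O(|x|^{-3})$ at infinity while $g=O(|x|^{-2})$, so $g\overline{\psi}$ is integrable in $\mathbb{R}^4$, and a standard cutoff argument then legitimizes each step.
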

	\begin{proof}
		First, we note that $S_2v\mathcal{G}_2v^*S_2$ is a compact operator.  Hence, we need only show that its kernel is trivial. 
		Assume that for some $\phi \in S_2 L^2(\R^4)$, $S_2v\mathcal{G}_2v^*S_2\phi=0$, then $ \la \mathcal{G}_2 v^*\phi , v^*\phi  \ra =0$. By Corollary~\ref{cor:S2 orth}, $\mathcal{G}_1 v^*\phi =0$, and we have
		\begin{align} \label{eq:verb}
			0= \la \mathcal{G}_2 v^*\phi , v^*\phi  \ra = - \lim_{z \rightarrow 0} { \f 1 {z^2}}\la [ \mathcal{R}_0^+(\lambda) - \mathcal{G}_0-g_1^+(z)\mathcal G_1] v^* \phi , v^*\phi  \ra 
		\end{align} 
		where $\lambda= \sqrt{z^2+m^2}$.  For $\phi\in S_2L^2$, the orthogonality condition ensures that the contribution of the $\mathcal G_1$ term is zero.  So, for $z= i\omega$ and $ 0 < \omega \ll m$, we have
		\begin{multline*}
			\frac 1 {z^2} \la [ \mathcal{R}_0(\lambda) - \mathcal{G}_0-g_1^+(z)\mathcal G_1] v^* \phi , v^*\phi  \ra =
			\f 1 {z^2} \la [ \mathcal{R}_0(\lambda) - \mathcal{G}_0] v^* \phi , v^*\phi  \ra \\
			= \int_{\R^4} \big\la K(\omega,\xi) \widehat{v^*\phi (\xi)}, \widehat{v^*\phi (\xi)} \big\ra _{\C^4} d\xi 
		\end{multline*}
		on the Fourier side.  Now, recalling \eqref{eq:G0 def} and \eqref{G0 def}, one has
		\begin{multline*}
			K(\omega,\xi) =
			\frac{1}{\omega^2 |\xi|^2} \left(\begin{array}{cccc}
				2m & 0 & \kappa & \bar{\eta}       \\
				0 & 2m & \eta &  -\bar{\kappa}      \\
				\bar{\kappa} & \bar{\eta}  & 0 & 0      \\
				\eta &  -\kappa & 0 & 0
			\end{array}\right)\\
			-\frac{1}{\omega^2(\omega^2+|\xi|^2)}
			\left(\begin{array}{cccc}
				m+\sqrt{m^2-\omega^2} & 0 & \kappa & \bar{\eta}      \\
				0 & m+\sqrt{m^2-\omega^2}  & \eta & -\bar{\kappa}       \\
				\bar{\kappa} &  \bar{\eta} & \sqrt{m^2-\omega^2}-m &0       \\
				\eta & -\kappa & 0 & \sqrt{m^2-\omega^2}-m
			\end{array}\right).
		\end{multline*}
		Here $\xi=( \xi_1, \xi_2, \xi_3, \xi_4)$, $\eta = \xi_2 + i\xi_1$ and $\kappa=\xi_3+i\xi_4$.  Notice that $|\eta|^2+|\kappa|^2=|\xi|^2$.   The first matrix corresponds to the contribution of the transform of $\mG_0$. Now, write $\tau:=\frac{|\xi|^2}{\omega^2}(m-\sqrt{m^2-\omega^2})$, then 
		\begin{align}
			K(\omega,\xi)=
			\frac{1}{|\xi|^2(\omega^2+|\xi|^2)}
			\left(\begin{array}{cccc}
				2m+\tau &0&\kappa &\eta   \\
				0 & 2m+\tau & \eta &  -\bar{\kappa}       \\
				\bar{\kappa} & \bar{\eta} & \tau & 0        \\
				\eta &  -\kappa  & 0 &\tau
			\end{array}\right).
		\end{align} 
		The eigenvalues of $K(\omega,\xi)$ are
		\begin{align*}
			&\lambda_{1,2}=\frac{m+\tau+\sqrt{m^2+|\xi|^2}}{|\xi|^2(\omega^2+|\xi|^2)},  \qquad \lambda_{3,4}= \frac{m+\tau-\sqrt{m^2+|\xi|^2}}{|\xi|^2(\omega^2+|\xi|^2)}. 
		\end{align*} 
		In particular, the eigenvalues of $K(\omega,\xi)$ are real and for any $\xi\neq0$ they are positive. Thus the matrix $K(\omega,\xi)$ is self-adjoint, and further is positive definite when $\xi\neq0$. The eigenvalues are non-increasing functions of $\omega \in (0,m)$. This allows one to invoke the monotone convergence theorem and take the limit into the integral \eqref{eq:verb} to obtain
		$$ 
		0= \lim_{\omega\rightarrow 0^+}\int_{\R^4} \la K(\omega,\xi)\widehat{v^*\phi (\xi)} , \widehat{v^*\phi (\xi)} \ra_{\C^4} d\xi \ =\int_{\R^4} \la K(0,\xi)\widehat{v^*\phi (\xi)} , \widehat{v^*\phi (\xi)} \ra_{\C^4} d\xi,
		$$
		where
		\begin{align*}
			K(0,\xi)= \frac{1}{|\xi|^4} \left(\begin{array}{cccc}
				2m+\frac{|\xi|^2}{2m} & 0 & \kappa & \bar{\eta}      \\
				0 & 2m+\frac{|\xi|^2}{2m} &  \eta& -\bar{\kappa}   \\
				\bar{\kappa} & \bar{\eta} & \frac{|\xi|^2}{2m} & 0       \\
				\eta &  -\kappa & 0 & \frac{|\xi|^2}{2m}
			\end{array}\right).
		\end{align*}
		Note that this matrix is also self-adjoint and positive definite for any $\xi\neq 0$, from which we conclude that $\widehat{v^*\phi (\xi)}=0$. To conlcude that $v^*\phi=0$, we note that $v^*\phi$ has $L^1$ entries, hence $v^*\phi (x) =0$. Recall that the fact that $\phi  \in S_1 L^2(\R^4) $ implies that $\phi = Uv^*\psi$  for $\psi= - \mathcal{G}_0v^*\phi $. Hence, we conclude that $\phi =0$.
	\end{proof}  
	In addition, we have
	\begin{align*}
		K(0,\xi)= \frac{1}{2m}  \frac{1}{|\xi|^4} \left(\begin{array}{cccc}
			2m & 0 & \kappa & \bar{\eta}      \\
			0 & 2m &  \eta& -\bar{\kappa}   \\
			\bar{\kappa} & \bar{\eta} & 0 & 0       \\
			\eta &  -\kappa & 0 & 0
		\end{array}\right)^2.
	\end{align*}
	Therefore, for any $\phi \in S_2L^2$ we have 
	\begin{align}\label{G0 to G2 ident} 
		\la \mathcal{G}_2 v^*\phi  , v^*\phi  \ra =-\frac{1}{2m} \la  \mathcal{G}_0 v^*\phi , \mathcal{G}_0 v^*\phi  \ra.
	\end{align}
	\begin{lemma}\label{lem:eproj}
		The operator $P_m = -\frac{1}{2m}\mathcal{G}_0V\mathcal{G}_0V\mathcal{G}_0v^*S_2D_2S_2v\mathcal{G}_0V\mathcal{G}_0V\mathcal{G}_0$ is the finite rank, orthogonal projection in $ L^2(\R^4) $ onto the eigenspace of $\mathcal H = D_m + V$ at threshold $m$.  
	\end{lemma}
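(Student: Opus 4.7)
The plan is to simplify the expression for $P_m$ algebraically until it has the classical Moore--Penrose form $K(K^*K)^{-1}K^*$ with $K := \mathcal{G}_0 v^* S_2$, and then identify $\text{Range}(K)$ with the $m$-eigenspace of $\mathcal{H}$ via Lemma~\ref{lem:esa2}.

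First I would exploit the eigenfunction structure to telescope $(\mathcal{G}_0 V)^2 \mathcal{G}_0 v^* S_2$ down to $\mathcal{G}_0 v^* S_2$. Fix $\phi \in S_2 L^2$; by Lemma~\ref{lem:esa2} the vector $\psi := -\mathcal{G}_0 v^* \phi$ lies in $L^2$ and is an eigenfunction of $\mathcal{H}$ at $m$, while the identity $(U + v\mathcal{G}_0 v^*)\phi = 0$ (valid since $\phi \in S_1 L^2$) yields $v\psi = U\phi$ and hence $V\psi = v^* U v\psi = v^*\phi$. A two-step substitution then gives
$$
\mathcal{G}_0 V\mathcal{G}_0 V\mathcal{G}_0 v^*\phi = \mathcal{G}_0 V\mathcal{G}_0 V(-\psi) = -\mathcal{G}_0 V\mathcal{G}_0 v^*\phi = \mathcal{G}_0 V\psi = \mathcal{G}_0 v^*\phi.
$$
Thus $\mathcal{G}_0 V\mathcal{G}_0 V\mathcal{G}_0 v^* S_2 = K$, and since $\mathcal{G}_0$, $V$, and $S_2$ are self-adjoint, the adjoint identity produces $S_2 v\mathcal{G}_0 V\mathcal{G}_0 V\mathcal{G}_0 = K^*$. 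This collapses $P_m$ to $-\tfrac{1}{2m}K D_2 K^*$.

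Next I would invoke identity \eqref{G0 to G2 ident} and polarize (using self-adjointness on $S_2 L^2$) to obtain $S_2 v\mathcal{G}_2 v^* S_2 = -\tfrac{1}{2m} K^* K$ on $S_2 L^2$. Lemma~\ref{lem:esa3} guarantees the left-hand side is invertible there, so $D_2 = -2m(K^*K)^{-1}$ and substitution yields $P_m = K(K^*K)^{-1}K^*$. This is the classical Moore--Penrose formula for the orthogonal projection onto $\text{Range}(K)$: it is manifestly self-adjoint, and idempotent via cancellation of $K^*K$ in $P_m^2$. The range $\text{Range}(K) = \{\mathcal{G}_0 v^*\phi : \phi \in S_2 L^2\}$ coincides with the threshold eigenspace by the bijection $\phi \leftrightarrow \psi = -\mathcal{G}_0 v^*\phi$ of Lemma~\ref{lem:esa2}, and it is finite-dimensional because $S_2 L^2$ is.

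The essential step is the collapse $\mathcal{G}_0 V \mathcal{G}_0 V \mathcal{G}_0 v^* S_2 = \mathcal{G}_0 v^* S_2$; once it is in hand, the remaining argument is purely algebraic. The only subtlety is rigorously justifying the $\mathcal{G}_0$ substitutions, but since $\psi$ is bounded (Lemma~\ref{lem:esa1}) and $V$ has polynomial decay, $V\psi$ and its successive images lie in weighted $L^2$ spaces on which $\mathcal{G}_0$ acts as a bounded integral operator, validating each step.
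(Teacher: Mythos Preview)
Your proof is correct. The paper omits the proof entirely, deferring to the three-dimensional analog in \cite{EGT}; your approach via the collapse $(\mathcal{G}_0 V)^2 \mathcal{G}_0 v^* S_2 = \mathcal{G}_0 v^* S_2$ followed by the Moore--Penrose identification $P_m = K(K^*K)^{-1}K^*$ with $K = \mathcal{G}_0 v^* S_2$ is exactly the standard argument alluded to there.
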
   
	
	The proof is standard, it follows as in the proof in the three dimensional case, see \cite{EGT}.  We omit the proof for the sake of brevity.

	\section{High Energy}\label{sec:hi}
	For completeness we prove frequency-localized dispersive estimates, which reflect the need for smoothness on the initial data and/or potential.  The phase has vanishing curvature as $z\to \infty$, so classic stationary phase methods fail.  However, when restricting to dyadic intervals of the from $z\approx 2^j$, the Hessian of the phase is bounded below allowing for an adapting stationary phase argument at the cost of powers of $2^j$.  We don't expect the bounds for the high energy portion of the perturbed evolution to sharp with respect to differentiability of the initial data.
	
	Let $\chi_j(z)$ be a smooth cut-off to the interval $z\approx 2^j$.
	Our main technical result is
	\begin{prop}\label{prop:high energy}
		Let $V(x)$ have continuous entries satisfying $|V(x)|\les \la x\ra^{-5-}$.  Then
		the following bound holds 
		\begin{align} \label{eqn: 2j hightail}
			\sup_{x,y\in \R^4}
			\bigg|\int_0^\infty e^{-it\sqrt{z^2+m^2}}\frac{z\chi_j(z)}{\sqrt{z^2+m^2}}  [\mR_V^+-\mR_V^-](\lambda)  (x,y)
			\, dz\bigg|    \les  	  \min(2^{4j},   2^{11j/2} |t|^{-2}).
		\end{align}
	\end{prop}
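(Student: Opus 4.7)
The plan is to expand the perturbed resolvent as a finite Born series plus a tail and estimate each summand separately on the dyadic window $z\approx 2^j$. We use
\begin{align*}
	\mR_V^\pm(z)=\sum_{k=0}^{2M+2}\mR_0^\pm(-V\mR_0^\pm)^k+(\mR_0^\pm V)^M\mR_0^\pm v^*M_\pm^{-1}(z)v\mR_0^\pm(V\mR_0^\pm)^M,
\end{align*}
choosing $M$ large enough that the iterated free resolvents flanking $M_\pm^{-1}(z)$ produce $L^2$-valued kernels. As in Section~\ref{sec:reg dis}, we relocate the $+/-$ difference via \eqref{eqn:alg identity}. For the tail we additionally need the high-energy bounds $\|\partial_z^k M_\pm^{-1}(z)\|_{L^2\to L^2}\les 1$ for $k=0,1$; these follow from a Neumann series since $M^\pm(z)=U+v\mR_0^\pm(z)v^*$ is a small compact perturbation of the invertible $U$ once $z$ is sufficiently large.

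For the uniform bound $2^{4j}$, the Bessel identity $[R_0^+-R_0^-](z^2)(x,y)=izJ_1(z|x-y|)/(4\pi|x-y|)$ combined with $|J_1(s)/s|\les(1+s)^{-3/2}$ gives, after multiplying by $D_m+\sqrt{z^2+m^2}$, the pointwise estimate $|[\mR_0^+-\mR_0^-](z)(x,y)|\les z^3(1+z|x-y|)^{-3/2}$. After relocating the $+/-$ difference onto one resolvent factor, the triangle inequality suffices: iterated spatial integrals converge uniformly in $x,y$ by Lemma~\ref{lem:spatial estimates} and the decay of $V$, and the $z$-integration on the dyadic window combined with the $z^3$ amplitude produces the claimed $2^{4j}$.

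For the time decay $2^{11j/2}|t|^{-2}$, we expose the oscillatory structure by writing each resolvent on its high-argument region as $\mR_0^\pm(z)(x_\ell,x_{\ell+1})=e^{\pm iz|x_\ell-x_{\ell+1}|}\widetilde\omega_\pm(z|x_\ell-x_{\ell+1}|)$, with the low-argument piece being smoother in $z$ and handled by direct integration by parts. The resulting $z$-integral has phase $-t\sqrt{z^2+m^2}\pm iz\sum_\ell\sigma_\ell|x_\ell-x_{\ell+1}|$, and we integrate by parts twice in $z$: off the stationary set, where $|\partial_z\phi|\gtrsim t$, routine integration by parts yields $|t|^{-2}$; near the stationary set, we apply a high-frequency analog of Lemma~\ref{lem:high stat phase} using the dyadic lower bound $|\partial_z^2\phi|\gtrsim tm^2/2^{3j}$. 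Each differentiation of an amplitude $\widetilde\omega_\pm$ costs $2^{j/2}$ by the symbol bound $|\partial_z^k\widetilde\omega_\pm|\les z^{1/2-k}r^{-3/2}$, and the accumulation across the $O(1)$ resolvent factors in a Born-series term (together with the $z$-integration window of length $2^j$) yields the net loss $2^{11j/2}$.

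The principal difficulty is the degeneracy $\phi''_\pm(z)\sim m^2/z^3$ at high energy: classical one-dimensional stationary phase on the dyadic band only gives $|t|^{-1/2}2^{3j/2}$, not $|t|^{-2}$. The improvement to the Klein--Gordon-type rate $|t|^{-2}$ in $\R^4$ comes through the joint oscillation in $z$ and in the spatial phases $e^{\pm iz|x_\ell-x_{\ell+1}|}$, which together realize a multi-dimensional stationary phase whose Hessian remains non-degenerate after the dyadic localization. The tail of the Born series is handled analogously, with $L^2\to L^2$ operator bounds on $M_\pm^{-1}(z)$ and $\partial_zM_\pm^{-1}(z)$ replacing pointwise bounds on $\mR_0^\pm$, and H\"older's inequality applied to pair the iterated resolvents through the middle operator.
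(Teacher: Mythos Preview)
Your proposal has two genuine gaps.

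First, the Neumann series argument for $M_\pm^{-1}(z)$ at high energy is not justified. You claim $v\mR_0^\pm(z)v^*$ is a small perturbation of $U$ once $z$ is large, but the opposite is true: on the support of $\widetilde\chi(z|x-y|)$ the kernel of $\mR_0^\pm$ satisfies $|\mR_0^\pm(z)(x,y)|\les z^{3/2}|x-y|^{-3/2}$, so $v\mR_0^\pm v^*$ \emph{grows} with $z$ in any reasonable operator norm. The paper sidesteps this entirely: it never inverts $M^\pm$ at high energy. Instead it uses the short resolvent identity $\mR_V=\mR_0-\mR_0V\mR_0+\mR_0V\mR_V V\mR_0$ and controls the perturbed piece via the limiting absorption principle \eqref{eqn:LAP}, which gives uniform weighted $L^2$ bounds on $\partial_z^k\mR_V$ for $k=0,1,2$ without any smallness hypothesis. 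To make the flanking kernels locally $L^2$ without accumulating powers of $z$, the paper uses a \emph{selective} iteration: write $\mR_0=\mR_L+\mR_H$, stop whenever $\mR_H$ appears (it is already locally $L^2$ and carries $z^{3/2}$), and iterate only through $\mR_L$ factors (which are $O(z^{-\epsilon})$ and locally singular). This is the mechanism that keeps the $2^j$ budget under control; your long Born series with $M_\pm^{-1}$ would, even if you could bound $M_\pm^{-1}$, pile up $z^{3/2}$ from each of the $2M$ free resolvents.

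Second, your route to $|t|^{-2}$ via ``multi-dimensional stationary phase with joint oscillation in $z$ and the spatial phases'' is not an argument. There is no spatial integration by parts available here---the spatial integrals are just $\R^4$ convolutions against decaying $V$---so the only oscillatory variable is $z$, and in one dimension a nondegenerate stationary point gives at best $|t|^{-1/2}$ per application. The paper obtains $|t|^{-2}$ differently: it integrates by parts once in $z$ (gaining $|t|^{-1}$), and for the remaining integral it rescales the phase so that $\phi''\approx 1$ on the dyadic window (costing $2^{3j}$), applies a van der Corput/stationary phase lemma (Lemma~\ref{stat phase general}), and then exploits the key observation that when the critical point $z_0$ lies in the support of $\chi_j$ one necessarily has $|t|\approx |x|+|y|$, so that the factors $\la x\ra^{-1/2}\la y\ra^{-1/2}$ coming from $|\omega_\pm(z|x-x_1|)|\les z^{1/2}|x-x_1|^{-1/2}\cdot |x-x_1|^{-1}$ contribute an additional $|t|^{-1/2}$. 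This spatial-to-temporal conversion is the missing idea; without it you cannot reach $|t|^{-2}$ from one-dimensional oscillation.
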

	We note that Theorem~\ref{thm:full results} follows from these bounds by summing in $j$ since the contribution of the powers of $\mathcal H$ to the Stone's formula satisfy $\la \mathcal H\ra^{-1}\approx 2^{-j}$ on the support of $\chi_j$.  We note that if we don't use the oscillation, one needs only require $|V(x)|\les \la x\ra^{-2-}$ for the bound $|\eqref{eqn: 2j hightail}|\les 2^{4j}$, this is clear from the proofs below.
	
	To prove these high energy bounds, we utilize different resolvent bounds for $z\approx 2^j$ than when $z$ approaches zero.  Recall that
	$$
	\mR_0^\pm(z)=(-i\alpha \cdot \nabla +m\beta+\sqrt{z^2+m^2}I)R_0^\pm (z^2),
	$$
	We now decompose the kernel of $\mR_0^\pm(z)(x,y)$ into two pieces based on the size of $z|x-y|$.
	$\mR_0^\pm(z)(x,y)=\chi(z|x-y|)\mR_0^\pm(z)(x,y)+\widetilde \chi(z|x-y|)\mR_0^\pm(z)(x,y):=\mR_L(z)(x,y)+\mR_H^\pm(z)(x,y)$.  Here we note that for $k=0,1,2$ and $0< \epsilon\leq 1$ we have
	\begin{align}
		\mR_H^\pm(z)(x,y)&= e^{\pm i z|x-y|} \omega_{\pm}(z|x-y|), \qquad |\partial_z^k  \omega_{\pm}(z|x-y|)|\les \frac{z^{\frac32-k}}{r^{\frac32}}.\label{eqn:RH bds}\\
		|\partial_z^k \mR_L(z)(x,y)|&=\frac{z^{-\epsilon-k}}{|x-y|^{4-\epsilon}}.
		\label{eqn:RL bds}
	\end{align}
	We note that the larger power of $z$ in the estimate for $\mR_H$ is because $\sqrt{z^2+m^2}\approx z$ when $z\approx 2^j$.
	The high energy argument won't use the expansions for $\mR_V^\pm$ developed for the low energy expansions in Section~\ref{sec:reg}.  Instead, we use the limiting absorption principle, \cite{EGG2}:
	\begin{align}\label{eqn:LAP}
		\sup_{\lambda>0} \|\partial_\lambda^k \mR_V^\pm(\lambda)\|_{L^{2,\sigma+k}\to L^{2,-\sigma-k}} \les 1, \qquad \sigma>\f12, \qquad k=0,1,2,
	\end{align}
	which requires the continuity assumption  on the potential.
	We utilize a selective iteration scheme to avoid accumulating excessive growth in the spectral parameter $z$.  From \eqref{eqn:RH bds} we can see that $\mR_H(z)(x,\cdot)$ is locally an $L^2$ function, while from \eqref{eqn:RL bds}, $\mR_L(z)(x,\cdot)$ is not.
	
	We begin by iterating the resolvent identity (and dropping the $\pm$ superscripts for convenience) to write
	\begin{align}\label{eqn:high BS}
		\mR_V=\mR_0-\mR_0V\mR_0+\mR_0V\mR_V V\mR_0
	\end{align}

	We recall Lemma~3.5 in \cite{eg2}:
	\begin{lemma}\label{lem:high stat phase2} Fix $j\in\mathbb N$, and let $\chi_j(z)$ be  a cut-off to
		$z\approx 2^j$.  If 
		$$
		|a(z)|\les 	\frac{z\chi_j(z) \widetilde \chi(zr)}{(1+zr)^{\f12}} ,
		\qquad |\partial_z a(z)|\les 
		\frac{\chi_j(z) \widetilde \chi(zr)}{(1+zr)^{\f12}},
		$$
		then we have the bound
		\begin{align*}
			\bigg|\int_0^\infty e^{-it \phi_\pm(z) } a(z)\, dz\bigg| \les  \min(2^{2j}, 2^{\frac{3j}2}|t|^{-1/2}, 2^{2j} |t|^{-1}),
		\end{align*}
		where $\phi_\pm(z)=\sqrt{z^2+m^2}\mp \frac{z r}{t}$.
	\end{lemma}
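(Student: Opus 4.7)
My plan is to prove each of the three quantities in the minimum as a separate upper bound. Since the integral is a single number, bounding it by each of the three expressions individually yields the bound by their minimum.

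The trivial bound $2^{2j}$ is immediate from the triangle inequality and the support of $\chi_j$: the pointwise estimate $|a(z)| \lesssim z\chi_j(z) \lesssim 2^j$ together with support size $\approx 2^j$ gives $\int |a(z)|\,dz \lesssim 2^{2j}$.

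For the bound $2^{3j/2} t^{-1/2}$, I would proceed by case analysis on the size of $r$. A direct computation gives $\phi_\pm'(z) = z/\sqrt{z^2+m^2} \mp r/t$ and $\phi_\pm''(z) = m^2(z^2+m^2)^{-3/2}$, which on the support of $\chi_j$ is bounded below by $\gtrsim \min(1/m, m^2 2^{-3j})$. When $r \gtrsim 2^j$, the $(1+zr)^{-1/2}$ decay in the amplitude hypothesis forces $\|a\|_{\infty} + \|a'\|_{L^1} \lesssim 1$, and Van der Corput's second-order lemma yields the desired bound $(t\mu)^{-1/2}\|a\|_{\mathrm{BV}} \lesssim 2^{3j/2} t^{-1/2}$. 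When $r \lesssim 2^j$ I would split into non-resonance and resonance regimes for $\phi_+$ (the $\phi_-$ case is simpler since $\phi_-'(z) \gtrsim 1$ throughout the relevant range of $z$): in the non-resonance regime where $|\phi_\pm'(z)| \gtrsim 1$, integration by parts once combined with the amplitude hypothesis yields $\lesssim 2^{j/2}/(tr^{1/2}) \leq 2^{3j/2} t^{-1/2}$; in the resonance regime where $r \approx t$, the direct bound $\int |a(z)|\,dz \lesssim \int (z/r)^{1/2}\chi_j(z)\,dz \lesssim 2^{3j/2}/r^{1/2} \approx 2^{3j/2} t^{-1/2}$ already suffices without invoking oscillation.

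For the bound $2^{2j} t^{-1}$, I would run a parallel case analysis. In the non-stationary regime, integration by parts once with $|\phi_\pm'| \gtrsim 1$ gives $\lesssim \frac{1}{t}\int |\partial_z(a/\phi_\pm')|\,dz$, which using the amplitude bounds is $\lesssim 2^j/t \leq 2^{2j}/t$. In the stationary regime for $\phi_+$ (where $r \approx t$), Van der Corput with $\mu \approx m^2 2^{-3j}$ combined with the improved amplitude bound $\|a\|_{\mathrm{BV}} \lesssim 2^{j/2}/r^{1/2} \approx 2^{j/2} t^{-1/2}$ yields $2^{3j/2} t^{-1/2} \cdot 2^{j/2} t^{-1/2} = 2^{2j}/t$.

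The main obstacle will be handling the potential stationary point of $\phi_+$ at $z_0 = mr/\sqrt{t^2 - r^2}$ (present when $r < t$) uniformly in $r$. The delicate point is that a direct application of Van der Corput without using the amplitude decay gives $\|a\|_{\mathrm{BV}}\lesssim 2^j$ and hence only $2^{5j/2}t^{-1/2}$, which is too weak; the interplay between integration by parts away from $z_0$ and direct Van der Corput or $L^1$ estimates near $z_0$, leveraging the $(1+zr)^{-1/2}$ decay in the amplitude hypothesis to compensate for the small Hessian of the phase at high frequencies, is the key technical point.
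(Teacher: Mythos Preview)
The paper does not prove this lemma; it is recalled verbatim from \cite{eg2} (Lemma~3.5 there) and used as a black box. So there is no ``paper's own proof'' to compare against, only the cited source.

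Your sketch is essentially correct and follows the standard route for such estimates. A few remarks to tighten it:

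\textbf{On the case splits.} Your organizing dichotomy is the right one: Van der Corput (second derivative version) when the amplitude decay $(zr)^{-1/2}$ is strong enough to compensate for the small Hessian $\phi_\pm''\approx m^2 2^{-3j}$, and nonstationary integration by parts when $|\phi_\pm'|\gtrsim 1$. Note that Van der Corput applies regardless of whether the critical point $z_0$ lies in the support, since $\phi_\pm''>0$ throughout; what fails without the amplitude decay is only the \emph{size} of the resulting bound (you get $2^{5j/2}t^{-1/2}$ instead of $2^{3j/2}t^{-1/2}$). So the case analysis is really: if $r\gtrsim t$ (or more precisely $r\approx t$ so that $z_0$ may land in the support) use Van der Corput with the improved $\|a\|_{\mathrm{BV}}\lesssim 2^{j/2}r^{-1/2}\approx 2^{j/2}t^{-1/2}$; otherwise $|\phi_+'|\gtrsim 1$ and one integration by parts gives $\lesssim t^{-1}\int(|a'|+2^{-3j}|a|)\lesssim 2^{j/2}/(tr^{1/2})\leq 2^j/t$.

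\textbf{On the non-resonance bound for the middle estimate.} You write that IBP gives $2^{j/2}/(tr^{1/2})\leq 2^{3j/2}t^{-1/2}$; strictly this requires $t\gtrsim 2^{-j}$ (using $r\gtrsim 2^{-j}$ from the $\widetilde\chi(zr)$ support). For $t\lesssim 2^{-j}$ the trivial bound $2^{2j}$ already dominates $2^{3j/2}t^{-1/2}$, so the minimum is unaffected. It would be cleaner to state upfront that you prove $|\int|\lesssim \min(\cdots)$ rather than each bound separately, since the three regimes $t\lesssim 2^{-j}$, $2^{-j}\lesssim t\lesssim 2^j$, $t\gtrsim 2^j$ correspond exactly to which of the three quantities achieves the minimum.

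\textbf{On $\phi_-$.} You correctly note that $\phi_-'(z)=z/\sqrt{z^2+m^2}+r/t\gtrsim 1$ on the support (for $j\geq 1$), so there is never a stationary point and a single integration by parts already yields $\lesssim 2^j/t$, better than all three claimed bounds in the relevant regimes.

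Your identification of the key mechanism---that the $(1+zr)^{-1/2}$ factor in the hypothesis is precisely what is needed to offset the degeneration $\phi''\approx 2^{-3j}$---is the heart of the matter and matches the argument in \cite{eg2}.
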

	
	We utilize this bound to show that the free Dirac operator satisfies the following dispersive bound.
	
	\begin{lemma}\label{lem:free hi}
		
		We have the bound
		
		$$
		\sup_{x,y\in \R^4} \bigg| \int_0^\infty e^{-it\sqrt{z^2+m^2}} \frac{z\chi_j(z)}{\sqrt{z^2+m^2}} [\mR_0^+-\mR_0^-](z)(x,y)\, dz \bigg|\les \min(2^{4j},2^{4j}|t|^{-2},2^{\frac72 j}|t|^{-\frac32})
		$$
		Consequently, the free Dirac operator satisfies the bounds
		$$
			\|e^{-it D_m} \la D_m\ra^{-4-} \|_{L^1\to L^\infty} \les \la t\ra^{-2},
		\quad 
			\|e^{-it D_m} \la D_m\ra^{-\frac72-} \|_{L^1\to L^\infty} \les |t|^{-\f32}.
		$$
		
	\end{lemma}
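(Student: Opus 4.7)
The plan is to split $\mR_0^\pm(z)=\mR_L(z)+\mR_H^\pm(z)$ using the cutoff $\chi(z|x-y|)$ and bound the corresponding contributions $I_L$ and $I_H^\pm$ to the $z$-integral separately. With $r=|x-y|$, the difference $\mR_0^+-\mR_0^-$ arises via \eqref{eqn:res bessel} from the Bessel function $J_1(zr)$, which is entire in $z$, so the integrand is smooth on the dyadic window $z\approx 2^j$ and all boundary terms in integration by parts vanish by the support of $\chi_j$. For $I_L$ (where $zr\les 1$), the Taylor expansion of $J_1$ combined with the factor $\sqrt{z^2+m^2}\approx 2^j$ yields $|\partial_z^k[\mR_L^+-\mR_L^-](z)(x,y)|\les z^{3-k}$. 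The triangle inequality then gives $|I_L|\les 2^{4j}$, while two integrations by parts against $e^{-it\sqrt{z^2+m^2}}$, using $\phi'(z)=z/\sqrt{z^2+m^2}\approx 1$ and $\phi''(z)\les 2^{-3j}$, yield $|I_L|\les 2^{2j}|t|^{-2}\les 2^{4j}|t|^{-2}$.

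For $I_H^\pm$, the asymptotic form \eqref{eqn:RH bds} writes $\mR_H^\pm(z)=e^{\pm izr}\omega_\pm(zr)$, and combining with the temporal phase produces $e^{-it\Phi_\pm(z)}$ with $\Phi_\pm(z)=\sqrt{z^2+m^2}\mp zr/t$.  The amplitude $a_\pm(z)=\frac{z\chi_j(z)}{\sqrt{z^2+m^2}}\omega_\pm(zr)\widetilde\chi(zr)$ satisfies $|a_\pm^{(k)}(z)|\les z^{3/2-k}r^{-3/2}$, so on the support $r\gtrsim 2^{-j}$ the triangle inequality gives $|I_H^\pm|\les 2^{3j}\cdot 2^j=2^{4j}$. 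Noting $\Phi_\pm'(z)=z/\sqrt{z^2+m^2}\mp r/t$, I split into two regimes. In the non-stationary regime ($|\Phi_\pm'|\gtrsim 1$, i.e.~$r/t$ away from $1$), two integrations by parts yield $|I_H^\pm|\les 2^{2j}|t|^{-2}$. In the stationary regime the critical point $z^*\approx 2^j$ satisfies $z^*/\sqrt{(z^*)^2+m^2}=r/t$, forcing $r\approx t$; together with $|\Phi''(z^*)|\approx 2^{-3j}$, standard stationary phase gives
\[
|I_H^\pm|\les |a_\pm(z^*)|\cdot(t|\Phi''(z^*)|)^{-1/2}\les \frac{2^{3j/2}}{r^{3/2}}\cdot\frac{2^{3j/2}}{|t|^{1/2}}.
\]
Using $r\approx t$, this bounds $|I_H^\pm|\les 2^{3j}|t|^{-2}\les 2^{4j}|t|^{-2}$, while using only $r\gtrsim 2^{-j}$ from the cutoff yields $|I_H^\pm|\les 2^{7j/2}|t|^{-3/2}$ (with the non-stationary IBP bound also dominated by this for $t\gtrsim 2^{-j}$).

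The consequence for the free Dirac evolution follows from Stone's formula \eqref{eqn:stone} applied with $V\equiv 0$: the change of variables $\lambda=\sqrt{z^2+m^2}$ recovers the displayed integral (summed over positive and negative spectral branches per Remark~\ref{rmk:neg branch}), and inserting a Littlewood-Paley decomposition $\sum_j\chi_j(\sqrt{z^2+m^2})\equiv 1$ along with the spectral factor $\la D_m\ra^{-\sigma}\chi_j(D_m)\sim 2^{-\sigma j}$ yields
\begin{align*}
\|e^{-itD_m}\la D_m\ra^{-4-}\|_{L^1\to L^\infty}&\les \sum_{j\geq 0}2^{-(4+)j}\min(2^{4j},2^{4j}|t|^{-2})\les \la t\ra^{-2},\\
\|e^{-itD_m}\la D_m\ra^{-7/2-}\|_{L^1\to L^\infty}&\les \sum_{j\geq 0}2^{-(7/2+)j}\cdot 2^{7j/2}|t|^{-3/2}\les |t|^{-3/2}.
\end{align*}
The principal obstacle is the stationary-phase analysis for $\Phi_\pm$: the Hessian $\Phi''(z)=m^2(z^2+m^2)^{-3/2}\approx 2^{-3j}$ degenerates at high frequency (the Klein-Gordon flattening of the dispersion relation), so tracking the $2^j$-factors carefully is what produces the Klein-Gordon rate $|t|^{-3/2}$ at high frequency, in contrast to the Schr\"odinger-type $|t|^{-2}$ rate obtained by two integrations by parts in non-stationary regions, and correctly interpolating these via the $\min$ is what forces the differentiability requirement in the final $L^1\to L^\infty$ bounds.
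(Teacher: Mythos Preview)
Your argument is essentially correct and follows the same strategy as the paper: split $\mR_0^\pm=\mR_L+\mR_H^\pm$ via $\chi(z|x-y|)$, use the Bessel expansion and two integrations by parts for $I_L$, and use stationary phase for $I_H^\pm$. The paper packages step three differently: it integrates by parts once against $e^{-it\sqrt{z^2+m^2}}$ alone, extracts a factor $2^j/|t|$, and then applies the black-box Lemma~\ref{lem:high stat phase2} to the remaining integral $\int e^{-it\phi_\pm}a\,dz$; that lemma already encodes the three-way minimum $\min(2^{2j},2^{3j/2}|t|^{-1/2},2^{2j}|t|^{-1})$ uniformly in $r$. You instead work directly with the combined phase $\Phi_\pm$ and split by hand into stationary versus $|\Phi_\pm'|\gtrsim 1$.

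One small point to tighten: your dichotomy misses the regime where $r/t$ is close to $1$ but $r/t\geq 1$ (so $\Phi_+$ has no critical point on $\R^+$) yet $|\Phi_+'|$ is not bounded below on $\mathrm{supp}\,\chi_j$. In that case ``standard stationary phase at $z^*$'' is not literally available, but the argument still goes through: one has $r\approx t$ and the uniform Hessian lower bound $\Phi_\pm''(z)=m^2(z^2+m^2)^{-3/2}\gtrsim 2^{-3j}$ on $\mathrm{supp}\,\chi_j$, so van der Corput's lemma (which needs only $\Phi''$ bounded below, not an actual critical point) yields the same $\frac{2^{3j}}{\sqrt{t}\,r^{3/2}}$ bound you state. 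Replacing ``standard stationary phase'' by van der Corput in the $r/t\approx 1$ regime closes the gap. Also note your $I_L$ bound $|\partial_z^k[\mR_L^+-\mR_L^-]|\les z^{3-k}$ is slightly wasteful (the $m\beta$ term in $D_m+\sqrt{z^2+m^2}$ gives a $z^2$ leading contribution, so the paper records $z^{2-k}$), but since $z\approx 2^j\geq 1$ your bound is still a valid upper bound and the conclusion is unaffected.
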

	
	\begin{proof}
		First, for $z|x-y|\les 1$, recalling \eqref{eqn:RL low bds} and \eqref{eqn:RL diff bds}, we have $\chi_j(z)[\mR_L^+-\mR_L^-](z)(x,y)=\mathcal E_j(z)$ a function that satisfies $|\partial_z^k \mathcal E_j(z)|\les z^{2-k}$ uniformly in $x,y$, and is supported on $z\approx 2^j$.  Then, we seek to bound
		$$
			\bigg|\int_0^\infty e^{-it\sqrt{z^2+m^2}} \frac{z }{\sqrt{z^2+m^2}} \mathcal E_j(z)\,dz\bigg|.
		$$
		We may bound the modulus of the integrand by $z^3$ and hence, by the triangle inequality, the integral is bounded by $2^{4j}$.  If we integrate by parts, there are no boundary terms when integrating by parts due to the support of $\mathcal E_j(z)$.
		\begin{multline*}
			\int_0^\infty e^{-it\sqrt{z^2+m^2}} \frac{z }{\sqrt{z^2+m^2}} \mathcal E_j(z)\,dz=\frac{1}{-it}\int_0^\infty e^{-it\sqrt{z^2+m^2}} \partial_z \mathcal E_j(z)\, dz\\
			=\frac{1}{t^2} \int_0^\infty e^{-it\sqrt{z^2+m^2}}\partial_z \bigg( \frac{\sqrt{z^2+m^2}}{z} \partial_z \mathcal E_j(z) \bigg)\, dz.
		\end{multline*}
		Since differentiation is comparable to division by $z$ we may bound the second integral by $|t|^{-1}2^{2j}$ and the third integral by $|t|^{-2}2^{j}$.
		From this, we see that 
		$$
		\sup_{x,y} \bigg|\int_0^\infty e^{-it\sqrt{z^2+m^2}} \frac{z \chi_j(z)}{\sqrt{z^2+m^2}} [\mR_L^+-\mR_L^-](z)(x,y)\,dz\bigg| \les 2^{j}\min(2^{3j},2^{j}|t|^{-1} ,|t|^{-2}).
		$$
		For the high energy portion the bounds in \eqref{eqn:RH bds} are valid, note that we do not utilize any cancellation between the + and - resolvents but instead bound them individually.  We need to bound:
		$$
		\int_0^\infty e^{-it\sqrt{z^2+m^2}} \frac{z \chi_j(z)}{\sqrt{z^2+m^2}} \mR_H^\pm(z)(x,y)\, dz=
		\int_0^\infty e^{-it\sqrt{z^2+m^2}} \frac{z \chi_j(z)}{\sqrt{z^2+m^2}} e^{\pm i z|x-y|}\omega_{\pm}(z|x-y|)\, dz.
		$$
		By the triangle inequality, we obtain a bound of size $2^{4j}$ using that $z|x-y|\gtrsim 1$ so that $|\omega_\pm(z|x-y|)|\les z^3$.
		We integrate by parts once to seee
		$$
		\frac{1}{-it} \int_0^\infty e^{-it\sqrt{z^2+m^2}}  \partial_z [\chi_j(z) e^{\pm i z|x-y|}\omega_{\pm}(z|x-y|) ]\, dz.
		$$
		Here
		$$
		[\chi_j(z) e^{\pm i z|x-y|}\omega_{\pm}(z|x-y|) ]=e^{\pm i z|x-y|} za(z)
		$$
		where $a(z)$ satisfies the bounds of of Lemma~\ref{lem:high stat phase2}.  The extra power of $z$ contributes an additional power of $2^j$ to the upper bound since $z\approx 2^j$.  Hence, we have 
		\begin{multline*}
			\sup_{x,y} \bigg|\int_0^\infty e^{-it\sqrt{z^2+m^2}} \frac{z \chi_j(z)}{\sqrt{z^2+m^2}} \widetilde\chi(z|x-y|)(\mR_0^+-\mR_0^-)(z)(x,y)\,dz\bigg|\\
			\les \min(2^{4j},2^{7j/2}|t|^{-3/2}, 2^{4j} |t|^{-2}).
		\end{multline*}
		Combining the bounds on $z|x-y|\les 1 $ and $z|x-y|\gtrsim 1$ proves the claim.
	\end{proof}
	
	We modify this proof to control the second term in \eqref{eqn:high BS}, namely,
	\begin{lemma}\label{lem:hi bornterm}
		
		Under the assumptions of Proposition~\ref{prop:high energy}, we have the bound
		$$
		\sup_{x,y\in \R^4} \bigg| \int_0^\infty e^{-it\sqrt{z^2+m^2}} \frac{z\chi_j(z)}{\sqrt{z^2+m^2}} [\mR_0^+V\mR_0^+-\mR_0^-V\mR_0^-](z)(x,y)\, dz \bigg|\les \min(2^{4j},2^{3j}|t|^{-\f32},2^{\frac72 j}|t|^{-2}).
		$$	
		
	\end{lemma}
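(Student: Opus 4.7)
The plan is to parallel the proof of Lemma~\ref{lem:free hi}, accommodating the additional $V\mR_0^{\pm}$ factor. First apply the algebraic identity
\begin{align*}
\mR_0^+V\mR_0^+ - \mR_0^-V\mR_0^- = (\mR_0^+-\mR_0^-)V\mR_0^+ + \mR_0^-V(\mR_0^+-\mR_0^-)
\end{align*}
and, by symmetry, reduce to bounding $(\mR_0^+-\mR_0^-)V\mR_0^+$. Then decompose each resolvent into its low and high parts, $\mR_0^\pm(z)(x,y)=\mR_L(z)(x,y)+\mR_H^\pm(z)(x,y)$, as in Section~\ref{sec:hi}, producing four spatial sub-cases indexed by whether $z|x-x_1|$ and $z|x_1-y|$ are small or large.

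In the low-low sub-case the pointwise difference bound $|\partial_z^k(\mR_L^+-\mR_L^-)(z)(x,x_1)|\les z^{2-k}$ from the discussion following \eqref{eqn:RL diff bds}, together with $|\partial_z^k\mR_L(z)(x_1,y)|\les z^{-\epsilon-k}|x_1-y|^{-4+\epsilon}$ from \eqref{eqn:RL bds} and the decay $|V(x_1)|\les \la x_1\ra^{-5-}$, allows the $x_1$-integral to close uniformly in $x,y$ via Lemma~\ref{lem:spatial estimates}. Since $\chi_j$ has compact support away from $0$ and $\infty$, two integrations by parts in $z$ are admissible without boundary terms, yielding the $|t|^{-2}$ bound with a $j$-power better than required (interpolation furnishes the $|t|^{-3/2}$ bound). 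When at least one resolvent is of high type, the phase $e^{\pm iz|x-x_1|}$ (or $e^{\pm iz|x_1-y|}$) arising from $\mR_H^\pm$ combines with $e^{-it\sqrt{z^2+m^2}}$ to place the $z$-integral in the form required by Lemma~\ref{lem:high stat phase2} with $r$ the relevant spatial distance. After one integration by parts in $z$, where the derivative of the exponential generates the factor of $r$ needed so that $r\,\omega_\pm(zr)\les z^2(zr)^{-1/2}$, an application of Lemma~\ref{lem:high stat phase2} produces $|t|^{-1/2}$ or $|t|^{-1}$ decay, which combines with the $|t|^{-1}$ gain from the IBP to give $|t|^{-3/2}$ and $|t|^{-2}$. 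The inner resolvent is absorbed into the amplitude: its $\mR_L$ contribution contributes $z^{-\epsilon}|x_1-y|^{-4+\epsilon}$, while its $\mR_H$ contribution is written $e^{\pm iz|x_1-y|}\omega_\pm(z|x_1-y|)$ and only the modulus $|\omega_\pm|\les z^{3/2}|x_1-y|^{-3/2}$ is used (its oscillation is not exploited). In every case, the triangle-inequality bound $2^{4j}$ follows by direct size estimates together with Lemma~\ref{lem:spatial estimates}.

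The main obstacle is the careful bookkeeping of $z$-powers in the high-high sub-case, where both resolvents contribute amplitudes of size $z^{3/2}$ and spatial factors like $|x-x_1|^{-3/2}$ and $|x_1-y|^{-3/2}$. There one sets $r=|x-x_1|$ in Lemma~\ref{lem:high stat phase2}, absorbs $\omega_\pm(z|x_1-y|)$ into the amplitude, and verifies the derivative bounds on the amplitude (noting that $|\partial_z^k\omega_\pm(zr')|\les z^{3/2-k}r'^{-3/2}$ for $r'=|x_1-y|$). Accounting for the factors of $z^{3/2}$ from the outer $\mR_H$ amplitude, the $z^{3/2}$ from the inner $\mR_H$ amplitude, the extra $z$ coming from the $r$-factor generated by the IBP on the exponential, and the scaling in Lemma~\ref{lem:high stat phase2}, yields exactly the claimed $j$-power $2^{7j/2}$ accompanying $|t|^{-2}$ and $2^{3j}$ accompanying $|t|^{-3/2}$. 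The requirement $|V(x)|\les \la x\ra^{-5-}$ enters precisely here, to close the iterated $x_1$-integral via Lemma~\ref{lem:spatial estimates} against weights of the form $|x-x_1|^{-3/2}|x_1-y|^{-3/2}$ uniformly in $x,y$.
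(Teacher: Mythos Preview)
Your overall strategy matches the paper's: split each resolvent into $\mR_L$ and $\mR_H$, use the $\pm$ cancellation only in the low-low piece (where two integrations by parts give the $|t|^{-2}$ bound with room to spare), and in the pieces containing an $\mR_H$ integrate by parts once and invoke Lemma~\ref{lem:high stat phase2}. The low-low and mixed cases go through essentially as you describe.

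There is, however, a genuine gap in your high-high treatment. You set $r=|x-x_1|$ in Lemma~\ref{lem:high stat phase2} and absorb the inner resolvent $\mR_H^{\pm}(z)(x_1,y)=e^{\pm iz|x_1-y|}\omega_{\pm}(z|x_1-y|)$ into the amplitude, checking only the bounds $|\partial_z^k\omega_{\pm}(zr')|\les z^{3/2-k}r'^{-3/2}$. But the amplitude then also carries the oscillatory factor $e^{\pm iz|x_1-y|}$, and when you verify the hypothesis on $|\partial_z a|$ in Lemma~\ref{lem:high stat phase2}, differentiating this factor contributes an extra $|x_1-y|$ that your bookkeeping ignores. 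Once it is included, the constant you can feed into Lemma~\ref{lem:high stat phase2} worsens by a factor of $z|x_1-y|$ (which on the support of $\widetilde\chi(z|x_1-y|)$ can be as large as $2^{2j}$), and the resulting $|t|^{-2}$ coefficient is $2^{11j/2}$ rather than the claimed $2^{7j/2}$. The paper avoids this by extracting \emph{both} phases into the oscillatory integral, writing
\[
\frac{1}{-it}\int_0^\infty e^{-it\sqrt{z^2+m^2}\pm iz(|x-x_1|+|x_1-y|)}\,a(z)\,dz
\]
and applying Lemma~\ref{lem:high stat phase2} with $r=|x-x_1|+|x_1-y|\approx\max(|x-x_1|,|x_1-y|)$; then $a$ contains no oscillation and $\partial_z a$ is controlled by the $\omega$-bounds alone. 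A related wrinkle: because you apply the algebraic identity \emph{before} the $\mR_L/\mR_H$ split, your high-high piece contains cross terms $\mR_H^{\mp}V\mR_H^{\pm}$ with opposite-sign phases that do not combine into a single $e^{\pm izr}$. The paper decomposes first and uses the $\pm$ difference only on $\mR_LV\mR_L$, so every high-high term has matching signs and the combined-phase trick applies cleanly.

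One minor point: the hypothesis $|V(x)|\les\la x\ra^{-5-}$ is not what closes the high-high $x_1$-integral. The integral $\int\la x_1\ra^{-\delta}(|x-x_1|^{-3/2}+|x_1-y|^{-3/2})\,dx_1$ is bounded uniformly in $x,y$ already for $\delta>5/2$ by Lemma~\ref{lem:spatial estimates}; the stronger decay on $V$ is inherited from Proposition~\ref{prop:high energy} but is not forced by this particular lemma.
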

	
	\begin{proof}
		
		We note that
		$$
		\mR_0V\mR_0=\mR_LV\mR_L+\mR_LV\mR_H+\mR_HV\mR_L+\mR_HV\mR_H.
		$$	
		Due to the singularities of $\mR_L$ in the spatial variables, \eqref{eqn:RL bds}, we must utilize the +/- difference on the first summand $\mR_LV\mR_L$.
		Recalling \eqref{eqn:RL diff bds} and utilizing the support condition that $z|x-y|\les 1$ we have
		$$
		|\partial_z^k[\mR_L^+-\mR_L^-](z)(x,y)|\les z^{2-k} (z|x-y|)^{-\ell}
		$$
		for our choice of $\ell\geq 0$. 
		In which case, by \eqref{eqn:alg identity}, it suffices to bound
		$$
		\int_0^\infty e^{-it\sqrt{z^2+m^2}} \frac{z\chi_j(z)}{\sqrt{z^2+m^2}} [(\mR_L^+-\mR_L^-)V\mR_L^+](z)(x,y)\, dz
		$$
		By the triangle inequality and the bounds above with $\ell=0$, and \eqref{eqn:RL bds} with $\epsilon=0+$, we may bound this  by
		$2^{3j}$ uniformly in $x,y$.
		By the support of $\chi_j$, we may integrate by parts without boundary terms and we seek to bound
		$$
		\frac{1}{(it)^2}\int_0^\infty e^{-it\sqrt{z^2+m^2}} \partial_z \bigg(
		\frac{\sqrt{z^2+m^2}}{z}\partial_z \big( \chi_j(z)[(\mR_L^+-\mR_L^-)V\mR_L^+](z)(x,y \big)
		\bigg) \,dz
		$$	
		We note that here  differentiation is comparable to division by $z$.  Choosing $\epsilon=1$ and $\ell=1-$ and using Lemma~\ref{lem:spatial estimates}, we may bound by
		\begin{align*}
			\frac{1}{|t|^2}\int_{\R^4}\int_0^\infty z^{-2+}\chi_j(z)\frac{\la x_1\ra^{-2-}}{|x-x_1|^{1-}|x_1-y|^3}\, dz\, dx_1  \les \frac{1}{|t|^2},
		\end{align*}
		uniformly in $x,y$.  Taking advantage of the $x_1$ integral allows us to negate the growth in $z$ at the cost of integral singularities in $|x-x_1|$.
		We now turn our attention to the summand containing only $\mR_H$.  By the triangle inequality, \eqref{eqn:RH bds}, one can dominate the integrand by $z^3|x-x_1|^{-\f32}|V(x_1)||x_1-y|^{-\f32}$, and apply Lemma~\ref{lem:spatial estimates} to obtain a bound of size $2^{4j}$.  Otherwise, we integrate by parts once.  By symmetry, it suffices to consider 
		\begin{multline*}
			\int_{\R^4}\frac{1}{-it}\int_0^\infty e^{-it\sqrt{z^2+m^2}}  \chi_j(z)[\partial_z\mR_H(z)]V\mR_H(z)\, dz\, dx_1\\
			=\frac{1}{-it}\int_{\R^4}\int_0^\infty e^{-it\sqrt{z^2+m^2}\mp z(|x-x_1|+|x_1-y|)/t}  a(z)\, dz\, dx_1,
		\end{multline*}
		where
		$$
		a(z)=\chi_j(z)|x-x_1| \big[ \omega(z|x-x_1|)+\omega'(z|x-x_1|)\big]V(x_1) \omega(z|x_2-y|).
		$$
		From the bounds on $\omega$ in \eqref{eqn:RH bds}, 
		and writing $r:=\max(|x-x_1|,|x_1-y|)$ and $s:=\min(|x-x_1|,|x_1-y|)$,
		we see that
		$$
		|a(z)|\les \frac{\chi_j(z)\widetilde \chi(zr) z^{\frac52}|V(x_1)|}{(1+zr)^{\frac{1}{2}} s^{\f32}}\les \frac{z\chi_j(z)\widetilde \chi(zr)}{(1+zr)^{\f12}} \frac{2^{\f32 j}|V(x_1)|}{s^\frac32} .
		$$
		Where we used that $r+s\approx r$ and $z\approx 2^j$.
		Similarly,
		$$
		|\partial_z a(z)|\les \frac{\chi_j(z)\widetilde \chi(zr)z^{\frac32}|V(x_1)|}{(1+zr)^{\frac{1}{2}} s^{\f32}} \les 
		\frac{\chi_j(z)\widetilde \chi(zr)}{(1+zr)^{\f12}} \frac{2^{\f32j}|V(x_1)| }{s^\frac32}.
		$$
		Hence, we may apply Lemma~\ref{lem:high stat phase2} to bound the contribution by
		\begin{multline*}
			\frac{2^{\f32j}}{|t|}\min(2^{2j}, 2^{\frac{3j}2}|t|^{-1/2}, 2^{2j} |t|^{-1})\int_{\R^4} |V(x_1)|(|x-x_1|^{-\f32}+|x_1-y|^{-\f32})\\
			\les \frac{2^{\f32j}}{|t|}\min(2^{2j}, 2^{\frac{3j}2}|t|^{-1/2}, 2^{2j} |t|^{-1}),
		\end{multline*}
		uniformly in $x,y$.
		When we have $\mR_LV\mR_H$ or $\mR_HV\mR_L$, the argument above applies with 
		$$
		|a(z)|\les \frac{\chi_j(z)\widetilde \chi(zr)\chi(zs) z^{2}V(x_1)}{(1+zr)^{\frac{1}{2}} s^{3}}, \quad |\partial_z a(z)|\les \frac{\chi_j(z)\widetilde \chi(zr)\chi(zs) zV(x_1)}{(1+zr)^{\frac{1}{2}} s^{3}}.
		$$
		One may then bound the contribution by 
		\begin{multline*}
			\frac{2^{j}}{|t|}\min(2^{2j}, 2^{\frac{3j}2}|t|^{-1/2}, 2^{2j} |t|^{-1})\int_{\R^4} |V(x_1)|(|x-x_1|^{-3}+|x_1-y|^{-3})\\
			\les \frac{2^{j}}{|t|}\min(2^{2j}, 2^{\frac{3j}2}|t|^{-1/2}, 2^{2j} |t|^{-1}),
		\end{multline*}
		uniformly in $x,y$.
	\end{proof}

	We now need only control the tail of \eqref{eqn:high BS}, we expand the free resolvents as $\mR_0=\mR_L+\mR_H$ to see that
	\begin{align}\label{eqn:hi iterated}
		\mR_0V\mR_V V\mR_0=\mR_HV\mR_V V \mR_H+\mR_HV\mR_V V \mR_L+\mR_LV\mR_V V \mR_H+\mR_LV\mR_V V \mR_L
	\end{align}
	The first term requires no further iteration due to the fact that $\mR_H$ is locally $L^2$ by \eqref{eqn:RH bds}.  
	To control the contribution of this piece we use the  following (slightly modified) lemma from
	\cite{Sc2}, see \cite[Lemma 3.3]{egd} or \cite{EGT2}.
	\begin{lemma}\label{stat phase general}
		
		Let $\phi'(z_0)=0$ and $1\leq \phi'' \leq C$.  Then,
		\begin{multline*}
			\bigg| \int_{-\infty}^{\infty} e^{-it\phi(z)} a(z)\, dz \bigg|
			\lesssim \int_{|z-z_0|<|t|^{-\frac{1}{2}}} |a(z)|\, dz \\
			+|t|^{-1} \int_{|z-z_0|>|t|^{-\frac{1}{2}}} \bigg( \frac{|a(z)|}{|z-z_0|^2}+
			\frac{|a'(z)|}{|z-z_0|}\bigg)\, dz.
		\end{multline*}
		
	\end{lemma}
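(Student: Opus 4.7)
The plan is to prove Lemma~\ref{stat phase general} by the classical nonstationary/stationary phase splitting. Specifically, introduce a smooth cutoff $\eta(z)$ equal to $1$ for $|z-z_0|\leq |t|^{-1/2}$ and vanishing for $|z-z_0|\geq 2|t|^{-1/2}$, with $|\eta'(z)|\les |t|^{1/2}$. Decompose
\[
\int_{-\infty}^{\infty} e^{-it\phi(z)} a(z)\,dz
= \int e^{-it\phi(z)} a(z)\eta(z)\,dz
+ \int e^{-it\phi(z)} a(z)(1-\eta(z))\,dz.
\]
The first piece is immediately controlled by $\int_{|z-z_0|<2|t|^{-1/2}} |a(z)|\,dz$ via the triangle inequality, which reproduces the first term of the claimed bound (up to the obvious enlargement of the integration radius by a factor of $2$).

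For the second piece, the hypothesis $\phi''\geq 1$ together with $\phi'(z_0)=0$ gives $|\phi'(z)|=|\int_{z_0}^z \phi''(s)\,ds|\geq |z-z_0|$ by the fundamental theorem of calculus, so $\phi'$ does not vanish on the support of $1-\eta$. I will then integrate by parts once using the identity $e^{-it\phi(z)}=\frac{1}{-it\phi'(z)}\partial_z e^{-it\phi(z)}$. The boundary contributions vanish: the factor $1-\eta$ removes any issue near $z_0$, and we may assume $a$ decays at infinity (otherwise the statement has no content). This produces
\[
\frac{1}{it}\int e^{-it\phi(z)}\,\frac{d}{dz}\!\left[\frac{a(z)(1-\eta(z))}{\phi'(z)}\right] dz.
\]

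Distributing the derivative yields three terms. The contributions from $a'(z)(1-\eta(z))/\phi'(z)$ and $a(z)(1-\eta(z))\phi''(z)/\phi'(z)^2$ are pointwise bounded, using $|\phi'|\geq |z-z_0|$ and $|\phi''|\leq C$, by $|a'(z)|/|z-z_0|$ and $C|a(z)|/|z-z_0|^2$ respectively on the region $|z-z_0|\geq |t|^{-1/2}$; after the $|t|^{-1}$ prefactor these reproduce exactly the second integral in the statement. The only delicate term is the cutoff-derivative piece $a(z)\eta'(z)/\phi'(z)$: here $\eta'$ is supported in the shell $|t|^{-1/2}\leq |z-z_0|\leq 2|t|^{-1/2}$ with $|\eta'|\les |t|^{1/2}$ and $|\phi'(z)|^{-1}\les |t|^{1/2}$, so the factors of $|t|^{\pm 1/2}$ in the $|t|^{-1}$-prefactored expression cancel exactly and this term is dominated by $\int_{|z-z_0|<2|t|^{-1/2}} |a(z)|\,dz$, which is absorbed into the first term of the statement. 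The main bookkeeping concern, and the only place where care is required, is verifying that this cutoff-derivative piece does not generate a genuinely new contribution beyond the two displayed on the right-hand side; once that is confirmed, collecting the pieces yields the lemma.
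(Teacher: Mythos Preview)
Your proof is correct and is precisely the standard argument used to establish this stationary phase estimate. The paper does not give its own proof of this lemma; it is quoted from the literature (Schlag \cite{Sc2}, see also \cite[Lemma 3.3]{egd} and \cite{EGT2}), and in the applications in Section~\ref{sec:hi} the amplitude $a$ is always compactly supported in $z$ via the cutoff $\chi_j$, so the boundary-term issue you flag is genuinely vacuous there.
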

	We cannot use Lemma~\ref{lem:high stat phase2} since derivatives of $\mR_V$ do not decay faster in $z$ than $\mR_V$ for large $z$.  The limiting absorption principle, \eqref{eqn:LAP}, only ensures that derivatives of the operator are bounded between appropriately weighted $L^2$ spaces.

	\begin{lemma}\label{lem:RhRh high}
		
		Under the assumptions of Propostion~\ref{prop:high energy}, we have
		$$
		\sup_{x,y\in\R^4} \bigg| \int_0^\infty e^{-it\sqrt{z^2+m^2}}\frac{z\chi_j(z)}{\sqrt{z^2+m^2}} \mR_H^\pm V\mR_V^\pm V\mR_H^\pm (z)(x,y)\, dz\bigg| \les \min(2^{4j}, 2^{11j/2}|t|^{-2}).
		$$
		
	\end{lemma}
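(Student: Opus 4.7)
The assertion splits into a uniform amplitude estimate giving $\les 2^{4j}$ and a time-decay estimate giving $\les 2^{11j/2}|t|^{-2}$; the first is straightforward from the limiting absorption principle, while the second requires an integration by parts combined with a stationary-phase estimate.

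For the $2^{4j}$ bound, the plan is to estimate the amplitude pointwise in $(x,y)$. Writing the spatial integral as a bilinear form and applying \eqref{eqn:LAP} with a weight $\sigma > \f12$, one obtains
\[
\bigl|[\mR_H^\pm V \mR_V^\pm V \mR_H^\pm](z)(x,y)\bigr| \les \|V \omega_\pm(z|x-\cdot|)\la\cdot\ra^\sigma\|_{L^2} \cdot \|V\omega_\pm(z|\cdot-y|)\la\cdot\ra^\sigma\|_{L^2}.
\]
Using $|\omega_\pm(zr)| \les z^{3/2}/r^{3/2}$ from \eqref{eqn:RH bds}, the decay $|V|\les\la\cdot\ra^{-5-}$, and Lemma~\ref{lem:spatial estimates}, each $L^2$-norm is bounded by $z^{3/2}$ uniformly in its spatial argument, so the amplitude is $O(z^3)$ uniformly in $(x,y)$. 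The triangle inequality then gives $\int \chi_j(z) \cdot (z/\sqrt{z^2+m^2}) \cdot z^3 \, dz \les 2^{4j}$.

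For the time-decay bound, I would perform one integration by parts in $z$ against the oscillation $e^{-it\sqrt{z^2+m^2}}$; there are no boundary terms by the support of $\chi_j$, and the factor $\sqrt{z^2+m^2}/z \approx 1$ is harmless for $z \approx 2^j$ in the high-energy regime (with routine modifications when $2^j\lesssim m$). This contributes $|t|^{-1}$ at the cost of moving one derivative onto the amplitude. The first derivative of $[\mR_H^\pm V\mR_V^\pm V\mR_H^\pm](z)(x,y)$ remains $O(z^3)$ uniformly in $(x,y)$: when $\partial_z$ lands on an $\omega_\pm$, the $z^{-1}$ gain is harmless; when it lands on the oscillating factor $e^{\pm iz r}$ hidden inside $\mR_H^\pm$, the resulting polynomial $r = |x-x_1|$ or $|x_2-y|$ is absorbed by the weight on $V$ via Lemma~\ref{lem:spatial estimates}; when it lands on $\mR_V^\pm$, the $\partial_z\mR_V^\pm$ bound from \eqref{eqn:LAP} with weight $\sigma+1$ applies, which is precisely where the hypothesis $\delta > 5$ is used. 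The second factor of $|t|^{-1}$ will be extracted by applying Lemma~\ref{stat phase general} to the remaining oscillatory integral, now viewing the exponentials $e^{\pm iz|x-x_1|}e^{\pm iz|x_2-y|}$ together with $e^{-it\sqrt{z^2+m^2}}$ as the full oscillation. Since $\phi''(z) \approx m^2/(z^2+m^2)^{3/2} \approx 2^{-3j}$ on the support of $\chi_j$, a rescaled version of the Schlag-type estimate produces an additional prefactor $2^{3j/2}|t|^{-1}$ relative to the $\phi''\approx 1$ case. Combining with the amplitude size $\approx 2^{3j}$, the factor $|t|^{-1}$ from the initial integration by parts, and the interval length $\approx 2^j$, this gives the claimed $2^{11j/2}|t|^{-2}$.

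The main technical obstacle is that one cannot simply integrate by parts twice in $z$: when two $\partial_z$ land on the oscillating factors $e^{\pm izr}$ inside $\mR_H^\pm$, the resulting $r^2$ factor forces an $L^2$-norm $\|r^2\omega_\pm V\la\cdot\ra^\sigma\|_{L^2} \les z^{3/2}\la x\ra^{\f12}$ that is not uniform in $x$. This forces one to interleave one integration by parts (which only needs the first derivative of the amplitude) with a stationary-phase estimate that handles the oscillation of $\mR_H^\pm$ intrinsically. The small size of $\phi''$ at high energy is what costs the extra $2^{3j/2}$ in the prefactor. Throughout the argument, the LAP bounds \eqref{eqn:LAP} on $\mR_V^\pm$ and its derivatives are applied inside integrals rather than pointwise, which is why the $x_1, x_2$-integrations and the $z$-integration must be handled in an intertwined manner; the decay assumption $\delta>5$ supplies precisely the weighted $L^2$-control needed at every derivative order $k \le 2$.
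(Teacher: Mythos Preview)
Your overall plan---one integration by parts followed by a stationary-phase estimate---matches the paper's, and your $2^{4j}$ bound is correct. The gap is in how you extract the second power of $|t|^{-1}$.

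You assert that Lemma~\ref{stat phase general}, after rescaling $\phi''\approx 2^{-3j}$ to $\phi''\approx 1$, yields an additional factor $2^{3j/2}|t|^{-1}$. But near a genuine critical point the lemma only gives $(2^{-3j}t)^{-1/2}=2^{3j/2}|t|^{-1/2}$; the $|t|^{-1}$ part of the lemma applies only to the region \emph{away} from the critical point. With your counting this would produce at best $|t|^{-3/2}$, not $|t|^{-2}$. A related difficulty is that you propose to place the full oscillations $e^{\pm iz|x-x_1|}e^{\pm iz|x_2-y|}$ into the phase; the phase then depends on the inner integration variables $x_1,x_2$, so its critical point does as well, and you cannot first integrate in $x_1,x_2$ via the limiting absorption principle \eqref{eqn:LAP} (which is only an $L^2$ operator bound, not a pointwise kernel bound) and afterwards apply Lemma~\ref{stat phase general}.

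The paper resolves both issues with one idea: write $e^{iz|x-x_1|}=e^{iz|x|}\cdot e^{iz(|x-x_1|-|x|)}$ and similarly at $y$. The factor $e^{iz|x|}e^{iz|y|}$ joins the phase, which is now independent of $x_1,x_2$, so the amplitude $a(z,x,y)$ can be defined by integrating against $V\mR_V V$ via \eqref{eqn:LAP}. The residual factor $e^{iz(|x-x_1|-|x|)}$ sits in the amplitude and has $z$-derivative bounded by $\la x_1\ra$, which \emph{is} absorbed by $V$. Computing the weighted $L^2$ norms then gives $|a|+|\partial_z a|\les 2^{3j}\la x\ra^{-1/2}\la y\ra^{-1/2}$. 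The key payoff is that if the critical point $z_0$ lies in the support of $\chi_j$, one has $(|x|+|y|)/t\approx z_0/\sqrt{z_0^2+m^2}\approx 1$, hence $\la x\ra^{-1/2}\la y\ra^{-1/2}\les |t|^{-1/2}$. This converts the $|t|^{-1/2}$ from stationary phase into the missing full power of $|t|^{-1}$. When $z_0$ is far from the support one has $|\phi'|\gtrsim 1$ and can simply integrate by parts once more. Without this phase-splitting and the ensuing $t\approx |x|+|y|$ observation, your argument stalls at $|t|^{-3/2}$.
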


	\begin{proof}[Proof of Lemma~\ref{lem:RhRh high}]
		We consider the `+' case and omit the superscripts, the `-' follows with minor changes.
		By the triangle inequality, \eqref{eqn:RH bds}, \eqref{eqn:LAP}, and Lemma~\ref{lem:spatial estimates}, we have
		\begin{multline*}
			\bigg| \int_0^\infty e^{-it\sqrt{z^2+m^2}}\frac{z\chi_j(z)}{\sqrt{z^2+m^2}} \mR_HV\mR_V V\mR_H(z)(x,y)\, dz\bigg|\\
			\les \int_{0}^\infty z^3\chi_j(z) \|V(\cdot)|x-\cdot|^{-\f32}\|_{L^{2,\frac12+}}\|\mR_V\|_{L^{2,\frac12+}\to L^{2,-\frac12-}}\|V(\cdot)|y-\cdot|^{-\f32}\|_{L^{2,\frac12+}}\, dz\les 2^{4j},
		\end{multline*}
		uniformly in $x,y$
		provided $|V(x)|\les \la x\ra^{-2-}$.
		Due to the cut-off $\chi_j(z)$, we may integrate by parts without boundary terms and we have to control
		\begin{align}\label{eqn:hi tail hihi}
			\frac{1}{-it}\int_0^\infty e^{-it\sqrt{z^2+m^2}}\partial_z \bigg(\chi_j(z) \mR_HV\mR_V V\mR_H(z)(x,y) \bigg)\, dz.
		\end{align}
		When the derivative acts on the cut-off or $\mR_V$, we may integrate by parts again.  The more delicate case is when the derivative acts on $\mR_V$, then we consider
		$$
		\frac{1}{|t|^2}\bigg|\int_0^\infty e^{-it\sqrt{z^2+m^2}}\partial_z \bigg( \frac{\chi_j(z) \sqrt{z^2+m^2}}{z} \mR_HV[\partial_z\mR_V]V\mR_H(z)(x,y) \bigg)\, dz\bigg|.
		$$
		By symmetry considerations, \eqref{eqn:RH bds} and the limiting absorption principle, this is bounded by	
		\begin{multline*}
			\frac{1}{|t|^2} \sum_{j=0}^1 \int_0^\infty z^3\chi_j(z)  \bigg\|\frac{V(\cdot)}{|x-\cdot|^{\frac{1}2+j}}\bigg\|_{L^{2,\frac32+j+}}\|\partial_z^{j+1} \mR_V\|_{L^{2,\frac32+j+}\to L^{2,-\frac32-j-}} \bigg\|\frac{V(\cdot)}{|y-\cdot|^{\frac{3}2}}\bigg\|_{L^{2,\frac32+j+}}\, dz\\
			\les \frac{2^{4j}}{|t|^2},
		\end{multline*}
		uniformly in $x,y$.  We note that for $j=0,1$ we have
		$$
			\bigg\|\frac{V(\cdot)}{|x-\cdot|^{\frac{1}2+j}}\bigg\|_{L^{2,\frac32+j+}}^2 \les 
			\int_{\R^4} \frac{\la x_1\ra^{3+2j-2\delta}}{|x-x_1|^{1+2j}} \, dx_1\les 1
		$$
		uniformly in $x$ provided $\delta>3$ by Lemma~\ref{lem:spatial estimates}.
		
		The most restrictive case is when a derivative acts on $\mR_H$ and we need to utilize the oscillation.  Without loss of generality, we consider the case when the derivative acts on the first $\mR_H$ in \eqref{eqn:hi tail hihi} and consider only the + resolvent case, the other cases follow by symmetry considerations.  In this case we seek to bound
		\begin{align*}
			\frac{1}{-it}\int_0^\infty e^{-it\sqrt{z^2+m^2}}\chi_j(z) [\partial_z \mR_H]V\mR_V V\mR_H(z)(x,y) \, dz=\frac{1}{-it} \int_0^\infty e^{-i2^{-3j}t \phi(z)}a(z,x,y) \, dz,
		\end{align*}
		where $\phi(z)=2^{3j}(\sqrt{z^2+m^2}-z(|x|+|y|)/t)$ is rescaled so that $\phi''(z)\approx 1$ on the support of $\chi_j$.  We include the phase of the non-differentiated $\mR_H$ for flexibility to apply this proof in other cases.  Here,
		\begin{multline*}
			a(z,x,y)\\
			= \int_{\R^8}\chi_j(z)  O_1(z^4) \omega(z|x-x_1|) e^{iz(|x-x_1|-|x|+|x_2-y|-|y|)}V\mR_V(z)V(x_1,x_2) \omega(z|x_2-y|)\, dx_1\, dx_2.
		\end{multline*}
		Where $|\partial_u^j\omega(u)|\les \la u\ra^{-\frac12-j}$.  Noting that when $\gamma>2$, by Lemma~\ref{lem:spatial estimates}, 
		$
		\| \la \cdot \ra^{-\gamma} |x-\cdot|^{-\frac12} \|_2\les \la x\ra^{-\frac12}, 
		$
		so that we have the bound
		$$
		|a(z,x,y)|+|\partial_z a(z,x,y)|\les 2^{3j}\chi_j(z)\la x\ra^{-\frac12} \la y\ra^{-\frac12}.
		$$
		Applying Lemma~\ref{stat phase general} we may bound this term's contribution to $\eqref{eqn:hi tail hihi}$ by
		\begin{align*}
			\frac{1}{|t|} \int_{|z-z_0|<\sqrt{2^{3j}/t}} |a(z)|\, dz+\frac{2^{3j}}{|t|^2}\int_{|z-z_0|>\sqrt{2^{3j}/t}}\bigg(\frac{|a(z)|}{|z-z_0|}+\frac{|a'(z)|}{|z-z_0|^2} \bigg)\, dz.
		\end{align*}
		When the critical point of the phase, $z_0=\frac{m(|x|+|y|)}{\sqrt{t^2-(|x|+|y|)^2}}$, is in a sufficiently small neighborhood of the the support of $a(z)$, we must have that $t\approx (|x|+|y|)$ so that $\la x\ra^{-\frac12} \la y\ra^{-\frac12}\les |t|^{-\frac12}$.  In this case we bound the integrals by
		$$
		|t|^{-1}2^{3j}\la x\ra^{-\frac12}\la y\ra^{-\frac12}\bigg( \sqrt{\frac{2^{3j}}{t}}+|t|^{-1}2^{3j} \frac{2^j}{\sqrt{2^{3j}/t}}\bigg) \les 2^{\frac{11j}{2}}|t|^{-2}.
		$$
		In the case that $t\not \approx (|x|+|y|)$, we don't rescale the phase and instead use that
		$$
		|\partial_z\big( \sqrt{z^2+m^2}-z(|x|+|y|)/t \big)|\gtrsim 1,
		$$
		to integrate by parts.  Noting that $a''(z,x,y)$ satisfies the same bounds as $a(z,x,y)$ and $a'(z,x,y)$ allows us to integrate by parts again to bound with $2^{4j}|t|^{-2}$ uniformly in $x,y$.	
	\end{proof}

	We now consider the second term in \eqref{eqn:hi iterated}, the analysis for the third term is identical up to symmetry.  Iterating the resolvent identity and decomposition $\mR_0=\mR_L+\mR_H$ we derive the identity
	\begin{multline}\label{eqn:hi bornlong}
		\mR_HV\mR_V V \mR_L=\mR_HV(\mR_0-\mR_V V\mR_0) V \mR_L\\
		=\mR_HV\mR_0V\mR_L-\mR_H V\mR_V V\mR_H V \mR_L-\mR_H V\mR_V V\mR_L V \mR_L\\
		=\mR_HV\mR_0V\mR_L-\mR_H V\mR_V V\mR_H V \mR_L-\mR_H V\mR_0 V\mR_L V \mR_L+\mR_H V\mR_V V\mR_0 V\mR_L V \mR_L
	\end{multline}
	Effectively, anytime we have $\mR_H$ on both sides we can stop the iteration process as the resulting integral kernel will be locally $L^2(\R^4)$ and $\mR_H$ brings the growth in $z$ that we seek to minimize.  For inner resolvents, combining \eqref{eqn:RH bds} and \eqref{eqn:RL bds} for $k=0,1,2$ we have
	\begin{align}\label{eqn:R0 high}
		|\partial_z^k \mR_0^\pm(z)(x,y)|\les z^{\frac32}\bigg(\frac{1}{|x-y|^3}+\frac{1}{|x-y|^{\frac32-k}}\bigg).
	\end{align}
	We select the value of $\epsilon$ in \eqref{eqn:RL bds} to ensure the iterated resolvents are locally $L^2$ and that the accumulated powers of $z$ are at most $\frac32$ on either side of the $\mR_V$.

	Any summand in the expansions above not containing $\mR_V$ can be controlled as in the proof of Lemma~\ref{lem:hi bornterm}.  Any term containing both an $\mR_H$ and $\mR_V$ may be controlled as in the proof of Lemma~\ref{lem:RhRh high}, which we formulate as the following corollary.
	
	\begin{corollary}\label{cor:high Rh stuff}
		
		Under the assumptions of Proposition~\ref{prop:high energy}, we have
		$$
		\sup_{x,y\in\R^4} \bigg| \int_0^\infty e^{-it\sqrt{z^2+m^2}}\frac{z\chi_j(z)}{\sqrt{z^2+m^2}} \mR_HV\mR_V \mathcal J(z)(x,y)\, dz\bigg| \les \min(2^{4j}, 2^{(11j/2)}|t|^{-2}).
		$$
		where $\mathcal J(z)$ is one of $\mR_HV\mR_L$, or $\mR_0V\mR_LV\mR_L$.  Furthermore, the $\mR_H$ on the left may be replaced by    $\mR_LV\mR_H$, or $\mR_LV\mR_LV\mR_H$ with the same upper bound.
		
	\end{corollary}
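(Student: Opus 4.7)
The plan is to follow the scheme of the proof of Lemma~\ref{lem:RhRh high} closely, since each of the terms listed has an $\mR_H$ on one side (which localizes the accompanying spatial integrals and supplies the oscillatory factor $e^{\pm iz|x-y|}$) and an $\mR_L$-containing iterated kernel on the other. First I would verify uniform boundedness of size $2^{4j}$ by the triangle inequality: expanding $\mR_0 = \mR_L + \mR_H$ inside $\mathcal{J}(z)$, each inner resolvent contributes at most $z^{3/2}$ by \eqref{eqn:R0 high}, the leading $\mR_H$ contributes $z^{3/2}$, the trailing $\mR_L$ contributes $z^{-\eps}$ (for a small $\eps>0$ chosen so that the composed kernel is locally $L^2$ on either side of $\mR_V$), and $\mR_V$ is handled via \eqref{eqn:LAP}. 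Applying Lemma~\ref{lem:spatial estimates} to collapse the iterated spatial integrals into the weighted $L^2$ norms required by \eqref{eqn:LAP} gives an integrand of size $\les z^{3-\eps}$ supported on $z\approx 2^j$, yielding the claimed bound.

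Next, to obtain time decay, integrate by parts once in $z$; the cutoff $\chi_j$ eliminates boundary terms. I would then split into cases based on where the derivative lands. If $\partial_z$ falls on $\chi_j$ or on an inner free resolvent or on $\mR_V$, one integrates by parts a second time: the extra $z^{-1}$ from $\sqrt{z^2+m^2}/z$ is absorbed by an extra $z^{-1}$ factor from differentiation, and \eqref{eqn:R0 high}, \eqref{eqn:LAP} (for up to two derivatives of $\mR_V$, which is why $\delta>5$ is needed on the weights), together with Lemma~\ref{lem:spatial estimates}, produce the $|t|^{-2} 2^{4j}$ bound exactly as in the $\partial_z \mR_V$ analysis in Lemma~\ref{lem:RhRh high}. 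When $\partial_z$ falls on the trailing $\mR_L$, the extra $z^{-1}$ from differentiating \eqref{eqn:RL bds} is harmless and the same double integration-by-parts argument works, provided $\eps$ is chosen small enough to keep the spatial integrals convergent via Lemma~\ref{lem:spatial estimates}.

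The hard case, as in Lemma~\ref{lem:RhRh high}, is when $\partial_z$ falls on the leading $\mR_H$, since $\partial_z \mR_H$ does not gain an extra power of $z^{-1}$ and we cannot safely integrate by parts a second time without introducing spatial growth. Here I would factor out the oscillation $e^{iz(|x-x_1|-|x|+\cdots)}$ exactly as in Lemma~\ref{lem:RhRh high}, rescale the phase by $2^{-3j}$ so that $\phi''\approx 1$ on the support of $\chi_j$, and apply Lemma~\ref{stat phase general}. The amplitude $a(z,x,y)$ now contains the inner iterated kernel $V\mR_V V \mathcal{J}(z)$, which by \eqref{eqn:R0 high}, \eqref{eqn:LAP} and Lemma~\ref{lem:spatial estimates} satisfies $|a(z)| + |\partial_z a(z)| \les 2^{3j}\chi_j(z)\la x\ra^{-1/2}\la y\ra^{-1/2}$ — the decay in $x,y$ coming from the bound $\|\la\cdot\ra^{-\gamma}|x-\cdot|^{-1/2}\|_2 \les \la x\ra^{-1/2}$ for $\gamma>2$. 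When the critical point $z_0$ lies near the support of $\chi_j$ we have $t\approx |x|+|y|$ and the contribution is $\les 2^{11j/2}|t|^{-2}$; otherwise $|\phi'|\gtrsim 1$ and a further non-stationary integration by parts yields $\les 2^{4j}|t|^{-2}$.

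The remaining cases listed (leading factor replaced by $\mR_L V \mR_H$ or $\mR_L V \mR_L V \mR_H$) are handled identically by symmetry: the $\mR_H$ located one or two resolvents inward is still the source of the oscillatory phase, and the intervening $\mR_L$'s are harmless inner factors controlled by \eqref{eqn:R0 high} after choosing $\eps$ small. The main obstacle throughout is the bookkeeping to ensure that the accumulated $z$-growth from multiple $\mR_L$ inner factors (which each contribute $z^{-\eps}$ rather than decaying) is balanced against the spatial singularities, and that the LAP weight requirements for $\partial_z^k \mR_V$ match the decay of the potential; assuming $\delta>5$ and selecting $\eps$ small but positive, Lemma~\ref{lem:spatial estimates} suffices to close the argument.
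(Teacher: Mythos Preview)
Your approach is essentially the paper's: reduce to the framework of Lemma~\ref{lem:RhRh high} by establishing pointwise bounds on the iterated kernel $\mathcal J$ and then run the same stationary-phase argument. One detail is off, however. You claim the amplitude satisfies $|a|+|\partial_z a|\les 2^{3j}\chi_j(z)\la x\ra^{-1/2}\la y\ra^{-1/2}$, with the $\la y\ra^{-1/2}$ coming from $\|\la\cdot\ra^{-\gamma}|y-\cdot|^{-1/2}\|_2\les\la y\ra^{-1/2}$. But every choice of $\mathcal J$ terminates on the right in $\mR_L$, which carries neither an oscillation $e^{\pm iz|\cdot-y|}$ nor a $|\cdot-y|^{-1/2}$ kernel; the paper's bound is $|\partial_z^k\mathcal J(x_2,y)|\les z^{3/2}\la x_2\ra^{1/2}$, \emph{uniform} in $y$. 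Consequently the extracted phase is $\sqrt{z^2+m^2}-z|x|/t$ (only the left $\mR_H$ contributes an outer oscillation), the critical-point condition is $t\approx|x|$, and $\la x\ra^{-1/2}\les|t|^{-1/2}$ alone closes the estimate to $2^{11j/2}|t|^{-2}$. This is a slip in the bookkeeping rather than a genuine gap.

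A related point: the $\la x_2\ra^{1/2}$ growth in $\mathcal J$ (arising when derivatives hit the inner $\mR_H$) is exactly why the paper singles out $\delta>5$ --- one needs $V\la\cdot\ra^{1/2}\in L^{2,\frac52+}$, not merely the bare LAP weight. Your attribution of the $\delta>5$ requirement solely to the LAP weights for $\partial_z^2\mR_V$ slightly understates the constraint.
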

	
	\begin{proof}
		
		By \eqref{eqn:RH bds}, \eqref{eqn:RL bds}, and \eqref{eqn:R0 high}, and repeated use of Lemma~\ref{lem:spatial estimates} we have (for $j=0,1,2$)
		$$
		|\partial_z^j \mR_HV\mR_L(z)(x_2,x_4)|, |\partial_z^j \mR_0V\mR_LV\mR_L(z)(x_2,x_4)|\les z^{\frac32}\la x_2\ra^{\frac12}.
		$$
		These bounds, along with the limiting absorption principle, \eqref{eqn:LAP}, allow one to obtain the same bounds for $a(z,x,y)$ as in Lemma~\ref{lem:RhRh high}, from which the claim follows.  Here we need $|V(x)|\les \la x\ra^{-5-}$ to control the growth in $x_2$ if two derivative act on an $\mR_H$ closest to $\mR_V$, to ensure that $V  \la \cdot \ra^{\f12}\in L^{2,\f52+}$.
		
		In the case of $\mR_LV\mR_H$ on the left, the required bounds for $a(z,x,y)$ hold by \eqref{eqn:RH bds}, \eqref{eqn:RL bds}, and the spatial integral estimate
		$$
			|\partial_z^k (\omega(z|x_2-x_3|)V\mR_L)|\les z^{\frac32}\int_{\R^4} \frac{\la x_3\ra^{-\delta}}{ |x_2-x_3|^{\frac12}|x_3-y|^{4-}}\, dx_3\les z^{\frac32}|x_2-y|^{-\frac12}.
		$$
		These bounds suffice to repeat the argument in Lemma~\ref{lem:RhRh high}.
		Similar adjustments may be made for the final claim, we leave the details to the interested reader.
	\end{proof}
	
	\begin{proof}[Proof of Proposition~\ref{prop:high energy}]
		From the expansion \eqref{eqn:high BS}, we note that Lemmas~\ref{lem:free hi} and \ref{lem:hi bornterm} control the contribution of the first two summands to the Stone's formula.  By Lemma~\ref{lem:RhRh high}, the discussion following it and Corollary~\ref{cor:high Rh stuff}, it remains only to consider the remaining ``Born-like" terms in \eqref{eqn:hi bornlong} that do not contain $\mR_V$.  
		
		Consider the contribution of $\mR_HV\mR_0V\mR_L$, the remaining are similar.  By the triangle inequality, \eqref{eqn:RH bds} and \eqref{eqn:RL bds}, we may bound its contribution by 
		$$
		\int_{\R^8}\int_{0}^\infty \chi_j(z)z^{3-\epsilon}  \frac{|V(x_1)|\, |V(x_2)|}{|x-x_1|^{\f32}|x_2-y|^{4-\epsilon}}\bigg(\frac{1}{|x_1-x_2|^3}+\frac{1}{|x_1-x_2|^{\f32}}\bigg)\, dz\, dx_1\, dx_2.
		$$
		We may bound this uniformly in $x$ and $y$ by $2^{\frac72j}$, provided we select $\epsilon=\frac12+$, to ensure the local singularities are removed by Lemma~\ref{lem:spatial estimates}.  In all other cases, we may integrate by parts.  If the derivative acts on $\mR_H$, we may repurpose the proof of Lemma~\ref{lem:hi bornterm} with 
		$$
		|\partial_z^ka(z)|\les \frac{2^{(\frac52-\epsilon)j}\la |x_1-x_2|\ra |V(x_1)|\, |V(x_2)|}{|x_1-x_2|^3|x_2-y|^{4-\epsilon}} \frac{z^{1-k}\chi_j(z)\widetilde \chi(z|x-x_1|)}{(1+z|x-x_1|)^{\f12}}.
		$$
		To obtain a bound of the form $2^{5j+}|t|^{-2}$.  If the derivative acts on any other piece we may integrate by parts again to obtain a bound of the form
		$$
		\frac{1}{|t|^2}\int_{\R^8} \int_0^\infty z^{3-\epsilon} \frac{ |V(x_1)|(1+|x_1-x_2|^{\f72}) |V(x_2)|}{|x-x_1|^{\f12}|x_1-x_2|^{3}|x_2-y|^{4-\epsilon}}\, dz\, dx_1\, dx_2
		$$
		Here we select $\epsilon=0+$,  and invoke Lemma~\ref{lem:spatial estimates} twice, to obtain bound of size $|t|^{-2} 2^{4j}$ that is uniform in $x,y$.  We require $|V(x)|\les \la x\ra^{-5-}$ to ensure that the integrals converge when both derivatives act on the inner $\mR_0$, since this can grow like $\la x_j-x_{j+1}\ra^{\frac12}$ in the spatial variables.  The analysis of $\mR_HV\mR_0V\mR_LV\mR_L$ follows similarly.
		
		For the final term in \eqref{eqn:high BS} we expand as follows.
		\begin{multline*}
			\mR_LV\mR_V V \mR_L=\mR_LV\big[\mR_0-\mR_0 V\mR_0+\mR_0V \mR_V V \mR_0\big] V \mR_L\\
			=\mR_LV\mR_0V\mR_L-\mR_L V\mR_0 V\mR_H V \mR_L-\mR_L V\mR_0 V\mR_V V\mR_0 V \mR_L		
		\end{multline*}
		The final term requires two more selective iterations of the resolvent identity.  The only portion of it that is not controlled by the previous arguments is when we have only $\mR_L$ contributing and we need to control the contribution of
		$$
			\mR_LV\mR_L V\mR_L  V\mR_V \mR_L V\mR_L  V \mR_L
		$$
		In these contribution of these terms to the $z$ integral, we need only integrate by parts and do not need to concern ourselves with the oscillatory piece.  The Born-like terms can be handled as in the previous discussion.
		
		By the preceding discussion, any term with $\mR_H$ can be controlled.  It suffices to consider the final term in which we much iterate on both sides sufficiently to insure that the iterated $\mR_L$ terms are locally $L^2$.  It remains only to bound the contribution of
		\begin{align}\label{eqn:hi all low}
			\int_0^\infty e^{-it\sqrt{z^2+m^2}}\frac{z\chi_j(z)}{\sqrt{z^2+m^2}}
			\mR_L( V\mR_L )^2 V\mR_V V(\mR_L V)^2 V \mR_L(z)(x,y)\, dz
		\end{align}
		Here we note that, by \eqref{eqn:RL bds} and Lemma~\ref{lem:spatial estimates}  we have
		$$
		\| \mR_L( V\mR_L )^2V\|_{L^{2,\frac12+}}  \les z^{-2}\bigg(\int_{\R^4} \frac{|V(x_3)|\la x_3\ra^{1+}}{|x-x_3|^{4-3\epsilon}} \,dx_3 \bigg)^{\f12}\les z^{-2}.
		$$
		Selecting $\epsilon=\frac23+$ ensures the final bound is valid and holds uniformly in $x$.  From here, we see that
		\begin{align*}
			|	( \mR_L V)^3 \mR_V (V\mR_L V)^3 (z)|\les 	\| ( \mR_L V)^3\|_{L^{2,\frac12+}} \|\mR_{V}\|_{L^{2,\frac12+}\to L^{2,-\frac12-}}\|(V\mR_L V)^3\|_{L^{2,\frac12+}}\les z^{-4},
		\end{align*}
		uniformly in $x,y$.
		Hence, by the triangle inequality and \eqref{eqn:LAP} we have $|\eqref{eqn:hi all low}|\les 1$.  To complete the proof, we integrate by parts twice, in the worse case we have derivatives act on $\mR_V$ and no decay in $z$ occurs.  That is, we have
		$$
		\sup_{x,y\in \R^4}	|\eqref{eqn:hi all low}|\les \la t\ra^{-2}.
		$$
	\end{proof}
	Finally,  we show that the solution operator remains bounded with near minimal assumptions on both the smoothness of the initial data measured with powers of $\la \mathcal H \ra^{-1}$, and decay of the potential.  Here we show that the selective iteration scheme we used on high energy to avoid growth in $z$ may be adapted to the low energy regime to avoid the need for further decay of the potential.
	
	\begin{proof}[Proof of Theorem~\ref{thm:min bd}]
		As noted after Proposition~\ref{prop:high energy}, one needs only $|V(x)|\les \la x\ra^{-2-}$ to show that the Dirac evolution localized to dyadic energies is bounded by $2^{4j}$ uniformly in $x,y$.  Hence, we need only show that the low energy evolution is bounded with lesser decay on the potential.
		
		We first note that, if we do not require differentiability in the error terms, we have the following bounds on the free resolvents when $0<z<1$:
		\begin{align}\label{eqn:RL all}
			|[\mR_0^\pm-\mG_0](z)(x,y)|\les \frac{z^{\f12}}{|x-y|^{\f32}}+\frac{z^{\f12}}{|x-y|^{2}}, \qquad
			|[\mR_L^+-\mR_L^-](z)(x,y)|\les z^2(z|x-y|)^{-\ell}.
		\end{align}
		These bounds hold for any choice of $\ell\geq0$ and any $z\neq0$.  In particular, we may now write
		$$
		M^\pm(z)=T_0+ \mathcal M_0(z), \qquad |\mathcal M_0(z)|\les z^{\f12} v(x)[\mathcal I_{\f52}+\mathcal I_2](x,y)v^*(y).
		$$
		where $\mathcal I_{\f52}:L^{2,s}\to L^{2,-s}$ provided $s>\frac54$, and $\mathcal I_{2}:L^{2,s}\to L^{2,-s}$ provided $s>1$ by Lemma~2.3 in \cite{Jen}.  Thus, if $|v(x)|\les \la x\ra^{-\frac54-}$ we may write 
		$$
		M^{\pm}(z)=T_0+\Gamma(z), \qquad \| \sup_{0<z<z_1} |z|^{-\f12} |\Gamma(z)|\,\|_{L^2\to L^2}\les 1.
		$$
		where the error term is absolutely bounded operator on $L^2$.  By a Neumann series computation, one has that $M^\pm(z)$ are invertible in a sufficiently small neighborhood of the threshold with
		\begin{align}
			M_\pm^{-1}(z)=D_0+\Gamma(z)=O_0(z^0).
		\end{align}
		where $\Gamma(z)$ is as above.
		
		By decomposing $\mR_0^\pm=\mR_L^\pm+\mR_H^\pm$ on both low  energy as well, we can adapt the selective iteration scheme from the high energy argument to both minimize the powers of $z$ on large energy and the needed decay on $V$.  Here, the symmetric resolvent identity also guarantees that
		\begin{align}\label{eqn:symm VRV}
			V\mR_V V=V-v^* M^{-1}v.
		\end{align}
		So that one may use the usual resolvent identity to write
		\begin{align*}
			\mR_V=\mR_0-\mR_0V\mR_0+\mR_0V\mR_V V\mR_0.
		\end{align*}
		We focus on the final term and selectively iterate:
		\begin{multline*}
			\mR_0V\mR_V V\mR_0=\mR_HV\mR_V V\mR_H+\mR_HV\mR_V V\mR_L+\mR_LV\mR_V V\mR_H+\mR_LV\mR_V V\mR_L\\
			=\mR_HV\mR_V V\mR_H+\mR_HV(\mR_0-\mR_V V\mR_0) V\mR_L+\mR_LV(\mR_0-\mR_0 V\mR_V) V\mR_V\\
			+\mR_LV(\mR_0-\mR_0V\mR_0+\mR_0V\mR_V V\mR_0) V\mR_L.
		\end{multline*}
		We then selectively iterate the inner expansions whenever $\mR_L$ contributes and terminate the iteration when $\mR_H$ is used.  For example, we expand $\mR_HV\mR_VV\mR_0V\mR_L$ as 
		$$
		\mR_HV\mR_V V\mR_0 V\mR_L
		=\mR_HV\mR_V V\mR_H V\mR_L+\mR_HV\mR_V V\mR_L V\mR_L.
		$$
		Since both sides of $\mR_V$ in the first term are locally $L^2$, we invoke  \eqref{eqn:symm VRV} to see
		$$
		\mR_HV\mR_V\mR_HV\mR_L=\mR_HV\mR_HV\mR_L-\mR_Hv^*M^{-1}v\mR_HV\mR_L.
		$$
		For the second term, we first utilize the usual resolvent identity, then the symmetric resolvent identity to see
		\begin{multline*}
			\mR_HV\mR_V V\mR_L V\mR_L=\mR_HV\mR_0 V\mR_L V\mR_L-\mR_HV\mR_V V\mR_0 V\mR_L V\mR_L\\
			=\mR_HV\mR_0 V\mR_L V\mR_L-\mR_HV\mR_0 V\mR_L V\mR_L+\mR_Hv^*M^{-1}v\mR_0 V\mR_L V\mR_L.
		\end{multline*}
		The other summands are similarly iterated until either an $\mR_H$ is encountered or there is a sequence of two $\mR_L$ and one $\mR_0$.  Now,
		\begin{align*}
			|v\mR_0 V\mR_L V\mR_L(x_2,y)|\les \sum \int_{\R^{8}}\frac{\la x_2\ra^{-\frac{\delta}{2}}\la x_3\ra^{-\delta} \la x_4\ra^{-\delta}}{|x_2-x_3|^{k_1}|x_3-x_4|^{k_2}|x_4-y|^{k_3}}\, dx_3\, dx_4,
		\end{align*}
		where the sum is taken over $k_1\in \{\frac32,3\}$ and $k_2,k_3\in \{2,3\}$.  Repeated use of Lemma~\ref{lem:spatial estimates} suffices to show that $v\mR_0 V\mR_L V\mR_L(\cdot,y)\in L^2$ uniformly in $y$ when $|V(x)|\les \la x\ra^{-\delta}$ for some $\delta>\frac52$. The case of $k_1=\f32$, $k_2=k_3=2$ neccessitates the condition that $\delta>\f52$.  Similar computations show that $\mR_Hv^*, \mR_HV\mR_Hv^*, \mR_LV\mR_LV\mR_Lv^*\in L^2$.
		
		For the other `Born-like' terms, those without a perturbed resolvent, we note that
		$$
		\int_{\R^4} |\mR_H(z)(x,x_1)V(x_1)\mR_H(x_1,y)|\, dx_1 \les \int_{\R^4} \frac{\la x_1\ra^{-\f52-}}{|x-x_1|^{\f32}|x_1-y|^{\f32}}\, dx_1 \les 1,
		$$
		uniformly in $x,y$ by Lemma~\ref{lem:spatial estimates}.  Similarly
		\begin{multline*}
			\int_{\R^8}|\mR_L(z)(x,x_1)V(x_1)\mR_H(z)(x_1,x_2)V(x_2)\mR_0(z)(x_2,y)|\, dx_1\, dx_2\\
			\les \sum \int_{\R^8} \frac{\la x_1\ra^{-\f52-}\la x_2\ra^{-\f52-}}{|x-x_1|^{k_1}|x_1-x_2|^{\f32}|x_2-y|^{k_2}}\, dx_1\, dx_2,
		\end{multline*}
		where the sum is taken over $k_1\in \{2,3\}$ and $k_2\in \{2,3\}$.  Repeated use of Lemma~\ref{lem:spatial estimates} shows this is bounded uniformly in $x,y$.  When the order of the resolvents is permuted, this argument follows similarly.  
		
		Only in the case when we have all $\mR_L$ contributing do we need to account for the +/- difference in the Stone's formula.  Here, we note that
		\begin{multline*}
			\int_{\R^8}|[\mR_L^+-\mR_L^-](x,x_1)V(x_1)\mR_L(z)(x_1,x_2)V(x_2)\mR_L(z)(x_2,y)|\, dx_1\, dx_2\\
			\les \sum \int_{\R^8} \frac{\la x_1\ra^{-\f52-}\la x_2\ra^{-\f52-}}{|x_1-x_2|^{k_1}|x_2-y|^{k_2}}\, dx_1\, dx_2,
		\end{multline*}
		where the sum is over $k_1,k_2\in \{2,3\}$, here we use that $|\mR_L^+-\mR_L^-|\les z^2$.  By Lemma~\ref{lem:spatial estimates}, the $x_1$ integral is bounded by $\la x_2\ra^{-\f12}$ uniformly in $x$.  Another application of the lemma establishes the bound.  This also suffices for the case of $\mR_LV\mR_L$ where one also needs to utilize the +/- difference.
	\end{proof}

\end{document}